\newcommand{\norm}[1]{\left\Vert#1\right\Vert}
\newcommand{\abs}[1]{\left\vert#1\right\vert}
\newcommand{\Lip}[1]{\text {Lip}\left(#1\right)}
\newcommand{\dia}[1]{\text {diameter}\left(#1\right)}
\newcommand{\C}{\text{const}}
\newcommand{\divergence}[1]{\text {div}#1}
\newcommand{\set}[1]{\left\{#1\right\}}
\theoremstyle{plain}
\newtheorem{thm}{Theorem}[section]
\newtheorem{cor}[thm]{Corollary}
\newtheorem{lem}[thm]{Lemma}
\newtheorem{prop}[thm]{Proposition}
\theoremstyle{definition}
\newtheorem{defn}{Definition}[section]
\theoremstyle{remark}
\newtheorem{rem}{Remark}[section]
\theoremstyle{plain}
\newtheorem*{main_assu}{Main Assumption}
\newcommand {\marginal} [1] {$\bigstar $\leavevmode\marginpar
    {\tiny\raggedright#1\par}}
\newcommand{\comment}[1]{}
\numberwithin{equation}{chapter}   
\begin{document}

\frontmatter

\pagestyle{empty}
\begin{center}
\rule{165pt}{0pt} \\
\vspace{0.5cm}
\LARGE{\textbf{Rigorous Results for the Periodic Oscillation of an Adiabatic Piston}}\\
\vspace{0.4cm}
\normalsize{by} \\
\vspace{0.2cm}
\large{Paul Wright } \\
\vspace{2cm}
\normalsize{A dissertation submitted in partial fulfillment} \\
\normalsize{of the requirements for the degree of} \\
\normalsize{Doctor of Philosophy}  \\
\normalsize{Department of Mathematics} \\
\normalsize{New York University} \\
\normalsize{May 2007} \\
\vspace{1.0cm}
\end{center}
\begin{flushright}
\vspace{.6cm}
\rule{160pt}{.6pt} \\
\normalsize{Lai-Sang Young --- Advisor}\\
\vspace{.8cm}
\end{flushright}
\pagebreak


\pagestyle{empty}           
\setcounter{page}{0}   
\qquad \pagebreak

\setcounter{page}{3} \pagestyle{plain}
\addcontentsline{toc}{section}{Dedication} \qquad \vspace{2cm}
\begin{center}
To Elizabeth
\end{center}
\pagebreak

\setcounter{page}{4} \pagestyle{plain}
\addcontentsline{toc}{section}{Acknowledgements}
\begin{center}
\huge{\textbf{Acknowledgements}}\\
\end{center}
\vspace{1cm}
\par
\normalsize{

This dissertation would not have been possible without the help and
support of many people.  First and foremost, I would like to thank
my advisor, Lai-Sang Young.  I have benefited greatly from both the
breadth of her interests and the depth of her understanding.  She
has been an excellent teacher and mentor, and I am indebted to her
for much of my understanding of mathematics.

I would also like to thank a number of other people at the Courant
Institute.  Professors Henry McKean and Jalal Shatah guided me
through much of my early time at Courant.  Oscar Lanford patiently
listened to and critiqued many of my arguments, and George Zaslavsky
contributed to my understanding of physics.  Postdoc's Kevin Lin and
Will Ott gave me constant encouragement and support, and both helped
to critique much of my writing.  P\'eter B\'alint generously
proofread a manuscript of my piston results in higher dimensions.

Of course, graduate school would never have been the same without my
fellow travelers, the other graduate students, to whom I am
grateful.  I would especially like to thank the other dynamics
students, Jos\'e Koiller, Stan Mintchev, and Tanya Yarmola, for
being good friends and study partners.  I would also like to thank
Tom Alberts, Gil Ariel, and Paris Pender, and my many great
officemates, Hantaek Bae, Umberto Hryniewicz, Helga Schaffrin, and
Ross Tulloch.

I must also thank Dmitry Dolgopyat of the University of Maryland,
who first suggested the adiabatic piston problem to me, and who
generously shared with me his unpublished notes on averaging.  In
addition, I am grateful to Marco Lenci and Luca Bussolari of the
Stevens Institute of Technology for their participation in Courant's
dynamical systems seminar, and to Albert Fathi of the ENS Lyon for
arranging for me to be a visitor there during the fall of 2005.
During graduate school, I was partially supported by a National
Science Foundation Graduate Research Fellowship.

Finally, I would like to thank my family, who have supported me
throughout my studies and enabled me to become who I am today.  My
mother shared with me her love of learning, and my father shared the
beauty of mathematics.  My wife has strengthened me in countless
ways.  I cannot thank her enough for sharing with me her writing
skills and her expert advice.  I dedicate this work to her.

} \pagebreak

\pagestyle{plain}
\addcontentsline{toc}{section}{Abstract}
\begin{center}
\huge{\textbf{Abstract}}\\
\end{center}
\vspace{1cm}
\par
\normalsize{

We study a heavy piston of mass $M$ that moves in one dimension. The
piston separates two gas chambers, each of which contains finitely
many ideal, unit mass gas particles moving in $d$ dimensions, where
$ d\geq 1$. Using averaging techniques, we prove that the actual
motions of the piston converge in probability to the predicted
averaged behavior on the time scale $M^ {1/2} $ when $M$ tends to
infinity while the total energy of the system is bounded and the
number of gas particles is fixed. Neishtadt and Sinai previously
pointed out that an averaging theorem due to Anosov should extend to
this situation.

When $ d=1$, the gas particles move in just one dimension, and we
prove that the rate of convergence of the actual motions of the
piston to its averaged behavior is $\mathcal{O} (M^ {-1/2}) $ on the
time scale $M^ {1/2} $. The convergence is uniform over all initial
conditions in a compact set. We also investigate the piston system
when the particle interactions have been smoothed. The convergence
to the averaged behavior again takes place uniformly, both over
initial conditions and over the amount of smoothing.

In addition, we prove generalizations of our results to $N$ pistons
separating $N+1$ gas chambers.  We also provide a general discussion
of averaging theory and the proofs of a number of previously known
averaging results.  In particular, we include a new proof of
Anosov's averaging theorem for smooth systems that is primarily due
to Dolgopyat.

} \pagebreak


\tableofcontents


\listoffigures
\addcontentsline{toc}{section}{List of Figures}





\mainmatter

\chapter{Introduction}\label{chp:intro}

What can be rigorously understood about the nonequilibrium dynamics
of chaotic, many particle systems?  Although much progress has been
made in understanding the infinite time behavior of such systems,
our understanding on finite time scales is still far from complete.
Systems of many particles contain a large number of degrees of
freedom, and it is often impractical or impossible to keep track of
their full dynamics.  However, if one is only interested in the
evolution of macroscopic quantities, then these variables form a
small subset of all of the variables. The evolution of these
quantities does not itself form a closed dynamical system, because
it depends on events happening in all of the (very large) phase
space. We must therefore develop techniques for describing the
evolution of just a few variables in phase space. Such descriptions
are valid on limited time scales because a large amount of
information about the dynamics of the full system is lost. However,
the time scales of validity can often be long enough to enable a
good prediction of the observable dynamics.

Averaging techniques help to describe the evolution of certain
variables in some physical systems, especially when the system has
components that move on different time scales.  The primary results
of this thesis involve applying averaging techniques to chaotic
microscopic models of gas particles separated by an adiabatic piston
for the purposes of justifying and understanding macroscopic laws.

This thesis is organized as follows.  In
Section~\ref{sct:piston_intro} we briefly introduce the the
adiabatic piston problem and our results.  In
Section~\ref{sct:heuristic} we review the physical motivations for
our results.  The following three chapters may each be read
independently. Chapter~\ref{chp:averaging} presents an introduction
to averaging theory and the proofs of a number of averaging theorems
for smooth systems that motivate our later proofs for the piston
problem.  Chapter~\ref{chp:1Dpiston} contains our results for piston
systems in one dimension, and Chapter~\ref{chp:dDpiston} contains
our results for the piston system in dimensions two and three.

\section{The adiabatic piston}\label{sct:piston_intro}

Consider the following simple model of an adiabatic piston
separating two gas containers:  A massive piston of mass $M\gg 1$
divides a container in $\mathbb{R}^d$, $ d=1, 2, \text{ or } 3$,
into two halves. The piston has no internal degrees of freedom and
can only move along one axis of the container. On either side of the
piston there are a finite number of ideal, unit mass, point gas
particles that interact with the walls of the container and with the
piston via elastic collisions. When $M=\infty $, the piston remains
fixed in place, and each gas particle performs billiard motion at a
constant energy in its sub-container. We make an ergodicity
assumption on the behavior of the gas particles when the piston is
fixed. Then we study the motions of the piston when the number of
gas particles is fixed, the total energy of the system is bounded,
but $M$ is very large.

Heuristically, after some time, one expects the system to approach a
steady state, where the energy of the system is equidistributed
amongst the particles and the piston. However, even if we could show
that the full system is ergodic, an abstract ergodic theorem says
nothing about the time scale required to reach such a steady state.
Because the piston will move much slower than a typical gas
particle, it is natural to try to determine the intermediate
behavior of the piston by averaging techniques. By averaging over
the motion of the gas particles on a time scale chosen short enough
that the piston is nearly fixed, but long enough that the ergodic
behavior of individual gas particles is observable, we will show
that the system does not approach the expected steady state on the
time scale $M^ {1/2} $. Instead, the piston oscillates periodically,
and there is no net energy transfer between the gas particles.

The results of this thesis follow earlier work by Neishtadt and
Sinai~\cite{Sin99,NS04}. They determined that for a wide variety of
Hamiltonians for the gas particles, the averaged behavior of the
piston is periodic oscillation, with the piston moving inside an
effective potential well whose shape depends on the initial position
of the piston and the gas particles' Hamiltonians.  They pointed out
that an averaging theorem due to Anosov~\cite{Ano60,LM88}, proved
for smooth systems, should extend to this case. The main result of
the present work, Theorem~\ref{thm:dDpiston}, is that Anosov's
theorem does extend to the particular gas particle Hamiltonian
described above. Thus, if we examine the actual motions of the
piston with respect to the slow time $\tau=t/M^ {1/2}$, then, as
$M\rightarrow\infty $, in probability (with respect to Liouville
measure) most initial conditions give rise to orbits whose actual
motion is accurately described by the averaged behavior for
$0\leq\tau\leq 1$, i.e.~for $0\leq t\leq M^ {1/2}$.

A recent study involving some similar ideas by Chernov and
Dolgopyat~\cite{CD06} considered the motion inside a two-dimensional
domain of a single heavy, large gas particle (a disk) of mass $M\gg
1$ and a single unit mass point particle.  They assumed that for
each fixed location of the heavy particle, the light particle moves
inside a dispersing (Sinai) billiard domain. By averaging over the
strongly hyperbolic motions of the light particle, they showed that
under an appropriate scaling of space and time the limiting process
of the heavy particle's velocity is a (time-inhomogeneous) Brownian
motion on a time scale $\mathcal{O} (M^ {1/2}) $. It is not clear
whether a similar result holds for the piston problem, even for gas
containers with good hyperbolic properties, such as the Bunimovich
stadium.  In such a container the motion of a gas particle when the
piston is fixed is only nonuniformly hyperbolic because it can
experience many collisions with the flat walls of the container
immediately preceding and following a collision with the piston.

The present work provides a weak law of large numbers, and it is an
open problem to describe the sizes of the deviations for the piston
problem~\cite{CD06b}.  Although our result does not yield concrete
information on the sizes of the deviations, it is general in that it
imposes very few conditions on the shape of the gas container. Most
studies of billiard systems impose strict conditions on the shape of
the boundary, generally involving the sign of the curvature and how
the corners are put together.  The proofs in this work require no
such restrictions.  In particular, the gas container can have cusps
as corners and need satisfy no hyperbolicity conditions.

If the piston divides a container in $\mathbb{R}^2$ or
$\mathbb{R}^3$ with axial symmetry, such as a rectangle or a
cylinder, then our ergodicity assumption on the behavior of the gas
particles when the piston is fixed does not hold. In this case, the
interactions of the gas particles with the piston and the ends of
the container are completely specified by their motions along the
normal axis of the container. Thus, this system projects onto a
system inside an interval consisting of a massive point particle,
the piston, which interacts with the gas particles on either side of
it. These gas particles make elastic collisions with the walls at
the ends of the container and with the piston, but they do not
interact with each other.  For such one-dimensional containers, the
effects of the gas particles are quasi-periodic and can be
essentially decoupled, and we recover a strong law of large numbers
with a uniform rate, reminiscent of classical averaging over just
one fast variable in $S^1$: The convergence of the actual motions to
the averaged behavior is uniform over all initial conditions, with
the size of the deviations being no larger than $\mathcal{O} (M^
{-1/2}) $ on the time scale $M^ {+1/2} $.  See
Theorem~\ref{thm:1Dpiston1}. Gorelyshev and
Neishtadt~\cite{GorNeishtadt06} independently obtained this result.

For systems in $ d=1$ dimension, we also investigate the behavior of
the system when the interactions of the gas particles with the walls
and the piston have been smoothed, so that Anosov's theorem applies
directly.  Let $\delta\geq 0$ be a parameter of smoothing, so that
$\delta=0 $ corresponds to the hard core setting above. Then the
averaged behavior of the piston is still a periodic oscillation,
which depends smoothly on $\delta$.  We show that the deviations of
the actual motions of the piston from the averaged behavior are
again not more than $\mathcal{O} (M^ {-1/2}) $ on the time scale $M^
{1/2} $. The size of the deviations is bounded uniformly, both over
initial conditions and over the amount of smoothing,
Theorem~\ref{thm:1D_smooth_uniform}.

Our results for a single heavy piston separating two gas containers
generalize to the case of $N$ heavy pistons separating $N+1$ gas
containers.  Here the averaged behavior of the pistons has them
moving like an $N$-dimensional particle inside an effective
potential well.  Compare Section~\ref{sct:apps_generalizations}.

The systems under consideration in this work are simple models of an
adiabatic piston. The general adiabatic piston problem~\cite {Ca63},
well-known from physics, consists of the following: An insulating
piston separates two gas containers, and initially the piston is
fixed in place, and the gas in each container is in a separate
thermal equilibrium. At some time, the piston is no longer
externally constrained and is free to move. One hopes to show that
eventually the system will come to a full thermal equilibrium, where
each gas has the same pressure and temperature. Whether the system
will evolve to thermal equilibrium and the interim behavior of the
piston are mechanical problems, not adequately described by
thermodynamics~\cite{Gru99}, that have recently generated much
interest within the physics and mathematics communities following
Lieb's address~\cite{Lie99}. One expects that the system will evolve
in at least two stages. First, the system relaxes deterministically
toward a mechanical equilibrium, where the pressures on either side
of the piston are equal.  In the second, much longer, stage, the
piston drifts stochastically in the direction of the hotter gas, and
the temperatures of the gases equilibrate.  See for
example~\cite{GPL03,CL02,Che05} and the references therein.
Previously, rigorous results have been limited mainly to models
where the effects of gas particles recolliding with the piston can
be neglected, either by restricting to extremely short time
scales~\cite{CLS02,CLS02b} or to infinite gas
containers~\cite{Che05}.

\section{Physical motivation for the results}\label{sct:heuristic}

In this section, we briefly review the physical motivations for our
results on the adiabatic piston.

Consider a massive, insulating piston of mass $M$ that separates a
gas container $\mathcal{D} $ in $\mathbb{R}^d$, $ d= 1,2,\text { or
}3$. See Figure~\ref{fig:intro_domain}. Denote the location of the
piston by $Q$ and its velocity by $dQ/dt=V$.  If $Q$ is fixed, then
the piston divides $\mathcal{D} $ into two subdomains,
$\mathcal{D}_1(Q) =\mathcal{D}_1 $ on the left and $\mathcal{D}_2(Q)
=\mathcal{D}_2  $ on the right.  By $\abs{\mathcal{D}_i} $ we denote
the area (when $ d=2$, or length, when $ d=1$, or volume, when $
d=3$) of $\mathcal{D}_i$.  Define
\[
    \ell : =\frac {\partial\abs{\mathcal{D}_1(Q)}} {\partial Q}=
    -\frac {\partial\abs{\mathcal{D}_2(Q)}} {\partial Q},
\]
so that $\ell$ is the piston's cross-sectional length (when $ d=2$,
or area, when $ d=3$).  If $ d=1$, then $\ell=1$.  By $E_i$ we
denote the total energy of the gas inside $\mathcal{D}_i$.

\begin{figure}
    \begin {center}
    \setlength{\unitlength}{0.9 cm}
    \begin{picture}(15,6)
        \put(2.5,1){\line(1,0){10}}
        \put(2.5,5){\line(1,0){10}}
        \qbezier(2.5,1)(0.5,1)(0.5,3)
        \qbezier(0.5,3)(0.5,5)(2.5,5)
        \qbezier(12.5,1)(14.5,1)(14.5,3)
        \qbezier(14.5,3)(14.5,5)(12.5,5)
        \put(2.2,4.55){$\mathcal{D}_1(Q)$}
        \put(11.5,4.55){$\mathcal{D}_2(Q)$}
        \put(3.1,1.3){$E_1$}
        \put(12,1.3){$E_2$}
        \put(1.0,5.4){$\mathcal{D} =\mathcal{D}_1(Q)\sqcup\mathcal{D}_2(Q)$}
        \linethickness {0.15cm}
        \put(8.5,1){\line(0,1){4}}
        \thinlines
        \put(8.5,3){\vector(1,0){.35}}
        \put(9.0,2.8){$V=\varepsilon W$}
        \put(8.5,1){\line(0,-1){0.15}}
        \put(8.35,0.33){$Q$}
        \put(8.35,5.2){$M=\varepsilon ^ {-2}\gg 1$}
        \put(5,4){\circle*{.15}}
        \put(5, 4){\vector(1,1){0.7}}
        \put(11.5,2.5){\circle*{.15}}
        \put(11.5, 2.5){\vector(1,-1){0.6}}
        \put(2.7,1.4){\circle*{.15}}
        \put(2.7, 1.4){\vector(1,2){.4}}
        \put(7.3,4.5){\circle*{.15}}
        \put(7.3, 4.5){\vector(-2,0){1.15}}
        \put(3.8,4){\circle*{.15}}
        \put(3.8, 4){\vector(1,3){.20}}
        \put(5.4,2.1){\circle*{.15}}
        \put(5.4, 2.1){\vector(-2,1){.65}}
        \put(9.0,4.1){\circle*{.15}}
        \put(9.0, 4.1){\vector(1,3){.25}}
        \put(11.2,4){\circle*{.15}}
        \put(11.2,4){\vector(1,0){1.45}}
        \put(14.1,3.0){\circle*{.15}}
        \put(14.1, 3.0){\vector(-2,-1){.55}}
        \put(0,1){\line(0,1){4}}
        \put(0,1){\line(1,0){0.15}}
        \put(0,5){\line(1,0){0.15}}
        \put(0,3.0){\line(-1,0){0.15}}
        \put(-0.5,2.88){$\ell$}
    \end{picture}
    \normalsize
    \end {center}
    \caption{A gas container $\mathcal{D} $ in $d=2$ dimensions separated by
        an adiabatic piston.}
    \label{fig:intro_domain}
\end{figure}
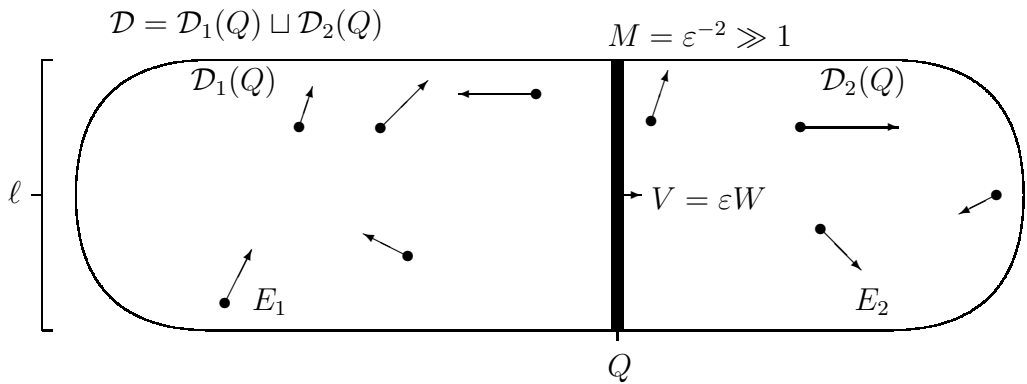

We are interested in the dynamics of the piston when the system's
total energy is bounded and $M\rightarrow \infty $.  When
$M=\infty$, the piston remains fixed in place, and each energy $E_i$
remains constant. When $M$ is large but finite, $MV^2/2$ is bounded,
and so $V=\mathcal{O} (M^ {-1/2}) $.  It is natural to define
\[
\begin {split}
    \varepsilon &=M^ {-1/2},\\ W &=\frac {V} {\varepsilon},
\end {split}
\]
so that $W$ is of order $1$ as $\varepsilon\rightarrow 0$.  This is
equivalent to scaling time by $\varepsilon$, and so we introduce the
slow time
\[
    \tau=\varepsilon t.
\]

If we let $P_i$ denote the pressure of the gas inside
$\mathcal{D}_i$, then heuristically the dynamics of the piston
should be governed by the following differential equation:
\begin {equation}\label{eq:piston_force}
\begin {split}
    \frac {dQ} {dt}&=V,\qquad M\frac {dV} {dt}=P_1\ell-P_2\ell,
    \\
    &\qquad\text {i.e.}
    \\
    \frac {dQ} {d\tau}&= W,\qquad
    \frac {dW} {d\tau}= P_1\ell- P_2\ell.
\end {split}
\end {equation}
To find differential equations for the energies of the gases, note
that in a short amount of time $dt$, the change in energy should
come entirely from the work done on a gas, i.e.~the force applied to
the gas times the distance the piston has moved, because the piston
is adiabatic. Thus, one expects that
\begin {equation}\label{eq:work}
\begin {split}
    \frac {dE_1} {dt}&=-V P_1\ell,
    \qquad
    \frac {dE_2} {dt} = +V  P_2\ell,
    \\
    &\qquad\text {i.e.}
    \\
    \frac {dE_1} {d\tau}&=- WP_1\ell,
    \qquad
    \frac {dE_2} {d\tau} = + W P_2\ell.
\end {split}
\end {equation}

To obtain a closed system of differential equations, it is necessary
to insert an expression for the pressures.   $P_i \ell$ should be
the average force from the gas particles in $\mathcal{D}_i$
experienced by the piston when it is held fixed in place.  Whether
such an expression, depending only on $E_i$ and $\mathcal{D}_i(Q)$,
exists and is the same for (almost) every initial condition of the
gas particles depends strongly on the microscopic model of the gas
particle dynamics.  Sinai and Neishtadt~\cite{Sin99, NS04} pointed
out that for many microscopic models where the pressures are well
defined, the solutions of Equations~\eqref{eq:piston_force}
and~\eqref{eq:work} have the piston moving according to a
model-dependent effective Hamiltonian.

Because the pressure of an ideal gas in $d$ dimensions is
proportional to the energy density, with the constant of
proportionality $2/d$, we choose to insert
\[
    P_i=\frac {2E_i}{d\abs{\mathcal{D}_i}} .
\]
Later, we will make assumptions on the microscopic gas particle
dynamics to justify this substitution. However, if we accept this
definition of the pressure, we obtain the following ordinary
differential equations for the four macroscopic variables of the
system:
\begin {equation}\label{eq:heuristic_avg_eq}
    \frac{d}{d\tau}
    \begin {bmatrix}
    Q\\
    W\\
    E_1\\
    E_2\\
    \end {bmatrix}
    =
    \begin {bmatrix}
    \displaystyle W\\
    \displaystyle \frac{2E_1\ell}{d\abs{\mathcal{D}_1(Q)}}
    -\frac{2E_2\ell}{d\abs{\mathcal{D}_2(Q)}}\\
    \displaystyle -\frac{2WE_1\ell}{d\abs{\mathcal{D}_1(Q)}}\\
    \displaystyle +\frac{2WE_2\ell}{d\abs{\mathcal{D}_2(Q)}}\\
    \end {bmatrix}.
\end {equation}
For these equations, one can see the effective Hamiltonian as
follows.  Since
\[
    \frac {d\ln(E_i)}{d\tau}=
    -\frac{2}{d}\frac {d\ln(\abs{\mathcal{D}_i(Q)})} {d\tau},
\]
\[
    E_i(\tau)=E_i(0)\left (\frac{\abs{\mathcal{D}_i(Q(0))}}
        {\abs{\mathcal{D}_i(Q(\tau))}}\right) ^ {2/d}.
\]
Hence
\[
    \frac {d^2Q(\tau)} {d\tau^2}=
    \frac {2\ell} {d}
    \frac{E_1(0)\abs{\mathcal{D}_1(Q(0))}^{2/d}}{\abs{\mathcal{D}_1(Q(\tau))}^{1+2/d}}
    -\frac {2\ell} {d}
    \frac{E_2(0)\abs{\mathcal{D}_2(Q(0))}^{2/d}}{\abs{\mathcal{D}_2(Q(\tau))}^{1+2/d}},
\]
and so $(Q, W) $ behave as if they were the coordinates of a
Hamiltonian system describing a particle undergoing motion inside a
potential well. The effective Hamiltonian may be expressed as
\begin {equation}\label{eq:d_dpot}
    \frac {1} {2}W^2+
    \frac{E_1(0)\abs{\mathcal{D}_1(Q(0))}^{2/d}}
    {\abs{\mathcal{D}_1(Q)}^{2/d}}+
    \frac{E_2(0)\abs{\mathcal{D}_2(Q(0))}^{2/d}}
    {\abs{\mathcal{D}_2(Q)}^{2/d}}.
\end {equation}

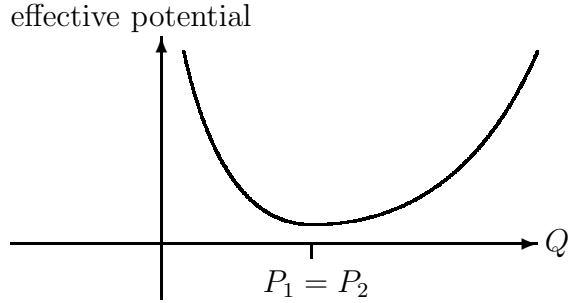
\begin{figure}
    \begin {center}
    \setlength{\unitlength}{1.0 cm}
    \begin{picture}(10,4)
        \thicklines
        \put(1,0.75){\vector(1,0){7}}
        \put(8.1,0.65){$Q$}
        \put(1,3.65){effective potential}
        \put(3,0){\vector(0,1){3.5}}
        \put(5,0.75){\line(0,-1){0.2}}
        \put(4.35,0.1){$P_1= P_2$}
        \qbezier(3.3,3.3)(3.8,1)(5,1)
        \qbezier(5,1)(7,1)(8,3.3)
    \end{picture}
    \end {center}
    \caption{An effective potential.}
    \label{fig:potential}
\end{figure}

The question is, do the solutions of
Equation~\eqref{eq:heuristic_avg_eq} give an accurate description of
the actual motions of the macroscopic variables when $M$ tends to
infinity?  The main result of this thesis,
Theorem~\ref{thm:dDpiston}, is that, for an appropriately defined
system, the answer to this question is affirmative for $0\leq t\leq
M^ {1/2}$, at least for most initial conditions of the microscopic
variables. Observe that one should not expect the description to be
accurate on time scales much longer than $\mathcal{O} (M^ {1/2})
=\mathcal{O} (\varepsilon^ {-1})$.  The reason for this is that,
presumably, there are corrections of size $\mathcal{O}
(\varepsilon)$ in Equation~\eqref{eq:heuristic_avg_eq} that we are
neglecting.  For $\tau=\varepsilon t>\mathcal{O} (1) $, these
corrections should become significant.  Such higher order
corrections for the adiabatic piston were studied by Crosignani
\emph{et al.}~\cite{CD96}.

\chapter{Background Averaging Material}\label{chp:averaging}

In this chapter, we present a number of well-known classical
averaging results for smooth systems, as well as a proof of Anosov's
averaging theorem, which is the first general multi-phase averaging
result. All of these theorems are at least 45 years old. However, we
present them here because our proofs of the classical results are at
least slightly novel, and the ideas in them lend themselves well to
certain higher-dimensional generalizations. In particular, they are
fairly close to the ideas in the proof we give for our piston
results in one dimension.  The proof of Anosov's theorem is a new
and unpublished proof due mainly to Dolgopyat, with some further
simplifications made. The ideas in this proof underly the ideas we
will use to prove the weak law of large numbers for our piston
system in dimensions two and three.

We begin by giving a discussion of a framework for general averaging
theory and some averaging results.  A number of classical averaging
theorems are then proved, followed by the proof of Anosov's theorem.

\section{The averaging framework}
\label{sct:anos}

In this section, consider a family of ordinary differential
equations
\begin {equation}\label{eq:ode}
\frac {dz} {dt}=Z(z,\varepsilon)
\end {equation}
on a smooth, finite-dimensional Riemannian manifold $\mathcal{M}$,
which is indexed by the real parameter $\varepsilon\in
[0,\varepsilon_0]$. Assume
\begin {itemize}
\item
     {\em  Regularity:} the functions $Z$ and
            $\partial Z/\partial \varepsilon$ are both
            $\mathcal{C}^1$ on $\mathcal{M}\times [0,\varepsilon_0] $.
\end {itemize}
We denote the flow generated by $Z(\cdot,\varepsilon) $ by
$z_\varepsilon(t,z)=z_\varepsilon(t) $. We will usually suppress the
dependence on the initial condition $z=z_\varepsilon(0,z) $.  Think
of $z_\varepsilon(\cdot) $ as being a random variable whose domain
is the space of initial conditions for the differential equation
\eqref{eq:ode} and whose range is the space of continuous paths
(depending on the parameter $t$) in $\mathcal{M}$.
\begin {itemize}
\item
     {\em  Existence of smooth integrals:} $z_0(t) $ has $m$ independent
            $\mathcal{C}^2$ first integrals
            $h=(h_1,\dotsc,h_m):\mathcal{M}\rightarrow\mathbb{R}^m$.
\end {itemize}
Then $h$ is conserved by $z_0(t) $, and at every point the linear
operator $\partial h/\partial z$ has full rank.  It follows from the
implicit function theorem that each level set
\[
    \mathcal{M}_c :=\{h=c\}
\]
is a smooth submanifold of co-dimension $m$, which is invariant
under $z_0(t) $. Further, assume that there exists an open ball
$\mathcal{U}\subset\mathbb{R}^m$ satisfying:
\begin {itemize}
    \item
        {\em Compactness:} $\forall c\in\mathcal{U},\: \mathcal{M}_c$ is compact.
    \item
        {\em Preservation of smooth measures:} $\forall c\in\mathcal{U}$,
        $z_0(t)\arrowvert_{\mathcal{M}_c} $ preserves a smooth measure
        $\mu_c$ that varies smoothly with $c$, i.e.~there exists a
        $\mathcal{C}^1 $ function $g:\mathcal{M}\rightarrow\mathbb{R}_{>0} $ such that
        $g\arrowvert_{\mathcal{M}_c} $ is the density of $\mu_c$ with respect
        to the restriction of Riemannian volume.
\end {itemize}

Set
\[
    h_\varepsilon(t,z) =h_\varepsilon(t) :=h(z_\varepsilon(t)).
\]
Again, think of $h_\varepsilon(\cdot) $ as being a random variable
that takes initial conditions $z\in\mathcal{M}$ to continuous paths
(depending on the parameter $t$) in $\mathcal{U}$. Since $dh_0/dt
\equiv0 $, Hadamard's Lemma allows us to write
\[
\frac {dh_\varepsilon} {dt} =\varepsilon
H(z_\varepsilon,\varepsilon)
\]
for some $\mathcal{C} ^1$ function $H:\mathcal{M}\times
[0,\varepsilon_0]\rightarrow\mathcal{U}$. Observe that
\[
    \frac {dh_\varepsilon} {dt}(t) =
    Dh(z_\varepsilon(t))Z(z_\varepsilon(t),\varepsilon)=
    Dh(z_\varepsilon(t))\bigl(Z(z_\varepsilon(t),\varepsilon)
    -Z(z_\varepsilon(t),0)\bigr),
\]
so that
\[
H(z,0) =\mathcal{L}_{\frac {\partial Z}
{\partial\varepsilon}\arrowvert_{\varepsilon=0}}h.
\]
Here $\mathcal{L}$ denotes the Lie derivative.

Define the averaged vector field $\bar{H} $ by
\begin{equation}
\label{eq:Anosov_ avg}
    \bar {H} (h)
    =\int_{\mathcal{M}_h}H(z,0)d\mu_h(z).
\end{equation}
Then $\bar {H} $ is $\mathcal{C}^1$. Fix a compact set
$\mathcal{V}\subset\mathcal{U}$, and introduce the slow time
\[
    \tau=\varepsilon t.
\]
Let $\bar{h} (\tau,z)=\bar{h} (\tau) $ be the random variable that
is the solution of
\[
\frac {d\bar{h}}{d\tau} =\bar {H} (\bar {h}),\qquad \bar {h} (0)
=h_\varepsilon(0).
\]
We only consider the dynamics in a compact subset of phase space, so
for initial conditions $z\in h^ {-1} \mathcal{U}$, define the
stopping time
\[
    T_\varepsilon(z) =T_\varepsilon=\inf \{\tau\geq 0: \bar {h}
    (\tau)\notin \mathcal{V} \text { or } h_\varepsilon(\tau
    /\varepsilon) \notin \mathcal{V} \}.
\]

Heuristically, think of the phase space $\mathcal{M}$ as being a
fiber bundle whose base is the open set $\mathcal{U}$ and whose
fibers are the compact sets $\mathcal{M}_h$. See Figure
\ref{fig:phase_space}. Then the vector field $Z(\cdot,0) $ is
perpendicular to the base, so its orbits $z_0(t) $ flow only along
the fibers.  Now when $0< \varepsilon\ll 1 $, the vector field
$Z(\cdot,\varepsilon) $ acquires a component of size $\mathcal{O}
(\varepsilon) $ along the base, and so its orbits $z_\varepsilon(t)
$ have a small drift along the base, which we can follow by
observing the evolution of $h_\varepsilon(t) $. Because of this, we
refer to $h$ as consisting of the slow variables. Other variables,
used to complete $h$ to a parameterization of (a piece of) phase
space, are called fast variables. Note that $h_\varepsilon(t)$
depends on all the dimensions of phase space, and so it is not the
flow of a vector field on the $m$-dimensional space $\mathcal{U}$.
However, because the motion along each fiber is relatively fast
compared to the motion across fibers, we hope to be able to average
over the fast motions and obtain a vector field on $\mathcal{U}$
that gives a good description of $h_\varepsilon(t)$ over a
relatively long time interval, independent of where the solution
$z_\varepsilon(t) $ started on $\mathcal{M}_{h_\varepsilon(0)} $.
Because our averaged vector field, as defined by Equation
\eqref{eq:Anosov_ avg}, only accounts for deviations of size
$\mathcal{O}(\varepsilon) $, we cannot expect this time interval to
be longer than size $\mathcal{O}(1/\varepsilon)$.  In terms of the
slow time $\tau=\varepsilon t$, this length becomes $\mathcal{O}(1)
$.  In other words, the goal of the first-order averaging method
described above should be to show that, in some sense,
$\sup_{0\leq\tau\leq 1\wedge
T_\varepsilon}\abs{h_\varepsilon(\tau/\varepsilon)-\bar{h}(\tau)}\rightarrow
0$ as $\varepsilon\rightarrow 0$.  This is often referred to as the
averaging principle.

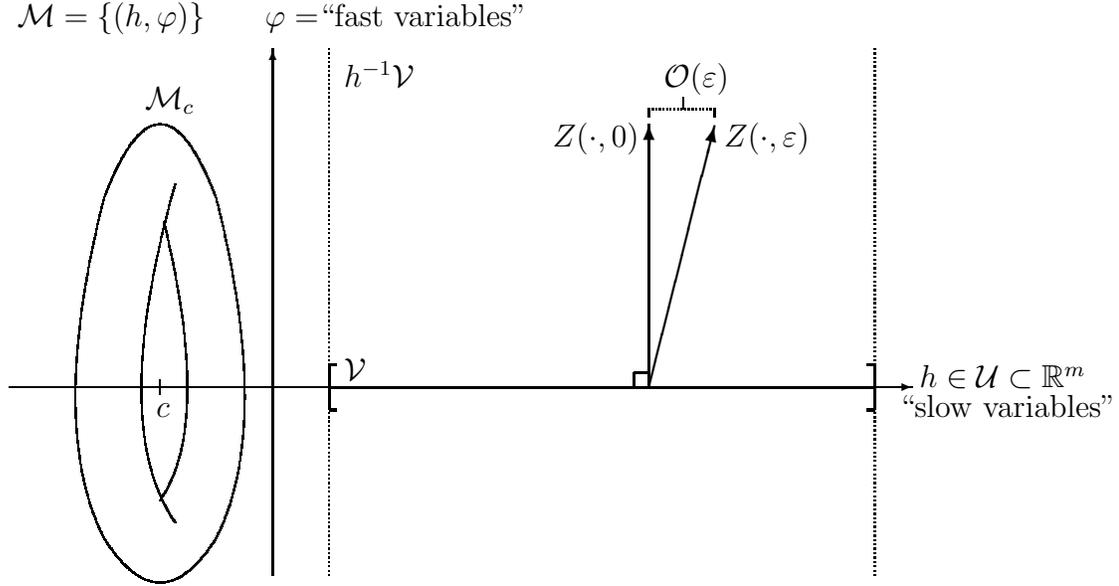
\begin{figure}
    \begin {center}
    \setlength{\unitlength}{1 cm}
    \begin{picture}(15,8)
        \put(0.1,7.3){$\mathcal{M}=\{(h,\varphi)\}$}
        \put(0,2.5){\vector(1,0){12}}
        \put(12.1,2.5){$h\in\mathcal{U}\subset\mathbb{R}^m$}
        \put(12.1,2.1){\!\!\!\!``slow variables''}
        \put(3.5,0){\vector(0,1){7}}
        \put(3.4,7.3){$\varphi=$``fast variables''}
        \thicklines
        \put(8.5,2.5){\vector(0,1){3.5}}
        \put(8.5,2.5){\vector(1,4){0.87}}
        \put(8.3,2.7){\line(1,0){.2}} \put(8.3,2.5){\line(0,1){.2}}
        \put(7.22,5.7){$Z(\cdot,0) $}
        \put(9.5,5.7){$Z(\cdot,\varepsilon) $}
        \thinlines
        \qbezier[18](8.5,6.2)(8.93,6.2)(9.37,6.2)
        \put(8.95,6.2){\line(0,1){.15}}
        \put(8.5,6.2){\line(0,-1){0.1}}
        \put(9.37,6.2){\line(0,-1){0.1}}
        \put(8.7,6.5){$\mathcal{O} (\varepsilon)$}
        \put(2,2.6){\line(0,-1){0.2}}
        \put(1.95,2.1){$c$}
        \qbezier(2.2,5.2)(1.3,2)(2.2,0.7)
        \qbezier(2.05,4.7)(2.7,2)(2,1)
        \qbezier(1.25,5)(.5,2)(1.25,0.5)
        \qbezier(2.75,5)(3.5,2)(2.75,0.5)
        \qbezier(1.25,5)(2,7)(2.75,5)
        \qbezier(1.25,0.5)(2,-0.7)(2.75,0.5)
        \put(1.8,6.2){$\mathcal{M}_c$}
        \qbezier[120](4.25,0)(4.25,3.5)(4.25,7)
        \qbezier[120](11.5,0)(11.5,3.5)(11.5,7)
        \put(4.45,2.6){$\mathcal{V}$}
        \put(4.45,6.5){$h^ {-1}\mathcal{V}$}
        \thicklines
        \put(4.25,2.5){\line(1,0){7.25}}
        \put(4.25,2.2){\line(0,1){.6}}
        \put(4.25,2.2){\line(1,0){.1}}
        \put(4.25,2.8){\line(1,0){.1}}
        \put(11.5,2.2){\line(0,1){.6}}
        \put(11.5,2.2){\line(-1,0){.1}}
        \put(11.5,2.8){\line(-1,0){.1}}
    \end{picture}
    \end {center}
    \caption{A schematic of the phase space $\mathcal{M}$.
        Note that although the level set $\mathcal{M}_c=\set { h=c}$
        is depicted as
        a torus, \emph{it need not be a torus}.  It could be any compact, co-dimension
        $m$ submanifold.}\label{fig:phase_space}
\end{figure}

Note that the assumptions of regularity, existence of smooth
integrals, compactness, and preservation of smooth measures above
are not sufficient for the averaging principle to hold in any form.
As an example of just one possible obstruction, the level sets
$\mathcal{M}_c$ could separate into two completely disjoint sets,
$\mathcal{M}_c=\mathcal{M}_c^+\sqcup\mathcal{M}_c^-$. If this were
the case, then it would be implausible that the solutions of the
averaged vector field defined by averaging over all of
$\mathcal{M}_c$ would accurately describe $h_\varepsilon(t,z) $,
independent of whether $ z\in\mathcal{M}_c^+$ or $
z\in\mathcal{M}_c^-$.

\subsubsection{Some averaging results}

So far, we are in a general averaging setting.  Frequently, one also
assumes that the invariant submanifolds, $\mathcal{M}_h$, are tori,
and that there exists a choice of coordinates
\[
    z=(h,\varphi)
\]
on $\mathcal{M} $ in which the differential equation \eqref{eq:ode}
takes the form
\[
    \frac {dh} {dt}=\varepsilon H(h,\varphi,\varepsilon),\qquad
    \frac {d\varphi} {dt}=\Phi(h,\varphi,\varepsilon).
\]
Then if $\varphi\in S^1$ and the differential equation for the fast
variable is regular, i.e.~$\Phi(h,\varphi,0)$ is bounded away from
zero for $h\in\mathcal{U} $,
\[
    \sup_{\substack {\text {initial conditions}\\
    \text {s.t. } h_\varepsilon(0)\in  \mathcal{V} }}\;
    \sup_{0\leq\tau\leq 1\wedge
    T_\varepsilon}\abs{h_\varepsilon(\tau/\varepsilon)-\bar{h}(\tau)}
    =\mathcal{O} (\varepsilon )\text { as }\varepsilon\rightarrow 0.
\]
See for example Chapter 5 in \cite{SV85}, Chapter 3 in \cite{LM88},
or Theorem~\ref{thm:simple_averaging2} in the following section.

When the differential equation for the fast variable is not regular,
or when there is more than one fast variable, the typical averaging
result becomes much weaker than the uniform convergence above.  For
example, consider the case when $\varphi\in\mathbb{T}^n$, $n>1$, and
the unperturbed motion is quasi-periodic,
i.e.~$\Phi(h,\varphi,0)=\Omega(h) $.  Also assume that
$H\in\mathcal{C} ^ {n+2}$ and that $\Omega$ is nonvanishing and
satisfies a nondegeneracy condition on $\mathcal{U} $ (for example,
$\Omega :\mathcal{U}\rightarrow\mathbb{T}^n$ is a submersion).  Let
$P$ denote Riemannian volume on $\mathcal{M} $.
Neishtadt~\cite{LM88,Nei76} showed that in this situation, for each
fixed $\delta>0$,
\[
    P\left (\sup_{0\leq\tau\leq 1\wedge
    T_\varepsilon}\abs{h_\varepsilon(\tau/\varepsilon)-\bar{h}(\tau)}
    \geq \delta
    \right)
    =\mathcal{O} (\sqrt\varepsilon /\delta),
\]
and that this result is optimal.  Thus, the averaged equation only
describes the actual motions of the slow variables in probability on
the time scale $1/\varepsilon $ as $\varepsilon\rightarrow 0$.

Neishtadt's result was motivated by a general averaging theorem for
smooth systems due to Anosov. This theorem requires none of the
additional assumptions in the averaging results above. Under the
conditions of regularity, existence of smooth integrals,
compactness, and preservation of smooth measures, as well as
\begin {itemize}
    \item
        {\em Ergodicity:} for Lebesgue almost every
        $c\in\mathcal{U}$, $(z_0(\cdot),\mu_c) $ is ergodic,
\end {itemize}
Anosov showed that $\sup_{0\leq\tau\leq 1\wedge
T_\varepsilon}\abs{h_\varepsilon(\tau/\varepsilon)-\bar{h}(\tau)}\rightarrow
0$ in probability (w.r.t. Riemannian volume on initial conditions)
as $\varepsilon\rightarrow 0$, i.e.
\begin {thm}[Anosov's averaging theorem~\cite{Ano60}]
\label{thm:anosov}

For each $T>0$ and for each fixed $\delta>0$,
\[
    P\left (\sup_{0\leq\tau\leq T\wedge
    T_\varepsilon}\abs{h_\varepsilon(\tau/\varepsilon)-\bar{h}(\tau)}
    \geq \delta
    \right)
    \rightarrow 0
\]
as $\varepsilon\rightarrow 0$.

\end {thm}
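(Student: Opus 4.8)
The plan is to reduce the statement to an ergodic-theoretic estimate by comparing $h_\varepsilon(t)$ with its averaged counterpart on a sequence of short time intervals. First I would partition the time interval $[0, T/\varepsilon]$ into $\lfloor T/(\varepsilon \Delta)\rfloor$ blocks of length $\Delta/\varepsilon$ in the fast time $t$ (equivalently length $\Delta$ in the slow time $\tau$), where $\Delta = \Delta(\varepsilon) \to 0$ slowly as $\varepsilon \to 0$ — for instance $\Delta = \varepsilon^{1/4}$, to be tuned at the end. On each block, the idea is that $z_\varepsilon(t)$ stays within $\mathcal{O}(\Delta)$ of the unperturbed orbit $z_0(t)$ starting from the same point (since $\dot z_\varepsilon - \dot z_0 = \mathcal{O}(\varepsilon)$ and the block has length $\mathcal{O}(\Delta/\varepsilon)$), so the increment of $h_\varepsilon$ over the block is well approximated by $\varepsilon \int H(z_0(s,z^\ast), 0)\,ds$ over that block, where $z^\ast$ is the starting point. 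By the ergodic theorem applied to $(z_0(\cdot), \mu_c)$ — valid for a.e.\ $c$ by the ergodicity assumption, and the exceptional set of $c$ has measure zero hence is invisible to $P$ after applying Fubini over the fibration $\mathcal{M} \to \mathcal{U}$ — the time average $\tfrac{1}{\Delta/\varepsilon}\int H(z_0(s,z^\ast),0)\,ds$ converges to the space average $\bar H(h(z^\ast))$ for $\mu_c$-a.e.\ starting point $z^\ast$ as the block length $\to \infty$. Thus the block increment of $h_\varepsilon$ is approximately $\Delta\, \bar H(\bar h)$ plus errors, which is exactly the increment one gets from the averaged ODE over a slow-time interval of length $\Delta$; summing over blocks and using a discrete Gronwall argument (the Lipschitz dependence of $\bar H$ controls the accumulation of errors, and the stopping time $T_\varepsilon$ keeps everything in the compact set $\mathcal{V}$ where constants are uniform) should give that $\sup_\tau |h_\varepsilon(\tau/\varepsilon) - \bar h(\tau)|$ is small off an exceptional set of small $P$-measure.

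The main obstacle — and the reason the conclusion is only convergence in probability rather than uniform convergence — is that the ergodic theorem gives pointwise convergence of time averages to the space average, but \emph{not} with a uniform rate: for a fixed block length $L = \Delta/\varepsilon$, the set of starting points $z^\ast$ on which $|\tfrac1L\int_0^L H\,ds - \bar H| \geq \delta$ has $\mu_c$-measure tending to $0$ as $L \to \infty$, but one has no control over how fast. So the argument must be organized so that the "bad" starting points in each block are absorbed into the exceptional set, and one must argue that the dynamics does not conspire to visit bad points in every block. The clean way to handle this is to work not with the increments along the true orbit but to compare the measure $P$ pushed forward under the flow: since $z_0(t)$ preserves $\mu_c$ on each fiber (and $z_\varepsilon$ nearly preserves the appropriate measure, with the base drift accounted for by the smooth density $g$), one can use an $L^1(\mu_c)$ or measure-theoretic version of the ergodic theorem — the Birkhoff averages converge in $L^1$, and more usefully the functions $z \mapsto \tfrac1L\int_0^L H(z_0(s,z),0)\,ds$ converge to $\bar H \circ h$ in $\mu_c$-measure uniformly in a sense that can be integrated over $c \in \mathcal{V}$ by dominated convergence (since $H$ is bounded on the compact region). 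This converts the "bad block" probability into an integral of a quantity going to zero, and a union bound over the $\mathcal{O}(1/(\varepsilon\Delta))$ blocks shows the total bad-set measure is controlled provided $\Delta$ is chosen so that the per-block error times the number of blocks still tends to zero.

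Concretely, the steps in order are: (1) fix the compact set $\mathcal{V}$ and derive uniform bounds — $|H| \leq C$, $\mathrm{Lip}(\bar H) \leq C$ — on a neighborhood of $\mathcal{V}$, and reduce to controlling the trajectory up to the stopping time $T_\varepsilon$; (2) set up the block decomposition with block length $\Delta/\varepsilon$, and on each block bound $|z_\varepsilon(t) - z_0(t, z_\varepsilon(t_k))|$ by $\mathcal{O}(\Delta \cdot e^{C\Delta})= \mathcal{O}(\Delta)$ using Gronwall, hence bound the difference between the true increment of $h_\varepsilon$ and $\varepsilon\int_0^{\Delta/\varepsilon} H(z_0(s,z_\varepsilon(t_k)),0)\,ds$ by $o(\Delta)$; (3) invoke ergodicity plus the ergodic theorem, combined with Egorov/dominated-convergence over the parameter $c$, to show that the $P$-measure of the set of initial conditions for which some block has $|\tfrac{\varepsilon}{\Delta}\int_0^{\Delta/\varepsilon} H(z_0(s,\cdot),0)\,ds - \bar H(h(\cdot))| \geq \delta'$ tends to $0$; here the key technical point is that along the true perturbed orbit the starting point of block $k+1$ is the image of the starting point of block $k$ under a map that is $\mathcal{O}(\Delta)$-close to the $\mu_c$-preserving map $z \mapsto z_0(\Delta/\varepsilon, z)$, so the pushed-forward measure stays comparable to $\mu_c$ with a density bounded uniformly, letting one control all blocks at once; (4) on the complement, run the discrete Gronwall: writing $e_k = |h_\varepsilon(t_k) - \bar h(k\Delta)|$, show $e_{k+1} \leq e_k(1 + C\Delta) + (\delta' + o(1))\Delta$, so $e_k \leq (e^{CT} )(\delta' T + o(1))$ for all $k$ with $k\Delta \leq T$, and interpolate between grid points using $|\dot h_\varepsilon| = \mathcal{O}(\varepsilon)$ and $|\dot{\bar h}| = \mathcal{O}(1)$; (5) choose $\delta' \to 0$ slowly and $\Delta \to 0$ appropriately (e.g.\ $\Delta = \varepsilon^{1/4}$) so that all error terms vanish and the total exceptional-set measure $\to 0$, completing the proof that $P(\sup_{0\leq\tau\leq T\wedge T_\varepsilon}|h_\varepsilon(\tau/\varepsilon) - \bar h(\tau)| \geq \delta) \to 0$.
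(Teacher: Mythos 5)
Your overall architecture (block decomposition, comparison with the frozen $\varepsilon=0$ dynamics on each block, fiberwise ergodicity, a measure-theoretic treatment of bad blocks, discrete Gronwall) is the same as the paper's, but two of your key steps fail as stated. First, the drift estimate in your step (2) is wrong: Gronwall's exponent is the Lipschitz constant of the full vector field $Z$ times the elapsed \emph{fast} time, so over a block of fast-time length $L=\Delta/\varepsilon$ the bound is $\abs{z_\varepsilon(t)-z_0(t,z_\varepsilon(t_k))}=\mathcal{O}(\varepsilon L\, e^{CL})$, not $\mathcal{O}(\Delta e^{C\Delta})$. Only the slow coordinates stay $\mathcal{O}(\Delta)$-close without an exponential factor; the fast coordinates, on which $H(\cdot,0)$ genuinely depends, separate exponentially in fast time (and in the situations where the ergodicity hypothesis has any force the fiber dynamics is typically hyperbolic, so this is not a pessimistic overestimate). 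With your choice $\Delta=\varepsilon^{1/4}$, i.e.\ $L=\varepsilon^{-3/4}$, the factor $e^{CL}$ destroys the comparison, so the block increment of $h_\varepsilon$ is not approximated by the frozen-fiber integral. The fix is to take the fast-time block length $L(\varepsilon)\to\infty$ \emph{sub-logarithmically}, $L=o(\log\varepsilon^{-1})$, so that $\varepsilon L e^{CL}\to 0$; the blocks then have slow-time length $\varepsilon L(\varepsilon)$, and no polynomial choice like $\varepsilon^{1/4}$ is admissible.

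Second, your step (3) ends with a union bound that requires \emph{every} block to be well ergodized, i.e.\ (per-block bad probability) $\times$ (number of blocks) $\to 0$. The per-block bad probability comes from Birkhoff's theorem and tends to $0$ with no rate, while the number of blocks, $T/(\varepsilon L)$, grows like $1/\varepsilon$ (incidentally, it is $T/\Delta$ in your notation, not $\mathcal{O}(1/(\varepsilon\Delta))$); no tuning of the block length can make that product small, so this step cannot be completed. The correct bookkeeping uses that each bad block contributes at most $\mathcal{O}(L)$ to the ergodization error, so $\varepsilon\sum_k\abs{II_{k,\varepsilon}}\leq \delta/2+\mathcal{O}(\varepsilon L\,\#\{\text{bad blocks}\})$: one only needs \emph{most} blocks to be good. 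Bounding the expectation $\varepsilon L\sum_k P(\text{block $k$ bad})$ and applying Chebyshev, the factor $\varepsilon L$ exactly cancels the number of blocks, so no rate in the ergodic theorem is needed—only that $P(\text{block $k$ bad})$ is comparable to $P(\text{block $0$ bad})$, which tends to $0$ by ergodicity plus bounded convergence over the base. For that comparability your argument is also too weak: being $\mathcal{O}(\Delta)$-close in the $C^0$ sense to a measure-preserving map gives no control whatsoever on Jacobians, hence none on push-forward densities. What is actually needed is that the perturbed flow nearly preserves the factor measure $dP^f=dh\,d\mu_h$ because its $P^f$-divergence is $\mathcal{O}(\varepsilon)$ (Hadamard's lemma, since the divergence vanishes at $\varepsilon=0$), so densities change by at most a factor $e^{KT}$ over times of order $T/\varepsilon$.
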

\noindent We present a recent proof of this theorem in
Section~\ref{sct:Anosov_proof} below.

If we consider $h_\varepsilon(\cdot) $ and $\bar h(\cdot) $ to be
random variables, Anosov's theorem is a version of the weak law of
large numbers. In general, we can do no better: There is no general
strong law in this setting. There exists a simple example due to
Neishtadt (which comes from the equations for the motion of a
pendulum with linear drag being driven by a constant torque) where
for no initial condition in a positive measure set do we have
convergence of $h_\varepsilon(t)$ to $\bar{h}(\varepsilon t)$ on the
time scale $1/\varepsilon $ as $\varepsilon\rightarrow
0$~\cite{Kif04}. Here, the phase space is $\mathbb{R}\times S^1$,
and the unperturbed motion is (uniquely) ergodic on all but one
fiber.

\section{Some classical averaging results}

In this section we present some simple, well-known averaging
results. See for example Chapter 5 in~\cite{SV85} or Chapter 3
in~\cite{LM88}.

\subsection{Averaging for time-periodic vector fields}

Consider a family of time dependent ordinary differential equations
\begin {equation}\label {eq:ode1}
\frac {dh} {dt}=\varepsilon H(h,t,\varepsilon),
\end {equation}
indexed by the real parameter $\varepsilon\geq 0$, where
$h\in\mathbb{R}^m$. Fix $\mathcal{V}\subset\subset\mathcal{U}
\subset\mathbb{R}^m$, and suppose
\begin {itemize}
\item
     {\em  Regularity:}
     $H\in\mathcal{C}^1(\mathcal{U}\times \mathbb{R}\times [0,\infty))$.
\item
     {\em  Periodicity:} There exists $\mathcal{T} >0 $ such that
     for each $h\in\mathcal{U} $,
     $H(h,t,0)$ is $\mathcal{T}$-periodic in time.
\end {itemize}
Then
\[
    \frac {dh} {dt}=\varepsilon H(h,t,0) +\mathcal{O} (\varepsilon^2).
\]

Let $h_\varepsilon(t) $ denote the solution of Equation
\eqref{eq:ode1}.  We seek a time independent vector field whose
solutions approximate $h_\varepsilon(t) $, at least for a long
length of time.  It is natural to define the averaged vector field
$\bar{H} $ by
\begin{equation*}
    \bar {H} (h)
    =\frac {1} {\mathcal{T}}\int_{0}^{\mathcal{T}} H(h,s,0)ds.
\end{equation*}
Then $\bar {H} \in\mathcal{C}^1(\mathcal{U})$. Let $\bar{h} (\tau) $
be the solution of
\[
\frac {d\bar{h}}{d\tau} =\bar {H} (\bar {h}),\qquad \bar {h} (0)
=h_\varepsilon(0).
\]
It is reasonable to hope that $\bar h (\varepsilon t) $ and
$h_\varepsilon(t) $ are close together for $0\leq t\leq \varepsilon^
{-1} $.  We only consider the dynamics in a compact subset of phase
space, so for initial conditions in $\mathcal{U} $, we define the
stopping time
\[
    T_\varepsilon=\inf \{\tau\geq 0: \bar {h} (\tau)\notin \mathcal{V}
    \text { or } h_\varepsilon(\tau /\varepsilon) \notin \mathcal{V} \}.
\]

\begin {thm}[Time-periodic averaging]
\label{thm:simple_averaging1}

For each $T>0$,
\begin {equation*}
    \sup_{h_\varepsilon(0)\in  \mathcal{V} }\;
    \sup_{0\leq\tau\leq T\wedge
    T_\varepsilon}\abs{h_\varepsilon(\tau/\varepsilon)-\bar{h}(\tau)}
    =\mathcal{O} (\varepsilon )\text { as }\varepsilon\rightarrow 0.
\end {equation*}

\end {thm}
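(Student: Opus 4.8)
This is the classical first-order averaging estimate over a single period, and the plan is the standard one: introduce a near-identity corrector that absorbs the oscillating part of the vector field, so that the comparison reduces to a Gronwall estimate between two trajectories of essentially the same autonomous equation. Fix an intermediate compact set $\mathcal{W}$ with $\mathcal{V}\subset\subset\mathcal{W}\subset\subset\mathcal{U}$ and define
\[
    u(h,t)=\int_0^t\bigl(H(h,s,0)-\bar H(h)\bigr)\,ds ,\qquad h\in\mathcal{W},\ t\in\mathbb{R}.
\]
Because $H(h,\cdot,0)$ is $\mathcal{T}$-periodic and $\bar H(h)$ is exactly its mean over one period, $u(h,\cdot)$ is $\mathcal{T}$-periodic in $t$; hence $u$ and $\partial u/\partial h$ are bounded uniformly on $\mathcal{W}\times\mathbb{R}$ by the $\mathcal{C}^1$-regularity of $H$, and in particular they do not grow with $t$. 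This is the one step where the periodicity hypothesis is essential — without it $u$ would grow linearly in $t$ and the whole argument would collapse.

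Next, for a solution $h_\varepsilon(\cdot)$ of \eqref{eq:ode1} with $h_\varepsilon(0)\in\mathcal{V}$, set $w(t):=h_\varepsilon(t)-\varepsilon\,u(h_\varepsilon(t),t)$. Differentiating and using $\partial u/\partial t=H(h,t,0)-\bar H(h)$, $\dot h_\varepsilon=\varepsilon H(h_\varepsilon,t,\varepsilon)$, and $H(h,t,\varepsilon)-H(h,t,0)=\mathcal{O}(\varepsilon)$ (cf.\ the remark following \eqref{eq:ode1}), a short computation gives
\[
    \dot w=\varepsilon\bar H(h_\varepsilon)+\mathcal{O}(\varepsilon^2)=\varepsilon\bar H(w)+\mathcal{O}(\varepsilon^2),
\]
where the last equality uses $|w-h_\varepsilon|=\varepsilon|u|=\mathcal{O}(\varepsilon)$ and the Lipschitz continuity of $\bar H$ on $\mathcal{W}$; all the $\mathcal{O}$'s are uniform in $t$ and in the initial condition so long as $h_\varepsilon$ remains in $\mathcal{V}$, i.e.\ for $0\le\varepsilon t\le T_\varepsilon$. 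Since $\mathcal{V}\subset\subset\mathcal{W}$, for $\varepsilon$ small $w(t)$ also remains in $\mathcal{W}$ on this range.

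Now write $\tilde h(t):=\bar h(\varepsilon t)$, which solves $\dot{\tilde h}=\varepsilon\bar H(\tilde h)$ exactly, and note $w(0)=h_\varepsilon(0)=\bar h(0)=\tilde h(0)$ because $u(\cdot,0)=0$. Subtracting the equations for $w$ and $\tilde h$ and integrating,
\[
    |w(t)-\tilde h(t)|\le \varepsilon\,\Lip{\bar H}\!\int_0^t|w(s)-\tilde h(s)|\,ds+\C\,\varepsilon^2 t ,
\]
so Gronwall gives $|w(t)-\tilde h(t)|\le \C\,\varepsilon^2 t\,e^{\varepsilon\,\Lip{\bar H}\,t}$, which on $0\le t\le (T\wedge T_\varepsilon)/\varepsilon$ is $\le\C\,\varepsilon T\,e^{\Lip{\bar H}\,T}=\mathcal{O}(\varepsilon)$, with constant depending only on $T$ and $\mathcal{W}$. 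Finally, with $t=\tau/\varepsilon$,
\[
    |h_\varepsilon(\tau/\varepsilon)-\bar h(\tau)|\le|h_\varepsilon(t)-w(t)|+|w(t)-\tilde h(t)|\le\varepsilon\sup_{\mathcal{W}\times\mathbb{R}}|u|+\mathcal{O}(\varepsilon)=\mathcal{O}(\varepsilon),
\]
uniformly over $0\le\tau\le T\wedge T_\varepsilon$ and over all initial conditions with $h_\varepsilon(0)\in\mathcal{V}$, which is the assertion.

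There is no deep obstacle here; the only genuine content beyond routine Gronwall bookkeeping is the use of periodicity to keep $u$ bounded. The point requiring a little care is the choice of the intermediate set $\mathcal{W}$: all estimates must be run there so that the $\mathcal{C}^1$ bounds on $H$, the Lipschitz bound on $\bar H$, and the sup bound on $u$ are uniform, and one then checks that for $\tau\le T_\varepsilon$ the objects $h_\varepsilon$, $\bar h$, and $w$ indeed stay in $\mathcal{W}$ once $\varepsilon$ is small. An equivalent formulation avoiding the explicit substitution — integrate $h_\varepsilon(t)-\bar h(\varepsilon t)=\int_0^t\bigl[\varepsilon(\bar H(h_\varepsilon)-\bar H(\tilde h))+\varepsilon(H(h_\varepsilon,s,0)-\bar H(h_\varepsilon))\bigr]\,ds+\mathcal{O}(\varepsilon)$ and dispatch the oscillating middle term by integration by parts against $u$ — yields the same bound and is the form I expect to generalize to the multi-phase averaging proofs later in the text.
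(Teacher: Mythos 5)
Your argument is correct, but it is not the route the paper takes. You use the classical near-identity corrector: the bounded, $\mathcal{T}$-periodic primitive $u(h,t)=\int_0^t\bigl(H(h,s,0)-\bar H(h)\bigr)ds$ and the substitution $w=h_\varepsilon-\varepsilon u(h_\varepsilon,t)$, after which a single Gronwall comparison between $w$ and $\bar h(\varepsilon t)$ finishes the proof. The paper instead never changes variables: it first uses Gronwall to reduce the claim to bounding the error integral $e_\varepsilon(\tau)=\varepsilon\int_0^{\tau/\varepsilon}\bigl(H(h_\varepsilon(s),s,0)-\bar H(h_\varepsilon(s))\bigr)ds$, then cuts the time axis at $t_k=k\mathcal{T}$ and, on each period, compares $h_\varepsilon$ with the frozen ($\varepsilon=0$) solution $h_{k,\varepsilon}\equiv h_\varepsilon(t_k)$, for which the integral over one period vanishes exactly; summing $\mathcal{O}(1/\varepsilon)$ blocks each contributing $\mathcal{O}(\varepsilon)$ gives the result. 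Your approach buys a shorter proof and makes the role of periodicity maximally transparent (it is exactly what keeps $u$ and $\partial u/\partial h$ bounded), and it is the standard textbook argument; its drawback is that it leans on the existence of a smooth, bounded corrector, which is special to the periodic (or more generally rapidly decorrelating, smooth) setting. The paper's block decomposition is chosen deliberately because it only needs the time average of $H$ along frozen orbits over a suitable time scale to equal $\bar H$, and that structure is what carries over verbatim to the one-fast-variable and separable multiphase theorems and, more importantly, to the hard-core piston proof, where the dynamics are discontinuous and no $\mathcal{C}^1$ corrector is available. One shared caveat, not a gap specific to you: the step $H(h,t,\varepsilon)-H(h,t,0)=\mathcal{O}(\varepsilon)$ requires $\partial H/\partial\varepsilon$ to be bounded uniformly in $t\in\mathbb{R}$, which the paper also uses implicitly when it writes $dh/dt=\varepsilon H(h,t,0)+\mathcal{O}(\varepsilon^2)$; and your final integration-by-parts reformulation is closer in spirit to the paper's splitting, but the genuinely portable idea there is the comparison with frozen solutions on blocks, not the corrector.
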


\begin {proof}

We divide our proof into three essential steps.

\paragraph*{Step 1:  Reduction using Gronwall's Inequality.}

Now, $\bar {h}(\tau) $ satisfies the integral equation
\[
\bar {h}(\tau) -\bar h(0) = \int_0^{\tau}\bar H(\bar
h(\sigma))d\sigma,
\]
while $h_\varepsilon(\tau/\varepsilon)$ satisfies
\[
\begin {split}
    h_\varepsilon(\tau/\varepsilon)-h_\varepsilon(0)
    &=
    \varepsilon\int_0^{\tau/\varepsilon}
    H(h_\varepsilon(s),s,\varepsilon)ds
    \\
    &=
    \mathcal{O}(\varepsilon) +\varepsilon\int_0^{\tau/\varepsilon}
    H(h_\varepsilon(s),s,0)ds\\
    &=\mathcal{O}(\varepsilon) +
    \varepsilon\int_0^{\tau/\varepsilon}
    H(h_\varepsilon(s),s,0)-
    \bar H(h_\varepsilon(s))ds+
    \int_0^{\tau}\bar H( h_\varepsilon(\sigma/\varepsilon))d\sigma
\end {split}
\]
for $0\leq\tau\leq T\wedge T_\varepsilon$.

Define
\[
    e_\varepsilon(\tau) =\varepsilon\int_0^{\tau/\varepsilon}
    H(h_\varepsilon(s),s,0)- \bar H(h_\varepsilon(s))ds.
\]
It follows from Gronwall's Inequality that
\begin {equation*}
    \sup_{0\leq \tau\leq T\wedge T_\varepsilon}
    \abs{\bar h(\tau)-h_\varepsilon(\tau/\varepsilon)}\leq
    \left(\mathcal{O}(\varepsilon)+
    \sup_{0\leq \tau\leq T\wedge T_\varepsilon}
    \abs{e_\varepsilon(\tau)}\right)e^{ \Lip{\bar
    H\arrowvert _\mathcal{V}}T}.
\end {equation*}

\paragraph*{Step 2:  A sequence of times adapted for ergodization.}

\emph{Ergodization} refers to the convergence along an orbit of a
function's time average to its space average.  We define a sequence
of times $t_{k} $ for $k\geq 0$ by $t_k=k\mathcal{T}$.  This
sequence of times is motivated by the fact that
\[
    \frac {1} {t_{k+1}-t_k}
    \int_{t_k} ^{t_{ k+1}} H(h_0(s),s,0)ds=\bar H(h_0).
\]
Note that $h_0(t) $ is independent of time. Thus,
\begin {equation}
\label {eq:simple_avg1}
    \sup_{0\leq \tau\leq
    T\wedge T_\varepsilon}
    \abs{e_\varepsilon(\tau)}
    \leq
    \mathcal{O} (\varepsilon)+
    \varepsilon\sum_{t_{k+1}\leq
    \frac {T\wedge T_\varepsilon}{\varepsilon}}
    \abs{
    \int_{t_{k}}^{t_{k+1}}
    H(h_\varepsilon(s),s,0)-\bar H(h_\varepsilon(s))ds}.
\end {equation}

\paragraph*{Step 3:  Control of individual terms by comparison with
        solutions of the $\varepsilon=0$ equation.}

The sum in Equation \eqref{eq:simple_avg1} has no more than
$\mathcal{O} (1/\varepsilon)$ terms, and so it suffices to show that
each term $\int_{t_{k}}^{t_{k+1}}H(h_\varepsilon(s),s,0)-\bar
H(h_\varepsilon(s))ds$ is no larger than $\mathcal{O} (\varepsilon)
$.  We can accomplish this by comparing the motions of
$h_\varepsilon(t)$ for $t_{k}\leq t\leq t_{k+1}$ with
$h_{k,\varepsilon} (t) $, which is defined to be the solution of the
$\varepsilon=0$ ordinary differential equation satisfying
$h_{k,\varepsilon} (t_k)=h_\varepsilon(t_k)$, i.e.
$h_{k,\varepsilon} (t)\equiv h_\varepsilon(t_k)$.

\begin {lem}

If $t_{k+1}\leq\frac{T\wedge T_\varepsilon}{\varepsilon}$, then $
    \sup_{t_{k}\leq t\leq t_{k+1}}\abs{h_{k,\varepsilon}(t)-
    h_\varepsilon(t)}=\mathcal{O}(\varepsilon ).$
\end {lem}

\begin {proof}
$dh_\varepsilon/dt=\mathcal{O}(\varepsilon)$.

\end {proof}

Using that $H$ and $\bar H$ are Lipschitz continuous, we conclude
that
\[
\begin {split}
    \int_{t_{k}}^{t_{k+1}}
    &
    H(h_\varepsilon(s),s,0)-\bar H(h_\varepsilon(s))ds
    \\
    =&
    \int_{t_{k}}^{t_{k+1}}
    H(h_\varepsilon(s),s,0)-H(h_{k,\varepsilon}(s),s,0)ds
    \\&+
    \int_{t_{k}}^{t_{k+1}}
    H(h_{k,\varepsilon}(s),s,0)-\bar H(h_{k,\varepsilon}(s))ds
    \\
    &
    +
    \int_{t_{k}}^{t_{k+1}}
    \bar H(h_{k,\varepsilon}(s))-\bar H(h_\varepsilon(s))ds
    \\
    =&
    \mathcal{O} (\varepsilon) +0+\mathcal{O} (\varepsilon)\\
    =&
    \mathcal{O} (\varepsilon).
\end {split}
\]

Thus we see that $
    \sup_{0\leq \tau\leq T\wedge T_\varepsilon}
    \abs{h_\varepsilon(\tau/\varepsilon)-\bar
    h(\tau)}
    \leq\mathcal{O}(\varepsilon ),
$ independent of the initial condition
$h_\varepsilon(0)\in\mathcal{V}$.

\end {proof}

\begin {rem}
Note that the $\mathcal{O} (\varepsilon) $ control in
Theorem~\ref{thm:simple_averaging1} on a time scale $t=\mathcal{O}
(\varepsilon^ {-1}) $ is generally optimal. For example, take
$H(h,t,\varepsilon) =\cos(t) +\varepsilon$.

\end {rem}

\subsection{Averaging for vector fields with one regular fast variable}

For $h\in\mathbb{R}^m$ and $\varphi\in S^1= [0,1]/0\sim 1$, consider
the family of ordinary differential equations
\begin {equation}\label {eq:ode2}
\frac {dh} {dt}=\varepsilon H(h,\varphi,\varepsilon),\qquad \frac
{d\varphi} {dt}=\Phi(h,\varphi,\varepsilon),
\end {equation}
indexed by the real parameter $\varepsilon\geq 0$. With
$z=(h,\varphi) $, we write this family of differential equations as
$dz/dt=Z(z,\varepsilon)$.

Fix $\mathcal{V}\subset\subset\mathcal{U} \subset\mathbb{R}^m$, and
suppose
\begin {itemize}
\item
     {\em  Regularity:}
     $Z\in\mathcal{C}^1(\mathcal{U}\times S^1\times [0,\infty))$.
\item
     {\em  Regular fast variable:} $\Phi(h,\varphi,0)$ is bounded away
     from $0$
     for $h\in\mathcal{U} $, i.e.
     \[
        \inf_{(h,\varphi)\in\mathcal{U}\times S^1} \abs{\Phi(h,\varphi,0)} >0.
     \]
     Without loss of generality, we assume
     that $\Phi(h,\varphi,0)> 0$.
\end {itemize}

Let $z_\varepsilon(t) = (h_\varepsilon(t),\varphi_\varepsilon(t)) $
denote the solution of Equation \eqref{eq:ode2}.  Then $z_0(t) $
leaves invariant the circles $\mathcal{M}_c=\{h=c\} $ in phase
space.  In fact, $z_0(t) $ preserves an uniquely ergodic invariant
probability measure on $\mathcal{M}_c$, whose density is given by
\[
    d\mu_c=\frac{1}{K_c}\frac{d\varphi}{\Phi(c,\varphi,0)},
\]
where $K_c=\int_0^1 \frac{d\varphi}{\Phi(c,\varphi,0)}$ is a
normalization constant.

The averaged vector field $\bar{H} $ is defined by averaging
$H(h,\varphi,0) $ over $\varphi$:
\begin{equation*}
    \bar {H} (h)
    =\int_{0}^1 H(h,\varphi,0)d\mu_h(\varphi)
    =\frac{1}{K_h}\int_{0}^1 \frac{H(h,\varphi,0)}{\Phi(h,\varphi,0)}d\varphi.
\end{equation*}
Then $\bar {H} \in\mathcal{C}^1(\mathcal{U})$. Let $\bar{h} (\tau) $
be the solution of
\[
\frac {d\bar{h}}{d\tau} =\bar {H} (\bar {h}),\qquad \bar {h} (0)
=h_\varepsilon(0).
\]
For initial conditions in $\mathcal{U}\times S^1 $, we have the
usual stopping time $T_\varepsilon=\inf \{\tau\geq 0: \bar {h}
(\tau)\notin \mathcal{V} \text { or } h_\varepsilon(\tau
/\varepsilon) \notin \mathcal{V} \}$.

\begin {thm}[Averaging over one regular fast variable]
\label{thm:simple_averaging2}

For each $T>0$,
\begin {equation*}
    \sup_{\substack {\text {initial conditions}\\
    \text {s.t. } h_\varepsilon(0)\in  \mathcal{V} }}\;
    \sup_{0\leq\tau\leq T\wedge
    T_\varepsilon}\abs{h_\varepsilon(\tau/\varepsilon)-\bar{h}(\tau)}
    =\mathcal{O} (\varepsilon )\text { as }\varepsilon\rightarrow 0.
\end {equation*}

\end {thm}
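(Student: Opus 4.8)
The plan is to follow the same three-step template used in the proof of Theorem~\ref{thm:simple_averaging1}, with the single modification that the ``clock'' driving ergodization is now an orbit of the fast variable $\varphi$ rather than the external time $t$. First I would set up the Gronwall reduction: since $dh_\varepsilon/dt=\varepsilon H(h_\varepsilon,\varphi_\varepsilon,\varepsilon)$, writing the integral equations for $h_\varepsilon(\tau/\varepsilon)$ and for $\bar h(\tau)$ and subtracting shows that, modulo an $\mathcal{O}(\varepsilon)$ error from replacing $H(\cdot,\cdot,\varepsilon)$ by $H(\cdot,\cdot,0)$, the difference $\sup_{0\le\tau\le T\wedge T_\varepsilon}\abs{h_\varepsilon(\tau/\varepsilon)-\bar h(\tau)}$ is controlled by $e^{\Lip{\bar H|_\mathcal{V}}T}$ times the ergodization error
\[
    e_\varepsilon(\tau)=\varepsilon\int_0^{\tau/\varepsilon}\bigl(H(h_\varepsilon(s),\varphi_\varepsilon(s),0)-\bar H(h_\varepsilon(s))\bigr)\,ds.
\]

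Second, I would choose the sequence of ``ergodization times'' $t_k$ adapted to the fast motion. Because $\Phi(h,\varphi,0)>0$ is bounded away from zero and above on $\mathcal{U}\times S^1$, the unperturbed flow $z_0(t)$ on each circle $\mathcal{M}_c$ is a reparameterized rotation whose period $\mathcal{T}(c)=K_c=\int_0^1 d\varphi/\Phi(c,\varphi,0)$ is a $\mathcal{C}^1$, positive, bounded function of $c$. I would like $t_k$ to mark successive returns of $\varphi_\varepsilon$ to its starting point, but since that depends on the orbit it is cleaner to freeze the base point: on the interval $[t_k,t_{k+1}]$ compare $h_\varepsilon$ with the solution $h_{k,\varepsilon}(t)\equiv h_\varepsilon(t_k)$ of the $\varepsilon=0$ equation, and let $\varphi_{k,\varepsilon}(t)$ be the corresponding fast coordinate solving $d\varphi/dt=\Phi(h_\varepsilon(t_k),\varphi,0)$; then set $t_{k+1}-t_k=\mathcal{T}(h_\varepsilon(t_k))$ so that $\varphi_{k,\varepsilon}$ makes exactly one full revolution and
\[
    \frac{1}{t_{k+1}-t_k}\int_{t_k}^{t_{k+1}}H\bigl(h_\varepsilon(t_k),\varphi_{k,\varepsilon}(s),0\bigr)\,ds=\bar H\bigl(h_\varepsilon(t_k)\bigr)
\]
by the change of variables $ds=d\varphi/\Phi$ and the definition of $\mu_{h_\varepsilon(t_k)}$. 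Since the increments $t_{k+1}-t_k$ are bounded below, there are at most $\mathcal{O}(1/\varepsilon)$ such intervals before $\tau$ reaches $T\wedge T_\varepsilon$, so it again suffices to show each term $\int_{t_k}^{t_{k+1}}\bigl(H(h_\varepsilon(s),\varphi_\varepsilon(s),0)-\bar H(h_\varepsilon(s))\bigr)\,ds$ is $\mathcal{O}(\varepsilon)$.

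Third, I would estimate a single term by the same triangle-inequality trick: split $H(h_\varepsilon(s),\varphi_\varepsilon(s),0)-\bar H(h_\varepsilon(s))$ into (i) $H(h_\varepsilon(s),\varphi_\varepsilon(s),0)-H(h_{k,\varepsilon}(s),\varphi_{k,\varepsilon}(s),0)$, (ii) $H(h_{k,\varepsilon}(s),\varphi_{k,\varepsilon}(s),0)-\bar H(h_{k,\varepsilon}(s))$, and (iii) $\bar H(h_{k,\varepsilon}(s))-\bar H(h_\varepsilon(s))$. The integral of (ii) vanishes by construction; the integrals of (i) and (iii) are $\mathcal{O}(\varepsilon)$ once we know $\sup_{t_k\le t\le t_{k+1}}\abs{h_\varepsilon(t)-h_{k,\varepsilon}(t)}=\mathcal{O}(\varepsilon)$ (immediate from $dh_\varepsilon/dt=\mathcal{O}(\varepsilon)$ and boundedness of the interval length) and that $\sup_{t_k\le t\le t_{k+1}}\abs{\varphi_\varepsilon(t)-\varphi_{k,\varepsilon}(t)}=\mathcal{O}(\varepsilon)$, using that $H$ and $\bar H$ are Lipschitz on the relevant compact set. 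The only genuinely new point compared with Theorem~\ref{thm:simple_averaging1}, and the step I expect to require the most care, is this last fast-variable estimate: $\varphi_\varepsilon$ and $\varphi_{k,\varepsilon}$ solve ODEs with the \emph{same} right-hand side shape $\Phi(\cdot,\varphi,0)$ but base points $h_\varepsilon(s)$ versus $h_\varepsilon(t_k)$ that differ by $\mathcal{O}(\varepsilon)$, and $\varphi_\varepsilon$ also feels the $\mathcal{O}(\varepsilon)$ difference between $\Phi(\cdot,\cdot,\varepsilon)$ and $\Phi(\cdot,\cdot,0)$; a Gronwall estimate on $S^1$ over the bounded time interval $[t_k,t_{k+1}]$, using that $\Phi(\cdot,\cdot,0)$ is $\mathcal{C}^1$ hence Lipschitz and bounded away from zero, gives the desired $\mathcal{O}(\varepsilon)$ bound. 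Summing the $\mathcal{O}(1/\varepsilon)$ terms then yields $\sup_{0\le\tau\le T\wedge T_\varepsilon}\abs{e_\varepsilon(\tau)}=\mathcal{O}(\varepsilon)$, and feeding this back into Step~1 completes the proof with the claimed uniformity over initial conditions with $h_\varepsilon(0)\in\mathcal{V}$.
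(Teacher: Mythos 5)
Your proposal is correct and follows essentially the same route as the paper: the same Gronwall reduction, the same partition of $[0,(T\wedge T_\varepsilon)/\varepsilon]$ into $\mathcal{O}(1/\varepsilon)$ blocks on each of which a frozen ($\varepsilon=0$) comparison orbit completes one loop of the fast circle, the same three-term triangle-inequality splitting, and the same Gronwall estimate for $\abs{\varphi_\varepsilon-\varphi_{k,\varepsilon}}$ on a bounded interval. The only (cosmetic) difference is that you take the block length to be the exact period $K_{h_\varepsilon(t_k)}$ of the frozen dynamics, which makes the averaging identity on each block exact, whereas the paper defines $t_{k+1,\varepsilon}$ as the first return of the frozen fast orbit to the initial phase $\varphi_\varepsilon(0)$; both choices yield blocks of length bounded above and below and the same $\mathcal{O}(\varepsilon)$ control per block.
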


\begin {rem}
This result encompasses Theorem~\ref{thm:simple_averaging1} for
time-periodic averaging. For example, if $\mathcal{T} =1$, simply
take $\varphi=t \text { mod } 1$ and
$\Phi(h,\varphi,\varepsilon)=1$.
\end {rem}

\begin {rem}
Many of the proofs of the above theorem of which we are aware hinge
on considering $\varphi$ as a time-like variable.  For example, one
could write
\[
    \frac{dh}{d\varphi}=\frac{dh}{dt}\frac{dt}{d\varphi}
    =\varepsilon\frac{H(h,\varphi,0)}{\Phi(h,\varphi,0)}
    +\mathcal{O}(\varepsilon^2),
\]
and this looks very similar to the time-periodic situation
considered previously.  However, it does take some work to justify
such arguments rigorously, and the traditional proofs do not easily
generalize to averaging over multiple fast variables.  Our proof
essentially uses $\varphi$ to mark off time, and it will immediately
generalize to a specific instance of multiphase averaging.
\end {rem}

\begin {proof}

Again, we have three steps.

\paragraph*{Step 1:  Reduction using Gronwall's Inequality.}

Now
\[
\bar {h}(\tau) -\bar h(0) = \int_0^{\tau}\bar H(\bar
h(\sigma))d\sigma,
\]
and
\[
\begin {split}
    h_\varepsilon(\tau/\varepsilon)-h_\varepsilon(0)
    &=
    \varepsilon\int_0^{\tau/\varepsilon}
    H(z_\varepsilon(s),\varepsilon)ds
    =
    \mathcal{O}(\varepsilon) +\varepsilon\int_0^{\tau/\varepsilon}
    H(z_\varepsilon(s),0)ds\\
    &=\mathcal{O}(\varepsilon) +
    \varepsilon\int_0^{\tau/\varepsilon}
    H(z_\varepsilon(s),0)-
    \bar H(h_\varepsilon(s))ds+
    \int_0^{\tau}\bar H( h_\varepsilon(\sigma/\varepsilon))d\sigma
\end {split}
\]
for $0\leq\tau\leq T\wedge T_\varepsilon$.

Define
\[
    e_\varepsilon(\tau) =\varepsilon\int_0^{\tau/\varepsilon}
    H(z_\varepsilon(s),0)- \bar H(h_\varepsilon(s))ds.
\]
It follows from Gronwall's Inequality that
\begin {equation*}
    \sup_{0\leq \tau\leq T\wedge T_\varepsilon}
    \abs{\bar h(\tau)-h_\varepsilon(\tau/\varepsilon)}\leq
    \left(\mathcal{O}(\varepsilon)+
    \sup_{0\leq \tau\leq T\wedge T_\varepsilon}
    \abs{e_\varepsilon(\tau)}\right)e^{ \Lip{\bar
    H\arrowvert _\mathcal{V}}T}.
\end {equation*}

\paragraph*{Step 2:  A sequence of times adapted for ergodization.}

Now for each initial condition in our phase space and for each fixed
$\varepsilon$, we define a sequence of times $t_{k,\varepsilon} $
and a sequence of solutions $z_{k,\varepsilon}(t) $ inductively as
follows: $t_{0,\varepsilon} =0$ and $z_{0,\varepsilon}(t)=z_{0}(t)
$.  For $k>0$, $t_{k,\varepsilon} =\inf\{t>t_{k-1,\varepsilon}:
\varphi_{k-1,\varepsilon}(t)=\varphi_\varepsilon(0)\}$, and
$z_{k,\varepsilon}(t) $ is defined as the solution of
\[
    \frac{dz_{k,\varepsilon}}{dt}=Z(z_{k,\varepsilon},0)
    =(0,\Phi(z_{k,\varepsilon},0)),
    \qquad
    z_{k,\varepsilon}(t_{k,\varepsilon})=z_{\varepsilon}(t_{k,\varepsilon}).
\]
This sequence of times is motivated by the fact that
\[
    \frac {1} {t_{k+1,\varepsilon}-t_{k,\varepsilon}}
    \int_{t_k,\varepsilon} ^{t_{ k+1,\varepsilon}}
    H(z_{k,\varepsilon}(s),0)ds=\bar H(h_{k,\varepsilon}).
\]
Recall that $h_{k,\varepsilon}(t) $ is independent of time. The
elements of this sequence of times are approximately uniformly
spaced, i.e.~if we fix $\omega >0$ such that $z\in\mathcal{V}\times
S^1 \Rightarrow 1/\omega <\Phi(z,0)<\omega$, then if $t_{
k+1,\varepsilon}\leq(T\wedge T_\varepsilon)/\varepsilon$, $1/\omega
<t_{ k+1,\varepsilon}-t_{ k,\varepsilon}<\omega$.

Thus,
\begin {equation*}
    \sup_{0\leq \tau\leq
    T\wedge T_\varepsilon}
    \abs{e_\varepsilon(\tau)}
    \leq
    \mathcal{O} (\varepsilon)+
    \varepsilon\sum_{t_{k+1,\varepsilon}\leq
    \frac {T\wedge T_\varepsilon}{\varepsilon}}
    \abs{
    \int_{t_{k,\varepsilon}}^{t_{k+1,\varepsilon}}
    H(z_\varepsilon(s),0)-\bar H(h_\varepsilon(s))ds},
\end {equation*}
where the sum in in this equation has no more than $\mathcal{O}
(1/\varepsilon)$ terms.

\paragraph*{Step 3:  Control of individual terms by comparison with
        solutions along fibers.}

It suffices to show that each term
$\int_{t_{k,\varepsilon}}^{t_{k+1,\varepsilon}}H(z_\varepsilon(s),0)-\bar
H(h_\varepsilon(s))ds$ is no larger than $\mathcal{O} (\varepsilon)
$.  We can accomplish this by comparing the motions of
$z_\varepsilon(t)$ for $t_{k,\varepsilon}\leq t\leq
t_{k+1,\varepsilon}$ with $z_{k,\varepsilon} (t) $.

\begin {lem}

If $t_{k+1,\varepsilon}\leq\frac{T\wedge
T_\varepsilon}{\varepsilon}$, then $
    \sup_{t_{k,\varepsilon}\leq t\leq t_{k+1,\varepsilon}}\abs{z_{k,\varepsilon}(t)-
    z_\varepsilon(t)}=\mathcal{O}(\varepsilon ).$
\end {lem}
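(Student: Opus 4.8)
The plan is a one-shot Gronwall estimate on the short interval $[t_{k,\varepsilon},t_{k+1,\varepsilon}]$, exploiting that by Step~2 its length is at most the fixed constant $\omega$. First I would pin down the compact set in which everything lives. Put $\mathcal{K}=\overline{\mathcal{V}}\times S^1$, which is a compact subset of $\mathcal{U}\times S^1$ since $\mathcal{V}\subset\subset\mathcal{U}$. Because $t_{k+1,\varepsilon}\leq(T\wedge T_\varepsilon)/\varepsilon$, for every $t\in[t_{k,\varepsilon},t_{k+1,\varepsilon}]$ we have $\varepsilon t\leq T\wedge T_\varepsilon$, so the definition of $T_\varepsilon$ forces $h_\varepsilon(t)\in\overline{\mathcal{V}}$, hence $z_\varepsilon(t)\in\mathcal{K}$. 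The comparison solution $z_{k,\varepsilon}(t)$ has $h$-component frozen at $h_\varepsilon(t_{k,\varepsilon})\in\overline{\mathcal{V}}$, so it also stays in $\mathcal{K}$. On $\mathcal{K}\times[0,1]$ the regularity hypothesis provides a constant $C$ with $\abs{Z(z,\varepsilon)-Z(z,0)}\leq C\varepsilon$ (integrate $\partial Z/\partial\varepsilon$, which is bounded there) and a Lipschitz constant $L$ for $Z(\cdot,0)$.

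Next I would write the difference in integral form,
\[
    z_\varepsilon(t)-z_{k,\varepsilon}(t)=\int_{t_{k,\varepsilon}}^{t}\bigl(Z(z_\varepsilon(s),\varepsilon)-Z(z_{k,\varepsilon}(s),0)\bigr)\,ds,
\]
and split the integrand as $\bigl(Z(z_\varepsilon(s),\varepsilon)-Z(z_\varepsilon(s),0)\bigr)+\bigl(Z(z_\varepsilon(s),0)-Z(z_{k,\varepsilon}(s),0)\bigr)$. The first term is at most $C\varepsilon$ in norm and the second at most $L\abs{z_\varepsilon(s)-z_{k,\varepsilon}(s)}$, so using $t-t_{k,\varepsilon}\leq\omega$,
\[
    \abs{z_\varepsilon(t)-z_{k,\varepsilon}(t)}\leq C\omega\varepsilon+L\int_{t_{k,\varepsilon}}^{t}\abs{z_\varepsilon(s)-z_{k,\varepsilon}(s)}\,ds.
\]
Gronwall's inequality then yields $\sup_{t_{k,\varepsilon}\leq t\leq t_{k+1,\varepsilon}}\abs{z_\varepsilon(t)-z_{k,\varepsilon}(t)}\leq C\omega e^{L\omega}\varepsilon=\mathcal{O}(\varepsilon)$.

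The constants $C$, $L$, and $\omega$ depend only on $Z$, $\mathcal{U}$, $\mathcal{V}$, and $T$ --- not on $k$, on the initial condition, or on $\varepsilon$ --- so the estimate is uniform, which is exactly the form Step~3 needs when it sums $\mathcal{O}(1/\varepsilon)$ such terms. The only point demanding a little care is the confinement to the fixed compact set $\mathcal{K}$, so that $C$ and $L$ exist and are uniform; this is precisely what the stopping time $T_\varepsilon$ buys us, and there is no genuine obstacle here. In short, this is the easy, ``local in slow time'' companion to the harder ergodization estimate, and it needs nothing beyond boundedness of $Z$ and $\partial Z/\partial\varepsilon$ on $\mathcal{K}\times[0,1]$ together with Gronwall.
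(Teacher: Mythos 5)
Your proof is correct and is essentially the paper's argument: a Gronwall comparison over the interval of bounded length $\omega$, with the $\mathcal{O}(\varepsilon)$ forcing coming from the $\varepsilon$-dependence of $Z$ on the compact set that the stopping time confines the orbit to. The paper merely organizes it slightly differently --- it disposes of the slow variables directly (since $dh_\varepsilon/dt=\mathcal{O}(\varepsilon)$ over a time interval of length $\mathcal{O}(1)$) and applies Gronwall only to the fast variable $\varphi$ --- but this is the same estimate with the same uniform constants.
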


\begin {proof}

Without loss of generality, we take $k=0$, so that
$z_{k,\varepsilon}(t)=z_{0}(t)$. Since $h_0(t) =h_\varepsilon(0) $
and $dh_\varepsilon/dt=\mathcal{O}(\varepsilon)$,
$\sup_{t_{0,\varepsilon}\leq t\leq t_{1,\varepsilon}}\abs{h_{0}(t)-
h_\varepsilon(t)}=\mathcal{O}(\varepsilon )$.

Now $\varphi_\varepsilon(t) -\varphi_\varepsilon(0) =\int_0^t \Phi
(h_\varepsilon(s),\varphi_\varepsilon(s),\varepsilon)ds $, and
because $\Phi$ is Lipschitz, we find that
\[
    \abs{\varphi_\varepsilon(t) -\varphi_0(t)}\leq \mathcal{O}
    (\varepsilon) +\Lip{\Phi}\int_0^t\abs{\varphi_\varepsilon(s)
    -\varphi_0(s)}ds
\]
for $0\leq t\leq \omega $. The result follows from Gronwall's
Inequality.

\end {proof}

Using that $H$ and $\bar H$ are Lipschitz continuous, we conclude
that
\[
\begin {split}
    \int_{t_{k,\varepsilon}}^{t_{k+1,\varepsilon}}
    &
    H(z_\varepsilon(s),0)-\bar H(h_\varepsilon(s))ds
    \\
    =&
    \int_{t_{k,\varepsilon}}^{t_{k+1,\varepsilon}}
    H(z_\varepsilon(s),0)-H(z_{k,\varepsilon}(s),0)ds
    \\&+
    \int_{t_{k,\varepsilon}}^{t_{k+1,\varepsilon}}
    H(z_{k,\varepsilon}(s),0)-\bar H(h_{k,\varepsilon}(s))ds
    \\
    &
    +
    \int_{t_{k,\varepsilon}}^{t_{k+1,\varepsilon}}
    \bar H(h_{k,\varepsilon}(s))-\bar H(h_\varepsilon(s))ds
    \\
    =&
    \mathcal{O} (\varepsilon) +0+\mathcal{O} (\varepsilon)
    \\=&
    \mathcal{O} (\varepsilon).
\end {split}
\]

Thus we see that $
    \sup_{0\leq \tau\leq T\wedge T_\varepsilon}
    \abs{h_\varepsilon(\tau/\varepsilon)-\bar
    h(\tau)}
    =\mathcal{O}(\varepsilon ),
$ independent of the initial condition
$(h_\varepsilon(0),\varphi_\varepsilon(0))\in\mathcal{V}\times S^1$.

\end {proof}

\subsection{Multiphase averaging for vector fields with separable,
regular fast variables}

As explained in Section~\ref{sct:anos}, when the differential
equation for the fast variable is not regular, or when there is more
than one fast variable, the typical averaging result becomes much
weaker than the uniform convergence in Theorems
\ref{thm:simple_averaging1} and \ref{thm:simple_averaging2} above.
Nonetheless, if the differential equations under consideration
satisfy some very specific hypotheses, the proof in the previous
section immediately generalizes to yield uniform convergence.

For $h\in\mathbb{R}^m$ and $\varphi=(\varphi^1,\cdots,\varphi^n)\in
\mathbb{T}^n= ([0,1]/0\sim 1) ^n$, consider the family of ordinary
differential equations
\begin {equation}\label {eq:ode3}
\frac {dh} {dt}=\varepsilon H(h,\varphi,\varepsilon),\qquad \frac
{d\varphi} {dt}=\Phi(h,\varphi,\varepsilon),
\end {equation}
indexed by the real parameter $\varepsilon\geq 0$. We also write
$z=(h,\varphi) $ and $dz/dt=Z(z,\varepsilon)$.

Fix $\mathcal{V}\subset\subset\mathcal{U} \subset\mathbb{R}^m$, and
suppose
\begin {itemize}
\item
     {\em  Regularity:}
     $Z\in\mathcal{C}^1(\mathcal{U}\times \mathbb{T}^n\times [0,\infty))$.
\item
     {\em  Separable fast variables:} $H(h,\varphi,0)$ and $\Phi(h,\varphi,0)$
     have the following specific forms:
     \begin {itemize}
     \item
        There exist $\mathcal{C}^1$ functions $H_j(h,\varphi^j)$ such that
        $H(h,\varphi,0)=\sum_{j=1}^{n}H_j(h,\varphi^j)$.  This can
        be thought of as saying that, to first order in $\varepsilon$, each fast
        variable affects the slow variables independently of the
        other fast variables.
     \item
        The components $\Phi^j$ of $\Phi$ satisfy
        $\Phi^j(h,\varphi,0)=\Phi^j(h,\varphi^j,0)$, i.e.~the
        unperturbed motion has each fast variable moving
        independently of the other fast variables.  Note that
        this assumption is satisfied if the unperturbed motion is quasi-periodic,
        i.e.~$\Phi(h,\varphi,0)=\Omega(h) $.
     \end {itemize}
\item
     {\em  Regular fast variables:} For each $j$,
     \[
        \inf_{(h,\varphi^j)\in\mathcal{U}\times S^1} \abs{\Phi^j(h,\varphi^j,0)} >0.
     \]
\end {itemize}

Let $z_\varepsilon(t) = (h_\varepsilon(t),\varphi_\varepsilon(t)) $
denote the solution of Equation \eqref{eq:ode3}.  Then $z_0(t) $
leaves invariant the tori $\mathcal{M}_c=\{h=c\} $ in phase space.
In fact, $z_0(t) $ preserves a (not necessarily ergodic) invariant
probability measure on $\mathcal{M}_c$, whose density is given by
\[
    d\mu_c=\prod_{ j=1} ^n
    \frac{1}{K_c^j}\frac{d\varphi^j}{\abs{\Phi^j(c,\varphi^j,0)}},
\]
where $K_c^j=\int_0^1
\frac{d\varphi^j}{\abs{\Phi^j(c,\varphi^j,0)}}$.

The averaged vector field $\bar{H} $ is defined by
\begin{equation*}
\begin {split}
    \bar {H} (h)
    &=\int_{\mathcal{M}_c} H(h,\varphi,0)d\mu_h(\varphi)
    =\sum_{j=1} ^n \int_{\mathcal{M}_c} H_j(h,\varphi^j)d\mu_h(\varphi)
    \\
    &=\sum_{j=1} ^n \frac{1}{K_h^j}\int_{0}^1
        \frac{H_j(h,\varphi^j)}{\abs{\Phi^j(h,\varphi^j,0)}}d\varphi^j
    : = \sum_{j=1} ^n \bar {H}_j (h).
\end {split}
\end{equation*}
Let $\bar{h} (\tau) $ be the solution of
\[
\frac {d\bar{h}}{d\tau} =\bar {H} (\bar {h}),\qquad \bar {h} (0)
=h_\varepsilon(0),
\]
and the stopping time $T_\varepsilon=\inf \{\tau\geq 0: \bar {h}
(\tau)\notin \mathcal{V} \text { or } h_\varepsilon(\tau
/\varepsilon) \notin \mathcal{V} \}$.

\begin {thm}[Averaging over multiple separable, regular fast variables]
\label{thm:simple_averaging3}

For each $T>0$,
\begin {equation*}
    \sup_{\substack {\text {initial conditions}\\
    \text {s.t. } h_\varepsilon(0)\in  \mathcal{V} }}\;
    \sup_{0\leq\tau\leq T\wedge
    T_\varepsilon}\abs{h_\varepsilon(\tau/\varepsilon)-\bar{h}(\tau)}
    =\mathcal{O} (\varepsilon )\text { as }\varepsilon\rightarrow 0.
\end {equation*}

\end {thm}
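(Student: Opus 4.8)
The plan is to imitate the three-step proof of Theorem~\ref{thm:simple_averaging2} almost verbatim, using the separability hypothesis to split the accumulated error into $n$ one-dimensional pieces, each of which is handled exactly as in the single regular fast variable case. The incommensurability of the $n$ fast frequencies --- which is the obstruction in general multiphase averaging --- never enters, precisely because the sum over the fast variables has already been carried out at the level of the error term.

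\emph{Step 1 (Gronwall reduction and decomposition).} As before, $\bar h(\tau)-\bar h(0)=\int_0^\tau \bar H(\bar h(\sigma))\,d\sigma$, while writing $H(z_\varepsilon(s),\varepsilon)=H(z_\varepsilon(s),0)+\mathcal{O}(\varepsilon)$ and adding and subtracting $\bar H(h_\varepsilon(s))$ gives, for $0\le\tau\le T\wedge T_\varepsilon$,
\[
h_\varepsilon(\tau/\varepsilon)-h_\varepsilon(0)=\mathcal{O}(\varepsilon)+e_\varepsilon(\tau)+\int_0^\tau \bar H(h_\varepsilon(\sigma/\varepsilon))\,d\sigma,
\]
where $e_\varepsilon(\tau)=\varepsilon\int_0^{\tau/\varepsilon} H(z_\varepsilon(s),0)-\bar H(h_\varepsilon(s))\,ds$. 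Since each $\bar H_j$ is a quotient of $\mathcal{C}^1$ integrals whose denominator is bounded away from zero, $\bar H$ is $\mathcal{C}^1$, and Gronwall's Inequality reduces the theorem to the bound $\sup_{0\le\tau\le T\wedge T_\varepsilon}\abs{e_\varepsilon(\tau)}=\mathcal{O}(\varepsilon)$, uniformly over initial conditions. Now invoke separability: $e_\varepsilon=\sum_{j=1}^n e_\varepsilon^j$ with $e_\varepsilon^j(\tau)=\varepsilon\int_0^{\tau/\varepsilon} H_j(h_\varepsilon(s),\varphi_\varepsilon^j(s))-\bar H_j(h_\varepsilon(s))\,ds$, and since $n$ is fixed it suffices to show $\abs{e_\varepsilon^j(\tau)}=\mathcal{O}(\varepsilon)$ for each $j$.

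\emph{Step 2 (times adapted to the $j$-th variable).} Fix $j$. Because $\Phi^j(h,\varphi,0)=\Phi^j(h,\varphi^j,0)$ is bounded away from zero, the $\varepsilon=0$ flow of the $j$-th coordinate at frozen $h$ is periodic and uniquely ergodic, with time average of $H_j(h,\cdot)$ equal to $\bar H_j(h)$. As in the single-fast-variable proof, set $t_{0,\varepsilon}^j=0$, $z_{0,\varepsilon}^j=z_0$, and for $k>0$ let $t_{k,\varepsilon}^j$ be the first time after $t_{k-1,\varepsilon}^j$ at which the $\varphi^j$-component of $z_{k-1,\varepsilon}^j$ returns to $\varphi_\varepsilon^j(0)$ (one full revolution around $S^1$), with $z_{k,\varepsilon}^j$ the solution of $dz/dt=Z(z,0)$ through $z_\varepsilon(t_{k,\varepsilon}^j)$. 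Fixing $\omega>0$ with $1/\omega<\abs{\Phi^j(z,0)}<\omega$ on $\mathcal{V}\times S^1$, the gaps $t_{k+1,\varepsilon}^j-t_{k,\varepsilon}^j$ lie in $(1/\omega,\omega)$ as long as $t_{k+1,\varepsilon}^j\le(T\wedge T_\varepsilon)/\varepsilon$, so there are $\mathcal{O}(1/\varepsilon)$ of these intervals, and by construction $\int_{t_{k,\varepsilon}^j}^{t_{k+1,\varepsilon}^j} H_j(h_{k,\varepsilon}^j(s),\varphi_{k,\varepsilon}^j(s))-\bar H_j(h_{k,\varepsilon}^j)\,ds=0$ since $h_{k,\varepsilon}^j$ is constant and the integral runs over one full period.

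\emph{Step 3 (control of terms and conclusion).} On $[t_{k,\varepsilon}^j,t_{k+1,\varepsilon}^j]$ compare $z_\varepsilon$ with $z_{k,\varepsilon}^j$: from $dh_\varepsilon/dt=\mathcal{O}(\varepsilon)$ we get $\abs{h_\varepsilon(t)-h_{k,\varepsilon}^j(t)}=\mathcal{O}(\varepsilon)$ on this interval of length $\mathcal{O}(1)$, and then a Gronwall estimate on the $j$-th circle --- using $\Phi^j(h,\varphi,\varepsilon)=\Phi^j(h,\varphi^j,0)+\mathcal{O}(\varepsilon)$ and Lipschitz continuity --- gives $\abs{\varphi_\varepsilon^j(t)-\varphi_{k,\varepsilon}^j(t)}=\mathcal{O}(\varepsilon)$; this is exactly the one-dimensional Lemma of the previous subsection applied to the $j$-th coordinate. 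Splitting $\int_{t_{k,\varepsilon}^j}^{t_{k+1,\varepsilon}^j} H_j(h_\varepsilon,\varphi_\varepsilon^j)-\bar H_j(h_\varepsilon)\,ds$ into the three differences $H_j(h_\varepsilon,\varphi_\varepsilon^j)-H_j(h_{k,\varepsilon}^j,\varphi_{k,\varepsilon}^j)$, $H_j(h_{k,\varepsilon}^j,\varphi_{k,\varepsilon}^j)-\bar H_j(h_{k,\varepsilon}^j)$, and $\bar H_j(h_{k,\varepsilon}^j)-\bar H_j(h_\varepsilon)$, Lipschitz continuity of $H_j$ and $\bar H_j$ together with Step 2 bound this by $\mathcal{O}(\varepsilon)+0+\mathcal{O}(\varepsilon)$. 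Summing over the $\mathcal{O}(1/\varepsilon)$ intervals plus a leftover of length $\le\omega$, then multiplying by $\varepsilon$, yields $\abs{e_\varepsilon^j(\tau)}=\mathcal{O}(\varepsilon)$ uniformly in the initial condition; summing over $j=1,\dots,n$ gives $\abs{e_\varepsilon(\tau)}=\mathcal{O}(\varepsilon)$, and Step 1 concludes.

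\emph{Main obstacle.} There is no genuinely hard step: separability is tailored to make the single-variable argument go through. The points requiring care are bookkeeping ones --- the $\varepsilon=0$ equation for $\varphi^j$ is autonomous only because $\Phi^j(h,\varphi,0)$ depends on $\varphi$ through $\varphi^j$ alone and $h$ is frozen (so the return times and the vanishing of the middle integral in Step 3 are well defined), and the $\mathcal{O}(\varepsilon)$ errors in the Gronwall comparison on the $j$-th circle must be checked to be uniform over the uncontrolled coordinates $\varphi^i$, $i\ne j$, which holds by compactness of $\mathbb{T}^n$ and the regularity hypothesis.
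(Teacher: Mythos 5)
Your proposal is correct and follows essentially the same route as the paper: a Gronwall reduction, a splitting of $e_\varepsilon$ into the per-variable errors $e_{j,\varepsilon}$ via the separability hypotheses, and then a repetition of Steps 2 and 3 of the single regular fast variable proof (return times of $\varphi^j$ along frozen solutions, with the one-period time average equal to $\bar H_j$). The paper's proof is just a terser statement of exactly this reduction, so there is nothing to add beyond the uniformity remarks you already make.
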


\begin {proof}

The proof is essentially the same as the proof of Theorem
\ref{thm:simple_averaging2}.  As before, we need only show that
$\sup_{0\leq \tau\leq T\wedge
T_\varepsilon}\abs{e_\varepsilon(\tau)}=\mathcal{O}(\varepsilon)$,
where
\[
    e_\varepsilon(\tau) =\varepsilon\int_0^{\tau/\varepsilon}
    H(z_\varepsilon(s),0)- \bar H(h_\varepsilon(s))ds.
\]
But by our separability assumptions, it suffices to show that for
each $j$,
\[
    \sup_{0\leq \tau\leq T\wedge
    T_\varepsilon}\abs{e_{j,\varepsilon}(\tau)}=\mathcal{O}(\varepsilon),
\]
where $e_{j,\varepsilon}(\tau)$ is defined by
\[
    e_{j,\varepsilon}(\tau) =\varepsilon\int_0^{\tau/\varepsilon}
    H_j(h_\varepsilon(s),\varphi_\varepsilon^j(s))- \bar H_j(h_\varepsilon(s))ds.
\]
Thus, we have effectively separated the effects of each fast
variable, and now the proof can be completed by essentially
following steps 2 and 3 in the proof of Theorem
\ref{thm:simple_averaging2}.

\end {proof}

\section{A proof of Anosov's theorem}
\label{sct:Anosov_proof}

Anosov's original proof of Theorem~\ref{thm:anosov} from 1960 may be
found in~\cite {Ano60}. An exposition of the theorem and Anosov's
proof in English may be found in~\cite {LM88}.  Recently,
Kifer~\cite{Kif04b} proved necessary and sufficient conditions for
the averaging principle to hold in an averaged with respect to
initial conditions sense.  He also showed explicitly that his
conditions are met in the setting of Anosov's theorem.  The proof of
Anosov's theorem given here is mainly due to Dolgopyat~\cite
{Dol05}, although some further simplifications have been made.

\begin{proof}[Proof of Anosov's theorem]
We begin by showing that without loss of generality we may take
$T_\varepsilon=\infty$. This is just for convenience, and not an
essential part of the proof.  To accomplish this, let $\psi(h) $ be
a  smooth bump function satisfying
\begin {itemize}
    \item
         $\psi(h)=1 \text { if } h\in\mathcal{V}$,
    \item
        $\psi(h)>0 \text { if } h\in\text
        {interior}(\mathcal{\tilde{V}})$,
    \item
         $\psi(h)=0 \text { if } h\notin\mathcal{\tilde{V}}$,
\end {itemize}
where $\mathcal{\tilde{V}}$ is a compact set chosen such that
$\mathcal{V}\subset\subset \text
{interior}(\mathcal{\tilde{V}})\subset\subset\mathcal{U}$. Next, set
$\tilde Z(z,\varepsilon)=\psi(h(z))Z(z,\varepsilon)$. Because the
bump function was chosen to depend only on the slow variables, our
assumption about preservation of measures is still satisfied; on
each fiber, $\tilde Z(z,0)$ is a scaler multiple of $Z(z,0)$.
Furthermore, the flow of $\tilde
Z(\cdot,0)\arrowvert_{\mathcal{M}_h} $ is ergodic for almost every
$h\in\mathcal{\tilde V}$. Then it would suffice to prove our theorem
for the vector fields $\tilde {Z}(z,\varepsilon)$ with the set
$\mathcal{\tilde{V}}$ replacing $\mathcal{V} $.  We assume that this
reduction has been made, although we will not use it until Step 5
below.

\paragraph*{Step 1:  Reduction using Gronwall's Inequality.}

Observe that $\bar {h}(\tau) $ satisfies the integral equation
\[
\bar {h}(\tau) -\bar h(0) = \int_0^{\tau}\bar H(\bar
h(\sigma))d\sigma,
\]
while $h_\varepsilon(\tau/\varepsilon)$ satisfies
\[
\begin {split}
    h_\varepsilon(\tau/\varepsilon)-h_\varepsilon(0)&=
    \varepsilon\int_0^ {\tau/\varepsilon}
    H(z_\varepsilon(s),\varepsilon)ds
    \\
    &=\mathcal{O}(\varepsilon) +\varepsilon\int_0^{\tau/\varepsilon}
    H(z_\varepsilon(s),0)ds\\
    &=\mathcal{O}(\varepsilon) +
    \varepsilon\int_0^{\tau/\varepsilon}
    H(z_\varepsilon(s),0)-
    \bar H(h_\varepsilon(s))ds+
    \int_0^{\tau}\bar H( h_\varepsilon(\sigma/\varepsilon))d\sigma
\end {split}
\]
for $0\leq\tau\leq T\wedge T_\varepsilon$.  Here we have used the
fact that $h^ {-1} \mathcal{V}\times [0,\varepsilon_0] $ is compact
to achieve uniformity over all initial conditions in the size of the
$\mathcal{O}(\varepsilon) $ term above. We use this fact repeatedly
in what follows.  In particular, $H$, $\bar H$, and $Z$ are
uniformly bounded and have uniform Lipschitz constants on the
domains of interest.

Define
\[
    e_\varepsilon(\tau) =\varepsilon\int_0^{\tau/\varepsilon}
    H(z_\varepsilon(s),0)- \bar H(h_\varepsilon(s))ds.
\]
It follows from Gronwall's Inequality that
\begin {equation}\label {eq:hinfnorm}
    \sup_{0\leq \tau\leq T\wedge T_\varepsilon}
    \abs{\bar h(\tau)-h_\varepsilon(\tau/\varepsilon)}\leq
    \left(\mathcal{O}(\varepsilon)+
    \sup_{0\leq \tau\leq T\wedge T_\varepsilon}
    \abs{e_\varepsilon(\tau)}\right)e^{ \Lip{\bar
    H\arrowvert_\mathcal{V}}T}.
\end {equation}

\paragraph*{Step 2:  Introduction of a time scale for ergodization.}

Choose a real-valued function $L(\varepsilon) $ such that
$L(\varepsilon)\rightarrow\infty,\: L(\varepsilon)=\text {o}
(\log\varepsilon^ {-1})$ as $\varepsilon\rightarrow 0$. Think of
$L(\varepsilon) $ as being a time scale which grows as
$\varepsilon\rightarrow 0$ so that \emph{ergodization}, i.e.~the
convergence along an orbit of a function's time average to a space
average, can take place. However, $L(\varepsilon) $ doesn't grow too
fast, so that on this time scale $z_\varepsilon(t) $ essentially
stays on one fiber, where we have our ergodicity assumption.  Set
$t_{k,\varepsilon}=kL(\varepsilon) $, so that
\begin {equation}\label {eq:einfnorm}
    \sup_{0\leq \tau\leq T\wedge T_\varepsilon}\abs{e_\varepsilon(\tau)}
    \leq \mathcal{O}(\varepsilon L(\varepsilon))+
    \varepsilon\sum_{k=0}^{\frac{T\wedge T_\varepsilon}{\varepsilon
    L(\varepsilon)}-1}\abs{\int_{t_{k,\varepsilon}}^{t_{k+1,\varepsilon}}H(z_\varepsilon(s),0)-\bar
    H(h_\varepsilon(s))ds}.
\end {equation}

\paragraph*{Step 3:  A splitting for using the triangle inequality.}

Now we let $z_{k,\varepsilon} (s) $ be the solution of
\[
    \frac {dz_{k,\varepsilon}}{dt}=Z(z_{k,\varepsilon},0),
    \qquad z_{k,\varepsilon}(t_{k,\varepsilon})=z_{\varepsilon}(t_{k,\varepsilon}).
\]
Set $h_{k,\varepsilon} (t) =h (z_{k,\varepsilon} (t)) $. Observe
that $h_{k,\varepsilon} (t) $ is independent of $t $. We break up
the integral
$\int_{t_{k,\varepsilon}}^{t_{k+1,\varepsilon}}H(z_\varepsilon(s),0)
-\bar H(h_\varepsilon(s))ds$ into three parts:
\[
\begin {split}
    \int_{t_{k,\varepsilon}}^{t_{k+1,\varepsilon}}
    &
    H(z_\varepsilon(s),0)
    -\bar H(h_\varepsilon(s))ds
    \\
    =&
    \int_{t_{k,\varepsilon}}^{t_{k+1,\varepsilon}}H(z_\varepsilon(s),0) -
    H(z_{k,\varepsilon}(s),0)ds
    \\
    &+\int_{t_{k,\varepsilon}}^{t_{k+1,\varepsilon}} H(z_{k,\varepsilon}(s),0)-
     \bar H(h_{k,\varepsilon}(s))ds\\
    &+\int_{t_{k,\varepsilon}}^{t_{k+1,\varepsilon}} \bar H(h_{k,\varepsilon}(s))ds-
    \bar H(h_{\varepsilon}(s))ds\\
    :=&
    I_{k,\varepsilon}+II_{k,\varepsilon}+III_{k,\varepsilon}.
\end {split}
\]

The term $II_{k,\varepsilon}$ represents an ``ergodicity term'' that
can be controlled by our assumptions on the ergodicity of the flow
$z_0(t) $, while the terms $I_{k,\varepsilon}$ and
$III_{k,\varepsilon}$ represent ``continuity terms'' that can be
controlled using the following control on the drift from solutions
along fibers.

\paragraph*{Step 4:  Control of drift from solutions along fibers.}

\begin {lem}
\label {lem:anosov1}

If $0<t_{k+1,\varepsilon}\leq\frac{T\wedge
T_\varepsilon}{\varepsilon}$,
\[
    \sup_{t_{k,\varepsilon}\leq t\leq t_{k+1,\varepsilon}}\abs{z_{k,\varepsilon}(t)-
    z_\varepsilon(t)}\leq \mathcal{O}(\varepsilon L(\varepsilon)
    e^{ \Lip{Z} L(\varepsilon)})
\]
\end {lem}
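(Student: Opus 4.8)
The plan is to estimate the difference $z_\varepsilon(t)-z_{k,\varepsilon}(t)$ on the interval $[t_{k,\varepsilon},t_{k+1,\varepsilon}]$ via a Gronwall argument driven by the $\mathcal{O}(\varepsilon)$ discrepancy between the vector fields $Z(\cdot,\varepsilon)$ and $Z(\cdot,0)$. Without loss of generality I would shift time so that $t_{k,\varepsilon}=0$ and write $\Delta(t)=z_\varepsilon(t)-z_{k,\varepsilon}(t)$, noting $\Delta(0)=0$ by the choice of initial condition for $z_{k,\varepsilon}$. Subtracting the two integral equations gives
\[
    \Delta(t)=\int_0^t \bigl(Z(z_\varepsilon(s),\varepsilon)-Z(z_{k,\varepsilon}(s),0)\bigr)\,ds
    =\int_0^t \bigl(Z(z_\varepsilon(s),\varepsilon)-Z(z_\varepsilon(s),0)\bigr)\,ds
    +\int_0^t \bigl(Z(z_\varepsilon(s),0)-Z(z_{k,\varepsilon}(s),0)\bigr)\,ds.
\]
The first integrand is $\mathcal{O}(\varepsilon)$ by the regularity assumption (using that $\partial Z/\partial\varepsilon$ is $\mathcal{C}^1$, hence bounded on the relevant compact set, so $\abs{Z(z,\varepsilon)-Z(z,0)}\leq \C\,\varepsilon$), uniformly over initial conditions because $h^{-1}\mathcal{V}\times[0,\varepsilon_0]$ is compact as emphasized in Step~1. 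The second integrand is bounded by $\Lip{Z}\abs{\Delta(s)}$.

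Therefore, for $0\le t\le L(\varepsilon)$,
\[
    \abs{\Delta(t)}\leq \C\,\varepsilon L(\varepsilon)+\Lip{Z}\int_0^t \abs{\Delta(s)}\,ds,
\]
and Gronwall's inequality yields $\abs{\Delta(t)}\leq \C\,\varepsilon L(\varepsilon)\,e^{\Lip{Z}\,t}\leq \C\,\varepsilon L(\varepsilon)\,e^{\Lip{Z}\,L(\varepsilon)}$, which is exactly the claimed bound $\mathcal{O}(\varepsilon L(\varepsilon)e^{\Lip{Z}L(\varepsilon)})$. One subtlety worth stating explicitly: the constant $\Lip{Z}$ here should be the Lipschitz constant of $Z(\cdot,0)$ on a slightly enlarged compact neighborhood, and one must check that the orbits $z_\varepsilon(s)$ and $z_{k,\varepsilon}(s)$ both remain in that neighborhood for $s\in[0,L(\varepsilon)]$. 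For $z_\varepsilon$ this follows from the stopping-time hypothesis $t_{k+1,\varepsilon}\leq (T\wedge T_\varepsilon)/\varepsilon$, which keeps $h_\varepsilon$ in $\mathcal{V}$; for $z_{k,\varepsilon}$ it stays on the single fiber $\mathcal{M}_{h_\varepsilon(t_{k,\varepsilon})}$, which is contained in $h^{-1}\mathcal{V}$, and the fibers over the compact set $\overline{\mathcal{V}}$ form a compact subset of $\mathcal{M}$. (Here one uses the reduction from the start of the proof, or simply that we only track the orbit up to the stopping time.)

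I do not expect a genuine obstacle here — this is the standard "comparison with the frozen-fiber flow" estimate and is the multi-phase analogue of the corresponding lemmas in the proofs of Theorems~\ref{thm:simple_averaging1} and~\ref{thm:simple_averaging2}. The only point requiring a little care, and the one I would single out as the "hard" part relative to the earlier proofs, is that $L(\varepsilon)\to\infty$, so the factor $e^{\Lip{Z}L(\varepsilon)}$ genuinely grows; the lemma statement retains it deliberately, and it is the later choice $L(\varepsilon)=\text{o}(\log\varepsilon^{-1})$ (made in Step~2) that ensures $\varepsilon L(\varepsilon)e^{\Lip{Z}L(\varepsilon)}\to 0$. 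So in writing the proof I would resist the temptation to absorb that exponential into a generic $\mathcal{O}$-constant and instead carry it through honestly, since subsequent steps need to see it.
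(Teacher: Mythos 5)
Your argument is essentially identical to the paper's proof: subtract the integral equations for $z_\varepsilon$ and $z_{k,\varepsilon}$, bound the vector-field discrepancy by $\mathcal{O}(\varepsilon)$ plus $\Lip{Z}\abs{\Delta(s)}$ using compactness and regularity in $(z,\varepsilon)$, and conclude with Gronwall over a time interval of length $L(\varepsilon)$. The paper merely compresses your two-term splitting into a single joint Lipschitz estimate $\abs{Z(z_0(s),0)-Z(z_\varepsilon(s),\varepsilon)}\leq\Lip{Z}\bigl(\abs{\varepsilon}+\abs{z_0(s)-z_\varepsilon(s)}\bigr)$, so your proposal is correct and follows the same route.
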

\begin {proof}
Without loss of generality we may set $k=0$, so that
$z_{k,\varepsilon}(t)=z_0(t)$. Then for $0\leq t\leq
L(\varepsilon)$,
\[
\begin {split}
    \abs{z_0(t)-z_\varepsilon(t)} &= \abs{\int_0^t Z(z_0(s),0)
    -Z(z_\varepsilon(s),\varepsilon)ds}
    \\
    &\leq \Lip{Z}\int_0^t \abs{\varepsilon}
    +\abs{z_0(s)-z_\varepsilon(s)}ds\\
    &=\mathcal{O}(\varepsilon L(\varepsilon))+
    \Lip{Z}\int_0^t
    \abs{z_0(s)-z_\varepsilon(s)}ds.
\end {split}
\]
The result follows from Gronwall's Inequality.
\end {proof}

From Lemma \ref{lem:anosov1} we find that
$I_{k,\varepsilon},III_{k,\varepsilon}=\mathcal{O}(\varepsilon
L(\varepsilon)^2 e^{\Lip{Z} L(\varepsilon)})$.

\paragraph*{Step 5:  Use of ergodicity along fibers to
control $II_{k,\varepsilon} $.}

From Equations \eqref{eq:hinfnorm} and \eqref{eq:einfnorm} and the
triangle inequality, we already know that
\begin{equation}\label{eq:hinfnorm2}
\begin {split}
    \sup_{0\leq \tau\leq T\wedge T_\varepsilon}
    &
    \abs{\bar h(\tau)-h_\varepsilon(\tau/\varepsilon)}
    \\
    &\leq
    \mathcal{O}(\varepsilon)+
    \mathcal{O}(\varepsilon L(\varepsilon))+
    \varepsilon \frac{T}{\varepsilon L(\varepsilon)}
    \mathcal{O}(\varepsilon L(\varepsilon)^2
    e^{\Lip{Z} L(\varepsilon)})
    +\mathcal{O}\left (\varepsilon
    \sum_{k=0}^{\frac{T\wedge T_\varepsilon}{\varepsilon
    L(\varepsilon)}-1}\abs{II_{k,\varepsilon}}\right)
    \\
    & =
    \mathcal{O}(\varepsilon L(\varepsilon) e^{\Lip{Z}
    L(\varepsilon)})+
    \mathcal{O}\left (\varepsilon
    \sum_{k=0}^{\frac{T\wedge T_\varepsilon}{\varepsilon
    L(\varepsilon)}-1}\abs{II_{k,\varepsilon}}\right).
\end {split}
\end {equation}
Fix $\delta > 0$.  Recalling that $T_\varepsilon=\infty $, it
suffices to show that
\[
    P\left(\varepsilon\sum_{k=0}^{\frac{T}{\varepsilon
    L(\varepsilon)}-1}\abs{II_{k,\varepsilon}}\geq
    \delta\right)\rightarrow 0
\]
as $\varepsilon\rightarrow 0$.

For initial conditions $z\in \mathcal{M}$ and for $0\leq
k\leq\frac{T}{\varepsilon L(\varepsilon)}$ define
\[
\begin {split}
    \mathcal{B}_{k,\varepsilon} & =\set{z:\frac{1}{L(\varepsilon)}
    \abs{II_{k,\varepsilon}}
    >\frac{\delta}{2T}},\\
    \mathcal{B}_{z,\varepsilon} & =\set{k:z\in
    \mathcal{B}_{k,\varepsilon}} .
\end {split}
\]
Think of these sets as describing ``bad ergodization.''  For
example, roughly speaking, $z\in\mathcal{B}_{k,\varepsilon}$ if the
orbit $z_{\varepsilon}(t)$ starting at $z$ spends the time between
$t_{k,\varepsilon}$ and $t_{k+1,\varepsilon} $ in a region of phase
space where the function $H(\cdot,0) $ is ``poorly ergodized'' on
the time scale $L(\varepsilon) $ by the flow $z_0(t) $ (as measured
by the parameter $\delta/2T$).  As $II_{k,\varepsilon}$ is clearly
never larger than $\mathcal{O} (L(\varepsilon))$, it follows that
\[
    \varepsilon
    \sum_{k=0}^{\frac{T}{\varepsilon
    L(\varepsilon)}-1}\abs{II_{k,\varepsilon}}\leq
    \frac{\delta}{2}+\mathcal{O}(\varepsilon L(\varepsilon)
    \# (\mathcal{B}_{z,\varepsilon})).
\]
Therefore it suffices to show that
\[
    P\left(\#(\mathcal{B}_{z,\varepsilon})\geq\frac{\delta}{\C\, \varepsilon
    L(\varepsilon)}\right)\rightarrow 0
\]
as $\varepsilon\rightarrow 0$. By Chebyshev's Inequality, we need
only show that
\[
    E (\varepsilon L(\varepsilon)\# (\mathcal{B}_{z,\varepsilon})) =
    \varepsilon L(\varepsilon)\sum_{k=0}^{\frac{T}{\varepsilon
    L(\varepsilon)}-1}P(\mathcal{B}_{k,\varepsilon})
\]
tends to $0$ with $\varepsilon$.

In order to estimate the size of $P(\mathcal{B}_{k,\varepsilon})$,
it is convenient to introduce a new measure $P^f$ that is uniformly
equivalent to the restriction of Riemannian volume $P$ to $h^
{-1}\mathcal{V} $.  Here the $f$ stands for ``factor,'' and $P^f$ is
defined by
\[
    dP^f=dh\cdot d\mu_h,
\]
where $dh$ represents integration with respect to the uniform
measure on $\mathcal{V} $.

Observe that $\mathcal{B}_{0,\varepsilon} =z_\varepsilon
(t_{k,\varepsilon})\mathcal{B}_{k,\varepsilon}$.  In words, the
initial conditions giving rise to orbits that are ``bad'' on the
time interval $[t_{k,\varepsilon},t_{k+1,\varepsilon}] $, moved
forward by time $t_{k,\varepsilon}$, are precisely the initial
conditions giving rise to orbits that are ``bad'' on the time
interval $[t_{0,\varepsilon},t_{1,\varepsilon}] $.  Because the flow
$z_0(\cdot) $ preserves the measure $P^f$, we expect
$P^f(\mathcal{B}_{0,\varepsilon})$ and
$P^f(\mathcal{B}_{k,\varepsilon})$ to have roughly the same size.
This is made precise by the following lemma.

\begin {lem}
\label {lem:anosov2} There exists a constant $K$ such that for each
Borel set $B\subset \mathcal{M}$ and each $t\in
[-T/\varepsilon,T/\varepsilon] $, $P^f(z_\varepsilon(t)B)\leq
e^{KT}P^f(B)$.
\end {lem}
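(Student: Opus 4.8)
The plan is to reduce the estimate on $P^f(z_\varepsilon(t)B)$ to a computation of the logarithmic rate of change of the $P^f$-measure of a set transported by the flow $z_\varepsilon(\cdot)$, and to control that rate by a constant using the regularity hypotheses and compactness. First I would write $dP^f = \rho\, dm$, where $m$ is Riemannian volume on $h^{-1}\mathcal{V}$ and $\rho$ is the density: $\rho$ is bounded above and below by positive constants on $h^{-1}\mathcal{V}$, since $dP^f = dh \cdot d\mu_h$ with $d\mu_h = g|_{\mathcal{M}_h}\,d(\text{vol}_{\mathcal{M}_h})$ for the $\mathcal{C}^1$ positive function $g$, the base measure $dh$ has constant density, and by the coarea formula the product decomposition differs from Riemannian volume by the Jacobian factor $|{\det}(Dh\,Dh^{*})|^{-1/2}$, which is $\mathcal{C}^0$ and bounded away from $0$ and $\infty$ on the compact set $h^{-1}\mathcal{V}$. (Recall that in Step 0 of the proof of Anosov's theorem we replaced $Z$ by $\psi(h)Z$, so the flow $z_\varepsilon$ may leave $h^{-1}\mathcal{V}$, but everything is cut off inside the slightly larger compact set $h^{-1}\tilde{\mathcal{V}}$, on which all of these bounds hold uniformly.)

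Next I would invoke the standard change-of-variables formula for the pushforward of a measure under a flow: for a Borel set $B$,
\[
    P^f\bigl(z_\varepsilon(t)B\bigr) = \int_{B} \rho\bigl(z_\varepsilon(t,z)\bigr)\,
    \bigl|\det D_z z_\varepsilon(t,z)\bigr|\,\frac{1}{\rho(z)}\,dP^f(z).
\]
So it suffices to bound the integrand $\rho(z_\varepsilon(t,z))\,|\det D_z z_\varepsilon(t,z)|/\rho(z)$ by $e^{KT}$ uniformly for $|t|\le T/\varepsilon$ and $z$ in the relevant compact set. The ratio $\rho(z_\varepsilon(t,z))/\rho(z)$ is harmless on its own (it is bounded by a fixed constant $\le e^{K'}$ for \emph{all} $t$, since $\rho$ is pinched between two positive constants), but the Jacobian $|\det D_z z_\varepsilon(t,z)|$ can in principle grow exponentially in $t$, hence like $e^{cT/\varepsilon}$, which is far too large. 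The point of the lemma is that this does \emph{not} happen because $z_\varepsilon$ is a small perturbation of the measure-preserving flow $z_0$.

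To make that precise, by Liouville's formula
\[
    \frac{d}{dt}\log\bigl|\det D_z z_\varepsilon(t,z)\bigr|
    = \bigl(\divergence{Z}\bigr)\bigl(z_\varepsilon(t,z),\varepsilon\bigr),
\]
where the divergence is taken with respect to $P^f$, i.e.\ $\divergence{X} := \rho^{-1}\,\mathrm{div}_{m}(\rho X)$. Since $z_0(\cdot)|_{\mathcal{M}_h}$ preserves $\mu_h$ and $z_0$ preserves $P^f$ (it flows only along fibers, leaving $h$ fixed and preserving $\mu_h$ on each fiber), we have $\divergence{Z(\cdot,0)} \equiv 0$ on $h^{-1}\tilde{\mathcal{V}}$. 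By the \emph{Regularity} hypothesis, $Z$ and $\partial Z/\partial\varepsilon$ are $\mathcal{C}^1$ on $\mathcal{M}\times[0,\varepsilon_0]$, and $\rho$ is $\mathcal{C}^1$ and bounded below on the compact set, so $\divergence{Z(\cdot,\varepsilon)}$ is continuous and, by Hadamard's Lemma applied to $\varepsilon\mapsto\divergence{Z(\cdot,\varepsilon)}$ (which vanishes at $\varepsilon=0$), satisfies $|\divergence{Z(\cdot,\varepsilon)}|\le C\varepsilon$ uniformly on the compact set for a constant $C$ independent of $\varepsilon$. Integrating the Liouville identity over $s\in[0,t]$ (or $[t,0]$) gives
\[
    \Bigl|\log\bigl|\det D_z z_\varepsilon(t,z)\bigr|\Bigr|
    \le C\varepsilon\,|t| \le C\varepsilon\cdot\frac{T}{\varepsilon} = CT,
\]
so $|\det D_z z_\varepsilon(t,z)|\le e^{CT}$. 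Combining this with the bound $\rho(z_\varepsilon(t,z))/\rho(z)\le e^{K'}$ yields $P^f(z_\varepsilon(t)B)\le e^{CT+K'}P^f(B)$, which is the claim with $K := C + K'/T$ — or more cleanly, absorbing the constant: since $P^f(z_\varepsilon(t)B)\le e^{CT}e^{K'}P^f(B)$ and $T$ ranges over a set bounded below away from $0$ only if we wish, one takes $K$ large enough that $e^{CT+K'}\le e^{KT}$ for the $T$ of interest; alternatively one simply restates the lemma with constant $e^{CT+K'}$, which is how it is used. The main obstacle is the one just navigated: ruling out exponential-in-$(1/\varepsilon)$ growth of the Jacobian, which works precisely because the $\varepsilon=0$ flow is $P^f$-preserving and the $\varepsilon$-dependence is $\mathcal{C}^1$, forcing the instantaneous volume-distortion rate to be $\mathcal{O}(\varepsilon)$ and hence its integral over a time interval of length $\mathcal{O}(1/\varepsilon)$ to be $\mathcal{O}(1)$.
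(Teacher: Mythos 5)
Your argument is correct and is essentially the paper's proof: both hinge on the facts that $\divergence_{P^f} Z(\cdot,0)\equiv 0$ because $z_0$ preserves $P^f$, that Hadamard's Lemma then gives $\divergence_{P^f} Z(\cdot,\varepsilon)=\mathcal{O}(\varepsilon)$ on the compact set, and that integrating this rate over $\abs{t}\leq T/\varepsilon$ yields a bound of the form $e^{KT}$. The paper just differentiates $\ln P^f(z_\varepsilon(t)B)$ directly via the transport theorem instead of changing variables and bounding the pointwise Jacobian; this sidesteps your separate density-ratio estimate and the small imprecision in your Liouville identity (its left-hand side should be the logarithm of the full $P^f$-Jacobian $\rho(z_\varepsilon(t,z))\abs{\det D_z z_\varepsilon(t,z)}/\rho(z)$, not of the Riemannian Jacobian alone), which in any case does not affect the conclusion.
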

\begin {proof}
Assume that $P^f(B) > 0$, and set $\gamma (t)=\ln
\bigr(P^f(z_\varepsilon(t)B)/P^f(B)\bigl)$.  Then $\gamma (0) =0$,
and
\[
\begin {split}
    \frac {d\gamma }{dt}(t)=&\frac{\frac{d}{dt}\int_{z_\varepsilon(t)B}\tilde f(z)dz}
    {\int_{z_\varepsilon(t)B}\tilde f(z)dz}
    =\frac{\int_{z_\varepsilon(t)B}\divergence_{P^f} Z(z,\varepsilon)dz}
    {\int_{z_\varepsilon(t)B}\tilde f(z)dz},
\end {split}
\]
where $\tilde f>0$ is the $\mathcal{C} ^1$ density of $P^f$ with
respect to Riemannian volume on $h^ {-1}\mathcal{V} $, $dz$
represents integration with respect to that volume, and
$\divergence_{P^f} Z(z,\varepsilon)=\divergence_z \tilde
f(z)Z(z,\varepsilon)$. Because $z_0(t) $ preserves $P^f$,
$\divergence_{P^f} Z(z,0)\equiv 0$.  By Hadamard's Lemma, it follows
that $\divergence_{P^f}\, Z(z,\varepsilon)=\mathcal{O}(\varepsilon)$
on the compact set $h^ {-1}\mathcal{V}$.  Hence $d\gamma
(t)/dt=\mathcal{O}(\varepsilon) $, and the result follows.
\end {proof}

Returning to our proof of Anosov's theorem, it suffices to show that
\[
    P^f(\mathcal{B}_{0,\varepsilon})=\int_\mathcal{V} dh\cdot \mu_h
    \biggl\{z:\frac{1}{L(\varepsilon)}\abs{\int_0^{L(\varepsilon)}
    H(z_0(s),0)-\bar H(h_0(0))ds}\geq\frac{\delta}{2T}\biggr\}
\]
tends to $0$ with $\varepsilon$.  By our ergodicity assumption, for
almost every $h$,
\[
    \mu_h
    \biggl\{z:\frac{1}{L(\varepsilon)}\abs{\int_0^{L(\varepsilon)}
    H(z_0(s),0)-\bar H(h_0(0))ds}\geq\frac{\delta}{2T}\biggr\}
    \rightarrow 0\text{ as }\varepsilon\rightarrow 0.
\]
Finally, an application of the Bounded Convergence Theorem finishes
the proof.

\end{proof}

\section {Moral}

From the proofs of the theorems in this chapter, it should be
apparent that there are at least two key steps necessary for proving
a version of the averaging principle in the setting presented in
Section~\ref{sct:anos}.

The first step is \emph{estimating the continuity between the
$\varepsilon=0$ and the $\varepsilon>0$ solutions} of
\[
    \frac {dz} {dt}=Z(z,\varepsilon).
\]
In particular, on some relatively long timescale
$L=L(\varepsilon)\ll \varepsilon^ {-1} $, we need to show that
\[
    \sup_{0\leq t\leq L}\abs{z_{0}(t)-
    z_\varepsilon(t)}\rightarrow 0
\]
as $\varepsilon\rightarrow 0$.  As long as $L$ is sub-logarithmic in
$\varepsilon ^ {-1} $, such estimates for smooth systems can be made
using Gronwall's Inequality.

The second step is \emph{estimating the rate of ergodization} of
$H(\cdot,0) $ by $ z_0(t) $, i.e.~estimating how fast
\[
    \frac {1} {L}\int_{0}^{L} H(z_{0}(s),0)\,ds\rightarrow
     \bar H(h_{0})
\]
(generally as $L\rightarrow\infty$).  Note that the estimates in
this step compete with those in the first step in that, if $L$ is
small we obtain better continuity, but if $L$ is large we usually
obtain better ergodization.  Also, we do not need the full force of
the assumption of ergodicity of $ (z_0(t),\mu_h) $ on the fibers
$\mathcal{M}_h$. We only need $ z_0(t) $ to ergodize the specific
function $H(\cdot,0) $.  Compare the proof of
Theorem~\ref{thm:simple_averaging3}.

Note that in the setting of Anosov's theorem, uniform ergodization
leads to uniform convergence in the averaging principle. Returning
to the proof of Theorem~\ref{thm:anosov} above, suppose that
\[
    \frac {1} {L(\varepsilon)}\int_0^{L(\varepsilon)}
    H(z_0(s),0)ds\rightarrow \bar H(h_0)
\]
uniformly over all initial conditions as $L(\varepsilon)\rightarrow
\infty$. Then for all $\varepsilon $ sufficiently small and  each $k
$, $\mathcal{B}_{k,\varepsilon} =\emptyset $, and hence for all
$\varepsilon $ sufficiently small and  each $z $, $\#
(\mathcal{B}_{z,\varepsilon}) =0 $.  From Equation
\eqref{eq:hinfnorm2}, it follows that $\sup_{0\leq \tau\leq T\wedge
T_\varepsilon}\abs{\bar h(\tau)-h_\varepsilon(\tau/\varepsilon)}
 \rightarrow 0$ as $\varepsilon \rightarrow 0$, uniformly over all
initial conditions $z\in h ^ {-1}\mathcal{V} $.  However, uniform
convergence in Birkhoff's Ergodic Theorem is extremely rare and
usually comes about because of unique ergodicity, so it is
unreasonable to expect this sort of uniform convergence in most
situations where Anosov's theorem applies.

\chapter{Results for piston systems in one dimension}\label{chp:1Dpiston}

In this chapter, we present our results for piston systems in one
dimension.  These results may also be found in~\cite{Wri06}.

\section{Statement of results}\label{sct:results}

\subsection{The hard core piston problem}
\label {sct:hard_core_results}

Consider the system of $n_1+n_2+1$ point particles moving inside the
unit interval indicated in Figure \ref{fig:piston}.  One
distinguished particle, the piston, has position $Q$ and mass $M$.
To the left of the piston there are $n_1>0$ particles with positions
$q_{1,j}$ and masses $m_{1,j} $, $1\leq j\leq n_1$, and to the right
there are $n_2>0$ particles with positions $q_{2,j}$ and masses
$m_{2,j} $, $1\leq j\leq n_2$.  These gas particles do not interact
with each other, but they interact with the piston and with walls
located at the end points of the unit interval via elastic
collisions.  We denote the velocities by $dQ/dt=V$ and
$dx_{i,j}/dt=v_{i,j}$. There is a standard method for transforming
this system into a billiard system consisting of a point particle
moving inside an $(n_1+n_2+1)$-dimensional polytope~\cite{CM06}, but
we will not use this in what follows.

\begin{figure}
    \begin {center}
        \setlength{\unitlength}{1 cm}
    \begin{picture}(10,5)
        \thinlines
        \put(1,2.5){\line(1,0){8}}
        \thicklines
        \put(1,1){\line(0,1){3}}
        \put(9,1){\line(0,1){3}}
        \multiput(0.5,1)(0,0.5){6}{\line(1,1){0.5}}
        \multiput(9,1)(0,0.5){6}{\line(1,1){0.5}}
        \put(0.9,0.5){$0$}
        \put(8.9,0.5){$1$}
        \linethickness {0.15cm}
        \put(4.0,1){\line(0,1){3}}
        \put(3.83,0.5){$Q$}
        \put(3.83,4.2){$M$}
        \put(1.5,2.5){\circle*{.18}}
        \put(2,2.5){\circle*{.18}}
        \put(3,2.5){\circle*{.18}}
        \put(4.5,2.5){\circle*{.18}}
        \put(6,2.5){\circle*{.18}}
        \put(7.5,2.5){\circle*{.18}}
        \put(8,2.5){\circle*{.18}}
        \put(2.85,2){$q_{1,j}$}
        \put(2.85,2.8){$m_{1,j}$}
        \put(5.85,2){$q_{2,j}$}
        \put(5.85,2.8){$m_{2,j}$}
    \end{picture}
    \end {center}
    \caption{The piston system with $n_1=3$ and $n_2=4$.  Note that the
        gas particles do not interact with each other, but only with the
        piston and the walls.}\label{fig:piston}
\end{figure}
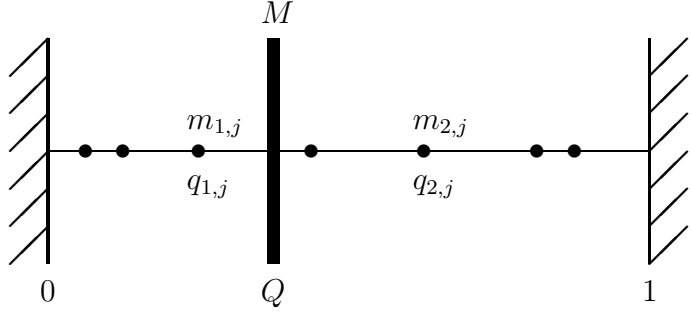

We are interested in the dynamics of this system when the numbers
and masses of the gas particles are fixed, the total energy is
bounded, and the mass of the piston tends to infinity. When
$M=\infty $, the piston remains at rest, and each gas particle
performs periodic motion.  More interesting are the motions of the
system when $M$ is very large but finite. Because the total energy
of the system is bounded, $MV^2/2\leq \C$, and so $V=\mathcal{O} (M^
{-1/2})$. Set
\[
    \varepsilon = M^ {-1/2},
\]
and let
\[
    W=\frac {V} {\varepsilon},
\]
so that
\[
    \frac {dQ} {dt}=\varepsilon W
\]
with $W =\mathcal{O} (1) $.

When $\varepsilon =0 $, the system has $n_1+n_2+2$ independent first
integrals (conserved quantities), which we take to be $Q,\: W$, and
$s_{i,j} =\abs{v_{i,j}}$, the speeds of the gas particles. We refer
to these variables as the slow variables because they should change
slowly with time when $\varepsilon$ is small, and we denote them by
\[
    h=(Q,W, s_{1,1},s_{1,2},\cdots,s_{1,n_1},
    s_{2,1},s_{2,2},\cdots,s_{2,n_2})\in\mathbb{R}^ {n_1+n_2+2}.
\]
We will often abbreviate by writing $h=(Q,W, s_{1,j},s_{2,j})$. Let
$h_\varepsilon(t,z) =h_\varepsilon(t) $ denote the dynamics of these
variables in time for a fixed value of $\varepsilon$, where $z$
represents the dependence on the initial condition in phase space.
We usually suppress the initial condition in our notation. Think of
$h_\varepsilon(\cdot) $ as a random variable which, given an initial
condition in the $2(n_1+n_2+1)$-dimensional phase space, produces a
piecewise continuous path in $\mathbb{R}^ {n_1+n_2+2}$. These paths
are the projection of the actual motions in our phase space onto a
lower dimensional space. The goal of averaging is to find a vector
field on $\mathbb{R}^ {n_1+n_2+2}$ whose orbits approximate
$h_\varepsilon(t) $.

\subsubsection {The averaged equation}
\label{sct:1dps_avg}

Sinai~\cite {Sin99} derived
\begin {equation} \label {eq:1davg}
    \frac{d}{d\tau}
    \begin {bmatrix}
    Q\\
    W\\
    s_{1,j}\\
    s_{2,j}\\
    \end {bmatrix}
    =\bar H(h) :=
    \begin {bmatrix}
    W\\
    \frac{\sum_{j=1} ^ {n_1}m_{1,j} s_{1,j}^2}{Q}-
    \frac{\sum_{j=1} ^ {n_2}m_{2,j} s_{2,j}^2}{1-Q}\\
    -\frac{s_{1,j} W}{Q}\\
    +\frac{s_{2,j} W}{1-Q}\\
    \end {bmatrix}
\end {equation}
as the averaged equation (with respect to the slow time
$\tau=\varepsilon t$) for the slow variables.  We provide a
heuristic derivation in Section \ref{sct:1dps_hc_heuristic}.  Sinai
solved this equation as follows: From
\[
    \frac {d\ln(s_{1,j})}{d\tau}=-\frac {d\ln(Q)} {d\tau},
\]
$s_{1,j}(\tau)=s_{1,j}(0)Q(0)/Q(\tau)$.  Similarly,
$s_{2,j}(\tau)=s_{2,j}(0) (1 - Q(0))/(1 - Q(\tau))$.  Hence
\[
    \frac {d^2Q} {d\tau^2}=
    \frac{\sum_{j=1} ^ {n_1}m_{1,j} s_{1,j}(0)^2 Q(0) ^2}{Q^3}
    -\frac{\sum_{j=1} ^ {n_2}
    m_{2,j} s_{2,j}(0)^2 (1-Q(0)) ^2}{(1-Q) ^3},
\]
and so $(Q, W) $ behave as if they were the coordinates of a
Hamiltonian system describing a particle undergoing periodic motion
inside a potential well. If we let
\[
    E_i=\sum_{j=1} ^ {n_i}
    \frac{m_{i,j}} {2}s_{i,j}^2
\]
be the kinetic energy of the gas particles on one side of the
piston, the effective Hamiltonian may be expressed as
\begin {equation}
\label {eq:1dpot}
    \frac {1} {2}W^2+
    \frac{E_1(0)Q(0)^2}{Q^2}+\frac{E_2(0)(1-Q(0))^2}{(1-Q)^2}.
\end {equation}
Hence, the solutions to the averaged equation are periodic for all
initial conditions under consideration.

\subsubsection{Main result in the hard core setting} \label {sct:1dres}

The solutions of the averaged equation approximate the motions of
the slow variables, $h_\varepsilon(t) $, on a time scale
$\mathcal{O} (1/\varepsilon) $ as $\varepsilon\rightarrow 0$.
Precisely, let $\bar{h} (\tau,z)=\bar{h} (\tau) $ be the solution of
\[
\frac {d\bar{h}}{d\tau} =\bar {H} (\bar {h}),\qquad \bar {h} (0)
=h_\varepsilon(0).
\]
Again, think of $\bar h(\cdot) $ as being a random variable that
takes an initial condition in our phase space and produces a path in
$\mathbb R^{n_1+n_2+2}$.

Next, fix a compact set $\mathcal{V}\subset \mathbb R^{n_1+n_2+2}$
such that $h\in \mathcal{V} \Rightarrow Q\subset\subset
(0,1),W\subset\subset \mathbb R$, and $s_{i,j} \subset\subset
(0,\infty)$ for each $i$ and $j$.\footnote { We have introduced this
notation for convenience.  For example, $h\in \mathcal{V}
\Rightarrow Q\subset\subset (0,1) $ means that there exists a
compact set $A \subset (0,1) $ such that $h\in \mathcal{V}
\Rightarrow Q\in A $, and similarly for the other variables.}  For
the remainder of this discussion we will restrict our attention to
the dynamics of the system while the slow variables remain in the
set $\mathcal{V} $. To this end, we define the stopping time
\[
    T_\varepsilon(z) =T_\varepsilon :=\inf \{\tau\geq 0: \bar {h}
    (\tau)\notin \mathcal{V} \text { or } h_\varepsilon(\tau
    /\varepsilon) \notin \mathcal{V} \}.
\]

\begin {thm}
\label{thm:1Dpiston1}

For each $T>0$,
\[
\sup_{\substack{\text{initial conditions}\\
\text{s.t. } h_\varepsilon(0)\in  \mathcal{V} }}\;
\sup_{0\leq\tau\leq T\wedge
T_\varepsilon}\abs{h_\varepsilon(\tau/\varepsilon)-\bar{h}(\tau)}
=\mathcal{O} (\varepsilon  ) \text { as } \varepsilon=M^
{-1/2}\rightarrow 0.
\]
\end{thm}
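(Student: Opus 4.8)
This statement is the hard-core analogue of Theorem~\ref{thm:simple_averaging3}: the $n_1+n_2$ gas particles play the role of ``separable, regular fast variables,'' since they interact only with the piston and never with one another. So I expect the same uniform $\mathcal{O}(\varepsilon)$ rate, and I would follow the three-step template of the classical proofs (Gronwall reduction; a sequence of times adapted to ergodization; control of the individual terms), with the one essential change that the Gronwall continuity estimates of the smooth theory are replaced by a direct, geometric analysis of the hard-core flow. Throughout, restrict to $0\le\tau\le T\wedge T_\varepsilon$, where $h_\varepsilon(\tau/\varepsilon)$ and $\bar h(\tau)$ lie in $\mathcal{V}$; there $Q$ is bounded away from $0$ and $1$, each $s_{i,j}$ away from $0$ and $\infty$, and $|W|$ is bounded, with the bounds depending only on $\mathcal{V}$. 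Since the number and masses of the particles are fixed, every $\mathcal{O}(\cdot)$ below is uniform over initial conditions with $h_\varepsilon(0)\in\mathcal{V}$. (The exceptional initial conditions producing two simultaneous collisions at the piston form a null set and may be discarded; the estimates pass to the supremum over the rest.)

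\textbf{Step 1: Gronwall reduction.} Set
\[
    e_\varepsilon(\tau)=\bigl(h_\varepsilon(\tau/\varepsilon)-h_\varepsilon(0)\bigr)
    -\int_0^\tau \bar H\bigl(h_\varepsilon(\sigma/\varepsilon)\bigr)\,d\sigma.
\]
Because $dQ_\varepsilon/dt=V_\varepsilon=\varepsilon W_\varepsilon$ while $\bar H_Q\equiv W$, the $Q$-component of $e_\varepsilon$ is identically $0$. Subtracting the integral equation satisfied by $\bar h$ and using that $\bar H$ is Lipschitz on $\mathcal{V}$, Gronwall's inequality gives
\[
    \sup_{0\le\tau\le T\wedge T_\varepsilon}\abs{h_\varepsilon(\tau/\varepsilon)-\bar h(\tau)}
    \le e^{\Lip{\bar H\arrowvert_\mathcal{V}}T}\,\sup_{0\le\tau\le T\wedge T_\varepsilon}\abs{e_\varepsilon(\tau)},
\]
so it suffices to prove $\sup\abs{e_\varepsilon}=\mathcal{O}(\varepsilon)$. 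Now $W$ and the $s_{i,j}$ change only at collisions of a gas particle with the piston, and $\bar H$ in~\eqref{eq:1davg} splits into $n_1+n_2$ summands, one attached to each particle (for $(1,j)$: the term $m_{1,j}s_{1,j}^2/Q$ inside $\bar H_W$ and the term $-s_{1,j}W/Q$ inside $\bar H_{s_{1,j}}$). Accordingly $e_\varepsilon$ splits into pieces $e_\varepsilon^{W,(i,j)}$ and $e_\varepsilon^{s_{i,j}}$, each of which is a sum over the piston-collisions of one particle minus $\varepsilon$ times the integral of the matching summand, and it is enough to bound each piece by $\mathcal{O}(\varepsilon)$.

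\textbf{Step 2: one bounce of a gas particle, and summation.} Expanding the elastic-collision laws with $M=\varepsilon^{-2}$, a collision of left-particle $(1,j)$ of speed $s$ with the piston of velocity $\varepsilon W$ changes $W$ by $2m_{1,j}s\,\varepsilon+\mathcal{O}(\varepsilon^2)$ and $s_{1,j}$ by $-2W\varepsilon+\mathcal{O}(\varepsilon^2)$ (opposite signs in the right chamber, matching~\eqref{eq:1davg}). Fix particle $(1,j)$ and let $t_0<t_1<\cdots$ be its piston-collision times; between two consecutive ones it makes exactly one wall collision, which changes no slow variable. The geometric heart of the argument is: whenever $[t_k,t_{k+1}]\subseteq[0,(T\wedge T_\varepsilon)/\varepsilon]$,
\[
    t_{k+1}-t_k=\frac{2Q(t_k)}{s_{1,j}(t_k)}+\mathcal{O}(\varepsilon),
\]
since after the collision the particle crosses its chamber and returns at a speed that is constant and within $\mathcal{O}(\varepsilon)$ of $s_{1,j}(t_k)$, while the piston — whose velocity stays $\mathcal{O}(\varepsilon)$ even as the other particles strike it — drifts only $\mathcal{O}(\varepsilon)$; the closing collision is transversal because the relative speed is bounded away from $0$, and this is exactly where the $\mathcal{V}$-bounds enter. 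Since $Q$, $W$, and the $s_{i,j}$ all vary by $\mathcal{O}(\varepsilon)$ over a window of bounded length, the jump $\Delta W_{t_k}=2m_{1,j}s_{1,j}(t_k)\varepsilon+\mathcal{O}(\varepsilon^2)$ differs by $\mathcal{O}(\varepsilon^2)$ from $\varepsilon\int_{t_k}^{t_{k+1}} m_{1,j}s_{1,j}(u)^2/Q(u)\,du$, whose leading term is $\varepsilon(t_{k+1}-t_k)\,m_{1,j}s_{1,j}(t_k)^2/Q(t_k)=2m_{1,j}s_{1,j}(t_k)\varepsilon$ by the displayed period formula; the analogous estimate for $e_\varepsilon^{s_{1,j}}$ uses the jump $-2W\varepsilon$ against the summand $-s_{1,j}W/Q$. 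Hence every full window contributes $\mathcal{O}(\varepsilon^2)$ to $e_\varepsilon^{W,(1,j)}$ and to $e_\varepsilon^{s_{1,j}}$; there are $\mathcal{O}(1/\varepsilon)$ of them (each of length bounded below), plus two partial windows at the ends contributing $\mathcal{O}(\varepsilon)$, so $e_\varepsilon^{W,(1,j)}=e_\varepsilon^{s_{1,j}}=\mathcal{O}(\varepsilon)$, uniformly. The same computation applies verbatim in the right chamber, and together with Step~1 this proves the theorem; as a by-product it makes the heuristic derivation of~\eqref{eq:1davg} rigorous.

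\textbf{Where the difficulty is.} The real work is the one-bounce lemma $t_{k+1}-t_k=2Q(t_k)/s_{1,j}(t_k)+\mathcal{O}(\varepsilon)$ together with the accompanying control of the post-collision state: one must estimate, uniformly over all initial conditions in $\mathcal{V}$, the return time and state of a gas particle through a complete bounce while the piston is moving \emph{and} is simultaneously being hit by the other particles. This is the hard-core substitute for the smooth continuity step of the ``Moral'' — a transversality estimate in place of a Gronwall estimate — and the crucial point is to keep its error at $\mathcal{O}(\varepsilon)$, so that after multiplication by the $\mathcal{O}(1/\varepsilon)$ collisions the total defect $e_\varepsilon$ is still $\mathcal{O}(\varepsilon)$. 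Minor extra care is needed for near-simultaneous collisions at the piston and for the behaviour of $h_\varepsilon$ as $\tau\uparrow T_\varepsilon$, but neither affects the rate.
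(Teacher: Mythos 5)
Your proposal is correct and follows essentially the same route as the paper's proof: a Gronwall reduction, a splitting of the error particle by particle, a partition of $[0,(T\wedge T_\varepsilon)/\varepsilon]$ into single-bounce windows of length uniformly of order one, and a per-window comparison with the frozen ($\varepsilon=0$) motion showing each window contributes $\mathcal{O}(\varepsilon^2)$ (equivalently, the paper's $\mathcal{O}(\varepsilon)$ per term before the overall factor of $\varepsilon$). The only difference is bookkeeping: the paper introduces angle variables $\varphi_{i,j}$ and marks windows by their returns to $0$, proving your one-bounce lemma as Lemma~\ref{lem:1d_divergence} via the coordinates of Section~\ref{sct:1dps_hcPS}, whereas you mark windows by the piston-collision times and argue the return-time estimate directly and geometrically — the substance is the same.
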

\noindent This result was independently obtained by Gorelyshev and
Neishtadt~\cite{GorNeishtadt06}.

Note that the stopping time does not unduly restrict the result.
Given any $c $ such that $h=c\Rightarrow Q\in (0,1),\: s_{i,j}\in
(0,\infty)$, then by an appropriate choice of the compact set
$\mathcal{V} $ we may ensure that, for all $\varepsilon $
sufficiently small and all initial conditions in our phase space
with $h_\varepsilon(0) =c$, $T_\varepsilon \geq T $. We do this by
choosing $\mathcal{V}\ni c $ such that the distance between
$\partial \mathcal{V} $ and the periodic orbit $\bar h (\tau) $ with
$\bar h (0) =c $ is positive. Call this distance $d $. Then
$T_\varepsilon $ can only occur before $T$ if $h_\varepsilon
(\tau/\varepsilon) $ has deviated by at least $d $ from $ \bar h
(\tau) $ for some $\tau\in [0,T) $. Since the size of the deviations
tends to zero uniformly with $\varepsilon$, this is impossible for
all small $\varepsilon $.

\subsection{The soft core piston problem}
\label {sct:soft_core_results}

In this section, we consider the same system of one piston and gas
particles inside the unit interval considered in Section
\ref{sct:hard_core_results}, but now the interactions of the gas
particles with the walls and with the piston are smooth.  Let
$\kappa\colon\mathbb {R}\rightarrow\mathbb {R} $ be a $\mathcal{C}
^2$ function satisfying
\begin {itemize}
    \item
         $\kappa (x)= 0 $ if $x\geq 1 $,
    \item
        $\kappa ' (x) < 0 $ if $x < 1 $.
\end {itemize}
Let $\delta >0 $ be a parameter of smoothing, and set
\[
    \kappa _\delta(x) =\kappa (x/\delta).
\]
Then consider the Hamiltonian system obtained by having the gas
particles interact with the piston and the walls via the potential
\[
    \sum_{j=1}^{n_1}\kappa_\delta(q_{1,j})+\kappa_\delta(Q-q_{1,j})+
    \sum_{j=1}^{n_2}\kappa_\delta(q_{2,j}-Q)+\kappa_\delta(1-q_{2,j}).
\]
As before, we set $\varepsilon =M^ {-1/2} $ and $W =V/\varepsilon$.
If we let
\begin {equation}
\label{eq:soft_energies}
\begin {split}
    E_{1,j}&=\frac{1}{2}m_{1,j}v_{1,j}^2+\kappa_\delta(q_{1,j})
        +\kappa_\delta(Q-q_{1,j}),\qquad 1\leq j\leq n_1,\\
    E_{2,j} & =\frac{1}{2}m_{2,j}v_{2,j}^2+\kappa_\delta(q_{2,j}-Q)
        +\kappa_\delta(1-q_{2,j}),\qquad 1\leq j\leq n_2,\\
\end {split}
\end {equation}
then $E_{i,j}$ may be thought of as the energy associated with a gas
particle, and $W ^2/2+\sum_{j=1}^{n_1}E_{1,j}
+\sum_{j=1}^{n_2}E_{2,j} $ is the conserved energy.

When $\varepsilon =0 $, the Hamiltonian system admits $n_1+n_2+2$
independent first integrals, which we choose this time as $h = (Q,
W,E_{1,j},E_{2,j}) $.  While discussing the soft core dynamics we
use the energies $E_{i,j} $ rather than the variables $s_{i,j}  =
\sqrt {2E_{i,j} /m_{i,j} } $, which we used for the hard core
dynamics, for convenience.

For comparison with the hard core results, we formally consider the
dynamics described by setting $\delta=0$ to be the hard core
dynamics described in Section \ref{sct:hard_core_results}. This is
reasonable because we will only consider gas particle energies below
the barrier height $\kappa(0) $.  Then for any
$\varepsilon,\delta\geq 0$, $h_\varepsilon^\delta (t) $ denotes the
actual time evolution of the slow variables. While discussing the
soft core dynamics we often use $\delta $ as a superscript to
specify the dynamics for a certain value of $\delta $. We usually
suppress the dependence on $\delta $, unless it is needed for
clarity.

\subsubsection{Main result in the soft core setting}
\label {sct:1d_sm_results}

We have already seen that when $\delta=0$, there is an appropriate
averaged vector field $\bar H^0$ whose solutions approximate the
actual motions of the slow variables, $h_\varepsilon^0 (t) $. We
will show that when $\delta >0$, there is also an appropriate
averaged vector field $\bar H^\delta$ whose solutions still
approximate the actual motions of the slow variables,
$h_\varepsilon^\delta (t) $. We delay the derivation of $\bar
H^\delta$ until Section \ref{sct:smooth_1D_derivation}.

Fix a compact set $\mathcal{V}\subset\mathbb{R}^{n_1+n_2+2}$ such
that $h\in\mathcal{V}\Rightarrow Q\subset\subset (0,1),
W\subset\subset\mathbb{R}$, and $E_{i,j}\subset\subset (0,\kappa
(0)) $ for each $i$ and $j$.  For each $\varepsilon,\delta\geq 0$ we
define the functions $\bar h ^\delta (\cdot) $ and
$T_\varepsilon^\delta $ on our phase space by letting $\bar h
^\delta (\tau) $ be the solution of
\begin {equation}
\label {eq:smooth_1D_averaged_eq}
    \frac {d\bar{h} ^\delta}{d\tau}
    =\bar {H} ^\delta (\bar {h} ^\delta),\qquad \bar {h} ^\delta (0)
    =h_\varepsilon ^\delta (0),
\end {equation}
and
\[
    T_\varepsilon ^\delta =\inf \{\tau\geq 0: \bar {h} ^\delta
    (\tau)\notin \mathcal{V} \text { or } h_\varepsilon ^\delta (\tau
    /\varepsilon) \notin \mathcal{V} \}.
\]

\begin {thm}
\label{thm:1D_smooth_uniform}

There exists $\delta_0 >0$ such that the averaged vector field $\bar
H^\delta (h) $ is $\mathcal{C} ^1$ on the domain
$\{(\delta,h):0\leq\delta \leq\delta_0,h\in\mathcal{V}\}$.
Furthermore, for each $T>0$,
\[
\sup_{0\leq\delta\leq\delta_0}\; \sup_{\substack {\text {initial conditions}\\
\text {s.t. } h_\varepsilon^\delta(0)\in  \mathcal{V} }}\;
\sup_{0\leq\tau\leq T\wedge T_\varepsilon ^\delta}\abs{h_\varepsilon
^\delta (\tau/\varepsilon)-\bar{h} ^\delta (\tau)} =\mathcal{O}
(\varepsilon  )\text { as }\varepsilon=M^ {-1/2}\rightarrow 0.
\]
\end{thm}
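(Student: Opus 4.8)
The plan is to recognize the soft core system, in suitable coordinates, as an instance of the multiphase averaging setting of Theorem~\ref{thm:simple_averaging3}, and then to rerun that theorem's proof while tracking the dependence of every constant on $\delta$. When $\varepsilon=0$ the piston is frozen and the Hamiltonian decouples completely: each gas particle oscillates in its own fixed one-dimensional potential well --- bounded by a wall and by the piston --- independently of the other particles. For energies $E_{i,j}<\kappa(0)$ this motion is periodic, so the common level set $\mathcal{M}_h$ of the slow variables $h=(Q,W,E_{i,j})$ of \eqref{eq:soft_energies} is a torus $\mathbb{T}^{n_1+n_2}$, one circle per gas particle. Introducing on each well an action--angle coordinate $\varphi^{i,j}\in S^1$ for the frozen dynamics puts the equations of motion into the form \eqref{eq:ode3}. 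Because the gas particles do not interact, all the structural hypotheses of Theorem~\ref{thm:simple_averaging3} hold: the frozen motion has each $\varphi^{i,j}$ rotating at a rate $\nu^\delta_{i,j}(Q,E_{i,j})$ that depends only on $h$ (the fast variables are independent; indeed the unperturbed motion is quasi-periodic), the work done on the $(i,j)$-th particle and that particle's contribution to the force on the piston depend, to leading order in $\varepsilon$, only on $\varphi^{i,j}$ (the fast variables are separable), and $\nu^\delta_{i,j}$ is bounded away from $0$ on the compact set cut out by $\mathcal{V}$ (each fast variable is regular). For each fixed $\delta>0$ the conclusion is thus Theorem~\ref{thm:simple_averaging3}; the real content here is uniformity in $\delta$, including the hard core limit $\delta=0$, which returns Theorem~\ref{thm:1Dpiston1}.

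The crux will be a uniform regularity statement: the period functions $T^\delta_{i,j}(Q,E)=1/\nu^\delta_{i,j}(Q,E)$, the action--angle changes of variables, the reduced vector field in the coordinates of \eqref{eq:ode3}, and hence the averaged field $\bar H^\delta$ obtained by integrating the reduced $H$ over the invariant measure on $\mathcal{M}_h$, are $\mathcal{C}^1$ with norms bounded uniformly in $\delta\in[0,\delta_0]$ on the domain determined by $\mathcal{V}$, and converge in $\mathcal{C}^1$ as $\delta\to0^+$ to the corresponding hard core objects (for $\bar H^\delta$, to Sinai's field \eqref{eq:1davg}). This is exactly where the form $\kappa_\delta(x)=\kappa(x/\delta)$ is exploited: on the potential-free part of each well the dynamics is manifestly $\delta$-independent, while near each turning point the rescaling $x\mapsto x/\delta$ renders the turning-region dynamics $\delta$-independent, so that its contributions to the period integral $T^\delta_{i,j}=\oint dq/\abs{v}$ and to the Jacobian of the action--angle map are $\mathcal{O}(\delta)$ with uniformly bounded derivatives. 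The compactness built into $\mathcal{V}$ enters precisely to keep $E_{i,j}$ away from $0$ (where $\kappa'$ vanishes at the foot of the barrier and the rescaling degenerates) and from the separatrix value $\kappa(0)$, and $Q$ away from $0$ and $1$ (where a well collapses). In particular one gets $\Lip{\bar H^\delta\arrowvert_{\mathcal{V}}}\le\C$ and $\nu^\delta_{i,j}$ bounded above and below by positive constants on $\mathcal{V}$, all uniformly in $\delta$; and differentiating the averaging formula \eqref{eq:smooth_1D_averaged_eq} under the integral sign gives the first assertion of the theorem.

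With these $\delta$-independent constants in hand I would carry out the proof of Theorem~\ref{thm:simple_averaging3} --- which follows Steps~1--3 of the proof of Theorem~\ref{thm:simple_averaging2} --- essentially verbatim: a Gronwall reduction bounds $\sup_{\tau\le T\wedge T^\delta_\varepsilon}\abs{\bar h^\delta(\tau)-h^\delta_\varepsilon(\tau/\varepsilon)}$ by $(\mathcal{O}(\varepsilon)+\sup\abs{e^\delta_\varepsilon})\,e^{\Lip{\bar H^\delta\arrowvert_{\mathcal{V}}}T}$ with the exponential factor uniformly bounded; one splits $e^\delta_\varepsilon$ into the contributions $e^\delta_{(i,j),\varepsilon}$ of the separate fast variables; and for each one partitions time by the successive returns of $\varphi^{i,j}_\varepsilon$ to its initial value, which are uniformly spaced, reducing everything to showing that each one-period term is $\mathcal{O}(\varepsilon)$ uniformly in $\delta$ by comparison with the frozen orbit on the current fiber. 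The one new point is that the reduced vector field of \eqref{eq:ode3} is \emph{not} bounded uniformly in $\delta$: the force on the piston, hence $\dot W$ and $\dot E_{i,j}$, spikes to size $\mathcal{O}(\varepsilon\delta^{-1})$ during a soft collision, and the deviation of $\varphi^{i,j}_\varepsilon$ from uniform rotation does likewise. But each soft collision lasts only time $\mathcal{O}(\delta)$ and transfers impulse $\mathcal{O}(1)$, and only $\mathcal{O}(1)$ collisions occur per period of a given particle, so the total variation of $h^\delta_\varepsilon$ --- and the deviation of the angles from uniform rotation --- over one such period is $\mathcal{O}(\varepsilon)$ uniformly in $\delta$. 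Replacing the pointwise bound ``$dh_\varepsilon/dt=\mathcal{O}(\varepsilon)$'' in the comparison lemma of Theorem~\ref{thm:simple_averaging2} by this per-period bound is enough to push the estimates through with $\delta$-independent constants; summing the $\mathcal{O}(1/\varepsilon)$ per-period terms yields the $\mathcal{O}(\varepsilon)$ bound, uniformly over initial conditions with $h^\delta_\varepsilon(0)\in\mathcal{V}$ and over $\delta\in[0,\delta_0]$.

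The genuinely new work --- and the step I expect to be the main obstacle --- is the uniform $\mathcal{C}^1$ control of the action--angle data and of $\bar H^\delta$ down to $\delta=0$ described in the second paragraph: one must show that steepening the interaction potential, which makes the raw vector field blow up, nonetheless perturbs the period functions, the change of coordinates, and the averaged field by only $\mathcal{O}(\delta)$ in $\mathcal{C}^1$, and it is there that the scaling structure of $\kappa_\delta$ and the compactness of $\mathcal{V}$ are essential.
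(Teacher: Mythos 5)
Your plan is correct and follows essentially the same route as the paper: action--angle coordinates for the frozen wells with $\mathcal{C}^1$ control of the periods and of the change of variables uniform down to $\delta=0$ (the paper's Lemmas~\ref{lem:1d_smooth_periods} and~\ref{lem:1d_smooth_ode}), an occupation-time/impulse argument to tame the $\mathcal{O}(\varepsilon\delta^{-1})$ force spikes (Lemmas~\ref{lem:1d_smooth_delta} and~\ref{lem:1d_smooth_intbound}), and then a rerun of the hard-core averaging scheme with $\delta$-independent constants. Be aware that the step you defer as ``the main obstacle'' is exactly where the paper's appendix works hardest, and that even your claimed pointwise $\mathcal{O}(\varepsilon\delta^{-1})$ bound on the angle's deviation from uniform rotation is not free: a term-by-term estimate of $\frac{\partial\varphi_1}{\partial E_1}\frac{dE_1}{dt}$ can diverge as the particle enters the soft wall (since $(\kappa^{-1})'\to-\infty$ at the edge of the barrier's support), and one needs the cancellation between $\kappa_\delta'$ and $\bigl((\kappa^{-1})'\bigr)^{-1}$ that the paper establishes at the end of the proof of Lemma~\ref{lem:1d_smooth_ode}.
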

\noindent As in Section \ref{sct:1dres}, for any fixed $c $ there
exists a suitable choice of the compact set $\mathcal{V} $ such that
for all sufficiently small $\varepsilon$ and $\delta$,
$T_\varepsilon ^\delta\geq T $ whenever $h_\varepsilon ^\delta (0)
=c$.

As we will see, for each fixed $\delta>0 $, Anosov's
theorem~\ref{thm:anosov} applies to the soft core system and yields
a weak law of large numbers, and Theorem~\ref{thm:simple_averaging3}
applies and yields a strong law of large numbers with a uniform rate
of convergence.  However, neither of these theorems yields the
uniformity over $\delta$ in the result above.

\subsection{Applications and generalizations}
\label{sct:apps_generalizations}

\subsubsection{Relationship between the hard core and the soft core
piston} \label{sct:relationship}

It is not \emph{a priori} clear that we can compare the motions of
the slow variables on the time scale $1/\varepsilon$ for $\delta>0$
versus $\delta=0$, i.e.~compare the motions of the soft core piston
with the motions of the hard core piston on a relatively long time
scale. It is impossible to compare the motions of the fast-moving
gas particles on this time scale as $\varepsilon\rightarrow 0$.  As
we see in Section \ref{sct:sc_proof}, the frequency with which a gas
particle hits the piston changes by an amount $\mathcal{O} (\delta)
$ when we smooth the interaction. Thus, on the time scale
$1/\varepsilon$, the number of collisions is altered by roughly
$\mathcal{O} (\delta/\varepsilon) $, and this number diverges if
$\delta$ is held fixed while $\varepsilon\rightarrow 0$.

Similarly, one might expect that it is impossible to compare the
motions of the soft and hard core pistons as $\varepsilon\rightarrow
0$ without letting $\delta\rightarrow 0$ with $\varepsilon$.
However, from Gronwall's Inequality it follows that if $\bar
h^\delta (0) =\bar h^0(0) $, then
\[
    \sup_{0\leq\tau\leq T\wedge
    T_\varepsilon ^\delta\wedge T_\varepsilon ^0} \abs{\bar h^\delta
    (\tau) -\bar h^0(\tau)}=\mathcal{O} (\delta).
\]
From the triangle inequality and Theorems \ref{thm:1Dpiston1} and
\ref{thm:1D_smooth_uniform} we obtain the following corollary, which
allows us to compare the motions of the hard core and the soft core
piston.
\begin {cor}
\label {cor: 1d_smooth_comparison}

As $\varepsilon=M^ {-1/2},\delta \rightarrow 0$,
\[
\sup_{c\in \mathcal{V} }\; \sup_{\substack {\text {initial conditions}\\
\text {s.t. } h_\varepsilon^\delta(0)=c=h_\varepsilon^0(0) }}\;
 \sup_{0\leq t\leq (T\wedge T_\varepsilon
^\delta\wedge T_\varepsilon ^0)/\varepsilon}\abs{h_\varepsilon
^\delta (t)-h_\varepsilon ^0 (t)} =\mathcal{O} (\varepsilon
)+\mathcal{O} (\delta).
\]
\end {cor}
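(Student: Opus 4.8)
The plan is to derive the corollary as a formal consequence of Theorems~\ref{thm:1Dpiston1} and~\ref{thm:1D_smooth_uniform}, together with an elementary Gronwall comparison of the two averaged flows $\bar h^\delta$ and $\bar h^0$, glued by the triangle inequality. First I would fix $c\in\mathcal{V}$ and an initial condition with $h_\varepsilon^\delta(0)=c=h_\varepsilon^0(0)$, so that by definition \eqref{eq:smooth_1D_averaged_eq} the paths $\bar h^\delta$ and $\bar h^0$ solve autonomous ODEs with the \emph{same} initial datum $c$ and vector fields $\bar H^\delta$, $\bar H^0$ respectively. Since Theorem~\ref{thm:1D_smooth_uniform} asserts that $\bar H^\delta(h)$ is $\mathcal{C}^1$ on $\{0\le\delta\le\delta_0,\,h\in\mathcal{V}\}$, on the compact set $\mathcal{V}$ we get both $\infnorm{\bar H^\delta-\bar H^0}=\mathcal{O}(\delta)$ (because $\partial_\delta\bar H^\delta$ is bounded there) and a bound on $\Lip{\bar H^0\arrowvert_\mathcal{V}}$ that is independent of $\delta$. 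Writing the two averaged solutions in integral form, subtracting, and running Gronwall's inequality on $[0,\,T\wedge T_\varepsilon^\delta\wedge T_\varepsilon^0]$ --- the interval on which both $\bar h^\delta$ and $\bar h^0$ remain in $\mathcal{V}$ --- then gives
\[
    \sup_{0\le\tau\le T\wedge T_\varepsilon^\delta\wedge T_\varepsilon^0}
    \abs{\bar h^\delta(\tau)-\bar h^0(\tau)}
    \le \mathcal{O}(\delta)\,e^{\Lip{\bar H^0\arrowvert_\mathcal{V}}T}
    =\mathcal{O}(\delta),
\]
with the implied constant independent of $c$, of the initial condition, and of $\delta$. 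This is exactly the estimate flagged in Section~\ref{sct:relationship}.

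Next I would combine this with the two averaging theorems via the triangle inequality
\[
    \abs{h_\varepsilon^\delta(\tau/\varepsilon)-h_\varepsilon^0(\tau/\varepsilon)}
    \le
    \abs{h_\varepsilon^\delta(\tau/\varepsilon)-\bar h^\delta(\tau)}
    +\abs{\bar h^\delta(\tau)-\bar h^0(\tau)}
    +\abs{\bar h^0(\tau)-h_\varepsilon^0(\tau/\varepsilon)}.
\]
On $0\le\tau\le T\wedge T_\varepsilon^\delta\wedge T_\varepsilon^0$ all four paths lie in $\mathcal{V}$, so Theorem~\ref{thm:1Dpiston1} bounds the third term by $\mathcal{O}(\varepsilon)$ uniformly over initial conditions with $h_\varepsilon^0(0)\in\mathcal{V}$, Theorem~\ref{thm:1D_smooth_uniform} bounds the first term by $\mathcal{O}(\varepsilon)$ uniformly over initial conditions \emph{and} over $\delta\in[0,\delta_0]$, and the middle term is $\mathcal{O}(\delta)$ by the previous paragraph. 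Hence the left-hand side is $\mathcal{O}(\varepsilon)+\mathcal{O}(\delta)$, uniformly in $c\in\mathcal{V}$, in the initial condition, and in $\delta$. Finally, substituting $\tau=\varepsilon t$ converts the range $0\le\tau\le T\wedge T_\varepsilon^\delta\wedge T_\varepsilon^0$ into $0\le t\le(T\wedge T_\varepsilon^\delta\wedge T_\varepsilon^0)/\varepsilon$, which is precisely the statement of the corollary.

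I do not expect any genuine obstacle: the corollary is a soft consequence of results already established. The only point needing a moment's care is verifying that the common stopping time $T_\varepsilon^\delta\wedge T_\varepsilon^0$ indeed keeps all four relevant paths inside $\mathcal{V}$ simultaneously, so that the uniform Lipschitz bound on $\bar H^0$ and both averaging estimates are legitimately applicable on the \emph{same} time interval; this is immediate from the definitions of $T_\varepsilon^\delta$ and $T_\varepsilon^0$. The substantive work --- the uniformity over $\delta$ in Theorem~\ref{thm:1D_smooth_uniform} and the recollision control underlying Theorem~\ref{thm:1Dpiston1} --- lives in those theorems, not here.
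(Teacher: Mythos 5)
Your proposal is correct and follows essentially the same route as the paper: the paper likewise obtains $\sup_{0\leq\tau\leq T\wedge T_\varepsilon^\delta\wedge T_\varepsilon^0}\abs{\bar h^\delta(\tau)-\bar h^0(\tau)}=\mathcal{O}(\delta)$ via Gronwall's Inequality (using the $\mathcal{C}^1$ dependence of $\bar H^\delta$ on $\delta$ from Theorem~\ref{thm:1D_smooth_uniform}) and then concludes by the triangle inequality together with Theorems~\ref{thm:1Dpiston1} and~\ref{thm:1D_smooth_uniform}. Your added remarks on uniformity of the constants and on the common stopping time are consistent with, and merely make explicit, what the paper leaves implicit.
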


This shows that, provided the slow variables have the same initial
conditions,
\[
\sup_{0\leq t\leq 1/\varepsilon}\abs{h_\varepsilon ^\delta
(t)-h_\varepsilon ^0 (t)} =\mathcal{O} (\varepsilon )+\mathcal{O}
(\delta).
\]
Thus the motions of the slow variables converge on the time scale
$1/\varepsilon $ as $\varepsilon,\delta\rightarrow 0 $, and it is
immaterial in which order we let these parameters tend to zero.

\subsubsection{The adiabatic piston problem}

We comment on what Theorem \ref{thm:1Dpiston1} says about the
adiabatic piston problem. The initial conditions of the adiabatic
piston problem require that $W(0) =0$. Although our system is so
simple that a proper thermodynamical pressure is not defined, we can
define the pressure of a gas to be the average force received from
the gas particles by the piston when it is held fixed,
i.e.~$P_1=\sum_{j=1} ^{n_1}2m_{1,j} s_{1,j} \frac{s_{1,j}}{2Q} =2
E_1/Q $ and $P_2=2E_2/(1-Q) $. Then if $P_1(0) >P_2(0) $, the
initial condition for our averaged equation \eqref{eq:1davg} has the
motion of the piston starting at the left turning point of a
periodic orbit determined by the effective potential well.  Up to
errors not much bigger than $M^ {-1/2} $, we see the piston
oscillate periodically on the time scale $M^ {1/2}$. If $P_1(0) <
P_2(0) $, the motion of the piston starts at a right turning point.
However, if $P_1(0) = P_2(0) $, then the motion of the piston starts
at the bottom of the effective potential well. In this case of
mechanical equilibrium, $\bar h(\tau) =\bar h(0) $, and we conclude
that, up to errors not much bigger than $M^ {-1/2} $, we see no
motion of the piston on the time scale $M^ {1/2}$.  A much longer
time scale is required to see if the temperatures equilibrate.

\subsubsection{Generalizations}

A simple generalization of Theorem \ref{thm:1Dpiston1}, proved by
similar techniques, follows. The system consists of $N-1$ pistons,
that is, heavy point particles, located inside the unit interval at
positions $Q_1< Q_2<\dotsc< Q_{N-1} $. Walls are located at
$Q_0\equiv 0$ and $Q_N\equiv 1$, and the piston at position $Q_i$
has mass $M_i$. Then the pistons divide the unit interval into $N$
chambers. Inside the $i^ {th}$ chamber, there are $n_i\geq 1$ gas
particles whose locations and masses will be denoted by $x_{i,j}$
and $m_{i,j}$, respectively, where $1\leq j\leq n_i$. All of the
particles are point particles, and the gas particles interact with
the pistons and with the walls via elastic collisions. However, the
gas particles do not directly interact with each other. We scale the
piston masses as $M_i=\hat M_i/\varepsilon^2$ with $\hat M_i$
constant, define $W_i$ by $dQ_i/dt=\varepsilon W_i$, and let $E_i$
be the kinetic energy of the gas particles in the $i^ {th}$ chamber.
Then we can find an appropriate averaged equation whose solutions
have the pistons moving like an $(N-1)$-dimensional particle inside
a potential well with an effective Hamiltonian
\[
    \frac {1} {2}\sum_{i=1}^{N-1}\hat M_i W_i^2 +
    \sum_{i=1}^{N}
    \frac{E_i(0)(Q_i(0)-Q_{i-1}(0))^2}{(Q_i-Q_{i-1})^2}.
\]
If we write the slow variables as $h= (Q_i,W_i,\abs{v_{i,j}})$ and
fix a compact set $\mathcal{V}$ such that $h\in \mathcal{V}
\Rightarrow Q_{i+1} -Q_i\subset\subset (0,1),W_i\subset\subset
\mathbb R$, and $\abs{v_{i,j}} \subset\subset (0,\infty)$, then the
convergence of the actual motions of the slow variables to the
averaged solutions is exactly the same as the convergence given in
Theorem \ref{thm:1Dpiston1}.

\begin {rem}

The inverse quadratic potential between adjacent pistons in the
effective Hamiltonian above is also referred to as the
Calogero-Moser-Sutherland potential.  It has also been observed as
the effective potential created between two adjacent tagged
particles in a one-dimensional Rayleigh gas by the insertion of one
very light particle inbetween the tagged
particles~\cite{BalintTothToth2007}.

\end{rem}

\section{Heuristic derivation of the averaged equation for the
        hard core piston}
\label {sct:1dps_hc_heuristic}

We present here a heuristic derivation of Sinai's averaged equation
\eqref{eq:1davg} that is found in~\cite {Dol05}.

First, we examine interparticle collisions when $\varepsilon>0$.
When a particle on the left, say the one at position $q_{1,j} $,
collides with the piston, $s_{1,j} $ and $W$ instantaneously change
according to the laws of elastic collisions:
\begin {equation}\label {eq:v1lin}
    \begin{bmatrix}
    v_{1,j}^+\\ V^+
    \end{bmatrix}
    =
    \frac{1}{m_{1,j}+M}
    \begin{bmatrix}
    m_{1,j}-M& 2M\\
    2m_{1,j}& M-m_{1,j}\\
    \end{bmatrix}
    \begin{bmatrix}
    v_{1,j}^-\\ V^-
    \end{bmatrix}.
\end {equation}
If the speed of the left gas particle is bounded away from zero, and
$W=M^ {1/2} V $ is also bounded, it follows that for all
$\varepsilon$ sufficiently small, any collision will have $v_{1,j}^
-
>0$ and $v_{1,j}^ + <0$. In this case, when we translate Equation
\eqref{eq:v1lin} into our new coordinates, we find that
\begin {equation}\label {eq:s1lin}
    \begin{bmatrix}
    s_{1,j}^+\\ W^+
    \end{bmatrix}
    =
    \frac{1}{1+\varepsilon^2 m_{1,j}}
    \begin{bmatrix}
    1-\varepsilon^2 m_{1,j}& - 2\varepsilon\\
    2\varepsilon m_{1,j}& 1-\varepsilon^2 m_{1,j}\\
    \end{bmatrix}
    \begin{bmatrix}
    s_{1,j}^-\\ W^-
    \end{bmatrix},
\end {equation}
so that
\[
\begin {split}
    \Delta s_{1,j}&=s_{1,j}^+ -s_{1,j}^- =-2\varepsilon W^-
    +\mathcal{O}(\varepsilon^2),\\
    \Delta W &=W^+ -W^- =+2\varepsilon m_{1,j}
    s_{1,j}^- +\mathcal{O}(\varepsilon^2).\\
\end {split}
\]

The situation is analogous when particles on the right collide with
the piston. For all $\varepsilon $ sufficiently small, $s_{2,j} $
and $W$ instantaneously change by
\[
\begin {split}
    \Delta W &=W^+ -W^- = - 2\varepsilon m_{2,j}
    s_{2,j}^- +\mathcal{O}(\varepsilon^2),\\
    \Delta s_{2,j}&=s_{2,j}^+ -s_{2,j}^- = + 2\varepsilon W^-
    +\mathcal{O}(\varepsilon^2).\\
\end {split}
\]
We defer discussing the rare events in which multiple gas particles
collide with the piston simultaneously, although we will see that
they can be handled appropriately.

Let $\Delta t $ be a length of time long enough such that the piston
experiences many collisions with the gas particles, but short enough
such that the slow variables change very little, in this time
interval.  From each collision with the particle at position
$q_{1,j} $, $W$ changes by an amount $+2\varepsilon m_{1,j}
s_{1,j}+\mathcal{O}(\varepsilon^2)$, and the frequency of these
collisions is approximately $\frac {s_{1,j}} {2Q} $. Arguing
similarly for collisions with the other particles, we guess that
\[
    \frac {\Delta W} {\Delta t} =
    \varepsilon\sum_{j=1} ^ {n_1} 2m_{1,j}
    s_{1,j}\frac{s_{1,j}}{2Q}
    - \varepsilon\sum_{j=1} ^ {n_2}
    2m_{2,j} s_{2,j}\frac{s_{2,j}}{2(1-Q)}
    +\mathcal{O}(\varepsilon^2).
\]
Note that not only does the position of the piston change slowly in
time, but its velocity also changes slowly, i.e. the piston has
inertia. With $\tau=\varepsilon t$ as the slow time, a reasonable
guess for the averaged equation for $W$ is
\[
    \frac {dW}{d\tau}=\frac{\sum_{j=1} ^ {n_1}m_{1,j} s_{1,j}^2}{Q}
    -\frac{\sum_{j=1} ^ {n_2}m_{2,j} s_{2,j}^2}{1-Q}.
\]
Similar arguments for the other slow variables lead to the averaged
equation \eqref{eq:1davg}.

\section{Proof of the main result for the hard core piston}
\label {sct:hc_proof}

\subsection{Proof of Theorem \ref{thm:1Dpiston1} with only one gas
            particle on each side}

We specialize to the case when there is only one gas particle on
either side of the piston, i.e.~we assume that $ n_1= n_2=1$.  We
then denote $x_{1,1} $ by $q_1$, $m_{2,2} $ by $m_2$, etc. This
allows the proof's major ideas to be clearly expressed, without
substantially limiting their applicability. At the end of this
section, we outline the simple generalizations needed to make the
proof apply in the general case.

\subsubsection{A choice of coordinates on the phase space for a
    three particle system}
\label {sct:1dps_hcPS}

As part of our proof, we choose a set of coordinates on our
six-dimensional phase space such that, in these coordinates, the
$\varepsilon=0$ dynamics are smooth.  Complete the slow variables
$h= (Q,W,s_1,s_2)$ to a full set of coordinates by adding the
coordinates $\varphi _i\in [0,1]/ \, 0\sim 1=S^1,\:i=1,2$, defined
as follows:
\[
\begin {split}
    \varphi_1 =\varphi_1 (q_1,v_1,Q)=&\begin {cases}
    \frac{q_1}{2Q} &\text { if } v_1 >0\\
    1-\frac{q_1}{2Q} &\text { if } v_1<0\\
    \end {cases}
    \\
    \varphi_2=\varphi_2(q_2,v_2,Q)=&\begin {cases}
    \frac{1-q_2}{2(1-Q)} &\text { if } v_2<0\\
    1- \frac{1-q_2}{2(1-Q)} &\text { if } v_2>0\\
    \end {cases}\\
\end {split}.
\]
When $\varepsilon =0 $, these coordinates are simply the angle
variable portion of action-angle coordinates for an integrable
Hamiltonian system.  They are defined such that collisions occur
between the piston and the gas particles precisely when $\varphi_1 $
or $\varphi_2 =1/2$. Then $z = (h,\varphi_1,\varphi_2) $ represents
a choice of coordinates on our phase space, which is homeomorphic to
$(\text{a subset of } \mathbb{R}^4)\times \mathbb{ T} ^ 2 $.  We
abuse notation and also let $h(z) $ represent the projection onto
the first four coordinates of $z$.

Now we describe the dynamics of our system in these coordinates.
When $\varphi_1,\varphi_2\ne 1/2$,
\[
\begin {split}
    \frac{d\varphi_1}{dt} =&\begin {cases}
    \frac{s_1}{2Q}-\frac{\varepsilon W}{Q}\varphi_1&\text { if }
    0\leq \varphi_1<1/2\\
    \frac{s_1}{2Q}+\frac{\varepsilon W}{Q}(1-\varphi_1)&\text { if }
    1/2< \varphi_1\leq 1\\
    \end {cases}
    \\
    \frac{d\varphi_2}{dt} =&\begin {cases}
    \frac{s_2}{2(1-Q)}+\frac{\varepsilon W}{1-Q}\varphi_2&\text { if }
    0\leq \varphi_2<1/2\\
    \frac{s_2}{2(1-Q)}-\frac{\varepsilon W}{1-Q}(1-\varphi_2)&\text { if }
    1/2< \varphi_2\leq 1\\
    \end {cases}
\end {split}.
\]
Hence between interparticle collisions, the dynamics are smooth and
are described by
\begin {equation}\label {eq:1dode}
\begin {split}
    \frac{dQ}{dt}&=\varepsilon W,\\
    \frac{dW}{dt}&=0,\\
    \frac{ds_1}{dt}&=0,\\
    \frac{ds_2}{dt}&=0,\\
    \frac{d\varphi_1}{dt}&=\frac{s_1}{2Q}+
        \mathcal{O}(\varepsilon),\\
    \frac{d\varphi_2}{dt}&=\frac{s_2}{2 (1 - Q)}+
        \mathcal{O}(\varepsilon).\\
\end {split}
\end {equation}

When $\varphi_1$ reaches $1/2$, while $\varphi_2\neq 1/2$, the
coordinates $Q,s_2,\varphi_1$, and $\varphi_2$ are instantaneously
unchanged, while $s_1$ and $W$ instantaneously jump, as described by
Equation \eqref{eq:s1lin}. As an aside, it is curious that $s_1^+
+\varepsilon W^+ = s_1^- -\varepsilon W^-$, so that $d\varphi_1/dt $
is continuous as $\varphi_1 $ crosses $1/2 $. However, the collision
induces discontinuous jumps of size $\mathcal{O} (\varepsilon ^2) $
in $dQ/dt$ and $d\varphi_2/dt$.  Denote the linear transformation in
Equation \eqref{eq:s1lin} with $j=1$ by $A_{1,\varepsilon} $.  Then
\[
    A_{1,\varepsilon} =\begin{bmatrix}
    1& - 2\varepsilon\\
    2\varepsilon m_1& 1\\
    \end{bmatrix} +\mathcal{O} (\varepsilon^2).
\]

The situation is analogous when $\varphi_2$ reaches $1/2$, while
$\varphi_1\neq 1/2$.  Then $W$ and $s_2$ are instantaneously
transformed by a linear transformation
\[
    A_{2,\varepsilon}
    =\begin{bmatrix}
    1&  -2\varepsilon m_2\\
    2\varepsilon& 1\\
    \end{bmatrix} +\mathcal{O} (\varepsilon^2).
\]

We also account for the possibility of all three particles colliding
simultaneously.  There is no completely satisfactory way to do this,
as the dynamics have an essential singularity near $\{\varphi_1
=\varphi_2 =1/2\} $.  Furthermore, such three particle collisions
occur with probability zero with respect to the invariant measure
discussed below.  However, the two $3\times 3$ matrices
\[
    \begin{bmatrix}
    A_{1,\varepsilon} & 0\\
    0& 1\\
    \end{bmatrix},\quad
    \begin{bmatrix}
    1 & 0\\
    0& A_{2,\varepsilon}\\
    \end{bmatrix}
\]
have a commutator of size $\mathcal{O} (\varepsilon ^2) $.  We will
see that this small of an error will make no difference to us as
$\varepsilon\rightarrow 0$, and so when $\varphi_1 =\varphi_2 =1/2
$, we pretend that the left particle collides with the piston
instantaneously before the right particle does.  Precisely, we
transform the variables $s_1,\: W, $ and $s_2 $ by
\[
\begin{bmatrix}
    s_1^+\\ W^+ \\ s_2^+\\
    \end{bmatrix}=
\begin{bmatrix}
     1 & 0\\
    0& A_{2,\varepsilon}\\
    \end{bmatrix}
\begin{bmatrix}
    A_{1,\varepsilon} & 0\\
    0& 1\\
\end{bmatrix}
\begin{bmatrix}
    s_1^-\\ W^-\\ s_2^-\\
    \end{bmatrix}.
\]
We find that
\[
\begin {split}
    \Delta s_1&=s_1^+ -s_1^- = - 2\varepsilon W^-
    +\mathcal{O}(\varepsilon^2),\\
    \Delta W &=W^+ -W^- = +2\varepsilon m_1 s_1^-
    - 2\varepsilon m_2
    s_2^- +\mathcal{O}(\varepsilon^2),\\
    \Delta s_2&=s_2^+ -s_2^- = + 2\varepsilon W^-
    +\mathcal{O}(\varepsilon^2).\\
\end {split}
\]

The above rules define a flow on the phase space, which we denote by
$z_\varepsilon (t) $.  We denote its components by
$Q_\varepsilon(t),\: W_\varepsilon(t),\: s_{1,\varepsilon} (t), $
etc.  When $\varepsilon
> 0 $, the flow is not continuous, and for
definiteness we take $z_\varepsilon(t) $ to be left continuous in
$t$.

Because our system comes from a Hamiltonian system, it preserves
Liouville
 measure. In our coordinates, this measure has a density proportional
 to
$Q (1 -Q) $.  That this measure is preserved also follows from the
fact that the ordinary differential equation \eqref{eq:1dode}
preserves this measure, and the matrices
$A_{1,\varepsilon},\:A_{2,\varepsilon} $ have determinant $1 $. Also
note that the set $\{\varphi_1 =\varphi_2 =1/2 \} $ has co-dimension
two, and so $\bigcup_t z_\varepsilon(t)\{\varphi_1 =\varphi_2 =1/2
\} $ has co-dimension one, which shows that only a measure zero set
of initial conditions will give rise to three particle collisions.

\subsubsection{Argument for uniform convergence} \label
{sct:1dps_hc_unif}

\paragraph*{Step 1:  Reduction using Gronwall's Inequality.}

Define $H(z) $ by
\[
    H(z) =
    \begin{bmatrix}
    W\\
    2m_1 s_1 \delta_{\varphi_1=1/2}-2m_2
     s_2\delta_{\varphi_2=1/2}\\
    -2W\delta_{\varphi_1=1/2}\\
    2W\delta_{\varphi_2=1/2}\\
    \end{bmatrix}.
\]
Here we make use of Dirac delta functions.  All integrals involving
these delta functions may be replaced by sums.  We explicitly deal
with any ambiguities arising from collisions occurring at the limits
of integration.

\begin {lem}
For $0 \leq t\leq \frac{T\wedge T_\varepsilon}{\varepsilon}$,
\[
    h_\varepsilon(t)-h_\varepsilon(0)=
    \varepsilon\int_0^t
    H(z_\varepsilon(s))ds+\mathcal{O}(\varepsilon),
\]
where any ambiguity about changes due to collisions occurring
precisely at times $0 $ and $t $ is absorbed in the $\mathcal{O}
(\varepsilon) $ term.
\end {lem}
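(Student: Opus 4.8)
The plan is to track $h_\varepsilon$ on $[0,t]$ by separating the smooth motion between interparticle collisions, governed by Equation~\eqref{eq:1dode}, from the jumps incurred at the collisions. Between collisions $W,s_1,s_2$ are constant while $dQ/dt=\varepsilon W$, and $Q_\varepsilon$ is moreover continuous across every collision (including triple collisions, where only $s_1,W,s_2$ are transformed); hence the total change of $Q_\varepsilon$ over $[0,t]$ is \emph{exactly} $\varepsilon\int_0^t W_\varepsilon(s)\,ds$, which is precisely the first component of $\varepsilon\int_0^t H(z_\varepsilon(s))\,ds$. For the remaining three components the smooth evolution contributes nothing, so the whole change of $(W,s_1,s_2)$ over $[0,t]$ equals the sum of the jumps at the collisions lying in $[0,t]$.

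The next step is to bound the number $N$ of such collisions. For $\tau\leq T\wedge T_\varepsilon$ the slow variables $h_\varepsilon(\tau/\varepsilon)$ remain in the fixed compact set $\mathcal{V}$, on which $s_1/(2Q)$ and $s_2/(2(1-Q))$ are bounded above by some $\omega<\infty$; by Equation~\eqref{eq:1dode} the clock rates $d\varphi_i/dt$ are then $\leq 2\omega$ for all $\varepsilon$ small enough, uniformly over initial conditions. Since two successive collisions involving the same gas particle are separated by a full turn of the corresponding $\varphi_i$, they are separated in time by at least $1/(2\omega)$; together with $t\leq(T\wedge T_\varepsilon)/\varepsilon$ this gives $N\leq \C/\varepsilon$. (For $\varepsilon$ small the incoming and outgoing velocities at a collision also have opposite signs, as was already used in setting up the $\varphi_i$ coordinates.)

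Now substitute the collision laws. A collision with the left particle changes $(s_1,W)$ by $\Delta s_1=-2\varepsilon W^-+\mathcal{O}(\varepsilon^2)$, $\Delta W=+2\varepsilon m_1 s_1^-+\mathcal{O}(\varepsilon^2)$ (Equation~\eqref{eq:s1lin}); the right particle is symmetric; and a triple collision, handled by the prescribed composition of $A_{1,\varepsilon}$ and $A_{2,\varepsilon}$, produces the jumps displayed in Section~\ref{sct:1dps_hcPS}, which differ from the superposition of one left and one right jump only by the $\mathcal{O}(\varepsilon^2)$ commutator. Interpreting $\int_0^t(\cdot)\,\delta_{\varphi_i=1/2}\,ds$ as the sum over collisions in $[0,t]$ with the integrand evaluated at the pre-collision state $z_\varepsilon(\cdot)$ (consistent with the left-continuity convention, and with $\varphi_i$ continuous so that it attains $1/2$ at the collision time), the leading terms of the jumps sum exactly to the last three components of $\varepsilon\int_0^t H(z_\varepsilon(s))\,ds$. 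The accumulated correction is $N\cdot\mathcal{O}(\varepsilon^2)=\mathcal{O}(\varepsilon)$ by the bound on $N$, with constants uniform over initial conditions because $W$ and the masses are bounded on the compact set $\mathcal{V}$. Finally, a collision occurring exactly at time $0$ or $t$ alters the sum by at most one jump, i.e.\ by $\mathcal{O}(\varepsilon)$, which is absorbed in the error term.

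The one delicate point is the uniform collision count $N=\mathcal{O}(1/\varepsilon)$: its constant must be independent of the initial condition and of $\varepsilon$. This is exactly where the stopping time enters — up to $T_\varepsilon$ the orbit keeps $h_\varepsilon$ in the fixed compact set $\mathcal{V}$, which forces the clock rates $d\varphi_i/dt$ to stay bounded above (and the $\mathcal{O}(\varepsilon)$ corrections to those rates are harmless once $\varepsilon$ is small). Everything else is bookkeeping with Equations~\eqref{eq:1dode} and~\eqref{eq:s1lin}.
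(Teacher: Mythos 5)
Your proposal is correct and follows essentially the same route as the paper: the $Q$-component is handled by the exact identity $Q_\varepsilon(t)-Q_\varepsilon(0)=\varepsilon\int_0^t W_\varepsilon(s)\,ds$, the remaining components are written as sums of collision jumps from Equation~\eqref{eq:s1lin} (with triple collisions absorbed via the $\mathcal{O}(\varepsilon^2)$ commutator), and the collision count is bounded by $\mathcal{O}(1/\varepsilon)$ using the uniform bounds on $d\varphi_i/dt$ while $h_\varepsilon\in\mathcal{V}$, so that the per-collision $\mathcal{O}(\varepsilon^2)$ errors accumulate to $\mathcal{O}(\varepsilon)$ uniformly over initial conditions. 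This matches the paper's argument, merely spelled out in slightly more detail.
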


\begin {proof}
There are four components to verify.  The first component requires
that $Q_\varepsilon(t)-Q_\varepsilon(0)=\varepsilon\int_0^t
W_\varepsilon(s)ds+\mathcal{O}(\varepsilon)$.  This is trivially
true because $Q_\varepsilon(t)-Q_\varepsilon(0)=\varepsilon\int_0^t
W_\varepsilon(s)ds$.

The second component states that
\begin {equation}
\label {eq:delt_w}
    W_\varepsilon(t)-
    W_\varepsilon(0)
    =
    \varepsilon\int_0^t 2m_1
    s_{1,\varepsilon}(s) \delta_{\varphi_{1,\varepsilon}(s)=1/2}-2m_2
   s_{2,\varepsilon}(s)\delta_{\varphi_{2,\varepsilon}(s)=1/2}ds
     +\mathcal{O} (\varepsilon).
\end {equation}
Let $r_k $ and $q_j $ be the times in $(0,t) $ such that
$\varphi_{1,\varepsilon}(r_k)=1/2$ and
$\varphi_{2,\varepsilon}(q_j)=1/2$, respectively.  Then
\[
    W_\varepsilon(t)- W_\varepsilon(0)=
     \sum_{r_k}\Delta W_\varepsilon (r_k)+
    \sum_{q_j}\Delta W_\varepsilon (q_j)
     +\mathcal{O} (\varepsilon).
\]
Observe that there exists $\omega >0 $ such that for all
sufficiently small $\varepsilon $ and all $h\in\mathcal{V}$, $
1/\omega < \frac {d\varphi_i} {dt} <\omega $.  Thus the number of
collisions in a time interval grows no faster than linearly in the
length of that time interval.  Because $t\leq T/\varepsilon $, it
follows that
\[
   W_\varepsilon(t)- W_\varepsilon(0)=\\
    \varepsilon\sum_{r_k} 2m_1
    s_{1,\varepsilon}(r_k) -\varepsilon\sum_{q_j}2m_2
   s_{2,\varepsilon}(q_j)
     +\mathcal{O} (\varepsilon),
\]
and Equation \eqref {eq:delt_w} is verified. Note that because
$\mathcal{V} $ is compact, there is uniformity over all initial
conditions in the size of the $\mathcal{O} (\varepsilon) $ terms
above.  The third and fourth components are handled similarly.
\end {proof}

Next, $\bar {h}(\tau) $ satisfies the integral equation
\[
\bar {h}(\tau) -\bar h(0) = \int_0^{\tau}\bar H(\bar
h(\sigma))d\sigma,
\]
while $h_\varepsilon(\tau/\varepsilon)$ satisfies
\[
\begin {split}
    h_\varepsilon(\tau/\varepsilon)-h_\varepsilon(0)
    &=
    \mathcal{O}(\varepsilon) +\varepsilon\int_0^{\tau/\varepsilon}
    H(z_\varepsilon(s))ds\\
    &=\mathcal{O}(\varepsilon) +
    \varepsilon\int_0^{\tau/\varepsilon}
    H(z_\varepsilon(s))-
    \bar H(h_\varepsilon(s))ds+
    \int_0^{\tau}\bar H( h_\varepsilon(\sigma/\varepsilon))d\sigma
\end {split}
\]
for $0\leq\tau\leq T\wedge T_\varepsilon$.

Define
\[
    e_\varepsilon(\tau) =\varepsilon\int_0^{\tau/\varepsilon}
    H(z_\varepsilon(s))- \bar H(h_\varepsilon(s))ds.
\]
It follows from Gronwall's Inequality that
\begin {equation}
\label {eq:1dgronwall}
    \sup_{0\leq \tau\leq T\wedge T_\varepsilon}
    \abs{\bar h(\tau)-h_\varepsilon(\tau/\varepsilon)}\leq
    \left(\mathcal{O}(\varepsilon)+
    \sup_{0\leq \tau\leq T\wedge T_\varepsilon}
    \abs{e_\varepsilon(\tau)}\right)e^{ \Lip{\bar
    H\arrowvert _\mathcal{V}}T}.
\end {equation}
Gronwall's Inequality is usually stated for continuous paths, but
the standard proof (found in \cite{SV85}) still works for paths that
are merely integrable, and $\abs{\bar
h(\tau)-h_\varepsilon(\tau/\varepsilon)}$ is piecewise smooth.

\paragraph*{Step 2:  A splitting according to particles.}

Now
\[
    H(z)-\bar H(h) =
    \begin{bmatrix}
    0\\
    2m_1 s_1  \delta_{\varphi_1=1/2}-m_{1} s_{1}^2/Q\\
    -2W\delta_{\varphi_1=1/2}+s_1 W/Q\\
    0\\
    \end{bmatrix}
    +
    \begin{bmatrix}
    0\\
    -2m_2 s_2\delta_{\varphi_2=1/2}+m_2 s_2^2/(1-Q)\\
    0\\
    2W\delta_{\varphi_2=1/2}-s_2 W/(1-Q)\\
    \end{bmatrix}
    ,
\]
and so, in order to show that $\sup_{0\leq \tau\leq T\wedge
T_\varepsilon}\abs{e_\varepsilon(\tau)} =\mathcal{O} (\varepsilon)
$, it suffices to show that
\begin {align*}\label{eq:hc_separation}
    &
    \sup_{0\leq \tau\leq
    T\wedge T_\varepsilon}
    \abs{\int_0^{\tau/\varepsilon}
    s_{1,\varepsilon}(s)\delta_{\varphi_{1,\varepsilon}(s)=1/2}-
    \frac{s_{1,\varepsilon}(s)^2}{2Q_\varepsilon(s)}ds}
    =\mathcal{O} (1),
    \\
    &
    \sup_{0\leq \tau\leq
    T\wedge T_\varepsilon}
    \abs{\int_0^{\tau/\varepsilon}
    W_{\varepsilon}(s)\delta_{\varphi_{1,\varepsilon}(s)=1/2}-
    \frac{W_{\varepsilon}(s)s_{1,\varepsilon}(s)}{2Q_\varepsilon(s)}ds}
    =\mathcal{O} (1),
\end {align*}
as well as two analogous claims about terms involving
$\varphi_{2,\varepsilon} $.  Thus we have effectively separated the
effects of the different gas particles, so that we can deal with
each particle separately. We will only show that
\[
    \sup_{0\leq\tau\leq
    T\wedge T_\varepsilon}
    \abs{\int_0^{\tau/\varepsilon}
    s_{1,\varepsilon}(s)\delta_{\varphi_{1,\varepsilon}(s)=1/2}-
    \frac{s_{1,\varepsilon}(s)^2}{2Q_\varepsilon(s)}ds}
    =\mathcal{O} (1).
\]
The other three terms can be handled similarly.

\paragraph*{Step 3:  A sequence of times adapted for ergodization.}

Ergodization refers to the convergence along an orbit of a
function's time average to its space average.  For example, because
of the splitting according to particles above, one can easily check
that $\frac {1} {t}\int_{0}^{t} H(z_0(s))ds =\bar H (h_0)
+\mathcal{O} (1/t) $, even when $z_0 (\cdot) $ restricted to the
invariant tori $\mathcal{M}_{h_0} $ is not ergodic.  In this step,
for each initial condition $z_\varepsilon(0) $ in our phase space,
we define a sequence of times $t_{k,\varepsilon} $ inductively as
follows: $t_{0,\varepsilon}=\inf\{t\geq
0:\varphi_{1,\varepsilon}(t)=0\}$,
$t_{k+1,\varepsilon}=\inf\{t>t_{k,\varepsilon}:\varphi_{1,\varepsilon}(t)=0\}$.
This sequence is chosen because $\delta_{\varphi_{1,0}(s)=1/2}$ is
``ergodizd'' as time passes from $t_{k,0}$ to $t_{k+1,0}$.  If
$\varepsilon$ is sufficiently small and $t_{k+1,\varepsilon}\leq
(T\wedge T_\varepsilon)/\varepsilon$, then the spacings between
these times are uniformly of order $1$, i.e.~$1/\omega
<t_{k+1,\varepsilon}-t_{k,\varepsilon}< \omega$.  Thus,
\begin {equation}\label{eq:hc_splitting}
\begin {split}
    \sup_{0\leq \tau\leq
    T\wedge T_\varepsilon}
    &
    \abs{\int_0^{\tau/\varepsilon}
    s_{1,\varepsilon}(s)\delta_{\varphi_{1,\varepsilon}(s)=1/2}-
    \frac{s_{1,\varepsilon}(s)^2}{2Q_\varepsilon(s)}ds}
    \\
    &
    \leq
    \mathcal{O} (1)+
    \sum_{t_{k+1,\varepsilon}\leq
    \frac {T\wedge T_\varepsilon}{\varepsilon}}
    \abs{
    \int_{t_{k,\varepsilon}}^{t_{k+1,\varepsilon}}
    s_{1,\varepsilon}(s)\delta_{\varphi_{1,\varepsilon}(s)=1/2}-
    \frac{s_{1,\varepsilon}(s)^2}{2Q_\varepsilon(s)}ds}.
\end {split}
\end {equation}

\paragraph*{Step 4:  Control of individual terms by comparison with
        solutions along fibers.}

The sum in Equation \eqref{eq:hc_splitting} has no more than
$\mathcal{O} (1/\varepsilon)$ terms, and so it suffices to show that
each term is no larger than $\mathcal{O} (\varepsilon) $.  We can
accomplish this by comparing the motions of $z_\varepsilon(t)$ for
$t_{k,\varepsilon}\leq t\leq t_{k+1,\varepsilon}$ with the solution
of the $\varepsilon=0$ version of Equation \eqref{eq:1dode} that, at
time $t_{k,\varepsilon}$, is located at
$z_\varepsilon(t_{k,\varepsilon})$. Since each term in the sum has
the same form, without loss of generality we will only examine the
first term and suppose that $t_{0,\varepsilon} =0$, i.e.~that
$\varphi_{1,\varepsilon} (0) =0$.

\begin {lem}
\label {lem:1d_divergence}

If $t_{1,\varepsilon}\leq\frac{T\wedge T_\varepsilon}{\varepsilon}$,
then $
    \sup_{0\leq t\leq t_{1,\varepsilon}}\abs{z_{0}(t)-
    z_\varepsilon(t)}=\mathcal{O}(\varepsilon ).
$
\end {lem}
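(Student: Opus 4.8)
The plan is to use that, by Step 3, the interval $[0,t_{1,\varepsilon}]$ has length of order one ($1/\omega<t_{1,\varepsilon}<\omega$) and that on it the orbit $z_\varepsilon$ experiences only $\mathcal{O}(1)$ collisions with the piston. Indeed $\varphi_{1,\varepsilon}$ increases strictly and returns to $0$ at time $t_{1,\varepsilon}$, so it crosses $1/2$ exactly once (one collision with the left particle), while, since $\dot\varphi_{2,\varepsilon}<\omega$, the angle $\varphi_{2,\varepsilon}$ sweeps out less than $\omega^2$ and hence meets $1/2$ at most $\mathcal{O}(1)$ times (boundedly many collisions with the right particle). The hypothesis $t_{1,\varepsilon}\le(T\wedge T_\varepsilon)/\varepsilon$ guarantees $h_\varepsilon(t)\in\mathcal{V}$ for $0\le t\le t_{1,\varepsilon}$, so — $\mathcal{V}$ being compact — every $\mathcal{O}(\varepsilon)$ below is uniform over initial conditions.

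First I would bound the slow variables. Between collisions $W_\varepsilon,s_{1,\varepsilon},s_{2,\varepsilon}$ are constant and $\dot Q_\varepsilon=\varepsilon W_\varepsilon=\mathcal{O}(\varepsilon)$ by \eqref{eq:1dode}, while at each collision $h_\varepsilon$ jumps by $\mathcal{O}(\varepsilon)$ by \eqref{eq:s1lin} and its right-particle analogue. Since there are only $\mathcal{O}(1)$ collisions and $t_{1,\varepsilon}=\mathcal{O}(1)$, it follows that $\sup_{0\le t\le t_{1,\varepsilon}}\abs{h_\varepsilon(t)-h_\varepsilon(0)}=\mathcal{O}(\varepsilon)$. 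As $h_0(t)\equiv h_\varepsilon(0)$, this is exactly the desired estimate for the four slow components of $z_0(t)-z_\varepsilon(t)$, and in particular $Q_\varepsilon,s_{1,\varepsilon},s_{2,\varepsilon}$ stay within $\mathcal{O}(\varepsilon)$ of their $t=0$ values throughout $[0,t_{1,\varepsilon}]$.

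Then I would bound the two angle variables. The key point is that collisions leave $\varphi_1$ and $\varphi_2$ unchanged, so $\varphi_{1,\varepsilon}$ and $\varphi_{2,\varepsilon}$ are continuous and piecewise $\mathcal{C}^1$ on $[0,t_{1,\varepsilon}]$ with only $\mathcal{O}(1)$ breakpoints (the collision times). From \eqref{eq:1dode} one has, wherever differentiable, $d\varphi_{1,\varepsilon}/dt=s_{1,\varepsilon}/(2Q_\varepsilon)+\mathcal{O}(\varepsilon)$ and $d\varphi_{2,\varepsilon}/dt=s_{2,\varepsilon}/(2(1-Q_\varepsilon))+\mathcal{O}(\varepsilon)$, whereas $\varphi_{i,0}$ has the constant derivative obtained by setting $\varepsilon=0$ and freezing the slow variables at their $t=0$ values. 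By the previous paragraph these agree up to $\mathcal{O}(\varepsilon)$, i.e.\ $d\varphi_{i,\varepsilon}/dt=d\varphi_{i,0}/dt+\mathcal{O}(\varepsilon)$ almost everywhere on $[0,t_{1,\varepsilon}]$. Since $\varphi_{i,\varepsilon}(0)=\varphi_{i,0}(0)$ and $t_{1,\varepsilon}<\omega$, integrating in $t$ gives $\sup_{0\le t\le t_{1,\varepsilon}}\abs{\varphi_{i,\varepsilon}(t)-\varphi_{i,0}(t)}=\mathcal{O}(\varepsilon)$ for $i=1,2$. Combining with the slow-variable bound yields the lemma.

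I do not foresee a genuine obstacle: this is the ``continuity between the $\varepsilon=0$ and $\varepsilon>0$ flows'' step, and because the interval carries only boundedly many collisions, the discontinuities of the hard-core flow contribute only $\mathcal{O}(\varepsilon)$ in aggregate rather than growing like $1/\varepsilon$. The only care needed is the bookkeeping that the collision count and all error constants are genuinely uniform — which rests on the bounds $1/\omega<\dot\varphi_i<\omega$ on $\mathcal{V}$ from Step 3 together with the stopping-time hypothesis keeping $h_\varepsilon$ in $\mathcal{V}$. If one preferred, the angle estimate could instead be obtained by a piecewise application of Gronwall's Inequality as in the proof of Theorem~\ref{thm:simple_averaging2}, but it is not needed here because the $\varphi$-equations depend on $\varphi$ itself only through the $\mathcal{O}(\varepsilon)$ terms.
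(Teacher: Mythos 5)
Your proof is correct and follows essentially the same route as the paper: first show the slow variables drift by only $\mathcal{O}(\varepsilon)$ (continuous drift of $Q$ plus $\mathcal{O}(1)$ collisions each contributing $\mathcal{O}(\varepsilon)$), then integrate the $\mathcal{O}(\varepsilon)$ difference of the $\varphi_i$-derivatives over the $\mathcal{O}(1)$ interval, using that collisions do not change the values of $\varphi_1,\varphi_2$. The only difference is that you spell out the collision count explicitly, which the paper takes from the earlier bounds $1/\omega<d\varphi_i/dt<\omega$.
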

\begin {proof}
To check that $\sup_{0\leq t\leq t_{1,\varepsilon}}\abs{h_{0}(t)-
    h_\varepsilon(t)} = \mathcal{O}(\varepsilon) $,
first note that $h_0 (t) =h_0(0) =h_\varepsilon (0) $. Then
$dQ_\varepsilon/dt =\mathcal{O} (\varepsilon) $, so that $Q_0(t)
-Q_\varepsilon(t) =\mathcal{O} (\varepsilon t) $.  Furthermore, the
other slow variables change by $\mathcal{O} (\varepsilon) $ at
collisions, while the number of collisions in the time interval $[0,
t_{1,\varepsilon}] $ is $ \mathcal{O} (1) $.

It remains to show that $\sup_{0\leq t\leq t_{1,\varepsilon}}
\abs{\varphi_{i,0}(t)-\varphi_{i,\varepsilon}(t)}=
\mathcal{O}(\varepsilon )$. Using what we know about the divergence
of the slow variables,
\[
\begin {split}
    \varphi_{1,0} (t) -\varphi_{1,\varepsilon} (t)  &=
    \int_0^t\frac{s_{1,0}(s)}{2Q_0(s)}-
    \frac{s_{1,\varepsilon}(s)}{2Q_\varepsilon(s)}+
    \mathcal{O}(\varepsilon)ds
    =\int_0 ^t
    \mathcal{O} (\varepsilon)ds
    = \mathcal{O} (\varepsilon )\\
\end {split}
\]
for $0\leq t \leq t_{1,\varepsilon} $.  Showing that $\sup_{0\leq
t\leq t_{1,\varepsilon}}
\abs{\varphi_{2,0}(t)-\varphi_{2,\varepsilon}(t)}=
\mathcal{O}(\varepsilon )$ is similar.

\end {proof}

From Lemma \ref{lem:1d_divergence},
$t_{1,\varepsilon}=t_{1,0}+\mathcal{O}(\varepsilon)
=2Q_0/s_{1,0}+\mathcal{O}(\varepsilon)$.  We conclude that
\[
\begin {split}
    \int_{0}^{t_{1,\varepsilon}}
    s_{1,\varepsilon}(s)\delta_{\varphi_{1,\varepsilon}(s)=1/2}-
    \frac{s_{1,\varepsilon}(s)^2}{2Q_\varepsilon(s)}ds
    & =
    \mathcal{O} (\varepsilon) +
    \int_{0}^{t_{1,\varepsilon}}
    s_{1,0}(s)\delta_{\varphi_{1,\varepsilon}(s)=1/2}-
    \frac{s_{1,0}(s)^2}{2Q_0(s)}ds
    \\
    &=
    \mathcal{O}(\varepsilon)+s_{1,0}
    -t_{1,\varepsilon}\frac{s_{1,0}^2}{2Q_0}
    =\mathcal{O}(\varepsilon).
\end {split}
\]

It follows that $
    \sup_{0\leq \tau\leq T\wedge T_\varepsilon}
    \abs{h_\varepsilon(\tau/\varepsilon)-\bar
    h(\tau)}
    =\mathcal{O}(\varepsilon ),
$ independent of the initial condition in $h^ {-1}\mathcal{V}$.

\subsection{Extension to multiple gas particles}

When $n_1,n_2>1$, only minor modifications are necessary to
generalize the proof above. We start by extending the slow variables
$h$ to a full set of coordinates on phase space by defining the
angle variables $\varphi _{i,j}\in [0,1]/ \, 0\sim 1=S^1$ for $1\leq
i\leq 2, $ $1\leq j\leq n_i$:
\[
\begin {split}
    \varphi_{1,j} =\varphi_{1,j} (q_{1,j},v_{1,j},Q)=
    &\begin {cases}
    \frac{q_{1,j}}{2Q} &\text { if } v_{1,j} >0\\
    1-\frac{q_{1,j}}{2Q} &\text { if } v_{1,j}<0\\
    \end {cases}
    \\
    \varphi_{2,j}=\varphi_{2,j}(q_{2,j},v_{2,j},Q)=
    &\begin {cases}
    \frac{1-q_{2,j}}{2(1-Q)} &\text { if } v_{2,j}<0\\
    1- \frac{1-q_{2,j}}{2(1-Q)} &\text { if } v_{2,j}>0\\
    \end {cases}\\
\end {split}.
\]
Then $d\varphi_{1,j}/dt=s_{1,j}(2Q) ^ {-1} +\mathcal{O}
(\varepsilon)$, $d\varphi_{2,j}/dt=s_{2,j}(2(1-Q)) ^ {-1}
+\mathcal{O} (\varepsilon)$, and $z =
(h,\varphi_{1,j},\varphi_{2,j}) $ represents a choice of coordinates
on our phase space, which is homeomorphic to $(\text{a subset of }
\mathbb{R}^{n_1+n_2+2})\times \mathbb{ T} ^ {n_1+n_2} $.  In these
coordinates, the dynamical system yields a discontinuous flow
$z_\varepsilon(t) $ on phase space.  The flow preserves Liouville
measure, which in our coordinates has a density proportional to $Q^
{n_1}(1-Q)^{n_2}$. As is Section \ref{sct:1dps_hcPS}, one can show
that the measure of initial conditions leading to multiple particle
collisions is zero.

Next, define $H(z) $ by
\[
    H(z) =
    \begin{bmatrix}
    W\\
    \sum_{j=1}^{n_1}2m_{1,j} s_{1,j} \delta_{\varphi_{1,j}=1/2}
    -\sum_{j=1}^{n_2}2m_2
     s_{2,j}\delta_{\varphi_{2,j}=1/2}\\
    -2W\delta_{\varphi_{1,j}=1/2}\\
    2W\delta_{\varphi_{2,j}=1/2}\\
    \end{bmatrix}.
\]
For $0 \leq t\leq \frac{T\wedge T_\varepsilon}{\varepsilon}$, $
    h_\varepsilon(t)-h_\varepsilon(0)=
    \varepsilon\int_0^t
    H(z_\varepsilon(s))ds+\mathcal{O}(\varepsilon).
$ From here, the rest of the proof follows the same arguments made
in Section \ref{sct:1dps_hc_unif}.

\section{Proof of the main result for the soft core piston}
\label {sct:sc_proof}

For the remainder of this chapter, we consider the family of
Hamiltonian systems introduced in Section
\ref{sct:soft_core_results}, which are parameterized by
$\varepsilon,\delta\geq 0$. For simplicity, we specialize to
$n_1=n_2=1$. As in Section \ref{sct:hc_proof}, the generalization to
$n_1, n_2>1$ is not difficult. The Hamiltonian dynamics are given by
the following ordinary differential equation:
\begin {equation}
\label {eq:smooth1dode1}
\begin {split}
    \frac{dQ}{dt}&=\varepsilon W,\\
    \frac{dW}{dt}&=\varepsilon\left(
    -\kappa_\delta'(Q-x_{1})
    +\kappa_\delta'(x_{2}-Q)\right),\\
    \frac{dx_{1}}{dt}&=v_{1},\\
    \frac{dv_{1}}{dt}&=\frac{1}{m_{1}}\bigl( -\kappa_\delta'(x_{1})
    +\kappa_\delta'(Q-x_{1})\bigr),\\
    \frac{dx_{2}}{dt}&=v_{2},\\
    \frac{dv_{2}}{dt}&=\frac{1}{m_{2}}\bigl( -\kappa_\delta'(x_{2}-Q)
    +\kappa_\delta'(1-x_{2})\bigr).\\
\end {split}
\end {equation}
Recalling the particle energies defined by Equation
\eqref{eq:soft_energies}, we find that
\begin {equation*}
\begin {split}
    \frac {d E_{1}} {dt} =\varepsilon W\kappa_\delta' (Q
    -x_{1}),\qquad
    \frac {d E_{2}} {dt} = -\varepsilon W\kappa_\delta' (x_{2} -Q).\\
\end {split}
\end {equation*}

\comment {
\begin {equation}
\label {eq:smooth1dode1}
\begin {split}
    \frac{dQ}{dt}&=\varepsilon W,\\
    \frac{dW}{dt}&=\varepsilon\left(\sum_{j=1}^{n_1}
    -\kappa_\delta'(Q-q_{1,j})
    +\sum_{j=1}^{n_2}\kappa_\delta'(q_{2,j}-Q)\right),\\
    \frac{dq_{1,j}}{dt}&=v_{1,j},\\
    \frac{dv_{1,j}}{dt}&=\frac{1}{m_{1,j}}\bigl( -\kappa_\delta'(q_{1,j})
    +\kappa_\delta'(Q-q_{1,j})\bigr),\\
    \frac{dq_{2,j}}{dt}&=v_{2,j},\\
    \frac{dv_{2,j}}{dt}&=\frac{1}{m_{2,j}}\bigl( -\kappa_\delta'(q_{2,j}-Q)
    +\kappa_\delta'(1-q_{2,j})\bigr).\\
\end {split}
\end {equation}
Recalling the particle energies $E_{i,j} $ defined by Equation
\eqref{eq:soft_energies}, we find that
\begin {equation*}
\begin {split}
    \frac {d E_{1,j}} {dt} =\varepsilon W\kappa_\delta' (Q
    -q_{1,j}),\qquad
    \frac {d E_{2,j}} {dt} = -\varepsilon W\kappa_\delta' (q_{2,j} -Q).\\
\end {split}
\end {equation*}

}

For the compact set $\mathcal{V} $ introduced in Section
\ref{sct:1d_sm_results}, fix a small positive number $\mathcal{E} $
and an open set $\mathcal{U}\subset \mathbb{R}^4 $ such that $
\mathcal{V} \subset\mathcal{U} $ and $h\in\mathcal{U}\Rightarrow
Q\in (\mathcal{E},1 -\mathcal{E})$, $W\subset\subset \mathbb {R} $,
and $\mathcal{E} <E_1,E_2 <\kappa (0) -\mathcal{E} $. We only
consider the dynamics for $0<\delta<\mathcal{E}/2$ and $h
\in\mathcal{U} $.

Define
\begin {equation*}
\begin {split}
    U_1(q_1)&=U_1(q_1,Q,\delta):=\kappa_\delta(q_1)
    +\kappa_\delta(Q-q_1),\\
    U_2(q_2)&=U_2(q_2,Q,\delta)  :=\kappa_\delta(q_2-Q)
    +\kappa_\delta(1-q_2).\\
\end {split}
\end {equation*}
Then the energies $E_i$ satisfy $E_i=m_i v_i^2/2+U_i(x_i)$.

Let $T_1=T_1(Q,E_1,\delta) $ and $T_2=T_2(Q,E_2,\delta)$ denote the
periods of the motions of the left and right gas particles,
respectively, when $\varepsilon =0 $.

\begin {lem}
\label {lem:1d_smooth_periods} For $i=1,2 $,
\[
    T_i\in\mathcal{C} ^1\{(Q,E_i,\delta):
    Q\in (\mathcal{E},1-\mathcal{E})
    ,E_i\in (\mathcal{E} ,\kappa (0) -\mathcal{E}),0\leq
    \delta <\mathcal{E}/2\} .
\]
Furthermore,
\[
\begin {split}
    T_1(Q,E_1,\delta)&=\sqrt{\frac  {2m_1} {E_1}}Q
    +\mathcal{O} (\delta),\\
    T_2(Q,E_2,\delta)&=\sqrt{\frac  {2m_2} {E_2}} (1 - Q)
    +\mathcal{O} (\delta).\\
\end {split}
\]
\end {lem}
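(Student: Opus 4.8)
The plan is to evaluate $T_i$ by the standard quadrature for one-dimensional Newtonian motion in a potential well and then to exploit the fact that the effective potential $U_i$ vanishes except in two boundary layers of width $\mathcal{O}(\delta)$. Fix $0<\delta<\mathcal{E}/2$ and $Q,E_1$ in the stated ranges and consider the left particle, which at $\varepsilon=0$ obeys $\tfrac12 m_1 v_1^2+U_1(q_1)=E_1$ with $U_1(q_1)=\kappa_\delta(q_1)+\kappa_\delta(Q-q_1)$. Since $\kappa_\delta(x)=0$ for $x\ge\delta$, since $\kappa_\delta$ is strictly decreasing on $(-\infty,\delta)$ with $\kappa_\delta(0)=\kappa(0)>E_1+\mathcal{E}$, and since $2\delta<\mathcal{E}<Q$, one checks directly that $U_1\equiv 0$ on $[\delta,Q-\delta]$, that $U_1$ is strictly monotone on $(-\infty,\delta]$ and on $[Q-\delta,\infty)$, and hence that the orbit oscillates between the turning points $q_-=\delta\,y_-(E_1)$ and $q_+=Q-\delta\,y_-(E_1)$, where $y_-(E)\in(0,1)$ is the unique solution of $\kappa(y_-)=E$ (well defined and $\mathcal{C}^2$ in $E$ by the inverse function theorem, as $\kappa'<0$ on $(0,1)$). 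Writing $T_1=2\int_{q_-}^{q_+}\bigl((2/m_1)(E_1-U_1(q_1))\bigr)^{-1/2}dq_1$ and splitting the integral over $[q_-,\delta]$, $[\delta,Q-\delta]$, $[Q-\delta,q_+]$, the middle piece gives the free-flight contribution while in each outer piece the substitution $q_1=\delta y$ (resp.\ $Q-q_1=\delta y$) replaces $\kappa_\delta(\cdot)$ by $\kappa(y)$ and pulls out a factor $\delta$. This produces the closed form
\[
    T_1(Q,E_1,\delta)=\sqrt{2m_1}\left(\frac{Q-2\delta}{\sqrt{E_1}}+2\delta\,G(E_1)\right),\qquad
    G(E):=\int_{y_-(E)}^{1}\frac{dy}{\sqrt{E-\kappa(y)}},
\]
valid for $0<\delta<\mathcal{E}/2$; at $\delta=0$ the right-hand side is $\sqrt{2m_1}\,Q/\sqrt{E_1}=2Q/s_1$, the hard-core period, so the formula extends consistently to $0\le\delta<\mathcal{E}/2$. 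The right particle is handled identically with $Q,m_1,E_1$ replaced by $1-Q,m_2,E_2$ (note $1-Q-2\delta>0$ since $Q<1-\mathcal{E}$).

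Granting this formula, the asymptotics are immediate: $T_1=\sqrt{2m_1/E_1}\,Q+2\sqrt{2m_1}\,\delta\bigl(G(E_1)-1/\sqrt{E_1}\bigr)=\sqrt{2m_1/E_1}\,Q+\mathcal{O}(\delta)$, the remainder being uniform because $G$ is continuous, hence bounded, on $[\mathcal{E},\kappa(0)-\mathcal{E}]$; similarly for $T_2$. Since $T_i$ is affine in $Q$ and in $\delta$ with coefficients depending only on $E_i$, the $\mathcal{C}^1$ assertion reduces to showing $G\in\mathcal{C}^1\bigl((\mathcal{E},\kappa(0)-\mathcal{E})\bigr)$.

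The one genuinely delicate point is this regularity of $G$, since the integrand has an inverse-square-root singularity at the $E$-dependent endpoint $y_-(E)$, where $E-\kappa(y)\sim|\kappa'(y_-)|\,(y-y_-)$. I would split $G(E)=\int_{y_-(E)}^{y_*}+\int_{y_*}^{1}$ at a \emph{fixed} $y_*\in\bigl(\kappa^{-1}(\mathcal{E}),1\bigr)$, so that $y_*>y_-(E)$ and $\kappa(y_*)<\mathcal{E}<E$ for every admissible $E$. On $[y_*,1]$ the integrand $(E-\kappa(y))^{-1/2}$ and its $E$-derivative are bounded (because $E-\kappa(y)\ge E-\kappa(y_*)>0$), so $\int_{y_*}^{1}$ is $\mathcal{C}^1$ in $E$ by differentiation under the integral sign. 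On $[y_-(E),y_*]$, where $\kappa'$ is negative and bounded away from $0$ --- this is precisely why $y_*$ must be kept away from $1$, at which $\kappa'(1)=0$ --- I would change variables by $\kappa(\eta)=E-w^2$; the implicit function theorem gives $\eta=\eta(E,w)\in\mathcal{C}^2$ jointly, increasing in $w$, with $\eta(E,0)=y_-(E)$ and $\eta(E,w_*(E))=y_*$ for $w_*(E)=\sqrt{E-\kappa(y_*)}$ (smooth and positive). Since $\sqrt{E-\kappa(y)}=w$ and $dy=-2w\,dw/\kappa'(\eta)$, the singular factor cancels and
\[
    \int_{y_-(E)}^{y_*}\frac{dy}{\sqrt{E-\kappa(y)}}=\int_{0}^{w_*(E)}\frac{-2\,dw}{\kappa'(\eta(E,w))},
\]
an integral with $\mathcal{C}^1$ integrand and $\mathcal{C}^1$ limits, hence $\mathcal{C}^1$ in $E$ by the Leibniz rule. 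Adding the two pieces yields $G\in\mathcal{C}^1$, and with it the lemma. The remaining items --- the monotonicity and interval-ordering properties of $U_i$ used above and the legitimacy of the substitutions --- are routine consequences of the hypotheses on $\kappa$ together with $0<\delta<\mathcal{E}/2$ and $E_i<\kappa(0)-\mathcal{E}$.
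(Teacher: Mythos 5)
Your proposal is correct, and its overall architecture coincides with the paper's: the same quadrature, the same splitting into a free-flight segment plus two boundary layers of width $\mathcal{O}(\delta)$, and the same scaling $q_1=\delta y$ that pulls out the factor $\delta$ and reduces everything to the $\delta$-independent function $G(E)=\int_{\kappa^{-1}(E)}^1\bigl(E-\kappa(y)\bigr)^{-1/2}dy$ (the paper's $F$), from which the affine-in-$(Q,\delta)$ closed form, the $\mathcal{O}(\delta)$ asymptotics, and the one-sided regularity at $\delta=0$ all follow exactly as you say. Where you genuinely diverge is in the only delicate step, the proof that $G\in\mathcal{C}^1$. The paper first passes to the energy variable, writing $F(E)=\int_0^E \frac{-(\kappa^{-1})'(u)}{\sqrt{E-u}}\,du$, splits at the fixed energy $\mathcal{E}/2$ (so that the blow-up of $(\kappa^{-1})'$ near $u=0$ and the blow-up of $(E-u)^{-1/2}$ near $u=E$ occur in different pieces), and differentiates under the integral sign via the Dominated Convergence Theorem, using the substitution $v=E-u$ and the $\mathcal{C}^1$ regularity of $(\kappa^{-1})'$ on $[\mathcal{E}/2,\kappa(0)]$ for the piece adjacent to $u=E$. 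You instead split in the spatial variable at a fixed $y_*<1$ and remove the turning-point singularity outright with the classical substitution $w=\sqrt{E-\kappa(y)}$, after which both pieces have bounded $\mathcal{C}^1$ integrands and the Leibniz rule applies. Both routes rest on the same structural facts ($\kappa'$ bounded away from zero away from $y=1$, and $E$ bounded away from $0$ and $\kappa(0)$); your desingularization yields a manifestly smooth integrand (and would give higher regularity with no extra work), at the cost of introducing the implicit function $\eta(E,w)=\kappa^{-1}(E-w^2)$, while the paper's energy-variable split avoids that but leans on dominated-convergence arguments with singular integrands. Either argument proves the lemma.
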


The proof of this lemma is mostly computational, and so we delay it
until Section \ref{sct:smooth1DTech}.  Note especially that the
periods can be suitably defined such that their regularity extends
to $\delta=0$.

In this section, and in Section \ref{sct:smooth1DTech} below, we
adopt the following convention on the use of the $\mathcal{O} $
notation.  \emph{All use of the $\mathcal{O} $ notation will
explicitly contain the dependence on $\varepsilon$ and $\delta$} as
$\varepsilon, \delta\rightarrow 0$.  For example, if a function
$f(h,\varepsilon,\delta) =\mathcal{O} (\varepsilon) $, then there
exists $\delta',\varepsilon' >0$ such that $\sup_{0<\varepsilon\leq
\varepsilon',\,0<\delta\leq\delta',\,h\in\mathcal{V}}
\abs{f(h,\varepsilon,\delta)/\varepsilon} < \infty $.

When $\varepsilon =0 $,
\[
    \frac { dx_i} {dt} =\pm \sqrt {\frac {2}
    {m_i} (E_i -U_i (x_i))} .
\]
Define $a = a (E_i,\delta) $ by
\[
    \kappa_\delta (a) =\kappa
    (a/\delta) =E_i,
\]
so that $a(E_1,\delta) $ is a turning point for the left gas
particle. Then $a=\delta\kappa ^ {-1} (E_i) $, where $\kappa ^ {-1}
$ is defined as follows: $\kappa: [0,1]\rightarrow [0,\kappa (0)] $
takes $0 $ to $\kappa (0) $ and $1 $ to $0 $. Furthermore,
$\kappa\in\mathcal{C} ^2 ([0,1]) $, $\kappa '\leq 0 $, and $\kappa '
(x) <0 $ if $x < 1 $. By monotonicity, $\kappa ^ {-1}\colon
[0,\kappa (0)]\rightarrow [0,1] $ exists and takes $0 $ to $1 $ and
$\kappa (0) $ to $0 $. Also, by the Implicit Function Theorem,
$\kappa ^ {-1}\in\mathcal{C} ^2 ((0,\kappa (0)]) $, $(\kappa ^
{-1})'(y) < 0 $ for $y>0$, and $(\kappa ^ {-1})'(y)\rightarrow
-\infty $ as $y\rightarrow 0^ +$. Because we only consider energies
$E_i\in (\mathcal{E},\kappa(0)-\mathcal{E}) $, it follows that $a
(E_i,\delta)$ is a $\mathcal{C} ^2$ function for the domains of
interest.

\subsection{Derivation of the averaged equation}

\label {sct:smooth_1D_derivation}

As we previously pointed out, for each fixed $\delta>0$, Anosov's
theorem~\ref{thm:anosov} and Theorem~\ref{thm:simple_averaging3}
apply directly to the family of ordinary differential equations in
Equation \eqref{eq:smooth1dode1}, provided that $\delta$ is
sufficiently small.  The invariant fibers $\mathcal{M}_h$ of the
$\varepsilon=0$ flow are tori described by a fixed value of the four
slow variables and $\{(Q,W,q_1,v_1,q_2,v_2): E_1=m_1
v_1^2/2+U_1(q_1,Q,\delta), E_2=m_2 v_2^2/2+U_2(q_2,Q,\delta)\} $. If
we use $(q_1,q_2) $ as local coordinates on $\mathcal{M}_h$, which
is valid except when $v_1\text { or }v_2=0$, the invariant measure
$\mu_h$ of the unperturbed flow has the density
\[
    \frac {dq_1 dq_2}
    {
    T_1\sqrt {\frac {2} {m_1} (E_1-U_1(q_1))}\:
    T_2\sqrt {\frac {2} {m_2} (E_2-U_2(q_2))}
    }.
\]
The restricted flow is ergodic for almost every $h $. See Corollary
\ref{cor:irrat_periods} in Section \ref{sct:smooth1DTech}.

Now
\[
    \frac {dh_\varepsilon ^\delta} {dt}
    =\varepsilon
    \begin {bmatrix}
             W \\
         -\kappa_\delta'(Q -q_1 )
                +\kappa_\delta'(q_2-Q )\\
             W\kappa_\delta' (Q
                -q_1)\\
            - W\kappa_\delta' (
                q_2-Q )\\
    \end {bmatrix},
\]
and
\[
\begin {split}
    \int_{\mathcal{M}_h}\kappa_\delta'(Q -q_1 )d\mu_h
    &
    =\frac {2} {T_1}\int_a^{Q-a}dq_1\frac{\kappa_\delta'(Q-q_1)}
    {\sqrt {\frac {2} {m_1} (E_1-U_1(q_1))}}
    \\
    &
    =\frac {\sqrt{2m_1}} {T_1}\int_{Q-\delta}^{Q-a}dq_1\frac{\kappa_\delta'(Q-q_1)}
    {\sqrt { E_1-\kappa_\delta(Q-q_1)}}
    \\
    &
    =-\frac {\sqrt{2m_1}} {T_1}\int_{0}^{E_1}\frac{du}
    {\sqrt { E_1-u}}
    \\
    &
    = -\frac {\sqrt{8m_1E_1}} {T_1}.
    \\
\end {split}
\]
Similarly,
\[
\begin {split}
    \int_{\mathcal{M}_h}\kappa_\delta'(q_2-Q)d\mu_h
    = -\frac {\sqrt{8m_2E_2}} {T_2}.\\
\end {split}
\]
It follows that the averaged vector field is
\[
    \bar H^\delta (h) =
    \begin {bmatrix}
             W \\
         \frac{\sqrt{8m_1E_1}}{T_1}-
     \frac{\sqrt{8m_2E_2}}{T_2}\\
             -W\frac{\sqrt{8m_1E_1}}{T_1}\\
            +W\frac{\sqrt{8m_2E_2}}{T_2}\\
        \end {bmatrix},
\]
where from Lemma \ref{lem:1d_smooth_periods} we see that $\bar H^
{\cdot} (\cdot)\in\mathcal{C} ^1 (\{(\delta,h):0\leq\delta
<\mathcal{E}/2,h\in\mathcal{V}\})$.  $\bar H^0(h) $ agrees with the
averaged vector field for the hard core system from Equation
\eqref{eq:1davg}, once we account for the change of coordinates
$E_i=m_i s_i^2/2$.

\begin {rem}

An argument due to Neishtadt and Sinai~\cite{NS04} shows that the
solutions to the averaged equation \eqref{eq:smooth_1D_averaged_eq}
are periodic.  This argument also shows that, as in the case $\delta
=0$, the limiting dynamics of $(Q,W) $ are effectively Hamiltonian,
with the shape of the Hamiltonian depending on $\delta$, $Q(0) $,
and the initial energies of the gas particles. The argument depends
heavily on the observation that the phase integrals
\[
    I_i(Q, E_i,\delta)=
    \int_{\frac{1}{2}m_i v^2+U_i(x,Q,\delta)\leq E_i}
    dxdv
\]
are adiabatic invariants, i.e.~they are integrals of the solutions
to the averaged equation.  Thus the four-dimensional phase space of
the averaged equation is foliated by invariant two-dimensional
submanifolds, and one can think of the effective Hamiltonians for
the piston as living on these submanifolds.

\end {rem}

\subsection {Proof of Theorem \ref{thm:1D_smooth_uniform}}

The following arguments are motivated by our proof in Section
\ref{sct:hc_proof}, although the details are more involved as we
show that the rate of convergence is independent of all small
$\delta$.

\subsubsection{A choice of coordinates on phase space}

We wish to describe the dynamics in a coordinate system inspired by
the one used in Section \ref{sct:1dps_hcPS}.  For each fixed
$\delta\in (0,\delta_0]$, this change of coordinates will be
$\mathcal{C} ^1 $ in all variables on the domain of interest.
However, it is an exercise in analysis to show this, and so we delay
the proofs of the following two lemmas until Section
\ref{sct:smooth1DTech}.

We introduce the angular coordinates $\varphi _i\in [0,1]/ \, 0\sim
1=S^1$ defined by
\begin {equation}
\label {eq:anglevar_defn}
\begin {split}
    \varphi_1 =\varphi_1 (q_1,v_1,Q)=&\begin {cases}
    0&\text { if } q_1 =a\\
    \frac{1}{T_1} \int_a^{q_1}\sqrt{\frac{m_1/2}
        {E_1-U_1(s)}}ds&\text { if } v_1 >0\\
    1/2&\text { if } q_1 =Q -a\\
    1-\frac{1}{T_1} \int_a^{q_1}\sqrt{\frac{m_1/2}
        {E_1-U_1(s)}}ds &\text { if } v_1<0\\
    \end {cases}
    \\
    \varphi_2=\varphi_2(q_2,v_2,Q)=&\begin {cases}
    0&\text { if } q_2 =1-a\\
    \frac{1}{T_2} \int_{q_2} ^{1 - a}\sqrt{\frac{m_2/2}
        {E_2-U_2(s)}}ds&\text { if } v_2 < 0\\
    1/2&\text { if } q_2 =Q+a\\
    1-\frac{1}{T_2} \int_{q_2} ^{1 -a}\sqrt{\frac{m_2/2}
        {E_2-U_2(s)}}ds &\text { if } v_2 > 0\\
    \end {cases}\\
\end {split}.
\end{equation}
Then $z =(h,\varphi_1,\varphi_2) $ is a choice of coordinates on $h^
{-1}\mathcal{U} $. As before, we will abuse notation and let $h (z)
$ denote the projection onto the first four coordinates of $z $.

There is a fixed value of $\delta_0$ in the statement of Theorem
\ref{thm:1D_smooth_uniform}.  However, for the purposes of our
proof, it will be convenient to progressively choose $\delta_0$
smaller when needed.  At the end of the proof, we will have only
shrunk $\delta_0$ a finite number of times, and this final value
will satisfies the requirements of the theorem.  Our first
requirement on $\delta_0$ is that it is smaller than
$\mathcal{E}/2$.

\begin {lem}
\label {lem:1d_smooth_ode}

If $\delta_0 >0$ is sufficiently small, then for each $\delta\in
(0,\delta_0]$ the ordinary differential equation
\eqref{eq:smooth1dode1} in the coordinates $z $ takes the form
\begin {equation}
\label {eq:smooth1dode_2}
    \frac {dz } {dt} =Z ^\delta (z ,\varepsilon),
\end {equation}
where $Z ^\delta \in\mathcal{C} ^1 ( h  ^ {-1} \mathcal{U}\times
[0,\infty)) $. When $z \in h^{-1}\mathcal{U} $,

\begin {equation}
\label {eq:smooth1dode_3}
    Z ^\delta (z,\varepsilon) =
    \begin {bmatrix}
            \varepsilon W \\
            \varepsilon\bigl
                ( -\kappa_\delta'(Q -q_1 (z))
                +\kappa_\delta'(q_2(z)-Q )\bigr)\\
            \varepsilon W\kappa_\delta' (Q
                -q_1(z))\\
            -\varepsilon W\kappa_\delta' (
                q_2(z)-Q )\\
            \frac {1} {T_1}+\mathcal{O}
                (\varepsilon)\\
            \frac {1} {T_2}+\mathcal{O}
                (\varepsilon)\\
    \end {bmatrix}.
\end {equation}

\end {lem}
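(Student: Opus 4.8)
The plan is to verify the six components of \eqref{eq:smooth1dode_3} one at a time; the only genuine work lies in the last two entries (the $\varphi_i$--components) and in the regularity claim $Z^\delta\in\mathcal{C}^1$. First I would record that the coordinate change $z=(h,\varphi_1,\varphi_2)$ is, for each fixed small $\delta>0$, a $\mathcal{C}^1$ diffeomorphism onto its image, so that the original variables $q_i=q_i(z)$, $v_i=v_i(z)$ are $\mathcal{C}^1$ functions of $z$. For fixed $(Q,E_1,\delta)$ with $E_1\in(\mathcal{E},\kappa(0)-\mathcal{E})$, the left particle oscillates between the turning points $a=a(E_1,\delta)$ and $Q-a$; since $Q-a>\mathcal{E}-\delta>\delta$ we have $U_1=\kappa_\delta(q_1)$ near $q_1=a$, hence $U_1'(a)=\delta^{-1}\kappa'(\kappa^{-1}(E_1))<0$, and likewise $U_1'(Q-a)>0$, so both turning points are nondegenerate. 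Standard one--degree--of--freedom action--angle analysis then gives that $q_1-a$ behaves like $\varphi_1^2$ near $\varphi_1=0$ and like $(\varphi_1-\tfrac12)^2$ near $\varphi_1=\tfrac12$, with no loss of regularity as $\varphi_1$ crosses these values; the same holds for $(\varphi_2,q_2,v_2)$. This is the first item deferred to Section~\ref{sct:smooth1DTech}; granting it, the first four entries of \eqref{eq:smooth1dode_3} follow at once by substituting $q_i=q_i(z)$ into the formulas for $dQ/dt$, $dW/dt$, $dE_1/dt$, $dE_2/dt$ derived immediately after \eqref{eq:smooth1dode1}, and these are $\mathcal{C}^1$ because $\kappa_\delta'$ and $q_i(\cdot)$ are.

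For the fifth component, write $\varphi_1=\Psi_1(q_1,Q,E_1,\delta)$ with $\Psi_1=T_1^{-1}\int_a^{q_1}\sqrt{(m_1/2)/(E_1-U_1(s))}\,ds$ on the branch $v_1>0$ (and the complementary formula on $v_1<0$), and differentiate along the flow:
\[
    \frac{d\varphi_1}{dt}=\frac{\partial\Psi_1}{\partial q_1}\,\dot q_1
    +\frac{\partial\Psi_1}{\partial Q}\,\dot Q+\frac{\partial\Psi_1}{\partial E_1}\,\dot E_1 .
\]
Here $\dot q_1=v_1=\sqrt{(2/m_1)(E_1-U_1(q_1))}$ and $\partial\Psi_1/\partial q_1=T_1^{-1}\sqrt{(m_1/2)/(E_1-U_1(q_1))}$, so the first term is exactly $1/T_1$ — the same exact cancellation that produces linear winding in the $\varepsilon=0$ problem — while $\dot Q=\mathcal{O}(\varepsilon)$ and $\dot E_1=\mathcal{O}(\varepsilon)$ make the remaining two terms $\mathcal{O}(\varepsilon)$. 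The $v_1<0$ branch and the $\varphi_2$--component are identical, so we obtain the claimed form of the last two entries, with $T_1,T_2$ the periods of Lemma~\ref{lem:1d_smooth_periods}, which that lemma tells us are $\mathcal{C}^1$ on the relevant domain.

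The main obstacle is the regularity assertion $Z^\delta\in\mathcal{C}^1(h^{-1}\mathcal{U}\times[0,\infty))$ near the turning points, i.e.\ near $\varphi_i\in\{0,\tfrac12\}$. Away from turning points $v_i\ne 0$ and every expression above is visibly smooth, so the difficulty is that $\partial\Psi_1/\partial Q$ and $\partial\Psi_1/\partial E_1$ come from differentiating a phase integral whose integrand already has an inverse--square--root singularity at the endpoints; naive differentiation under the integral sign produces a non--integrable $(E_1-U_1(s))^{-3/2}$ term and a divergent boundary term from $a=a(E_1,\delta)$, the two divergences in fact cancelling. The remedy is the usual change of variables $u=E_1-U_1(s)$ (legitimate by nondegeneracy of the turning points), after which $\Psi_1$ and its $Q$-- and $E_1$--derivatives become integrals with at worst an integrable $u^{-1/2}$ singularity and integrands that are $\mathcal{C}^1$ in $(Q,E_1,\delta)$ uniformly on the compact parameter set; equivalently, one checks directly that the coefficient of $\varepsilon$ in $d\varphi_1/dt$ extends with continuous first derivatives across $\varphi_1=0$ and $\varphi_1=\tfrac12$ using the $\sqrt{q_1-a}\sim\varphi_1$ behaviour noted above. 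Carrying this out — together with the uniform-in-$\delta$ bookkeeping demanded by the $\mathcal{O}$--convention of this section, and the check that nothing degenerates as $\delta$ ranges over $(0,\delta_0]$ for $\delta_0$ chosen small enough — is precisely the ``exercise in analysis'' postponed to Section~\ref{sct:smooth1DTech}; once it is done, assembling the six components yields the lemma.
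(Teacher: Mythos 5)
Your route coincides with the paper's in structure: repair the coordinate chart at the turning points, compute $d\varphi_1/dt$ by the chain rule, extract the exact $1/T_1$ from the $q_1$-derivative term, and bound the rest. The first four components and the $1/T_1$ cancellation are fine. The genuine gap is in the one sentence you treat as immediate: ``$\dot Q=\mathcal{O}(\varepsilon)$ and $\dot E_1=\mathcal{O}(\varepsilon)$ make the remaining two terms $\mathcal{O}(\varepsilon)$.'' Under the $\mathcal{O}$-convention of this section (uniformity over all $\delta\in(0,\delta_0]$, which is the entire point of Theorem~\ref{thm:1D_smooth_uniform}), $\dot E_1=\varepsilon W\kappa_\delta'(Q-q_1)$ is only $\mathcal{O}(\varepsilon/\delta)$ in the interaction zone $q_1>Q-\delta$, not $\mathcal{O}(\varepsilon)$; and the other factor $\partial\Psi_1/\partial E_1$ is not uniformly bounded either --- after the change of variables it contains the term $\frac{2\delta}{T_1}\int_0^{v_1}(\kappa^{-1})''(E_1-v^2)\,dv$, which can diverge as $v_1\to\pm\sqrt{E_1}$. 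So multiplying crude bounds on the two factors cannot close the argument, even for fixed $\delta$; a genuine cancellation is required. The paper's proof shows
\[
    \frac {\partial \varphi_1} {\partial E_1}
    \frac {dE_1} {dt}
    =
    \varepsilon\,\mathcal{O}\left (\frac {1/2-\varphi_1}
    {\delta}\right)
    +
    \varepsilon\,\mathcal{O}
    \left(
    \frac {1}
    {(\kappa^ {-1})' (E_1-v_1^2)}
    \int_0^{v_1}(\kappa^{-1})''(E_1-v^2)dv
    \right),
\]
using $\kappa_\delta'(Q-q_1)=\bigl(\delta(\kappa^{-1})'(E_1-v_1^2)\bigr)^{-1}$, and then invokes two nontrivial facts: $1/2-\varphi_1=\mathcal{O}(\delta)$ in the interaction zone (Lemma~\ref{lem:1d_smooth_delta}), and the ratio $\frac{1}{(\kappa^{-1})'(e)}\int_e^{E_1}(\kappa^{-1})''(u)(E_1-u)^{-1/2}\,du$ is uniformly bounded because $(\kappa^{-1})'(e)\to-\infty$ as $e\to0^+$ at the same rate the integral diverges. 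Deferring this ``uniform-in-$\delta$ bookkeeping'' to the appendix defers exactly the content of the lemma.

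A smaller point: your device for regularity at the turning points (substituting $u=E_1-U_1(s)$ in the phase integral) is essentially the paper's, which rewrites $\varphi_1$ as an integral over the velocity, $\varphi_1=\tfrac12+\frac{2}{T_1}\int_0^{v_1}(\kappa_\delta^{-1})'(E_1-v^2)\,dv$ for $q_1>Q-\delta$ (Equation~\eqref{eq:anglevar_defn2}); that reformulation is also what produces the explicit expression for $\partial\varphi_1/\partial E_1$ needed in the estimate above, so it is worth carrying out rather than only invoking.
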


Recall that, by our conventions, the $\mathcal{O} (\varepsilon) $
terms in Equation \eqref{eq:smooth1dode_3} have a size that can be
bounded independent of all $\delta$ sufficiently small.  Denote the
flow determined by $Z ^\delta (\cdot,\varepsilon) $ by
$z_\varepsilon ^\delta (t) $, and its components by
$Q_\varepsilon^\delta (t) $, $W_\varepsilon^\delta (t) $,
$E_{1,\varepsilon}^\delta (t) $, etc. Also, set
$h_\varepsilon^\delta (t) =h(z_\varepsilon ^\delta (t) )$.  From
Equation \eqref{eq:smooth1dode_3},
\begin{equation}\label {eq:sm_H}
    H ^\delta (z,\varepsilon) :=
    \frac{1}{\varepsilon}
    \frac{dh_\varepsilon^\delta}{dt}=
    \begin {bmatrix}
            W \\
             -\kappa_\delta'(Q -q_1 (z))
                +\kappa_\delta'(q_2(z)-Q )\\
            W\kappa_\delta' (Q
                -q_1(z))\\
            -W\kappa_\delta' (
                q_2(z)-Q )\\
    \end {bmatrix}.
\end {equation}
In particular, $H ^\delta (z,\varepsilon) =H ^\delta (z,0)$.

Before proceeding, we need one final technical lemma.

\begin {lem}
\label {lem:1d_smooth_delta}

If $\delta_0>0$ is chosen sufficiently small, there exists a
constant $K$ such that for all $\delta\in (0,\delta_0]$,
$\kappa_\delta' (\abs{Q -x_i(z)}) =0 $ unless $\varphi_i\in [1/2
-K\delta,1/2 +K\delta] $.
\end {lem}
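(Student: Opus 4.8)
The plan is to argue directly with the gas-particle positions $q_i = x_i(z)$, translating the hypothesis into a narrow window of $q_i$-values around the collision point and then showing that the corresponding window of $\varphi_i$-values has width $\mathcal{O}(\delta)$. Since $\kappa_\delta'(y) = \delta^{-1}\kappa'(y/\delta)$ vanishes for $y \ge \delta$, the assumption $\kappa_\delta'(\abs{Q - x_i(z)}) \ne 0$ forces the particle-to-piston distance to be less than $\delta$; as that distance is always at least the turning-point value $a = a(E_i,\delta) = \delta\kappa^{-1}(E_i) < \delta$, it confines the left particle to $q_1 \in (Q - \delta,\, Q - a]$ and the right particle to $q_2 \in [Q + a,\, Q + \delta)$. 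Because $h \in \mathcal{U}$ keeps $Q$ inside $(\mathcal{E}, 1 - \mathcal{E})$ and we only consider $\delta < \mathcal{E}/2$, on this window the particle sits at distance greater than $\mathcal{E}/2 > \delta$ from its own wall, so the wall term in $U_i$ is zero there and $U_1(q_1) = \kappa_\delta(Q - q_1)$, $U_2(q_2) = \kappa_\delta(q_2 - Q)$.

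Next I would use the definition \eqref{eq:anglevar_defn} of $\varphi_i$ together with the half-period identity $\frac{1}{T_1}\int_a^{Q - a}\sqrt{\frac{m_1/2}{E_1 - U_1(s)}}\,ds = \frac{1}{2}$, which just records that the $\varepsilon = 0$ travel time between turning points equals $T_1/2$, and similarly for $T_2$. In either sign case for $v_1$ this yields, for a particle confined to the window above,
\[
    \abs{\varphi_1 - \tfrac12} = \frac{1}{T_1}\int_{q_1}^{Q - a}\sqrt{\frac{m_1/2}{E_1 - U_1(s)}}\,ds \le \frac{1}{T_1}\int_{Q - \delta}^{Q - a}\sqrt{\frac{m_1/2}{E_1 - U_1(s)}}\,ds,
\]
and analogously $\abs{\varphi_2 - \tfrac12} \le \frac{1}{T_2}\int_{Q + a}^{Q + \delta}\sqrt{\frac{m_2/2}{E_2 - U_2(s)}}\,ds$. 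Substituting $u = (Q - s)/\delta$ (respectively $u = (s - Q)/\delta$) turns $U_i(s)$ into $\kappa(u)$, sends the integration limits to $\kappa^{-1}(E_i)$ and $1$, and pulls out a factor $\delta$, giving $\abs{\varphi_i - \tfrac12} \le \frac{\delta}{T_i}\sqrt{\tfrac{m_i}{2}}\, I(E_i)$, where $I(E) := \int_{\kappa^{-1}(E)}^{1}\frac{du}{\sqrt{E - \kappa(u)}}$ carries no remaining $\delta$-dependence.

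It then remains to bound $I(E)$ uniformly for $E$ in the compact interval $[\mathcal{E}, \kappa(0) - \mathcal{E}]$ and to bound $T_i$ away from $0$, the latter being immediate from Lemma \ref{lem:1d_smooth_periods} after shrinking $\delta_0$. For $I(E)$, write $u_0 = \kappa^{-1}(E)$, which ranges over a compact subset $[c_1, c_2] \subset (0,1)$, and fix $c_3 \in (c_2, 1)$. On $[u_0, c_3]$ one has $E - \kappa(u) = \kappa(u_0) - \kappa(u) \ge m_0(u - u_0)$ with $m_0 = \min_{[0, c_3]}(-\kappa') > 0$, so that portion of the integral is at most $2/\sqrt{m_0}$; on $[c_3, 1]$ one has $E - \kappa(u) \ge \kappa(c_2) - \kappa(c_3) > 0$, so that portion is at most $1/\sqrt{\kappa(c_2) - \kappa(c_3)}$. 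Hence $I(E) \le C$ for a constant $C$ independent of $E$ and $\delta$, and the lemma follows with $K = C\,\max_i \sqrt{m_i/2}\,\big/\inf T_i$. The one delicate point — and the crux of the argument — is precisely this uniform integrability estimate at the turning point: the integrand has only a square-root singularity there, integrable exactly because $-\kappa'$ is bounded below on the relevant range of $u_0$, which in turn uses that the energies $E_i$ remain bounded away from the barrier height $\kappa(0)$.
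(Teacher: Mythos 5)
Your argument is correct, and at bottom it lands on the same quantity as the paper: the interaction zone $\set{\abs{Q-x_i}<\delta}$ occupies a fraction of the angular coordinate of size $\delta F(E_i)/T_i$ (up to the mass normalization), where $F(E)=\int_{\kappa^{-1}(E)}^{1}\bigl(E-\kappa(u)\bigr)^{-1/2}du$ is exactly your $I(E)$. The difference is the route: the paper reads the bound off in one line from the velocity-form representation \eqref{eq:anglevar_defn3} of $\varphi_1$, valid precisely on the window $q_1>Q-\delta$, and it does not re-estimate the singular integral because $F$ was already shown to be finite (indeed $\mathcal{C}^1$) on $[\mathcal{E},\kappa(0)-\mathcal{E}]$ in the proof of Lemma~\ref{lem:1d_smooth_periods}. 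You instead start from the position-form definition \eqref{eq:anglevar_defn}, use the half-period identity $\tfrac{1}{T_1}\int_a^{Q-a}\sqrt{\tfrac{m_1/2}{E_1-U_1}}\,ds=\tfrac12$, rescale $u=(Q-s)/\delta$, and then prove the uniform bound on $I(E)$ from scratch by splitting at the turning point, using that $-\kappa'$ is bounded below on the relevant compact range because the energies stay in $(\mathcal{E},\kappa(0)-\mathcal{E})$; the side checks (the wall term of $U_i$ vanishes on the window since $Q>\mathcal{E}$ and $\delta<\mathcal{E}/2$, and $T_i$ is bounded below by Lemma~\ref{lem:1d_smooth_periods}) are handled correctly. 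What your version buys is self-containedness and a direct, elementary integrability estimate; what the paper's buys is brevity, since it reuses the representation and the analysis of $F$ already set up for the period computation in Lemmas~\ref{lem:1d_smooth_periods} and~\ref{lem:1d_smooth_ode}.
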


\subsubsection{Argument for uniform convergence}

We start by proving the following lemma, which essentially says that
an orbit $z_\varepsilon ^\delta (t) $ only spends a fraction
$\mathcal{O} (\delta) $ of its time in a region of phase space where
$\abs{H^\delta (z_\varepsilon ^\delta (t),\varepsilon)}
=\abs{H^\delta (z_\varepsilon ^\delta (t),0)}$ is of size
$\mathcal{O} (\delta ^ {-1}) $

\begin {lem}
\label {lem:1d_smooth_intbound}

For $0\leq\mathcal{T}'\leq\mathcal{T}\leq\frac {T\wedge
T_\varepsilon ^\delta} {\varepsilon}$,
\[
    \int_{\mathcal{T}'}^{\mathcal{T}} \abs{H ^\delta (z_\varepsilon ^\delta
    (s),0)}ds
    =\mathcal{O} (1\vee(\mathcal{T}-\mathcal{T}')).
\]
\end {lem}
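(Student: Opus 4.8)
The plan is to use the explicit form of $H^\delta(\cdot,0)$ recorded in Equation~\eqref{eq:sm_H}. On the interval in question we have $\mathcal{T}',\mathcal{T}\in[0,(T\wedge T_\varepsilon^\delta)/\varepsilon]$, so $z_\varepsilon^\delta(s)\in h^{-1}\mathcal{V}$ for $s\in[\mathcal{T}',\mathcal{T}]$ and in particular $\abs{W}$ is bounded by $\sup_{\mathcal{V}}\abs{W}$. The only components of $H^\delta(z,0)$ that can be large are then built from the two functions $\kappa_\delta'(Q-q_1(z))$ and $\kappa_\delta'(q_2(z)-Q)$, each multiplied by $1$ or by $W$, so $\abs{H^\delta(z,0)}\le\abs{W}+(1+\abs{W})\left(\abs{\kappa_\delta'(Q-q_1(z))}+\abs{\kappa_\delta'(q_2(z)-Q)}\right)$. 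Since the $\abs{W}$ term integrates to $\mathcal{O}(\mathcal{T}-\mathcal{T}')$, it suffices to prove $\int_{\mathcal{T}'}^{\mathcal{T}}\abs{\kappa_\delta'(Q-q_1(z_\varepsilon^\delta(s)))}\,ds=\mathcal{O}(1\vee(\mathcal{T}-\mathcal{T}'))$ and the analogous bound for $\kappa_\delta'(q_2(z)-Q)$.

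For one such integral I would combine a pointwise size bound with a bound on the time spent "in a collision." First, $\kappa_\delta'(x)=\delta^{-1}\kappa'(x/\delta)$, and whenever $\kappa_\delta'(Q-q_1(z))\ne0$ along an admissible orbit the ratio $(Q-q_1(z))/\delta$ is at most $1$ (else $\kappa'$ vanishes) and at least $\kappa^{-1}(E_1)\ge\kappa^{-1}(\kappa(0)-\mathcal{E})>0$, because the left gas particle always satisfies $U_1(q_1)\le E_1$ and near the piston $U_1(q_1)=\kappa_\delta(Q-q_1)$; on this fixed compact set $\abs{\kappa'}$ is bounded by a constant independent of $\varepsilon$ and $\delta$, so $\abs{\kappa_\delta'(Q-q_1(z))}=\mathcal{O}(\delta^{-1})$ pointwise. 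Second, by Lemma~\ref{lem:1d_smooth_delta} this function vanishes unless $\varphi_1\in[1/2-K\delta,1/2+K\delta]$; and from the fifth component of Equation~\eqref{eq:smooth1dode_3} together with Lemma~\ref{lem:1d_smooth_periods}, which bounds $T_1$ above and below by positive constants uniformly in small $\delta$, we get $0<c\le d\varphi_1/dt\le C'$ for all $\varepsilon$ sufficiently small, uniformly in $\delta$. Hence $\varphi_1$ is strictly increasing mod $1$, each passage through the window $[1/2-K\delta,1/2+K\delta]$ occupies a time at most $2K\delta/c=\mathcal{O}(\delta)$, and on $[\mathcal{T}',\mathcal{T}]$ there are at most $C'(\mathcal{T}-\mathcal{T}')+1=\mathcal{O}(1\vee(\mathcal{T}-\mathcal{T}'))$ such passages. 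Multiplying the pointwise bound $\mathcal{O}(\delta^{-1})$ by the total window time $\mathcal{O}(\delta(1\vee(\mathcal{T}-\mathcal{T}')))$ gives the desired $\mathcal{O}(1\vee(\mathcal{T}-\mathcal{T}'))$; the same argument with $\varphi_2$ handles $\kappa_\delta'(q_2(z)-Q)$, and adding the contributions and using boundedness of $W$ completes the proof.

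The main obstacle is exactly the uniformity in $\delta$ demanded by this section's $\mathcal{O}$-convention: the pointwise size of $\kappa_\delta'$ blows up like $\delta^{-1}$ while the time per collision shrinks like $\delta$, and the two must cancel with no residual $\delta$-dependence. This is what forces the two structural inputs above — that the argument of $\kappa'$ stays in a $\delta$-independent compact subset of $(0,1]$ whenever $\kappa_\delta'\ne0$, so $\sup\abs{\kappa'}$ there is a genuine constant, and that $d\varphi_i/dt$ is bounded below by a $\delta$-independent positive constant, which is precisely why Lemmas~\ref{lem:1d_smooth_periods} and~\ref{lem:1d_smooth_ode} are established beforehand. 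A minor bookkeeping point worth stating is that $z_\varepsilon^\delta(s)\in h^{-1}\mathcal{V}\subset h^{-1}\mathcal{U}$ for $s\in[\mathcal{T}',\mathcal{T}]$, so all three cited lemmas genuinely apply throughout the interval of integration.
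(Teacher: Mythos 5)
Your proof is correct and follows essentially the same route as the paper's: localize the support of $\kappa_\delta'$ to the window $\varphi_i\in[1/2-K\delta,1/2+K\delta]$ via Lemma~\ref{lem:1d_smooth_delta}, use the $\delta$-uniform two-sided bounds on $d\varphi_{i,\varepsilon}^\delta/dt$ from Lemmas~\ref{lem:1d_smooth_periods} and~\ref{lem:1d_smooth_ode} to bound the time spent in that window by $\mathcal{O}(\delta(1\vee(\mathcal{T}-\mathcal{T}')))$, and cancel against the pointwise bound $\abs{\kappa_\delta'}=\mathcal{O}(\delta^{-1})$. Your additional justification that the argument of $\kappa'$ stays in a $\delta$-independent compact subset of $(0,1]$ (and the explicit handling of the bounded $W$ terms) is a detail the paper leaves implicit, but it is the same argument.
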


\begin {proof}

Without loss of generality, $\mathcal{T}' =0$.  From Lemmas
\ref{lem:1d_smooth_periods} and \ref{lem:1d_smooth_ode} it follows
that if we choose $\delta_0$ sufficiently small, then there exists
$\omega
>0 $ such that for all sufficiently small $\varepsilon $ and all
$\delta\in (0,\delta_0]$, $h\in\mathcal{V}\Rightarrow 1/\omega
<\frac {d\varphi_{i,\varepsilon} ^\delta} {dt} <\omega $.   Define
the set $B = [1/2 -K\delta, 1/2+K\delta] $, where $K $ comes from
Lemma \ref{lem:1d_smooth_delta}.  Then we find a crude bound on
$\int_0^{\mathcal{T}} \abs {\kappa_\delta' \bigl(Q_\varepsilon
^\delta (s) -q_1 (z_\varepsilon ^ \delta(s))\bigr)}ds $ using that
\[
    \frac {d\varphi_{1,\varepsilon} ^\delta} {dt}\text { is }
    \begin {cases}
    \geq 1/\omega &\text { if } \varphi_{1,\varepsilon} ^\delta\in B\\
    \leq \omega &\text { if } \varphi_{1,\varepsilon} ^\delta\in B^c.\\
    \end {cases}
\]
This yields
\[
\begin{split}
    \int_0^{\mathcal{T}}  \abs {\kappa_\delta'
    \bigl( Q_\varepsilon ^\delta(s)
    -q_1
    (z_\varepsilon ^ \delta(s))  \bigr)}ds
    &
    \leq\frac { \C} {\delta}\int_0^{\mathcal{T}}
    1_{\varphi_{1,\varepsilon} ^\delta (s)\in B}ds\\
    & \leq \frac { \C} {\delta} \left(\frac {2K\omega\delta}
    {2K\omega\delta+\frac{1-2K\delta}{\omega}}\mathcal{T}
    +2K\omega\delta\right)
    \\
    &
    =\mathcal{O}(1\vee\mathcal{T}).\\
\end{split}
\]
Similarly, $\int_0^{\mathcal{T}} \abs {\kappa_\delta' (
q_2(z_\varepsilon ^ \delta(s))-Q_\varepsilon ^\delta(s))}ds
    =\mathcal{O}(1\vee\mathcal{T})$, and so $\int_0^{\mathcal{T}}
    \abs{H ^\delta (z_\varepsilon ^\delta
    (s),0)}ds= \mathcal{O}(1\vee\mathcal{T})$.
\end {proof}

We now follow steps one through four from Section
\ref{sct:1dps_hc_unif}, making modifications where necessary.

\paragraph*{Step 1:  Reduction using Gronwall's Inequality.}

Now $h_\varepsilon^\delta(\tau/\varepsilon)$ satisfies
\[
    h_\varepsilon^\delta(\tau/\varepsilon)-h_\varepsilon^\delta(0)    =
    \varepsilon\int_0^{\tau/\varepsilon}
    H^\delta(z_\varepsilon^\delta(s),0)ds.
\]
Define
\[
    e_\varepsilon^\delta(\tau)
    =\varepsilon\int_0^{\tau/\varepsilon}
    H^\delta(z_\varepsilon^\delta(s),0)- \bar
    H^\delta(h_\varepsilon^\delta(s))ds.
\]
It follows from Gronwall's Inequality and the fact that $\bar H^
{\cdot} (\cdot)\in\mathcal{C} ^1 (\{(\delta,h):0\leq\delta
\leq\delta_0,h\in\mathcal{V}\})$ that
\begin {equation}
\label{eq:1d_smooth_gronwall}
\begin {split}
    \sup_{0\leq \tau\leq T\wedge T_\varepsilon ^\delta}
    \abs{h_\varepsilon ^\delta (\tau/\varepsilon)-\bar
    h ^\delta (\tau)}
    &\leq
    \left(\sup_{0\leq \tau\leq T\wedge T_\varepsilon ^\delta}
    \abs{e_\varepsilon^\delta (\tau)}\right)
    e^{ \Lip{\bar
    H ^\delta \arrowvert _\mathcal{V}} T}
    \\
    &=\mathcal{O}\left(\sup_{0\leq \tau\leq T\wedge T_\varepsilon ^\delta}
    \abs{e_\varepsilon^\delta (\tau)}\right).
\end {split}
\end {equation}

\paragraph*{Step 2:  A splitting according to particles.}

Next,
\[
\begin {split}
    H^\delta(z,0)
    &-\bar H^\delta(h)
    \\
    &=
    \begin{bmatrix}
    0\\
    -\kappa_\delta'(Q-q_1(z))-\frac {\sqrt{8m_1 E_1}} {T_1}\\
    W\kappa_\delta'(Q-q_1(z))+W\frac {\sqrt{8m_1 E_1}} {T_1}\\
    0\\
    \end{bmatrix}
    +
    \begin{bmatrix}
    0\\
    \kappa_\delta'(q_2(z)-Q)+\frac {\sqrt{8m_2 E_2}} {T_2}\\
    0\\
    -W\kappa_\delta'(q_2(z)-Q)-W\frac {\sqrt{8m_2 E_2}} {T_2}\\
    \end{bmatrix}
    ,
\end {split}
\]
and so, in order to show that $\sup_{0\leq \tau\leq T\wedge
T_\varepsilon^\delta}\abs{e_\varepsilon^\delta(\tau)} =\mathcal{O}
(\varepsilon) $, it suffices to show that for $i=1,2$,
\begin {equation*}
\begin {split}
    \sup_{0\leq \tau\leq
    T\wedge T_\varepsilon^\delta}
    &
    \abs{\int_0^{\tau/\varepsilon}
    \kappa_\delta'\bigl(\abs{Q_\varepsilon^\delta(s)-
    x_i(z_\varepsilon^\delta(s))}\bigr)+
    \frac {\sqrt{8m_i E_{i,\varepsilon} ^\delta(s)}} {T_i(Q_{\varepsilon}^\delta(s),
     E_{i,\varepsilon}^\delta(s),
    \delta)}ds}
    =
    \mathcal{O} (1),
    \\
    \sup_{0\leq \tau\leq
    T\wedge T_\varepsilon^\delta}
    &
    \abs{\int_0^{\tau/\varepsilon}W_\varepsilon(s)
    \kappa_\delta'\bigl(\abs{Q_\varepsilon^\delta(s)-
    x_i(z_\varepsilon^\delta(s))}\bigr)+W_\varepsilon(s)
    \frac {\sqrt{8m_i E_{i,\varepsilon} ^\delta(s)}} {T_i(Q_{\varepsilon}^\delta(s),
     E_{i,\varepsilon}^\delta(s),
    \delta)}ds}
    \\&=
    \mathcal{O} (1).\\
\end {split}
\end {equation*}
We only demonstrate that
\[
    \sup_{0\leq \tau\leq
    T\wedge T_\varepsilon^\delta}
    \abs{\int_0^{\tau/\varepsilon} \kappa_\delta'\bigl(Q_\varepsilon^\delta(s)-
    q_1(z_\varepsilon^\delta(s))\bigr)+
    \frac {\sqrt{8m_1 E_{1,\varepsilon} ^\delta(s)}} {T_1(Q_{\varepsilon}^\delta(s),
     E_{1,\varepsilon}^\delta(s),
    \delta)}ds}
    =
    \mathcal{O} (1).
\]
The other three terms are handled similarly.

\paragraph*{Step 3:  A sequence of times adapted for ergodization.}

Define the sequence of times $t_{k,\varepsilon}^\delta $ inductively
by $t_{0,\varepsilon}^\delta=\inf\{t\geq
0:\varphi_{1,\varepsilon}^\delta(t)=0\}$,
$t_{k+1,\varepsilon}^\delta=\inf\{t>t_{k,\varepsilon}^\delta:
\varphi_{1,\varepsilon}^\delta(t)=0\}$. If $\varepsilon$ and
$\delta$ are sufficiently small and $t_{k+1,\varepsilon}^\delta\leq
(T\wedge T_\varepsilon^\delta)/\varepsilon$, then it follows from
Lemma \ref{lem:1d_smooth_ode} and the discussion in the proof of
Lemma~\ref{lem:1d_smooth_intbound} that $1/\omega
<t_{k+1,\varepsilon}^\delta-t_{k,\varepsilon}^\delta< \omega$. From
Lemmas \ref{lem:1d_smooth_ode} and \ref{lem:1d_smooth_intbound} it
follows that
\begin {equation}\label{eq:sc_splitting}
\begin {split}
    &\sup_{0\leq \tau\leq
    T\wedge T_\varepsilon^\delta}
    \abs{\int_0^{\tau/\varepsilon} \kappa_\delta'\bigl(Q_\varepsilon^\delta(s)-
    q_1(z_\varepsilon^\delta(s))\bigr)+
    \frac {\sqrt{8m_1 E_{1,\varepsilon} ^\delta(s)}} {T_1(Q_{\varepsilon}^\delta(s),
     E_{1,\varepsilon}^\delta(s),
    \delta)}ds}
    \\
    &\leq
    \mathcal{O} (1)+
    \sum_{t_{k+1,\varepsilon}^\delta\leq
    \frac {T\wedge T_\varepsilon^\delta}{\varepsilon}}
    \abs{\int_{t_{k,\varepsilon}^\delta}^{t_{k+1,\varepsilon}^\delta}
     \kappa_\delta'\bigl(Q_\varepsilon^\delta(s)-
    q_1(z_\varepsilon^\delta(s))\bigr)+
    \frac {\sqrt{8m_1 E_{1,\varepsilon} ^\delta(s)}} {T_1(Q_{\varepsilon}^\delta(s),
    E_{1,\varepsilon}^\delta(s),
    \delta)}ds}.
\end {split}
\end {equation}

\paragraph*{Step 4:  Control of individual terms by comparison with
        solutions along fibers.}

As before, it suffices to show that each term in the sum in Equation
\eqref{eq:sc_splitting} is no larger than $\mathcal{O} (\varepsilon)
$. Without loss of generality we will only examine the first term
and suppose that $t_{0,\varepsilon}^\delta =0$, i.e.~that
$\varphi_{1,\varepsilon}^\delta (0) =0$.

\begin {lem}
\label {lem:1d_smooth_divergence}

If $t_{1,\varepsilon}^\delta\leq\frac{T\wedge
T_\varepsilon^\delta}{\varepsilon}$, then $
    \sup_{0\leq t\leq t_{1,\varepsilon}^\delta}\abs{z_{0}^\delta(t)-
    z_\varepsilon^\delta(t)}=\mathcal{O}(\varepsilon ).
$
\end {lem}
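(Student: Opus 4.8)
The plan is to mimic the proof of Lemma \ref{lem:1d_divergence} from the hard core setting, but now working directly with the smooth ODE \eqref{eq:smooth1dode_2} and paying attention to the fact that the vector field $Z^\delta$ has components of size $\mathcal{O}(\delta^{-1})$ (coming from $\kappa_\delta'$) in the thin region $\varphi_1 \in [1/2-K\delta, 1/2+K\delta]$. First I would split $\abs{z_0^\delta(t) - z_\varepsilon^\delta(t)}$ into its slow part $\abs{h_0^\delta(t) - h_\varepsilon^\delta(t)}$ and its fast part $\sum_i \abs{\varphi_{i,0}^\delta(t) - \varphi_{i,\varepsilon}^\delta(t)}$, exactly as in the hard core lemma. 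For the slow part, note $h_0^\delta(t) = h_0^\delta(0) = h_\varepsilon^\delta(0)$ since the $\varepsilon=0$ flow fixes the slow variables; then from \eqref{eq:sm_H}, $dh_\varepsilon^\delta/dt = \varepsilon H^\delta(z_\varepsilon^\delta(s),0)$, so
\[
    \abs{h_\varepsilon^\delta(t) - h_\varepsilon^\delta(0)} \leq \varepsilon \int_0^t \abs{H^\delta(z_\varepsilon^\delta(s),0)}\,ds.
\]
The crucial point is that although the integrand can be as large as $\mathcal{O}(\delta^{-1})$ pointwise, Lemma \ref{lem:1d_smooth_intbound} (with $\mathcal{T}' = 0$, $\mathcal{T} = t_{1,\varepsilon}^\delta = \mathcal{O}(1)$) gives $\int_0^{t_{1,\varepsilon}^\delta} \abs{H^\delta(z_\varepsilon^\delta(s),0)}\,ds = \mathcal{O}(1)$, so $\sup_{0\leq t\leq t_{1,\varepsilon}^\delta} \abs{h_0^\delta(t) - h_\varepsilon^\delta(t)} = \mathcal{O}(\varepsilon)$, uniformly in $\delta$.

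For the fast variables, I would use that, by Lemma \ref{lem:1d_smooth_ode}, $d\varphi_{i,\varepsilon}^\delta/dt = 1/T_i(Q_\varepsilon^\delta, E_{i,\varepsilon}^\delta, \delta) + \mathcal{O}(\varepsilon)$ and $d\varphi_{i,0}^\delta/dt = 1/T_i(Q_0^\delta, E_{i,0}^\delta, \delta)$. Since $T_i$ is $\mathcal{C}^1$ on the relevant compact domain (Lemma \ref{lem:1d_smooth_periods}), it is Lipschitz there with a constant uniform in $\delta \leq \delta_0$, so
\[
    \abs{\varphi_{i,0}^\delta(t) - \varphi_{i,\varepsilon}^\delta(t)} \leq \int_0^t \left( \C\,\abs{h_0^\delta(s) - h_\varepsilon^\delta(s)} + \mathcal{O}(\varepsilon) \right) ds = \mathcal{O}(\varepsilon)
\]
for $0 \leq t \leq t_{1,\varepsilon}^\delta$, using the slow-variable bound just obtained and $t_{1,\varepsilon}^\delta < \omega = \mathcal{O}(1)$. (One needs here that $\varphi_{i,\varepsilon}^\delta$ and $\varphi_{i,0}^\delta$ stay in the region where these formulas for the derivatives are valid, which holds since both start in $h^{-1}\mathcal{U}$ and the slow variables drift only $\mathcal{O}(\varepsilon)$; for $\varphi_2$, whose starting phase need not be $0$, the same estimate applies since $d\varphi_{2,\varepsilon}^\delta/dt$ is bounded away from zero.) Combining the two bounds gives the claim.

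The main obstacle is the slow-variable estimate, specifically the need to avoid a naive bound $\varepsilon \cdot t \cdot \mathcal{O}(\delta^{-1})$, which would only give $\mathcal{O}(\varepsilon/\delta)$ and destroy the uniformity over $\delta$ that is the whole point of Theorem \ref{thm:1D_smooth_uniform}. This is exactly why Lemma \ref{lem:1d_smooth_intbound} was proved first: it says the orbit spends only an $\mathcal{O}(\delta)$-fraction of an $\mathcal{O}(1)$ time window inside the $\mathcal{O}(\delta)$-wide collision layer where $\kappa_\delta'$ is large, so the time-integral of $\abs{H^\delta(\cdot,0)}$ over such a window is $\mathcal{O}(1)$ rather than $\mathcal{O}(\delta^{-1})$. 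Once that input is in hand, the rest is a routine Gronwall-free estimate (no exponential factors are needed since the time interval has fixed length $\mathcal{O}(1)$), paralleling the hard core Lemma \ref{lem:1d_divergence} almost verbatim.
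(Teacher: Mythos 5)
Your proposal is correct and follows essentially the same route as the paper: the slow-variable bound via Lemma~\ref{lem:1d_smooth_intbound} (precisely to avoid the naive $\mathcal{O}(\varepsilon/\delta)$ estimate), and the fast-variable bound via the uniform regularity of $T_i$ from Lemmas~\ref{lem:1d_smooth_periods} and~\ref{lem:1d_smooth_ode}. No changes needed.
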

\begin {proof}
By Lemma \ref{lem:1d_smooth_intbound}, $h_{0} ^\delta
(t)-h_\varepsilon ^\delta (t) =h_{\varepsilon} ^\delta
(0)-h_\varepsilon ^\delta (t) = -\varepsilon\int_0 ^t H^\delta
(z_{\varepsilon} ^\delta (s),0)ds=\mathcal{O} (\varepsilon (1\vee
t))$ for $t\geq 0$.

Using what we know about the divergence of the slow variables, we
find that
\[
\begin {split}
    \varphi_{1,0} ^\delta (t) -\varphi_{1,\varepsilon} ^\delta (t)
    &=
    \int_0^t\frac{1}{T_1 (Q_0 ^\delta (s),E_0 ^\delta (s),\delta)}-
    \frac{1}{T_1 (Q_\varepsilon ^\delta (s),E_\varepsilon ^\delta
    (s),\delta)}+
    \mathcal{O}(\varepsilon)ds\\
    &=\int_0 ^t
    \mathcal{O} (\varepsilon)ds\\
    &= \mathcal{O} (\varepsilon )\\
\end {split}
\]
for $0\leq t \leq t_{1,\varepsilon}^\delta$. Lemmas
\ref{lem:1d_smooth_periods} and \ref{lem:1d_smooth_ode} ensure the
desired uniformity in the sizes of the orders of magnitudes.
 Showing that $\sup_{0\leq t\leq t_{1,\varepsilon}^\delta} \abs{\varphi_{2,0}
^\delta (t)-\varphi_{2,\varepsilon} ^\delta (t)}=
\mathcal{O}(\varepsilon )$ is similar.

\end {proof}

From Lemma \ref{lem:1d_smooth_divergence} we find that
$t_{1,\varepsilon}=t_{1,0}+\mathcal{O}(\varepsilon) =T_1 (Q_0
^\delta,E_0 ^\delta,\delta)+\mathcal{O}(\varepsilon)$.  Hence
\[
\begin {split}
    \int_{0}^{t_{1,\varepsilon}^\delta}
    \frac {\sqrt{8m_1 E_{1,\varepsilon} ^\delta(s)}} {T_1(Q_{\varepsilon}^\delta(s),
    E_{1,\varepsilon}^\delta(s),
    \delta)}ds
    &=
    \mathcal{O} (\varepsilon) +
    \int_{0}^{t_{1,0}^\delta}
    \frac {\sqrt{8m_1 E_{1,0} ^\delta}} {T_1(Q_{0}^\delta,
    E_{1,0}^\delta,
    \delta)}ds
    \\
    &=\mathcal{O} (\varepsilon) +\sqrt{8m_1 E_{1,0} ^\delta}.
\end {split}
\]
But when $q_1(z_\varepsilon^\delta)<Q_\varepsilon^\delta-a$,
\[
\begin {split}
    \frac {d} {ds}
    &
    \sqrt{E_{1,\varepsilon}^\delta(s)
    -\kappa_\delta\bigl(Q_\varepsilon^\delta(s)
    -q_1(z_\varepsilon^\delta(s))\bigr)}
    =
    \frac {\text {sign} \bigl(v_1(z_\varepsilon^\delta(s))\bigr)
    \kappa_\delta ' \bigl(Q_\varepsilon^\delta(s)
        -q_1(z_\varepsilon^\delta(s))\bigr)} {\sqrt{2m_1}},
    \\
\end {split}
\]
and so
\[
\begin {split}
    \int_{0}^{t_{1,\varepsilon}^\delta}\kappa_\delta'\bigl(Q_\varepsilon^\delta(s)-
    q_1(z_\varepsilon^\delta(s))\bigr) ds
    &
    =-\sqrt{2m_1 E_{1,\varepsilon} ^\delta(0)}
    -\sqrt{2m_1 E_{1,\varepsilon} ^\delta(t_{1,\varepsilon}^\delta)}
    \\
    &=\mathcal{O} (\varepsilon) -\sqrt{8m_1 E_{1,0} ^\delta}.
\end{split}
\]
Hence,
\[
    \int_{0}^{t_{1,\varepsilon}^\delta}\kappa_\delta'\bigl(Q_\varepsilon^\delta(s)-
    q_1(z_\varepsilon^\delta(s))\bigr)+
    \frac {\sqrt{8m_1 E_{1,\varepsilon} ^\delta(s)}} {T_1(Q_{\varepsilon}^\delta(s),
    E_{1,\varepsilon}^\delta(s),
    \delta)}ds=\mathcal{O} (\varepsilon),
\]
as desired.


\section{Appendix to Section \ref{sct:sc_proof}}
\label {sct:smooth1DTech}

\paragraph* {Proof of Lemma \ref{lem:1d_smooth_periods}:}
\begin{proof}

For $0<\delta <\mathcal{E}/2 $,
\[
\begin {split}
    T_1=T_1(Q,E_1,\delta)=2\int_a^{Q-a}\sqrt{\frac{m_1/2}
        {E_1-U_1(s)}}ds,\\
    T_2=T_2(Q,E_2,\delta)=2\int_{Q+a}^{1-a}\sqrt{\frac{m_2/2}
        {E_2-U_2(s)}}ds.\\
\end {split}
\]
We only consider the claims about $T_1 $, and for convenience we
take $m_1 =2 $.  Then
\[
\begin {split}
    T_1(Q,E_1,\delta)&=
    2\int_a^{Q-a}\frac{ds}{\sqrt{E_1-U_1(s)}}=
    4\int_a^{Q/2}\frac{ds}{\sqrt{E_1-\kappa_\delta(s)}}\\
    &=
    4\left(\frac{Q/2-\delta}{\sqrt{E_1}}+
    \int_a^{\delta}\frac{ds}{\sqrt{E_1-\kappa_\delta(s)}}\right)
    \\
    &
    =
    \frac{2Q-4\delta}{\sqrt{E_1}}+
    4\delta\int_{\kappa^{-1}(E_1)}^1
    \frac{ds}{\sqrt{E_1-\kappa(s)}}.\\
\end {split}
\]

Define
\[
    F (E) :=\int_{\kappa^{-1}(E)}^1
    \frac{ds}{\sqrt{E-\kappa(s)}}
    =
    \int_{0}^E
    \frac{- (\kappa ^ {-1})' (u)}{\sqrt{E-u}}du.
\]
Notice that $ (\kappa ^ {-1})' (u)$ diverges as $u\rightarrow 0 ^ +
$, while $(E-u) ^ {-1/2} $ diverges as $u\rightarrow E ^ - $, but
both functions are still integrable on $[0,E] $.  It follows that $F
(E) $ is well defined. Then it suffices to show that $F:
[\mathcal{E},\kappa (0) -\mathcal{E}]\rightarrow \mathbb{R} $ is
$\mathcal{C} ^1 $.

Write
\[
\begin {split}
    F (E)
    &=
    \int_0 ^ {\mathcal{E}/2}
    \frac{- (\kappa ^ {-1})' (u)}{\sqrt{E-u}}du +
    \int_{\mathcal{E}/2} ^E
    \frac{- (\kappa ^ {-1})' (u)}{\sqrt{E-u}}du\\
    &:=
    F_1 (E) +F_2 (E).\\
\end {split}
\]
A standard application of the Dominated Convergence Theorem allows
us to differentiate inside the integral and conclude that
$F_1\in\mathcal{C} ^ {\infty} ([\mathcal{E} ,\kappa (0)
-\mathcal{E}]) $, with
\[
    F_1' (E) =\int_0 ^ {\mathcal{E}/2}
    \frac{ (\kappa ^ {-1})' (u)}{2 (E-u) ^ {3/2}}du.
\]

To examine $F_2 $, we make the substitution $v =E-u $ to find that
\[
    F_2 (E) =\int_0 ^ {E-\mathcal{E}/2}
    \frac{- (\kappa ^ {-1})' (E-v)}{\sqrt{v}}dv.
\]
Using the fact that $(\kappa ^ {-1})'\in\mathcal{C} ^1 ([\mathcal{E}
/2,\kappa (0)]) $ and the Dominated Convergence Theorem, we find
that $F_2 $ is differentiable, with
\[
    F_2' (E) =\frac{- (\kappa ^ {-1})' (\mathcal{E}/2)}
    {\sqrt{E-\mathcal{E}/2}} +
    \int_0 ^ {E-\mathcal{E}/2}
    \frac{- (\kappa ^ {-1})'' (E-v)}{\sqrt{v}}dv.
\]
Another application of the Dominated Convergence Theorem shows that
$F_2' $ is continuous, and so $F_2\in\mathcal{C} ^ {1} ([\mathcal{E}
,\kappa (0) -\mathcal{E}]) $.

Thus
\[
    T_1(Q,E_1,\delta)=
    \frac{2Q}{\sqrt{E_1}}+
    4\delta\left[-E_1 ^ {-1/2} +F_1 (E_1) +F_2 (E_1)\right]
\]
has the desired regularity.  For future reference, we note that
\begin {equation} \label {eq:partial_periods}
\begin {split}
    \frac {\partial T_1} {\partial Q} =
    \frac {2} {\sqrt E_1},
    \quad
    \frac {\partial T_1} {\partial E_1} =
    \frac {-Q} {E_1 ^ {3/2}} +\mathcal{O} (\delta).
\end {split}
\end {equation}

\end {proof}

\begin {cor}
\label {cor:irrat_periods} For all $\delta$  sufficiently small, the
flow $z_0 ^\delta (t) $ restricted to the invariant tori
$\mathcal{M}_c =\{h =c\} $ is ergodic (with respect to the invariant
Lebesgue measure) for almost every $c\in \mathcal{U}$.

\end {cor}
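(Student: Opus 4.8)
The plan is to reduce the statement to the classical dichotomy for linear flows on the two-torus — a constant-speed linear flow on $\mathbb{T}^2$ is ergodic with respect to Haar measure if and only if its two frequencies are rationally independent — and then to verify that rational dependence occurs only for a Lebesgue-null set of parameters $c$.

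First, putting $\varepsilon = 0$ in \eqref{eq:smooth1dode_3} (Lemma \ref{lem:1d_smooth_ode}) gives $\dot Q = \dot W = \dot E_1 = \dot E_2 = 0$ and $\dot\varphi_i = 1/T_i(Q,E_i,\delta)$, so in the coordinates $z = (h,\varphi_1,\varphi_2)$ the restriction of $z_0^\delta(t)$ to $\mathcal{M}_c = \{h = c\}$ is the linear flow on the torus $\{(\varphi_1,\varphi_2)\}\cong\mathbb{T}^2$ with frequency vector $\omega(c) = (1/T_1, 1/T_2)$. A short change of variables using the definitions \eqref{eq:anglevar_defn}, under which $d\varphi_i = T_i^{-1}\sqrt{(m_i/2)/(E_i - U_i)}\,dq_i$ on each monotone branch, shows that the invariant density recorded in Section \ref{sct:smooth_1D_derivation} pulls back to normalized Haar measure $d\varphi_1\,d\varphi_2$ on $\mathbb{T}^2$; this is the measure $\mu_c$. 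Hence $(z_0^\delta(t)\arrowvert_{\mathcal{M}_c},\mu_c)$ is ergodic exactly when $1/T_1$ and $1/T_2$ are rationally independent, i.e.~when $T_1(Q,E_1,\delta)/T_2(Q,E_2,\delta)\notin\mathbb{Q}$.

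It then suffices to show that $N_\delta := \{c\in\mathcal{U} : T_1/T_2 \in \mathbb{Q}\}$ is Lebesgue-null for all small $\delta$. Since $T_1, T_2 > 0$, we may write $N_\delta = \bigcup_{p,q\geq 1}\{c\in\mathcal{U} : q\,T_1(Q,E_1,\delta) - p\,T_2(Q,E_2,\delta) = 0\}$, a countable union. For fixed positive integers $p,q$, the function $G_{p,q}(c) := q\,T_1(Q,E_1,\delta) - p\,T_2(Q,E_2,\delta)$ is $\mathcal{C}^1$ on $\mathcal{U}$ by Lemma \ref{lem:1d_smooth_periods}, and by \eqref{eq:partial_periods} its $E_1$-derivative is $\partial G_{p,q}/\partial E_1 = q\,\partial T_1/\partial E_1 = q\bigl(-Q E_1^{-3/2} + \mathcal{O}(\delta)\bigr)$ (up to a fixed positive mass factor). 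For $\delta_0$ small enough this is bounded away from $0$ on $\mathcal{U}$, since $Q$ and $E_1$ are bounded away from $0$ there; hence $\nabla G_{p,q}$ never vanishes, so $\{G_{p,q} = 0\}$ is a $\mathcal{C}^1$ hypersurface in $\mathcal{U}$ and in particular Lebesgue-null. A countable union of null sets is null, so $N_\delta$ is null, and the flow is ergodic for every $c\in\mathcal{U}\setminus N_\delta$.

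The only point deserving genuine care — and the one place the smallness of $\delta_0$ actually enters — is the uniform non-vanishing of $\partial T_1/\partial E_1$ (equivalently, that the frequency vector $\omega(c)$ stays honestly two-dimensional): one must check, via \eqref{eq:partial_periods} and the compactness built into $\mathcal{U}$, that the $\mathcal{O}(\delta)$ correction cannot cancel the leading term $-Q E_1^{-3/2}$ over the relevant range of $(Q,E_1)$, and this is what fixes how small $\delta_0$ must be chosen. The pullback-of-measure computation and the ergodicity criterion for linear torus flows are then entirely standard.
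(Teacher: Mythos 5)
Your proposal is correct and follows essentially the same route as the paper: ergodicity is reduced to irrationality of $T_1/T_2$ for the linear flow in the angle coordinates, and the key input is the same nonvanishing of $\partial T_1/\partial E_1 = -Q E_1^{-3/2}+\mathcal{O}(\delta)$ from Lemma~\ref{lem:1d_smooth_periods}, which fixes how small $\delta$ must be. The only cosmetic difference is in the final measure-zero step: you write the exceptional set as a countable union of regular level sets $\{qT_1=pT_2\}$ (null $\mathcal{C}^1$ hypersurfaces), whereas the paper fixes $Q,W,E_2$, uses monotonicity of $T_1$ in $E_1$, and applies Fubini's Theorem.
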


\begin {proof}
The flow is ergodic whenever the periods $T_1 $ and $T_2 $ are
irrationally related.  Fix $\delta $ sufficiently small such that $
\frac {\partial T_1} {\partial E_1} =
    -Q/E_1 ^ {3/2} +\mathcal{O} (\delta) < 0 $.  Next, consider
$Q $, $W$, and $E_2 $ fixed, so that $T_2 $ is constant.  Because
$T_1\in\mathcal{C}^1 $, it follows that, as we let $E_1 $ vary,
$\frac {T_1} {T_2}\notin\mathbb {Q} $ for almost every $E_1 $.  The
result follows from Fubini's Theorem.
\end {proof}

\paragraph* {Proof of Lemma \ref{lem:1d_smooth_ode}:}
\begin{proof}
For the duration of this proof, we consider the dynamics for a
small, fixed value of $\delta >0 $, which we generally suppress in
our notation.  For convenience, we take $m_1 =2 $.

Let $\psi $ denote the map taking $(Q,W,q_1,v_1,q_2,v_2) $ to
$(Q,W,E_1,E_2,\varphi_1,\varphi_2) $.  We claim that $\psi$ is a
$\mathcal{C} ^1 $ change of coordinates on the domain of interest.
Since $E_1 =v_1 ^2+\kappa_\delta(q_1)+\kappa_\delta(Q-q_1)$, $E_1$
is a $\mathcal{C} ^2 $ function of $q_1, v_1, $ and $Q$.  A similar
statement holds for $E_2 $.

The angular coordinates $\varphi_i (x_i,v_i,Q) $ are defined by
Equation \eqref{eq:anglevar_defn}.  We only consider $\varphi_1 $,
as the statements for $\varphi_2 $ are similar.  Then $\varphi_1
(q_1,v_1,Q) $ is clearly $\mathcal{C} ^1 $ whenever $q_1\neq a,Q -a
$.  The apparent difficulties in regularity at the turning points
are only a result of how the definition of $\varphi_1 $ is presented
in Equation \eqref{eq:anglevar_defn}.  Recall that the angle
variables are actually defined by integrating the elapsed time along
orbits, and our previous definition expressed $\varphi_1 $ in a
manner which emphasized the dependence on $q_1 $.  In fact, whenever
$\abs{v_1} <\sqrt E_1 $,
\begin {equation}
\label {eq:anglevar_defn2}
\begin {split}
    \varphi_1 (q_1,v_1,Q)=\begin {cases}
    -\frac{2}{T_1} \int_0^{v_1}(\kappa_\delta^{-1})'
        (E_1-v^2)dv&\text { if } q_1 <\delta\\
    \frac{1}{2}+\frac{2}{T_1} \int_0^{v_1}(\kappa_\delta^{-1})'
        (E_1-v^2)dv&\text { if } q_1 >Q -\delta.\\
    \end {cases}
    \\
\end {split}
\end {equation}
Here $E_1$ is implicitly considered to be a function of $q_1, v_1,$
and $Q$.  One can verify that $D\psi $ is non-degenerate on the
domain of interest, and so $\psi$ is indeed a $\mathcal{C} ^ 1 $
change of coordinates.

Next observe that $d\varphi_{1,0}/dt=1/T_1$, so Hadamard's Lemma
implies that
\[
    \frac {d\varphi_{1,\varepsilon}} {dt}=\frac {1} {T_1} +\mathcal{O}
    (\varepsilon f(\delta)).
\]
It remains to show that, in fact, we may take $f(\delta) =1$. It is
easy to verify this whenever $q_1\leq Q -\delta $ because $dE_1/dt=
0 $ there.  We only perform the more difficult verification when
$q_1 > Q -\delta $.

When $q_1 > Q -\delta $, $\abs {v_1} <\sqrt E_1 $ and $E_1 =v_1 ^2
+\kappa_\delta (Q-q_1) $.  From Equation \eqref{eq:anglevar_defn2}
we find that
\begin {equation}\label {eq:anglevar_defn3}
    \varphi_1 =    \frac{1}{2}+\frac{2\delta}{T_1(Q,E_1,\delta)}
    \int_0^{v_1}(\kappa^{-1})'
        (E_1-v^2)dv.
\end {equation}
To find $d\varphi_1/dt $, we consider $\varphi_1 $ as a function of
$v_1, Q, $ and $E_1 $, so that
\[
    \frac {d\varphi_1}{dt}=
    \frac {\partial \varphi_1} {\partial v_1}\frac {d v_1} {dt} +
    \frac {\partial \varphi_1} {\partial Q}\frac {d Q} {dt} +
    \frac {\partial \varphi_1} {\partial E_1}\frac {d E_1} {dt}.
\]
Then, using Equations \eqref{eq:partial_periods} and
\eqref{eq:anglevar_defn3}, we compute
\[
\begin {split}
    \frac {\partial \varphi_1} {\partial v_1}
    \frac {dv_1} {dt}
    & =
    \frac {2} {T_1}(\kappa_\delta^{-1})'(E_1-v_1^2)
    \frac {\kappa_\delta' (Q -q_1)} {2}
    =
    \frac {1} {T_1},
    \\
    \frac {\partial \varphi_1} {\partial Q}
    \frac {dQ} {dt}
    & =
    \frac {1/2 -\varphi_1} {T_1}
    \frac {\partial T_1} {\partial Q}
    (\varepsilon W)
    =
    \varepsilon W\frac {1/2 -\varphi_1} {T_1}
    \frac {2} {\sqrt E_1},
    \\
    \frac {\partial \varphi_1} {\partial E_1}
    \frac {dE_1} {dt}
    & =
    \left(
    \frac {1/2 -\varphi_1} {T_1}
    \frac {\partial T_1} {\partial E_1}
    +
    \frac{2\delta}{T_1}\int_0^{v_1}(\kappa^{-1})''(E_1-v^2)dv
    \right)
    (\varepsilon W \kappa_\delta' (Q -q_1)).
\end {split}
\]
Using that $\kappa_\delta'(Q-q_1) =\kappa' (\kappa^ {-1}
(E_1-v_1^2))/\delta=(\delta(\kappa^ {-1})' (E_1-v_1^2))^ {-1} $, we
find that
\[
    \frac {\partial \varphi_1} {\partial E_1}
    \frac {dE_1} {dt}
    =
    \varepsilon\mathcal{O}\left (\frac {1/2-\varphi_1}
    {\delta}\right)
    +
    \varepsilon\mathcal{O}
    \left(
    \frac {1}
    {(\kappa^ {-1})' (E_1-v_1^2)}
    \int_0^{v_1}(\kappa^{-1})''(E_1-v^2)dv
    \right).
\]
But here $1/2-\varphi_1$ is $\mathcal{O}(\delta)$.  See the proof of
Lemma \ref{lem:1d_smooth_delta} below.  Thus the claims about
$d\varphi_1/dt$ will be proven, provided we can uniformly bound
\[
    \frac{1}{(\kappa^ {-1})'(E_1-v_1^2)}
    \int_0^{v_1}(\kappa^{-1})''(E_1-v^2)dv.
\]
Note that the apparent divergence of the integral as
$\abs{v_1}\rightarrow\sqrt {E_1}  $ is entirely due to the fact that
our expression for $\varphi_1$ from Equation
\eqref{eq:anglevar_defn3} requires $\abs {v_1} < \sqrt E_1 $. If we
make the substitution $u=E_1-v^2$ and let $e=E_1-v_1^2$, then it
suffices to show that
\[
    \sup_{\mathcal{E}\leq E_1\leq\kappa(0)-\mathcal{E}}
    \;
    \sup_{0<e\leq E_1}
    \abs{\frac{1}{(\kappa^ {-1})' (e)}
    \int_e^{E_1}\frac{(\kappa^{-1})''(u)}{\sqrt{E_1-u}}du}
    <+\infty.
\]
The only difficulties occur when $e$ is close to $0$.  Thus it
suffices to show that
\[
    \sup_{\mathcal{E}\leq E_1\leq\kappa(0)-\mathcal{E}}
    \;
    \sup_{0<e\leq \mathcal{E}/2}
    \abs{\frac{1}{(\kappa^ {-1})' (e)}
    \int_e^{\mathcal{E}/2}\frac{(\kappa^{-1})''(u)}{\sqrt{E_1-u}}du}
\]
is finite.  But this is bounded by
\[
\begin {split}
    \sup_{0<e\leq \mathcal{E}/2}
    &
    \abs{\frac{1}{(\kappa^ {-1})' (e)}
    \int_e^{\mathcal{E}/2}\frac{(\kappa^{-1})''(u)}{\sqrt{\mathcal{E}/2}}du}
    \\
    &=
    \sup_{0<e\leq \mathcal{E}/2}
    \abs{\frac{\sqrt{2/\mathcal{E}}}{(\kappa^ {-1})' (e)}
    \bigl((\kappa^ {-1})' (\mathcal{E}/2) -(\kappa^ {-1})' (e)\bigr) }
    ,
\end{split}
\]
which is finite because $(\kappa^ {-1})' (e)\rightarrow -\infty $ as
$e\rightarrow 0^ + $.  The claims about $d\varphi_2/dt$ can be
proven similarly.

\end {proof}

\paragraph* {Proof of Lemma \ref{lem:1d_smooth_delta}:}

\begin{proof}
We continue in the notation of the proofs of Lemmas
\ref{lem:1d_smooth_periods} and \ref{lem:1d_smooth_ode} above, and
we set $m_1 =2 $.  Then from Equation \eqref{eq:anglevar_defn3}, we
see that $\kappa_\delta' (Q-q_1) =0 $ unless $\abs{\varphi_1
-1/2}\leq \abs{\frac{2\delta}{T_1} \int_0^{\sqrt
E_1}(\kappa^{-1})'(E_1-v^2)dv} = \delta F (E_1)/T_1 =\mathcal{O}
(\delta)  $.  Dealing with $\varphi_2 $ is similar.

\end {proof}

\chapter{The periodic oscillation of an adiabatic piston in
two or three dimensions}\label{chp:dDpiston}

In this chapter, we present our results for the piston system in two
or three dimensions. These results may also be found
in~\cite{Wri06b}.

\section{Statement of the main result}\label{sct:main_result}

\subsection{Description of the model}\label{sct:model}

Consider a massive, insulating piston of mass $M$ that separates a
gas container $\mathcal{D} $ in $\mathbb{R}^d$, $ d= 2\text { or
}3$. See Figure~\ref{fig:domain1}. Denote the location of the piston
by $Q$, its velocity by $dQ/dt=V$, and its cross-sectional length
(when $ d=2$, or area, when $ d=3$) by $\ell$. If $Q$ is fixed, then
the piston divides $\mathcal{D} $ into two subdomains,
$\mathcal{D}_1(Q) =\mathcal{D}_1 $ on the left and $\mathcal{D}_2(Q)
=\mathcal{D}_2  $ on the right. By $E_i$ we denote the total energy
of the gas inside $\mathcal{D}_i$, and by $\abs{\mathcal{D}_i} $ we
denote the area (when $ d=2$, or volume, when $ d=3$) of
$\mathcal{D}_i$.

We are interested in the dynamics of the piston when the system's
total energy is bounded and $M\rightarrow \infty $.  When
$M=\infty$, the piston remains fixed in place, and each energy $E_i$
remains constant. When $M$ is large but finite, $MV^2/2$ is bounded,
and so $V=\mathcal{O} (M^ {-1/2}) $.  It is natural to define
\[
\begin {split}
    \varepsilon&=M^ {-1/2},\\ W& =\frac {V} {\varepsilon},
\end {split}
\]
so that $W$ is of order $1$ as $\varepsilon\rightarrow 0$.  This is
equivalent to scaling time by $\varepsilon$.

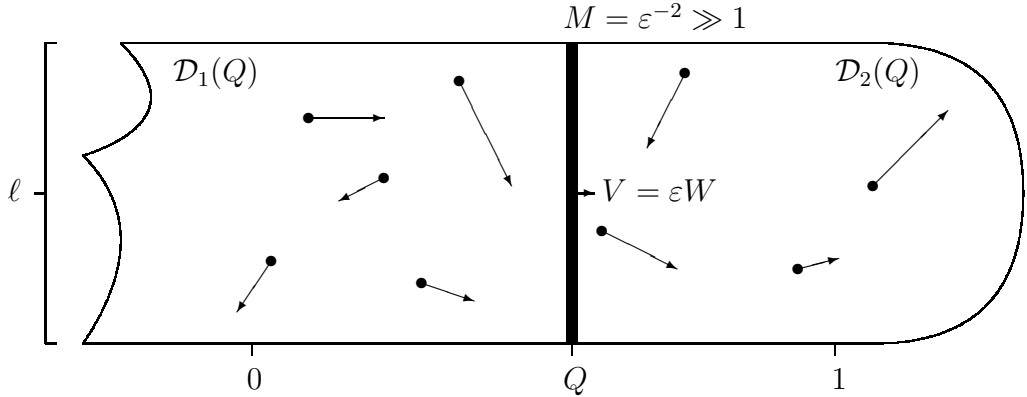
\begin{figure}
    \begin {center}
    \setlength{\unitlength}{1 cm}
    \begin{picture}(15,6)
        \put(2.5,1){\line(1,0){10}}
        \put(2.5,5){\line(1,0){10}}
        \qbezier(2,3.5)(3.5,4)(2.5,5)
        \qbezier(2,3.5)(3,2.5)(2,1)
        \put(2,1){\line(1,0){1}}
        \qbezier(12.5,1)(14.5,1)(14.5,3)
        \qbezier(14.5,3)(14.5,5)(12.5,5)
        \put(3.2,4.5){$\mathcal{D}_1(Q)$}
        \put(12,4.5){$\mathcal{D}_2(Q)$}
        \put(1.5,1){\line(0,1){4}}
        \put(1.5,1){\line(1,0){0.15}}
        \put(1.5,5){\line(1,0){0.15}}
        \put(1.5,3){\line(-1,0){0.15}}
        \put(1,2.9){$\ell$}
        \linethickness {0.15cm}
        \put(8.5,1){\line(0,1){4}}
        \thinlines
        \put(8.5,3){\vector(1,0){.3}}
        \put(8.9,2.9){$V=\varepsilon W$}
        \put(8.5,1){\line(0,-1){0.15}}
        \put(8.38,0.4){$Q$}
        \put(8.38,5.2){$M=\varepsilon ^ {-2}\gg 1$}
        \put(4.25,1){\line(0,-1){0.15}}
        \put(4.18,0.4){$0$}
        \put(12,1){\line(0,-1){0.15}}
        \put(11.95,0.4){$1$}
        \put(5,4){\circle*{.15}}
        \put(5,4){\vector(1,0){1}}
        \put(7,4.5){\circle*{.15}}
        \put(7,4.5){\vector(1,-2){0.7}}
        \put(6,3.2){\circle*{.15}}
        \put(6,3.2){\vector(-2,-1){0.6}}
        \put(6.5,1.8){\circle*{.15}}
        \put(6.5,1.8){\vector(3,-1){0.7}}
        \put(4.5,2.1){\circle*{.15}}
        \put(4.5,2.1){\vector(-2,-3){0.45}}
        \put(8.9,2.5){\circle*{.15}}
        \put(8.9,2.5){\vector(2,-1){1}}
        \put(11.5,2.0){\circle*{.15}}
        \put(11.5,2.0){\vector(4,1){0.55}}
        \put(10,4.6){\circle*{.15}}
        \put(10,4.6){\vector(-1,-2){0.5}}
        \put(12.5,3.1){\circle*{.15}}
        \put(12.5,3.1){\vector(1,1){1}}
    \end{picture}
    \end {center}
    \caption{A gas container $\mathcal{D}\subset \mathbb{R}^2 $ separated by a piston.}
    \label{fig:domain1}
\end{figure}

Next we precisely describe the gas container.  It is a compact,
connected billiard domain $\mathcal{D} \subset\mathbb{R}^d$ with a
piecewise $\mathcal{C} ^3$ boundary, i.e.~$\partial\mathcal{D} $
consists of a finite number of $\mathcal{C} ^3$ embedded
hypersurfaces, possibly with boundary and a finite number of corner
points.  The container consists of a ``tube,'' whose perpendicular
cross-section $\mathcal{P} $ is the shape of the piston, connecting
two disjoint regions.  $\mathcal{P} \subset \mathbb{R} ^ {d-1} $ is
a compact, connected domain whose boundary is piecewise $\mathcal{C}
^3$.  Then the ``tube'' is the region $[0,1]\times
\mathcal{P}\subset\mathcal{D} $ swept out by the piston for $ 0\leq
Q\leq 1$, and $[0,1]\times
\partial\mathcal{P}\subset\partial\mathcal{D} $.  If $d=2$, $\mathcal{P} $
is just a closed line segment, and the ``tube'' is a rectangle.  If
$ d=3$, $\mathcal{P} $ could be a circle, a square, a pentagon, etc.

Our fundamental assumption is as follows:
\begin{main_assu}
For almost every $Q\in [0,1]$ the billiard flow of a single particle
on an energy surface in either of the two subdomains
$\mathcal{D}_i(Q)$ is ergodic (with respect to the invariant
Liouville measure).
\end {main_assu}
\noindent If $ d=2$, the domain could be the Bunimovich
stadium~\cite{Bun79}. Another possible domain is indicated in Figure
\ref{fig:domain1}.  The ergodicity of billiards in such domains,
which produce hyperbolic flows, goes back to the pioneering work of
Sinai~\cite{Sin70}, although a number of individuals have
contributed to the theory.  A full accounting of this history can be
found in~\cite{CM06}.  Polygonal domains satisfying our assumptions
can also be constructed~\cite{Vorobets_1997}. Suitable domains in
$d=3$ dimensions can be constructed using a rectangular box with
shallow spherical caps adjoined~\cite{BunimovichRehacek1998}.  Note
that we make no assumptions regarding the hyperbolicity of the
billiard flow in the domain.

The Hamiltonian system we consider consists of the massive piston of
mass $M$ located at position $Q$, as well as $ n_1+ n_2 $ gas
particles, $ n_1$ in $\mathcal{D}_1$ and $n_2$ in $\mathcal{D}_2$.
Here $n_1$ and $n_2$ are fixed positive integers. For convenience,
the gas particles all have unit mass, though all that is important
is that each gas particle has a fixed mass. We denote the positions
of the gas particles in $\mathcal{D}_i$ by $q_{ i,j}$, $1\leq j\leq
n_i$.  The gas particles are ideal point particles that interact
with $\partial\mathcal{D} $ and the piston by hard core, elastic
collisions. Although it has no effect on the dynamics we consider,
for convenience we complete our description of the Hamiltonian
dynamics by specifying that the piston makes elastic collisions with
walls located at $Q=0,\: 1$ that are only visible to the piston.  We
denote velocities by $dQ/dt=V=\varepsilon W$ and $dq_{ i,j}/dt=v_{
i,j}$, and we set
\[
    E_{ i,j}=v_{ i,j}^2/2,\qquad E_i=\sum_{ j=1} ^
    {n_i} E_{ i,j}.
\]
Our system has $d(n_1+n_2)+1$ degrees of freedom, and so its phase
space is $(2d(n_1+n_2)+2)$-dimensional.

We let
\[
    h(z)=h=(Q,W,E_{1,1},E_{1,2},\cdots,E_{1,n_1},E_{2,1},E_{2,2},\cdots,E_{2,n_2}),
\]
so that $h$ is a function from our phase space to
$\mathbb{R}^{n_1+n_2+2}$.  We often abbreviate
$h=(Q,W,E_{1,j},E_{2,j})$, and we refer to $h$ as consisting of the
slow variables because these quantities are conserved when
$\varepsilon=0$.  We let $h_\varepsilon(t,z)=h_\varepsilon(t) $
denote the actual motions of these variables in time for a fixed
value of $\varepsilon$. Here $z$ represents the initial condition in
phase space, which we usually suppress in our notation.  One should
think of $h_\varepsilon(\cdot) $ as being a random variable that
takes initial conditions in phase space to paths (depending on the
parameter t) in $\mathbb{R}^{n_1+n_2+2}$.

\subsection {The averaged equation}

From the work of Neishtadt and Sinai~\cite{NS04}, one can derive
\begin {equation}\label{eq:d_davg}
    \frac{d}{d\tau}
    \begin {bmatrix}
    Q\\
    W\\
    E_{1,j}\\
    E_{2,j}\\
    \end {bmatrix}
    =\bar H(h):=
    \begin {bmatrix}
    W\\
    \displaystyle\frac{2E_1\ell}{d\abs{\mathcal{D}_1(Q)}}
    -\frac{2E_2\ell}{d\abs{\mathcal{D}_2(Q)}}\\
    \displaystyle-\frac{2WE_{1,j}\ell}{d\abs{\mathcal{D}_1(Q)}}\\
    \displaystyle+\frac{2WE_{2,j}\ell}{d\abs{\mathcal{D}_2(Q)}}\\
    \end {bmatrix}
\end {equation}
as the averaged equation (with respect to the slow time
$\tau=\varepsilon t$) for the slow variables. Later, in Section
\ref{sct:heuristic2}, we will give another heuristic derivation of
the averaged equation that is more suggestive of our proof.

Neishtadt and Sinai~\cite{Sin99, NS04} pointed out that the
solutions of Equation~\eqref{eq:heuristic_avg_eq} have $(Q, W) $
behaving as if they were the coordinates of a Hamiltonian system
describing a particle undergoing motion inside a potential well.  As
in Section~\ref{sct:heuristic}, the effective Hamiltonian is given
by
\begin{equation*}
    \frac {1} {2}W^2+
    \frac{E_1(0)\abs{\mathcal{D}_1(Q(0))}^{2/d}}
    {\abs{\mathcal{D}_1(Q)}^{2/d}}+
    \frac{E_2(0)\abs{\mathcal{D}_2(Q(0))}^{2/d}}
    {\abs{\mathcal{D}_2(Q)}^{2/d}}.
\end{equation*}
This can be seen as follows.  Since
\[
    \frac {\partial\abs{\mathcal{D}_1(Q)}} {\partial Q}=\ell
    =-\frac {\partial\abs{\mathcal{D}_2(Q)}} {\partial Q},
\]
$d\ln(E_{i,j})/d\tau=-(2/d)d\ln(\abs{\mathcal{D}_i(Q)})/d\tau$, and
so
\[
    E_{i,j}(\tau)=E_{i,j}(0)\left (\frac{\abs{\mathcal{D}_i(Q(0))}}
        {\abs{\mathcal{D}_i(Q(\tau))}}\right) ^ {2/d}.
\]
By summing over $j$, we find that
\[
    E_{i}(\tau)=E_{i}(0)\left (\frac{\abs{\mathcal{D}_i(Q(0))}}
        {\abs{\mathcal{D}_i(Q(\tau))}}\right) ^ {2/d}
\]
and so
\[
    \frac {d^2Q(\tau)} {d\tau^2}=
    \frac {2\ell} {d}
    \frac{E_1(0)\abs{\mathcal{D}_1(Q(0))}^{2/d}}{\abs{\mathcal{D}_1(Q(\tau))}^{1+2/d}}
    -\frac {2\ell} {d}
    \frac{E_2(0)
    \abs{\mathcal{D}_2(Q(0))}^{2/d}}{\abs{\mathcal{D}_2(Q(\tau))}^{1+2/d}}.
\]

Let $\bar{h} (\tau,z)=\bar{h} (\tau) $ be the solution of
\[
\frac {d\bar{h}}{d\tau} =\bar {H} (\bar {h}),\qquad \bar {h} (0)
=h_\varepsilon(0).
\]
Again, think of $\bar h(\cdot) $ as being a random variable.

\subsection{The main result}

The solutions of the averaged equation approximate the motions of
the slow variables, $h_\varepsilon(t) $, on a time scale
$\mathcal{O} (1/\varepsilon) $ as $\varepsilon\rightarrow 0$.
Precisely, fix a compact set $\mathcal{V}\subset \mathbb
R^{n_1+n_2+2}$ such that $h\in \mathcal{V} \Rightarrow
Q\subset\subset (0,1),W\subset\subset \mathbb R$, and $E_{i,j}
\subset\subset (0,\infty)$ for each $i$ and $j$.\footnote { We have
introduced this notation for convenience.  For example, $h\in
\mathcal{V} \Rightarrow Q\subset\subset (0,1) $ means that there
exists a compact set $A \subset (0,1) $ such that $h\in \mathcal{V}
\Rightarrow Q\in A $, and similarly for the other variables.}  We
will be mostly concerned with the dynamics when $h\in\mathcal{V} $.
Define
\[
\begin {split}
    Q_{min}&=\inf_{h\in\mathcal{V}}Q,\qquad
    Q_{max}=\sup_{h\in\mathcal{V}}Q,
    \\
    E_{min}&=\inf_{h\in\mathcal{V}}\frac{1}{2}W^2+E_1+E_2,\qquad
    E_{max}=\sup_{h\in\mathcal{V}}\frac{1}{2}W^2+E_1+E_2.
\end {split}
\]
For a fixed value of $\varepsilon >0$, we only consider the dynamics
on the invariant subset of phase space defined by
\[
\begin {split}
    \mathcal{M}_\varepsilon =
    \{(Q,V,q_{i,j},v_{i,j})\in\mathbb{R}^ {2d(n_1+n_2)+2}:
    Q\in [0,1],\;q_{i,j}\in\mathcal{D}_{i}(Q),
    &
    \\
    E_{min}\leq \frac{M}{2}V^2+E_1+E_2\leq E_{ max}\}
    &.
\end {split}
\]
Let $P_\varepsilon$ denote the probability measure obtained by
restricting the invariant Liouville measure to
$\mathcal{M}_\varepsilon $.  Define the stopping time
\[
    T_\varepsilon(z) =T_\varepsilon =\inf \{\tau\geq 0: \bar {h}
    (\tau)\notin \mathcal{V} \text { or } h_\varepsilon(\tau
    /\varepsilon) \notin \mathcal{V} \}.
\]

\begin {thm}\label{thm:dDpiston}

If $\mathcal{D} $ is a gas container in $d=2$ or $3$ dimensions
satisfying the assumptions in Subsection~\ref{sct:model} above, then
for each $T>0$,
\[
    \sup_{0\leq\tau\leq T\wedge
    T_\varepsilon}\abs{h_\varepsilon(\tau/\varepsilon)-\bar{h}(\tau)}
    \rightarrow 0 \text { in probability as } \varepsilon=M^
    {-1/2}\rightarrow 0,
\]
i.e.~for each fixed $\delta>0$,
\[
    P_\varepsilon\left(\sup_{0\leq\tau\leq T\wedge
    T_\varepsilon}\abs{h_\varepsilon(\tau/\varepsilon)-\bar{h}(\tau)}\geq\delta
    \right)
    \rightarrow 0\text { as } \varepsilon=M^
    {-1/2}\rightarrow 0.
\]
\end{thm}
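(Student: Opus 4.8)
The plan is to follow the general structure of the proof of Anosov's theorem in Section~\ref{sct:Anosov_proof}, adapted to the fact that here the fast dynamics are billiard flows with singularities rather than smooth flows. As in that proof, the strategy breaks into two pieces: (i) controlling the divergence between the perturbed orbit $z_\varepsilon(t)$ and the ``frozen'' orbit $z_{k,\varepsilon}(t)$ on a long timescale $L(\varepsilon)$, and (ii) using ergodicity of the billiard flow on each fiber to show that the ``ergodization error'' term is small in probability. The Main Assumption gives ergodicity for almost every $Q$, which plays the role of the ergodicity-along-fibers hypothesis in Theorem~\ref{thm:anosov}.

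First I would set up the averaging framework: define $h_\varepsilon$, the averaged field $\bar H$ exactly as in Equation~\eqref{eq:d_davg}, the stopping time $T_\varepsilon$, and use Gronwall's inequality (as in Step~1 of every proof in Chapter~\ref{chp:averaging}) to reduce the theorem to showing that $e_\varepsilon(\tau)=\varepsilon\int_0^{\tau/\varepsilon}\bigl(H(z_\varepsilon(s))-\bar H(h_\varepsilon(s))\bigr)ds\to 0$ in probability, uniformly for $0\le\tau\le T\wedge T_\varepsilon$. Here $H(z)$ is the instantaneous rate of change of the slow variables, which is a sum of Dirac masses supported on collisions of gas particles with the piston (as in the hard-core one-dimensional case of Section~\ref{sct:1dps_hc_unif}, but now the inter-collision motion of each gas particle is a billiard flow in $\mathcal{D}_i(Q)$ rather than free motion on an interval). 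Because the gas particles do not interact with each other, $H-\bar H$ splits as a sum over the $n_1+n_2$ particles, so it suffices to control one particle at a time. For a single particle one introduces a time scale $L(\varepsilon)\to\infty$ with $L(\varepsilon)=o(\log\varepsilon^{-1})$, sets $t_{k,\varepsilon}=kL(\varepsilon)$, and splits $e_\varepsilon$ into blocks, exactly as in Equations~\eqref{eq:einfnorm}--\eqref{eq:hinfnorm2}.

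The control of the continuity terms $I_{k,\varepsilon}$, $III_{k,\varepsilon}$ is where the billiard nature of the problem bites: Lemma~\ref{lem:anosov1} used smoothness of $Z$ and Gronwall, which is unavailable because the billiard flow is discontinuous at the singularity set (grazing collisions, corners, cusps). I expect this to be the main obstacle. The plan is to prove a substitute: the frozen orbit $z_{k,\varepsilon}$ and the true orbit $z_\varepsilon$ stay $\mathcal{O}(\varepsilon L(\varepsilon))$-close \emph{away from a small neighborhood of the singularity set}, and one must show that orbits spend only a small fraction of time near the singularities. This requires a quantitative estimate — uniform in $Q$ over a positive-measure set of $Q$'s — on the measure of the set of phase points whose billiard orbit comes within distance $\rho$ of the singularity set within time $L(\varepsilon)$; such an estimate should be $\mathcal{O}(\rho L(\varepsilon))$ by invariance of Liouville measure and a bound on the ``complexity'' of the singularity set, and this is exactly where the mild regularity hypotheses on $\partial\mathcal{D}$ (piecewise $\mathcal{C}^3$, finitely many corners) enter. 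One also needs that the map $Q\mapsto$ (billiard flow on $\mathcal{D}_i(Q)$, together with its invariant measure and the averaged quantity $\bar H$) is measurable and that $\bar H$ is $\mathcal{C}^1$ in $h$ — the $\mathcal{C}^1$ regularity of $\bar H$ follows from the explicit formula~\eqref{eq:d_davg}, since $|\mathcal{D}_i(Q)|$ is smooth in $Q$.

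Finally, for the ergodization term $II_{k,\varepsilon}=\int_{t_{k,\varepsilon}}^{t_{k+1,\varepsilon}}\bigl(H(z_{k,\varepsilon}(s))-\bar H(h_{k,\varepsilon}(s))\bigr)ds$, I would argue exactly as in Step~5 of the proof of Theorem~\ref{thm:anosov}: introduce the factor measure $dP^f_\varepsilon=dh\cdot d\mu_h$ uniformly equivalent to $P_\varepsilon$, define the ``bad sets'' $\mathcal{B}_{k,\varepsilon}$ of initial conditions that are poorly ergodized on time $L(\varepsilon)$, use the analogue of Lemma~\ref{lem:anosov2} (the flow $z_\varepsilon$ distorts $P^f_\varepsilon$ by at most $e^{KT}$, which still holds since the divergence bound only used boundedness of phase space and Hadamard's lemma applied to the density, not smoothness of trajectories), and reduce via Chebyshev to showing $E\bigl(\varepsilon L(\varepsilon)\#\mathcal{B}_{z,\varepsilon}\bigr)\to 0$. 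This in turn reduces to $\int_\mathcal{V}dh\,\mu_h\{z:\frac{1}{L(\varepsilon)}|\int_0^{L(\varepsilon)}(H-\bar H)(z_0(s))ds|\ge\delta/2T\}\to 0$, which follows from the Birkhoff ergodic theorem on almost every fiber (the Main Assumption) together with the Bounded Convergence Theorem — using that $H$ composed with the frozen flow, integrated over a block, is bounded because the number of piston collisions in time $L(\varepsilon)$ is $\mathcal{O}(L(\varepsilon))$. The one subtlety is that $H$ here is not a bounded function but a sum of delta functions; one works instead with $\int_0^{L(\varepsilon)}H(z_0(s))\,ds$, which \emph{is} a bounded, measurable function of the initial condition, and it is this function whose time average one ergodizes.
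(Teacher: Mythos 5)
Your architecture coincides with the paper's (which is indeed modeled on Dolgopyat's proof of Anosov's theorem): Gronwall reduction, splitting by particles, blocks of length $L(\varepsilon)$, the triangle-inequality decomposition into continuity and ergodization terms, Chebyshev plus measure invariance to control the bad blocks, and Birkhoff on almost every fiber (via the Main Assumption) plus bounded convergence for the ergodization term; you also correctly identify the absence of a smooth Gronwall inequality as the central new difficulty and propose the right kind of substitute (closeness of orbits away from a neighborhood of the billiard singularity set, plus a measure bound on the neighborhood). The genuine gap is an \emph{a priori} estimate that your plan does not supply: control, over the whole time horizon $[0,T/\varepsilon]$ of the \emph{perturbed} dynamics, of gas particles that approach the moving piston with $\abs{v_i^\perp}=\mathcal{O}(\varepsilon)$. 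The paper introduces the extra stopping time $\tilde{T}_\varepsilon$ (requiring $\abs{v_i^\perp}>C_1\varepsilon$ whenever a particle is in the tube near the piston) and proves $P\{\tilde{T}_\varepsilon<T\wedge T_\varepsilon\}=\mathcal{O}(\varepsilon)$ (Lemma~\ref{lem:no_vertical}) by an escape-rate argument: the bad set has measure $\mathcal{O}(\varepsilon)$, and the set of points that newly enter it within a fixed time is only $\mathcal{O}(\varepsilon^2)$, so the union of preimages over $\sim T/\varepsilon$ time steps is still $\mathcal{O}(\varepsilon)$. Without this, one cannot guarantee ``clean collisions,'' hence neither the lower bound on times between piston collisions nor the $\mathcal{O}(L)$ collision counts per block, and consequently neither the integral representation $h_\varepsilon(t_2)-h_\varepsilon(t_1)=\mathcal{O}(\varepsilon)+\varepsilon\int H$ (Lemma~\ref{lem:h_int}) nor the pseudo-orbit shadowing of the perturbed collision sequence by the frozen billiard map (Lemma~\ref{lem:gron3}). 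Your per-block estimate on orbits near the frozen billiard's singularity set, handled in expectation, does not substitute for this: the problematic event concerns the interaction of a nearly tangent particle with the \emph{moving} wall over the full horizon, and a union bound of per-block $o(1)$ estimates over $T/(\varepsilon L)$ blocks is not small.

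Two smaller points. First, your appeal to an analogue of Lemma~\ref{lem:anosov2} via Hadamard's lemma and a divergence bound does not directly apply to a flow with collisions; but it is also unnecessary, since here Liouville measure is \emph{exactly} invariant under $z_\varepsilon$ and is uniformly equivalent to the factor measure $dP^f=dh\,d\mu^1_h\,d\mu^2_h$ on $h^{-1}\mathcal{V}$, which is how the paper proceeds. Second, two quantitative details matter more than your sketch suggests: the measure of the bad set for shadowing is not $\mathcal{O}(\rho L)$ but is driven by $\nu\mathcal{N}_\gamma(F^{-1}\mathcal{N}_\gamma(\partial\Omega))=\mathcal{O}(\gamma^{1/3})$ (the preimage of a singularity neighborhood is badly distorted by the unbounded derivative near grazing collisions), and the per-collision expansion constant is $\mathcal{O}(1/\gamma)$ with $\gamma$ shrinking, so the admissible time scale is much shorter than $o(\log\varepsilon^{-1})$: one needs $\varepsilon\, e^{\C L^2}\rightarrow 0$, e.g.\ $L(\varepsilon)=\log\log\varepsilon^{-1}$, as in Equations~\eqref{eq:gron6} and~\eqref{eq:gron7}. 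Finally, to identify the fiberwise ergodic average of the delta-function observable with the coefficients of $\bar H$, the paper passes to the collision map induced on the piston face and uses Santal\'o's formula (Lemma~\ref{lem:ae_convergence}); some such argument is needed to make your ``ergodize $\int_0^{L}H(z_0(s))\,ds$'' step precise.
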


\begin {rem}
It should be noted that the stopping time in the above result is not
unduly restrictive.  If the initial pressures of the two gasses are
not too mismatched, then the solution to the averaged equation is a
periodic orbit, with the effective potential well keeping the piston
away from the walls.  Thus, if the actual motions follow the
averaged solution closely for $0\leq\tau\leq T\wedge T_\varepsilon$,
and the averaged solution stays in $\mathcal{V} $, it follows that
$T_\varepsilon
>T$.
\end{rem}

\begin {rem}
The techniques of this work should immediately generalize to prove
the analogue of Theorem~\ref{thm:dDpiston} above in the nonphysical
dimensions $d>3$, although we do not pursue this here.
\end{rem}

\begin {rem}

As in Subsection~\ref{sct:apps_generalizations},
Theorem~\ref{thm:dDpiston} can be easily generalized to cover a
system of $N-1$ pistons that divide $N$ gas containers, so long as,
for almost every fixed location of the pistons,  the billiard flow
of a single gas particle on an energy surface in any of the $N$
subcontainers is ergodic (with respect to the invariant Liouville
measure). The effective Hamiltonian for the pistons has them moving
like an $(N-1)$-dimensional particle inside a potential well.

\end{rem}

\section{Preparatory material concerning a\\ two-dimensional gas
    container with only one gas particle on each side}
\label{sct:2dprep}

Our results and techniques of proof are essentially independent of
the dimension and the fixed number of gas particles on either side
of the piston. Thus, we focus on the case when $d=2$ and there is
only one gas particle on either side.  Later, in Section
\ref{sct:generalization}, we will indicate the simple modifications
that generalize our proof to the general situation.  For clarity, in
this section and next, we denote $ q_{ 1,1} $ by $ q_1$, $v_{2, 1} $
by $ v_2$, etc. We decompose the gas particle coordinates according
to whether they are perpendicular to or parallel to the piston's
face, for example $q_1= (q_1^\perp,q_1^\parallel)$. See Figure
\ref{fig:domain}.

\begin{figure}
    \begin {center}
    \setlength{\unitlength}{0.7 cm}
    \begin{picture}(15,6)
        \put(2.5,1){\line(1,0){10}}
        \put(2.5,5){\line(1,0){10}}
        \qbezier(2,3.5)(3.5,4)(2.5,5)
        \qbezier(2,3.5)(3,2.5)(2,1)
        \put(2,1){\line(1,0){1}}
        \qbezier(12.5,1)(14.5,1)(14.5,3)
        \qbezier(14.5,3)(14.5,5)(12.5,5)
        \put(3.2,4.5){$\mathcal{D}_1$}
        \put(12,4.5){$\mathcal{D}_2$}
        \put(13.3,5){\vector(0,1){0.5}}
        \put(13.23,5.7){$\parallel$}
        \put(13.3,5){\vector(1,0){0.5}}
        \put(13.95,4.9){$\perp$}
        \put(1.5,1){\line(0,1){4}}
        \put(1.5,1){\line(1,0){0.15}}
        \put(1.5,5){\line(1,0){0.15}}
        \put(1.5,3){\line(-1,0){0.15}}
        \put(1,2.9){$\ell$}
        \linethickness {0.1cm}
        \put(8.5,1){\line(0,1){4}}
        \thinlines
        \put(8.5,3){\vector(1,0){.4}}
        \put(9.0,2.9){$V=\varepsilon W$}
        \put(8.5,1){\line(0,-1){0.15}}
        \put(8.38,0.3){$Q$}
        \put(8.38,5.2){$M=\varepsilon ^ {-2}\gg 1$}
        \put(4.25,1){\line(0,-1){0.15}}
        \put(4.15,0.3){$0$}
        \put(12,1){\line(0,-1){0.15}}
        \put(11.95,0.3){$1$}
        \put(5,4){\circle*{.15}}
        \put(5,4){\vector(1,2){.4}}
        \put(5.05,3.65){$q_1$}
        \put(5.45,4.5){$v_1$}
        \put(11.5,2.5){\circle*{.15}}
        \put(11.5,2.5){\vector(-1,-3){.3}}
        \put(11.65,2.3){$q_2$}
        \put(11.35,1.5){$v_2$}
    \end{picture}
    \end {center}
    \caption{A choice of coordinates on phase space.}
    \label{fig:domain}
\end{figure}
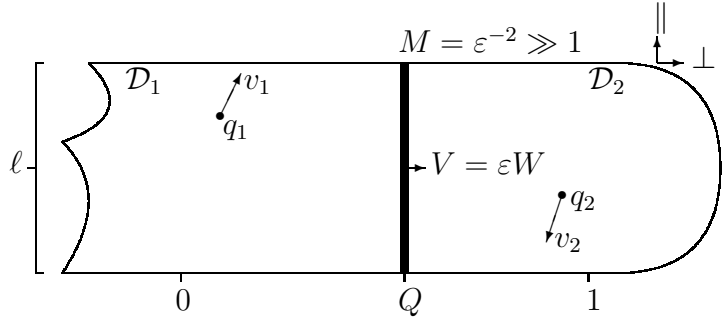

The Hamiltonian dynamics define a flow on our phase space. We denote
this flow by $z_\varepsilon(t,z) =z_\varepsilon(t)$, where
$z=z_\varepsilon(0,z) $. One should think of $z_\varepsilon(\cdot) $
as being a random variable that takes initial conditions in phase
space to paths in phase space.  Then $h_\varepsilon(t)
=h(z_\varepsilon(t)) $.  By the change of coordinates
$W=V/\varepsilon$, we may identify all of the
$\mathcal{M}_\varepsilon$ defined in Section~\ref{sct:main_result}
with the space
\[
\begin {split}
    \mathcal{M} =
    \{(Q,W,q_1,v_1,q_2,v_2)\in\mathbb{R}^ {10}:
    Q\in [0,1],\;q_1\in\mathcal{D}_1(Q),\;
    q_2\in\mathcal{D}_2(Q),\;
    &
    \\
    E_{min}\leq \frac{1}{2}W^2+E_1+E_2\leq E_{ max}\}
    &.
\end {split}
\]
and all of the $P_\varepsilon$ with the probability measure $P$ on
$\mathcal{M}$, which has the density
\[
    dP=\C \,dQdWdq_1^\perp dq_1^\parallel dv_1^\perp dv_1^\parallel
    dq_2^\perp dq_2^\parallel dv_2^\perp dv_2^\parallel.
\]
(Throughout this work we will use $\C$ to represent generic
constants that are independent of $\varepsilon$.) We will assume
that these identifications have been made, so that we may consider
$z_\varepsilon(\cdot) $ as a family of measure preserving flows on
the same space that all preserve the same probability measure.  We
denote the components of $z_\varepsilon(t) $ by $Q_\varepsilon(t) $,
$q_{1,\varepsilon}^\perp(t) $, etc.

The set $\{z\in\mathcal{M}:q_1=Q=q_2\}$ has co-dimension two, and so
$\bigcup_t z_\varepsilon(t)\{q_1=Q=q_2\}$ has co-dimension one,
which shows that only a measure zero set of initial conditions will
give rise to three particle collisions.  We ignore this and other
measures zero events, such as gas particles hitting singularities of
the billiard flow, in what follows.

Now we present some background material, as well as some lemmas that
will assist us in our proof of Theorem \ref{thm:dDpiston}.  We begin
by studying the billiard flow of a gas particle when the piston is
infinitely massive.  Next we examine collisions between the gas
particles and the piston when the piston has a large, but finite,
mass.  Then we present a heuristic derivation of the averaged
equation that is suggestive of our proof.  Finally we prove a lemma
that allows us to disregard the possibility that a gas particle will
move nearly parallel to the piston's face -- a situation that is
clearly bad for having the motions of the piston follow the
solutions of the averaged equation.

\subsection{Billiard flows and maps in two dimensions}\label{sct:billiard}

In this section, we study the billiard flows of the gas particles
when $M=\infty $ and the slow variables are held fixed at a specific
value $h\in\mathcal{V} $.  We will only study the motions of the
left gas particle, as similar definitions and results hold for the
motions of the right gas particle.  Thus we wish to study the
billiard flow of a point particle moving inside the domain
$\mathcal{D}_1$ at a constant speed $\sqrt{2E_1} $.  The results of
this section that are stated without proof can be found
in~\cite{CM06}.

Let $\mathcal{TD}_1$ denote the tangent bundle to $\mathcal{D}_1$.
The billiard flow takes place in the three-dimensional space
$\mathcal{M}_h^1=\mathcal{M}^1=\{(q_1,v_1)\in\mathcal{TD}_1:q_1\in\mathcal{
D}_1,\; \abs{v_1}=\sqrt{2E_1}\}/\sim$. Here the quotient means that
when $q_1\in\partial\mathcal{ D}_1$, we identify velocity vectors
pointing outside of $\mathcal{D}_1$ with those pointing inside
$\mathcal{D}_1$ by reflecting through the tangent line to
$\partial\mathcal{D}_1$ at $ q_1$, so that the angle of incidence
with the unit normal vector to $\partial\mathcal{ D}_1$ equals the
angle of reflection. Note that most of the quantities defined in
this subsection depend on the fixed value of $h$.  We will usually
suppress this dependence, although, when necessary, we will indicate
it by a subscript $h$.  We denote the resulting flow by
$y(t,y)=y(t)$, where $y(0,y)=y$. As the billiard flow comes from a
Hamiltonian system, it preserves Liouville measure restricted to the
energy surface.  We denote the resulting probability measure by $\mu
$. This measure has the density
$d\mu=dq_1dv_1/(2\pi\sqrt{2E_1}\abs{\mathcal{D}_1} ) $. Here $dq_1$
represents area on $\mathbb{R}^2$, and $ dv_1$ represents length on
$S^1_{\sqrt{2E_1}}=\set{v_1\in\mathbb{R}^2:\abs{v_1}=\sqrt{2E_1}}$.

There is a standard cross-section to the billiard flow, the
collision cross-section
$\Omega=\{(q_1,v_1)\in\mathcal{TD}_1:q_1\in\partial\mathcal{ D}_1,\;
\abs{v_1}=\sqrt{2E_1}\}/\sim$.  It is customary to parameterize
$\Omega$ by $\{x=(r,\varphi):r\in\partial\mathcal{D}_1,\:\varphi\in
[-\pi /2,+\pi /2]\}$, where $r$ is arc length and $\varphi$
represents the angle between the outgoing velocity vector and the
inward pointing normal vector to $\partial\mathcal{D}_1$. It follows
that $\Omega$ may be realized as the disjoint union of a finite
number of rectangles and cylinders.  The cylinders correspond to
fixed scatterers with smooth boundary placed inside the gas
container.
 If $F:\Omega\circlearrowleft$ is the collision map, i.e.~the
return map to the collision cross-section, then $F$ preserves the
projected probability measure $\nu $, which has the density
$d\nu=\cos\varphi\, d\varphi \, dr/(2\abs{\partial\mathcal{D}_1}) $.
Here $\abs{\partial\mathcal{D}_1}$ is the length of
$\partial\mathcal{D}_1$.

We suppose that the flow is ergodic, and so $F$ is an invertible,
ergodic measure preserving transformation. Because
$\partial\mathcal{D}_1$ is piecewise $\mathcal{C} ^3$, $F$ is
piecewise $\mathcal{C} ^2$, although it does have discontinuities
and unbounded derivatives near discontinuities corresponding to
grazing collisions.  Because of our assumptions on $\mathcal{D}_1$,
the free flight times and the curvature of $\partial\mathcal{D}_1$
are uniformly bounded.  It follows that if $x\notin \partial\Omega
\cup F^{-1} (\partial\Omega) $, then $F$ is differentiable at $x$,
and
\begin {equation}\label{eq:2d_derivative_bound}
    \norm {DF(x)}\leq\frac {\C} {\cos \varphi(Fx)},
\end{equation}
where $\varphi(Fx) $ is the value of the $\varphi$ coordinate at the
image of $x$.

Following the ideas in Section \ref{sct:inducing}, we induce $F$ on
the subspace $\hat\Omega$ of $\Omega$ corresponding to collisions
with the (immobile) piston.  We denote the induced map by $\hat F$
and the induced measure by $\hat \nu$. We parameterize $\hat\Omega$
by $\{(r,\varphi):0\leq r\leq \ell,\:\varphi\in [-\pi/2,+\pi/2]\}$.
As $\nu \hat\Omega=\ell/\abs{\partial\mathcal{D}_1} $, it follows
that $\hat\nu $ has the density $d\hat \nu=\cos\varphi\, d\varphi \,
dr/(2\ell) $.

For $x\in\Omega $, define $\zeta x $ to be the free flight time,
i.e.~the time it takes the billiard particle traveling at speed
$\sqrt{2E_1} $ to travel from $x$ to $Fx$. If $x\notin
\partial\Omega \cup F^{-1} (\partial\Omega) $,
\begin {equation}
\label{eq:2d_time_derivative}
    \norm {D\zeta (x)}\leq\frac {\C} {\cos \varphi(Fx)}.
\end {equation}
Santal\'{o}'s formula~\cite{San76,Chernov1997} tells us that
\begin {equation}
\label{eq:2d_Santalo}
    E_\nu \zeta=\frac {\pi
    \abs{\mathcal{D}_1}} {\abs{v_1}\abs{\partial\mathcal{D}_1}}.
\end {equation}
If $\hat\zeta:\hat\Omega\rightarrow\mathbb{R} $ is the free flight
time between collisions with the piston, then it follows from
Proposition \ref{prop:inducing} that
\begin {equation}
\label{eq:2d_flight}
    E_{\hat\nu} \hat\zeta=\frac {\pi
    \abs{\mathcal{D}_1}}{\abs{v_1}\ell}.
\end{equation}

The expected value of $ \abs{v_1^\perp }$ when the left gas particle
collides with the (immobile) piston is given by
\begin {equation}
\label{eq:2d_momentum}
    E_{\hat\nu} \abs{v_1^\perp }=E_{\hat\nu} \sqrt{2E_1}\cos\varphi=
    \frac{\sqrt{2E_1}}{2}\int_{-\pi/2}^{+\pi/2} \cos^2\varphi\,d\varphi=
    \sqrt{2E_1}\frac{\pi}{4}.
\end {equation}

We wish to compute $\lim_{t\rightarrow\infty} t^{-1} \int_0^t
\abs{2v_1^\perp (s)}\delta_{q_1^\perp (s)=Q}ds$, the time average of
the change in momentum of the left gas particle when it collides
with the piston.  If this limit exists and is equal for almost every
initial condition of the left gas particle, then it makes sense to
define the pressure inside $\mathcal{D}_1$ to be this quantity
divided by $\ell$.  Because the collisions are hard-core, we cannot
directly apply Birkhoff's Ergodic Theorem to compute this limit.
However, we can compute this limit by using the map $\hat F$.

\begin {lem}
\label{lem:ae_convergence}

If the billiard flow $y(t) $ is ergodic, then for $\mu-a.e.$ $y\in
\mathcal{M}^1$,
\[
    \lim_{t\rightarrow\infty} \frac{1}{t}
    \int_0^t \abs{v_1^\perp (s)}\delta_{q_1^\perp (s)
    =Q}ds=
    \frac{E_1\ell}{2\abs{\mathcal{D}_1(Q)}}.
\]

\end {lem}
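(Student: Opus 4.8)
The plan is to convert the flow-time average into a Birkhoff average for the induced collision map $\hat F$ and then apply the pointwise ergodic theorem. First I would note that the integral $\int_0^t \abs{v_1^\perp(s)}\,\delta_{q_1^\perp(s)=Q}\,ds$ is really a sum: each time the billiard orbit $y(s)$ hits the piston face, it contributes a single term $\abs{v_1^\perp}$ evaluated at that collision, which in the $(r,\varphi)$ coordinates on $\hat\Omega$ equals $\sqrt{2E_1}\cos\varphi$. So if $N(t)$ denotes the number of collisions with the piston up to time $t$, and $x_0,x_1,\dots\in\hat\Omega$ are the successive points of the orbit of $\hat F$, then
\[
    \int_0^t \abs{v_1^\perp(s)}\,\delta_{q_1^\perp(s)=Q}\,ds
    = \sum_{k=0}^{N(t)-1}\sqrt{2E_1}\cos\varphi(x_k) + \mathcal{O}(1),
\]
the error accounting for a possible partial flight interval at the end.

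Next I would handle the two factors $N(t)/t$ and $\frac{1}{N(t)}\sum_{k<N(t)}\sqrt{2E_1}\cos\varphi(x_k)$ separately. Since the billiard flow $y(t)$ is ergodic, the induced map $\hat F$ on $\hat\Omega$ is ergodic with respect to $\hat\nu$ (this is the standard fact about induced maps, cf.\ the inducing material of Section~\ref{sct:inducing} and Proposition~\ref{prop:inducing}). Applying Birkhoff's Ergodic Theorem to $\hat F$ with the $\hat\nu$-integrable observable $\sqrt{2E_1}\cos\varphi$ gives, for $\hat\nu$-a.e.\ starting collision point,
\[
    \frac{1}{N}\sum_{k=0}^{N-1}\sqrt{2E_1}\cos\varphi(x_k)
    \longrightarrow E_{\hat\nu}\abs{v_1^\perp} = \sqrt{2E_1}\,\frac{\pi}{4}
\]
by Equation~\eqref{eq:2d_momentum}. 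For $N(t)/t$: the elapsed time after $N$ collisions with the piston is the Birkhoff sum $\sum_{k=0}^{N-1}\hat\zeta(x_k)$ of the piston-return time $\hat\zeta$, so Birkhoff's theorem again gives $\frac{1}{N}\sum_{k<N}\hat\zeta(x_k)\to E_{\hat\nu}\hat\zeta = \pi\abs{\mathcal{D}_1}/(\abs{v_1}\ell)$ from Equation~\eqref{eq:2d_flight}; inverting, $N(t)/t\to \abs{v_1}\ell/(\pi\abs{\mathcal{D}_1})$. Multiplying the two limits and using $\abs{v_1}=\sqrt{2E_1}$,
\[
    \frac{1}{t}\int_0^t \abs{v_1^\perp(s)}\,\delta_{q_1^\perp(s)=Q}\,ds
    \longrightarrow \frac{\abs{v_1}\ell}{\pi\abs{\mathcal{D}_1}}\cdot\sqrt{2E_1}\,\frac{\pi}{4}
    = \frac{2E_1\ell}{4\abs{\mathcal{D}_1(Q)}} = \frac{E_1\ell}{2\abs{\mathcal{D}_1(Q)}},
\]
which is the claim.

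Finally there is the measure-theoretic bookkeeping: the statement asks for convergence for $\mu$-a.e.\ $y\in\mathcal{M}^1$, not just $\hat\nu$-a.e.\ starting collision point. I would pass from $\hat\nu$-a.e.\ to $\nu$-a.e.\ on the full collision cross-section $\Omega$ using ergodicity of $F$ (a $\nu$-full set of orbits visits $\hat\Omega$ and inherits the good behavior), and then from $\nu$-a.e.\ on $\Omega$ to $\mu$-a.e.\ on $\mathcal{M}^1$ via the standard suspension/flow correspondence, using that $\mu$-a.e.\ point flows to $\Omega$ in finite time and that the finite initial flight segment does not affect the $t\to\infty$ average. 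The main obstacle is not any single estimate but making sure the null sets line up correctly and that the counting function $N(t)$ is controlled — i.e.\ that $N(t)\to\infty$ and $N(t)/t$ has a genuine limit rather than merely $\limsup/\liminf$ bounds; this follows because $\hat\zeta$ is $\hat\nu$-integrable with finite positive mean, so the Birkhoff sums of $\hat\zeta$ grow linearly and $N(t)$ is essentially their inverse. The bounds \eqref{eq:2d_time_derivative} on $\zeta$ are not needed here; only integrability of $\hat\zeta$ and $\cos\varphi$ matters, and both are bounded or have finite $\hat\nu$-integral by the explicit density of $\hat\nu$.
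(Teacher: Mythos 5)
Your proposal is correct and follows essentially the same route as the paper's proof: reduce to the induced collision map $\hat F$ on $\hat\Omega$ via the suspension-flow structure, sandwich the time average between Birkhoff sums of $\abs{v_1^\perp}$ and $\hat\zeta$, and conclude from Birkhoff's Ergodic Theorem together with Equations~\eqref{eq:2d_flight} and~\eqref{eq:2d_momentum}. No substantive differences to report.
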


\begin {proof}
Because the billiard flow may be viewed as a suspension flow over
the collision cross-section with $\zeta$ as the height function, it
suffices to show that the convergence takes place for $\hat\nu-a.e.$
$x\in\hat\Omega$. For an initial condition $x\in\hat\Omega$, define
$\hat{N}_t(x)=\hat{N}_t=\#\set{s\in (0,t]:y(s,x) \in\hat\Omega}$. By
the Poincar\'e  Recurrence Theorem, $\hat{N}_t\rightarrow\infty$ as
$t\rightarrow\infty$, $\hat\nu-a.e.$

But
\[
\begin {split}
    \frac{\hat{N}_t}{\sum_{n=0}^{\hat{N}_t}\hat\zeta(\hat F^n x)}
    \frac{1}{\hat{N}_t}
    \sum_{n=1}^{\hat{N}_t}\abs{v_1^\perp }(\hat F^n x)
    &\leq
    \frac{1}{t}\int_0^t \abs{v_1^\perp (s)}\delta_{q_1^\perp (s)=Q}ds
    \\
    &\leq
    \frac{\hat{N}_t}{\sum_{n=0}^{\hat{N}_t-1}\hat\zeta(\hat F^n x)}
    \frac{1}{\hat{N}_t}
    \sum_{n=0}^{\hat{N}_t}\abs{v_1^\perp }(\hat F^n x),
\end {split}
\]
and so the result follows from Birkhoff's Ergodic Theorem and
Equations \eqref{eq:2d_flight} and \eqref{eq:2d_momentum}.
\end {proof}

\begin {cor}
\label{cor:ae_convergence}

If the billiard flow $y(t) $ is ergodic, then for each $\delta>0$,
\[
    \mu
    \set{y\in \mathcal{M}^1:\abs{\frac{1}{t}
    \int_0^t \abs{v_1^\perp (s)}\delta_{q_1^\perp (s)
    =Q}ds-
    \frac{E_1\ell}{2\abs{\mathcal{D}_1(Q)}}}\geq \delta}
    \rightarrow 0\text{ as }t\rightarrow \infty.
\]

\end {cor}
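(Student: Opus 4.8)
The corollary is a routine consequence of Lemma~\ref{lem:ae_convergence}: almost-everywhere convergence on the probability space $(\mathcal{M}^1,\mu)$ implies convergence in measure. I would present it exactly as the Bounded Convergence Theorem is used at the end of the proof of Anosov's theorem. Write $X_t(y)=\frac{1}{t}\int_0^t \abs{v_1^\perp(s)}\delta_{q_1^\perp(s)=Q}\,ds=\frac{1}{t}\sum_{n:\,\sigma_n(y)\leq t}\abs{v_1^\perp(\sigma_n(y))}$, where $\sigma_1(y)<\sigma_2(y)<\cdots$ are the times at which the orbit $y(s,y)$ collides with the immobile piston, and set $c=\frac{E_1\ell}{2\abs{\mathcal{D}_1(Q)}}$. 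By Lemma~\ref{lem:ae_convergence}, $X_t(y)\to c$ as $t\to\infty$ for $\mu$-a.e.\ $y\in\mathcal{M}^1$.

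The one point that needs checking is that the family $\{X_t\}_{t\geq 1}$ is uniformly bounded on $\mathcal{M}^1$, so that the Bounded Convergence Theorem applies. Since the gas particle moves at the fixed speed $\sqrt{2E_1}$, each term satisfies $\abs{v_1^\perp(\sigma_n)}\leq\sqrt{2E_1}$, so $X_t\leq\sqrt{2E_1}\,\hat N_t(y)/t$ with $\hat N_t(y)=\#\set{n:\sigma_n(y)\leq t}$. To bound $\hat N_t/t$, note that between two consecutive piston collisions the perpendicular velocity $v_1^\perp$ can change sign only at a wall whose normal has nonzero $\perp$-component; but inside the tube $[0,1]\times\mathcal{P}$ every boundary wall lies in $[0,1]\times\partial\mathcal{P}$ and hence has normal perpendicular to the $\perp$-axis. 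Therefore, after a collision with the piston at $q_1^\perp=Q$, the particle must leave the tube, reaching $q_1^\perp=0$, and return before it can strike the piston again, traversing a perpendicular distance at least $2Q$ at speed at most $\sqrt{2E_1}$. Hence consecutive piston collisions are separated by at least $c_0:=2Q/\sqrt{2E_1}>0$ (for our fixed $h$, $Q\in(0,1)$ and $E_1<\infty$), so $\hat N_t\leq 1+t/c_0$ and $X_t\leq\sqrt{2E_1}\,(1/t+1/c_0)$ for every $y$ and every $t\geq 1$.

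With this uniform bound the Bounded Convergence Theorem gives $\int_{\mathcal{M}^1}\abs{X_t-c}\,d\mu\to 0$, and Markov's inequality then yields $\mu\set{y\in\mathcal{M}^1:\abs{X_t(y)-c}\geq\delta}\leq\delta^{-1}\int_{\mathcal{M}^1}\abs{X_t-c}\,d\mu\to 0$, which is the assertion. (Alternatively one can bypass the uniform bound: since $t\mapsto X_t(y)$ is right-continuous, $\sup_{t\geq T}\abs{X_t(y)-c}=\sup_{t\geq T,\,t\in\mathbb{Q}}\abs{X_t(y)-c}$ is measurable, and continuity from above of the finite measure $\mu$ together with Lemma~\ref{lem:ae_convergence} shows $\mu\set{y:\sup_{t\geq T}\abs{X_t(y)-c}\geq\delta}\to 0$ as $T\to\infty$.) Thus nothing here is genuinely hard; the only step requiring a moment's thought is the elementary geometric lower bound on the time between successive piston collisions, and everything else is soft measure theory.
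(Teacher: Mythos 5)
Your proposal is correct and coincides with what the paper intends: Corollary~\ref{cor:ae_convergence} is stated without proof as an immediate consequence of Lemma~\ref{lem:ae_convergence}, i.e.\ of the standard fact that almost-everywhere convergence on a probability space implies convergence in measure, which is exactly what you establish (your second, ``alternative'' route is the textbook argument and needs no uniform bound at all, while your first route correctly reuses the paper's own geometric observation that consecutive piston collisions are separated by at least $2Q/\sqrt{2E_1}$).
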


\subsection{Analysis of collisions}\label{sct:collisions}

In this section, we return to studying our piston system when
$\varepsilon>0$.  We will examine what happens when a particle
collides with the piston.  For convenience, we will only examine in
detail collisions between the piston and the left gas particle.
Collisions with the right gas particle can be handled similarly.

When the left gas particle collides with the piston, $v_1^\perp $
and $V$ instantaneously change according to the laws of elastic
collisions:
\begin {equation*}
    \begin{bmatrix}
    v_1^{\perp +}\\ V^+
    \end{bmatrix}
    =
    \frac{1}{1+M}
    \begin{bmatrix}
    1-M& 2M\\
    2& M-1\\
    \end{bmatrix}
    \begin{bmatrix}
    v_1^{\perp -}\\ V^-
    \end{bmatrix}.
\end {equation*}
In our coordinates, this becomes
\begin {equation}\label{eq:collision_change}
    \begin{bmatrix}
    v_1^{\perp +}\\ W^+
    \end{bmatrix}
    =
    \frac{1}{1+\varepsilon^2 }
    \begin{bmatrix}
    \varepsilon^2 -1 & 2\varepsilon\\
    2\varepsilon & 1-\varepsilon^2 \\
    \end{bmatrix}
    \begin{bmatrix}
    v_1^{\perp -}\\ W^-
    \end{bmatrix}.
\end {equation}
Recalling that $ v_1, W=\mathcal{O} (1) $, we find that to first
order in $\varepsilon$,
\begin{equation}
\label{eq:v_1Wchange}
    v_1^{\perp +}=-v_1^{\perp -}+\mathcal{O}(\varepsilon),\qquad
    W^ +=W^ -+\mathcal{O}(\varepsilon).
\end{equation}
Observe that a collision can only take place if $v_1^{\perp
-}>\varepsilon W^ - $.  In particular, $v_1^{\perp -}> -
\varepsilon\sqrt{2E_{max}}$.  Thus, either $v_1^{\perp -}> 0$ or
$v_1^{\perp -}= \mathcal{O} (\varepsilon) $.  By expanding
Equation~\eqref{eq:collision_change} to second order in
$\varepsilon$, it follows that
\begin{equation}
\label{eq:E_1Wchange}
\begin {split}
    E_1^+ -E_1^- &=-2\varepsilon W \abs{v_1^{\perp }}
    +\mathcal{O}(\varepsilon^2),\\
    W^+ -W^- &=+2\varepsilon \abs{v_1^{\perp }}
    +\mathcal{O}(\varepsilon^2).
\end {split}
\end{equation}
Note that it is immaterial whether we use the pre-collision or
post-collision values of $W$ and $\abs{v_1^{\perp }}$ on the right
hand side of Equation~\eqref{eq:E_1Wchange}, because any ambiguity
can be absorbed into the $\mathcal{O} (\varepsilon^2) $ term.

It is convenient for us to define a ``clean collision'' between the
piston and the left gas particle:
\begin {defn}
    The left gas particle experiences a \emph{clean collision} with the
    piston if and only if $v_1^{\perp -}>0$ and $v_1^{\perp +}<-\varepsilon
    \sqrt{2E_{max}}$.
\end{defn}
\noindent In particular, after a clean collision, the left gas
particle will escape from the piston, i.e.~the left gas particle
will have to move into the region $q_1^{\perp }\leq 0 $ before it
can experience another collision with the piston.  It follows that
there exists a constant $C_1>0$, which depends on the set
$\mathcal{V} $, such that for all $\varepsilon$ sufficiently small,
so long as $ Q\geq Q_{min} $ and $\abs{v_1^{\perp }}
>\varepsilon C_1$ when $q_1^{\perp }\in [Q_{ min},Q]$, then the left gas particle
will experience only clean collisions with the piston, and the time
between these collisions will be greater than $2Q_{min}/(\sqrt
{2E_{max}})$.  (Note that when we write expressions such as
$q_1^{\perp }\in [Q_{ min},Q]$, we implicitly mean that $q_1$ is
positioned inside the ``tube'' discussed at the beginning of
Section~\ref{sct:main_result}.) One can verify that $C_1=5\sqrt
{2E_{max}}$ would work.

Similarly, we can define clean collisions between the right gas
particle and the piston.  We assume that $C_1$ was chosen
sufficiently large such that for all $\varepsilon$ sufficiently
small, so long as $ Q\leq Q_{max} $ and $\abs{v_2^{\perp }}
>\varepsilon C_1$ when $q_2^{\perp }\in [Q,Q_{max}] $, then the right gas particle
will experience only clean collisions with the piston.

Now we define three more stopping times, which are functions of the
initial conditions in phase space.
\[
\begin {split}
    T_\varepsilon' =&\inf \{\tau\geq 0: Q_{min}\leq q_{1,\varepsilon}^{\perp
    }(\tau/\varepsilon)\leq Q_\varepsilon(\tau/\varepsilon)\leq Q_{max}
    \text { and}\abs{v_{1,\varepsilon}^{\perp }(\tau/\varepsilon)}\leq C_1\varepsilon \}
    ,
    \\
    T_\varepsilon'' =&
    \inf \{\tau\geq 0:Q_{min}\leq Q_\varepsilon(\tau/\varepsilon)\leq q_{2,\varepsilon}^{\perp
    }(\tau/\varepsilon)\leq Q_{max}
    \text { and}\abs{v_{2,\varepsilon}^{\perp }(\tau/\varepsilon)}\leq C_1\varepsilon
    \},
    \\
    \tilde{T}_\varepsilon =&
    T\wedge T_\varepsilon\wedge T_\varepsilon'\wedge T_\varepsilon''
\end {split}
\]

Define $H(z) $ by
\[
    H(z) =
    \begin{bmatrix}
    W\\
    +2\abs{v_1^{\perp }} \delta_{q_1^{\perp }=Q}
    -2\abs{v_2^{\perp }} \delta_{q_2^{\perp }=Q}\\
    -2W\abs{v_1^{\perp }} \delta_{q_1^{\perp }=Q}\\
    +2W\abs{v_2^{\perp }} \delta_{q_2^{\perp }=Q}\\
    \end{bmatrix}.
\]
Here we make use of Dirac delta functions.  All integrals involving
these delta functions may be replaced by sums.

The following lemma is an immediate consequence of Equation
\eqref{eq:E_1Wchange} and the above discussion:

\begin{lem}
\label{lem:h_int}

If $0\leq t_1\leq t_2\leq \tilde{T}_\varepsilon/\varepsilon $, the
piston experiences $\mathcal{O} ((t_2-t_1)\vee 1) $ collisions with
gas particles in the time interval $[t_1, t_2]$, all of which are
clean collisions. Furthermore,
\begin {equation*}
    h_\varepsilon(t_2)-h_\varepsilon(t_1)=
    \mathcal{O}(\varepsilon)+\varepsilon\int_{t_1}^{t_2}
    H(z_\varepsilon(s))ds.
\end {equation*}
Here any ambiguities arising from collisions occurring at the limits
of integration can be absorbed into the $\mathcal{O} (\varepsilon) $
term.

\end {lem}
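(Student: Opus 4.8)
The plan is to obtain everything directly from the auxiliary stopping times $T_\varepsilon'$, $T_\varepsilon''$ together with the \emph{clean collision} analysis carried out just above the statement. First I would note that for $0\leq t\leq\tilde{T}_\varepsilon/\varepsilon$ none of $T_\varepsilon$, $T_\varepsilon'$, $T_\varepsilon''$ has been reached, so $h_\varepsilon(t)\in\mathcal V$ (in particular $Q_{min}\leq Q_\varepsilon(t)\leq Q_{max}$ and all the gas energies are at most $E_{max}$), while $\abs{v_{1,\varepsilon}^\perp(t)}>C_1\varepsilon$ whenever $q_{1,\varepsilon}^\perp(t)\in[Q_{min},Q_\varepsilon(t)]$ and $\abs{v_{2,\varepsilon}^\perp(t)}>C_1\varepsilon$ whenever $q_{2,\varepsilon}^\perp(t)\in[Q_\varepsilon(t),Q_{max}]$. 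With the choice $C_1=5\sqrt{2E_{max}}$, this is precisely the hypothesis under which the discussion preceding the lemma guarantees that every collision of the piston with either gas particle on this time interval is a clean collision, and that two consecutive collisions of the piston with a fixed gas particle are separated by more than $2Q_{min}/\sqrt{2E_{max}}$ units of time. Since there are only two gas particles, the number of piston collisions in a subinterval $[t_1,t_2]$ is therefore at most $2(t_2-t_1)\sqrt{2E_{max}}/(2Q_{min})+2=\mathcal{O}((t_2-t_1)\vee 1)$; throughout we discard the measure-zero set of initial conditions leading to simultaneous three-particle collisions, as noted in Section~\ref{sct:2dprep}.

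For the integral identity I would split $h_\varepsilon(t_2)-h_\varepsilon(t_1)$ into its smooth and jump parts. Between collisions $W_\varepsilon$, $E_{1,\varepsilon}$, $E_{2,\varepsilon}$ are constant and $dQ_\varepsilon/dt=\varepsilon W_\varepsilon$, so the first component is exact, $Q_\varepsilon(t_2)-Q_\varepsilon(t_1)=\varepsilon\int_{t_1}^{t_2}W_\varepsilon(s)\,ds$, matching the first component of $\varepsilon\int_{t_1}^{t_2}H(z_\varepsilon(s))\,ds$ with no error. The other three components of $h_\varepsilon$ change only at the (clean) piston--particle collisions, and the jump at each collision is recorded by Equation~\eqref{eq:E_1Wchange}: a left collision gives $\Delta W=2\varepsilon\abs{v_1^\perp}+\mathcal{O}(\varepsilon^2)$, $\Delta E_1=-2\varepsilon W\abs{v_1^\perp}+\mathcal{O}(\varepsilon^2)$, $\Delta E_2=0$, and the mirror statement holds for a right collision, consistent with $H(z)$; by the observation following \eqref{eq:E_1Wchange} it is immaterial whether pre- or post-collision values of $W$ and $\abs{v_i^\perp}$ are used. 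Summing the leading $\varepsilon$-terms over all collision times in $(t_1,t_2)$ is, by the very definition of the Dirac-delta integrals in $H$, equal to $\varepsilon\int_{t_1}^{t_2}H(z_\varepsilon(s))\,ds$.

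It remains to collect the errors. The accumulated $\mathcal{O}(\varepsilon^2)$ corrections, one per collision, sum to $\mathcal{O}(\varepsilon^2)\cdot\mathcal{O}((t_2-t_1)\vee 1)$, which is $\mathcal{O}(\varepsilon)$ because $t_2-t_1\leq\tilde{T}_\varepsilon/\varepsilon\leq T/\varepsilon$; the same estimate shows that switching between pre- and post-collision conventions in the integrand alters the right-hand side by only $\mathcal{O}(\varepsilon)$, so the convention is irrelevant. A collision occurring exactly at $t_1$ or $t_2$ changes any component of $h_\varepsilon$ by at most $\mathcal{O}(\varepsilon)$, absorbing the endpoint ambiguity. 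Altogether this yields the stated identity. The lemma is largely bookkeeping; the two places where genuine care is needed are the conversion of an $\mathcal{O}(\varepsilon^2)$-per-collision error into a global $\mathcal{O}(\varepsilon)$ error, which is exactly where the a priori bound $t_2-t_1\leq T/\varepsilon$ built into $\tilde{T}_\varepsilon$ is used, and the certification that every collision on this interval is clean, which is the sole purpose of the stopping times $T_\varepsilon'$ and $T_\varepsilon''$.
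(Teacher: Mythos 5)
Your proposal is correct and follows exactly the route the paper intends: the paper states the lemma as an immediate consequence of Equation~\eqref{eq:E_1Wchange} and the preceding clean-collision discussion (the constant $C_1$ and the lower bound $2Q_{min}/\sqrt{2E_{max}}$ on the time between collisions of a fixed particle with the piston), and your write-up simply makes that bookkeeping explicit. The two points you flag as needing care --- the per-collision $\mathcal{O}(\varepsilon^2)$ errors summing to $\mathcal{O}(\varepsilon)$ over at most $\mathcal{O}(1/\varepsilon)$ collisions, and the stopping times certifying that all collisions are clean --- are precisely the content the paper relies on.
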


\subsection{Another heuristic derivation of the averaged
equation}\label{sct:heuristic2}

The following heuristic derivation of Equation \eqref{eq:d_davg}
when $ d=2$ was suggested in~\cite{Dol05}. Let $\Delta t $ be a
length of time long enough such that the piston experiences many
collisions with the gas particles, but short enough such that the
slow variables change very little, in this time interval.  From each
collision with the left gas particle, Equation~\eqref{eq:E_1Wchange}
states that $W$ changes by an amount $+2\varepsilon
\abs{v_1^{\perp}} +\mathcal{O}(\varepsilon^2)$, and from
Equation~\eqref{eq:2d_momentum} the average change in $W$ at these
collisions should be approximately $\varepsilon\pi
\sqrt{2E_1}/2+\mathcal{O}(\varepsilon^2)$. From
Equation~\eqref{eq:2d_flight} the frequency of these collisions is
approximately $\sqrt{2E_1}\,\ell /(\pi
    \abs{\mathcal{D}_1})$. Arguing
similarly for collisions with the other particle, we guess that
\[
    \frac {\Delta W} {\Delta t} =
    \varepsilon\frac{E_1\ell}{\abs{\mathcal{D}_1(Q)}}
    -\varepsilon\frac{E_2\ell}{\abs{\mathcal{D}_2(Q)}}
    +\mathcal{O}(\varepsilon^2).
\]
With $\tau=\varepsilon t$ as the slow time, a reasonable guess for
the averaged equation for $W$ is
\[
    \frac {dW}{d\tau}=\frac{E_1\ell}{\abs{\mathcal{D}_1(Q)}}
    -\frac{E_2\ell}{\abs{\mathcal{D}_2(Q)}}.
\]
Similar arguments for the other slow variables lead to the averaged
equation \eqref{eq:d_davg}, and this explains why we used $P_i=
E_i/\abs{\mathcal{D}_i}$ for the pressure of a $2$-dimensional gas
in Section~\ref{sct:heuristic}.

There is a similar heuristic derivation of the averaged equation in
$ d>2$ dimensions.  Compare the analogues of
Equations~\eqref{eq:2d_flight} and \eqref{eq:2d_momentum} in
Subsection~\ref{sct:higher_d}.

\subsection{\textit{A priori} estimate on the size
        of a set of bad initial conditions}

In this section, we give an \textit{a priori} estimate on the size
of a set of initial conditions that should not give rise to orbits
for which $\sup_{0\leq\tau\leq T\wedge
T_\varepsilon}\abs{h_\varepsilon(\tau/\varepsilon)-\bar{h}(\tau)}$
is small.  In particular, when proving Theorem \ref{thm:dDpiston},
it is convenient to focus on orbits that only contain clean
collisions with the piston.  Thus, we show that $P\{\tilde
{T}_\varepsilon<T\wedge T_\varepsilon \} $ vanishes as
$\varepsilon\rightarrow 0$. At first, this result may seem
surprising, since $P\{T_\varepsilon'\wedge T_\varepsilon''=0\}
=\mathcal{O}(\varepsilon)$, and one would expect $\cup_{t=0} ^
{T/\varepsilon} z_\varepsilon(-t)\{T_\varepsilon'\wedge
T_\varepsilon''=0\}$ to have a size of order $1$. However, the rate
at which orbits escape from $\{T_\varepsilon'\wedge
T_\varepsilon''=0\}$ is very small, and so we can prove the
following:

\begin {lem}
\label{lem:no_vertical}
    \[
        P\{\tilde {T}_\varepsilon<T\wedge T_\varepsilon \} =\mathcal{O}
        (\varepsilon).
    \]
\end {lem}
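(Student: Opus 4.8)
The plan is to bound $P\{T_\varepsilon'<T\}$ and $P\{T_\varepsilon''<T\}$ separately; since $\tilde T_\varepsilon<T\wedge T_\varepsilon$ forces $T_\varepsilon'\wedge T_\varepsilon''<T$, the lemma follows by a union bound, and by the left--right symmetry it is enough to treat $T_\varepsilon'$. Let $B=\{z\in\mathcal{M}:Q_{min}\le q_1^\perp\le Q\le Q_{max},\ \abs{v_1^\perp}\le C_1\varepsilon\}$, which up to a null set is exactly $\{T_\varepsilon'=0\}$. Unwinding the definition of the stopping time gives $\{T_\varepsilon'<T\}\subseteq\bigcup_{0\le s\le T/\varepsilon}z_\varepsilon(-s)B$. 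Because the constraint $\abs{v_1^\perp}\le C_1\varepsilon$ carves a slab of thickness $\mathcal{O}(\varepsilon)$ out of an otherwise bounded region (recall that $P$ has a constant density in the coordinates of Section~\ref{sct:2dprep} and that $E_{max}$ bounds all speeds), we have $P(B)=\mathcal{O}(\varepsilon)$. So everything reduces to showing that flowing $B$ backward for real time $T/\varepsilon$ enlarges its measure by only $\mathcal{O}(\varepsilon)$.

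The crucial point -- the quantitative form of ``the rate at which orbits escape from $B$ is very small'' -- is that the $P$-flux \emph{into} $B$ per unit real time is $\mathcal{O}(\varepsilon^2)$, not $\mathcal{O}(\varepsilon)$. An orbit can enter $B$ only by (a) starting in $B$ at $t=0$ (already accounted for by $P(B)=\mathcal{O}(\varepsilon)$); (b) transversally crossing the smooth hypersurface $\{q_1^\perp=Q_{min}\}$ from below with $0<v_1^\perp\le C_1\varepsilon$; or (c) having the jump $v_1^{\perp+}=-v_1^{\perp-}+\mathcal{O}(\varepsilon)$ of a left-particle--piston collision carry it from outside $B$ into $B$, which by Equation~\eqref{eq:v_1Wchange} forces $v_1^{\perp-}\in(C_1\varepsilon,(C_1+\C)\varepsilon]$ (a positive value, since a collision requires $v_1^{\perp-}>\varepsilon W^-$ and $C_1>\sqrt{2E_{max}}$). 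In case (b) the transport weight is the crossing speed $v_1^\perp$, itself $\le C_1\varepsilon$, so the flux is $\C\int_0^{C_1\varepsilon}v_1^\perp\,dv_1^\perp$ times a bounded integral over the remaining coordinates, i.e.\ $\mathcal{O}(\varepsilon^2)$; in case (c) the crossing speed of the collision locus is $v_1^{\perp-}-\varepsilon W^-=\mathcal{O}(\varepsilon)$ and the admissible $v_1^{\perp-}$ fill an interval of length $\mathcal{O}(\varepsilon)$, so the collision flux is again $\mathcal{O}(\varepsilon)\cdot\mathcal{O}(\varepsilon)=\mathcal{O}(\varepsilon^2)$. In both cases the extra power of $\varepsilon$ appears precisely because the particle crosses a thin layer at a correspondingly small speed.

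To turn this into the measure estimate I would set $g(t)=P\bigl(\bigcup_{0\le s\le t}z_\varepsilon(-s)B\bigr)-P(B)$ and check, using invariance of $P$, that $g$ is nonnegative, $g(0)=0$, and subadditive; then $g(T/\varepsilon)\le\lceil T/\varepsilon\rceil\,g(1)$. Now $g(1)$ is at most the $P$-measure of the orbits that first enter $B$ during the real-time interval $(0,1]$, hence at most $\mathbb{E}\#\{\text{entries into }B\text{ in }(0,1]\}$; by the standard transport (coarea) bound $P\{z:z_\varepsilon(s)\in\Sigma\text{ inward for some }s\in(0,1]\}\le\int_\Sigma(\text{inward normal speed})\,d\sigma$, applied to the smooth piece $\Sigma=\{q_1^\perp=Q_{min}\}$ and to the collision locus, this is $\mathcal{O}(\varepsilon^2)$ by the previous paragraph. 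Therefore $g(T/\varepsilon)=\mathcal{O}(1/\varepsilon)\cdot\mathcal{O}(\varepsilon^2)=\mathcal{O}(\varepsilon)$, so $P\{T_\varepsilon'<T\}=g(T/\varepsilon)+P(B)=\mathcal{O}(\varepsilon)$, and the same argument handles $T_\varepsilon''$.

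The main obstacle is entirely technical: making the ``count of entries into $B$'' rigorous. Since $z_\varepsilon(\cdot)$ is discontinuous at collisions, entries must be split into genuine transversal crossings of the smooth boundary pieces of $B$ and the discontinuous jumps at piston collisions, and one must invoke the coarea/flow-box estimate for the finitely many $\mathcal{C}^1$ hypersurfaces involved -- which is legitimate here because $\partial\mathcal{D}$ and $\partial\mathcal{P}$ are piecewise $\mathcal{C}^3$ and, after discarding the null set of initial conditions producing multiple or singular collisions, the relevant boundary pieces are transverse to the flow. The only nonbookkeeping point is verifying that (c) is the sole collisional route into $B$, i.e.\ that a left-particle--piston collision with $\abs{v_1^{\perp-}}>(C_1+\C)\varepsilon$ leaves the orbit outside $B$; this is immediate from $\bigl|\,\abs{v_1^{\perp+}}-\abs{v_1^{\perp-}}\,\bigr|=\mathcal{O}(\varepsilon)$ in Equation~\eqref{eq:v_1Wchange}.
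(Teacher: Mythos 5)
Your overall strategy is sound and your key quantitative insight --- that entries into $B$ cost an extra factor of $\varepsilon$ because a thin layer is crossed at a correspondingly small normal speed, so the per-unit-time influx is $\mathcal{O}(\varepsilon^2)$ and accumulates to only $\mathcal{O}(\varepsilon)$ over real time $T/\varepsilon$ --- is exactly the mechanism the paper exploits. But the implementation is genuinely different. The paper never computes a flux: it runs the flow \emph{forward} (using invariance to replace $z_\varepsilon(-t)$ by $z_\varepsilon(t)$), chops $[0,T/\varepsilon]$ into $\mathcal{O}(1/(\varepsilon\gamma))$ blocks of a fixed length $\gamma=Q_{min}/\sqrt{8E_{max}}$, and bounds the measure of the single ``escape set'' $\bigcup_{0\leq t\leq\gamma}(z_\varepsilon(t)\mathfrak{B}_\varepsilon)\setminus\mathfrak{B}_\varepsilon$ by a purely static argument: any point of that set still has $\abs{v_1^\perp}=\mathcal{O}(\varepsilon)$ (because $\gamma$ is shorter than the escape time needed before $v_1^\perp$ can change again) \emph{and} lies within $\mathcal{O}(\varepsilon)$ of one of the four position faces of $\mathfrak{B}_\varepsilon$, so the set sits inside a product of two $\mathcal{O}(\varepsilon)$-slabs in independent coordinates and has measure $\mathcal{O}(\varepsilon^2)$. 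This buys the same $\varepsilon\times\varepsilon$ gain as your flux computation while using nothing beyond subadditivity of $P$ and elementary geometry, and in particular it completely avoids the coarea/Kac-type transport identity for a discontinuous flow that you correctly identify as the technical burden of your route. Your subadditivity reduction $g(T/\varepsilon)\leq\lceil T/\varepsilon\rceil g(1)$ is fine, and your route is more conceptually transparent, but it leaves you with real work: the flux identity has to be justified separately for the smooth crossing of $\{q_1^\perp=Q_{min}\}$ and for the jump at the collision locus.

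There is one concrete omission in your case analysis: your list (a)--(c) of entry routes is not exhaustive. The set $B$ also has the face $\{Q=Q_{max}\}$, and an orbit can enter $B$ with the left particle already in the tube and $\abs{v_1^\perp}\leq C_1\varepsilon$, simply by the piston drifting down through $Q_{max}$ (the paper's escape-set estimate explicitly includes the corresponding condition $\abs{Q-Q_{max}}\leq\mathcal{O}(\varepsilon)$ among its four possibilities). This route is harmless --- the normal speed of the crossing is $\abs{dQ/dt}=\varepsilon\abs{W}=\mathcal{O}(\varepsilon)$ and the constraint $\abs{v_1^\perp}\leq C_1\varepsilon$ still confines the integration to an $\mathcal{O}(\varepsilon)$-slab, so this face also contributes flux $\mathcal{O}(\varepsilon^2)$ --- but as written your claim that (a)--(c) are the \emph{only} ways to enter $B$ is false and must be repaired. (Your implicit use of the fact that $v_1^\perp$ cannot change at collisions with the tube walls, so that the face $\abs{v_1^\perp}=C_1\varepsilon$ can only be crossed via a piston collision, is correct and worth stating explicitly.)
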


In some sense, this lemma states that the probability of having a
gas particle move nearly parallel to the piston's face within the
time interval $[0,T/\varepsilon ] $, when one would expect the other
gas particle to force the piston to move on a macroscopic scale,
vanishes as $\varepsilon\rightarrow 0$.  Thus, one can hope to
control the occurrence of the ``nondiffusive fluctuations'' of the
piston described in~\cite{CD06} on a time scale $\mathcal{O}
(\varepsilon^ {-1}) $.

\begin {proof}

As the left and the right gas particles can be handled similarly, it
suffices to show that $P\{T_\varepsilon'<T \} =\mathcal{O}
(\varepsilon)$.  Define
\[
    \mathfrak{B}_\varepsilon=\{z\in\mathcal{M}:
    Q_{min}\leq q_1^{\perp
    }\leq Q\leq Q_{max}
    \text { and}\abs{v_1^{\perp }}\leq C_1\varepsilon
    \}.
\]
Then $\{T_\varepsilon'<T \}\subset \cup_{t=0} ^ {T/\varepsilon}
z_\varepsilon(-t)\mathfrak{B}_\varepsilon$, and if $\gamma=
Q_{min}/\sqrt{8 E_{max}}$,
\[
\begin {split}
    P\left (\bigcup_{t=0} ^{T/\varepsilon}
    z_\varepsilon(-t)\mathfrak{B}_\varepsilon\right)
    &
    =
    P\left (\bigcup_{t=0} ^{T/\varepsilon}
    z_\varepsilon(t)\mathfrak{B}_\varepsilon\right)
    =
    P\left(\mathfrak{B}_\varepsilon\cup\bigcup_{t=0} ^{T/\varepsilon}
    ((z_\varepsilon(t)\mathfrak{B}_\varepsilon) \backslash \mathfrak{B}_\varepsilon)
    \right)
    \\
    &
    \leq
    P\mathfrak{B}_\varepsilon+P\left( \bigcup_{k=0}^{T/(\varepsilon\gamma
    )} z_\varepsilon(k\gamma )
    \Bigl[ \bigcup_{t=0}^\gamma
    (z_\varepsilon(t)\mathfrak{B}_\varepsilon)\backslash
    \mathfrak{B}_\varepsilon \Bigr]
    \right)
    \\
    &
    \leq
    P\mathfrak{B}_\varepsilon+
    \left(\frac{T}{\varepsilon\gamma }+1\right)
    P\left(
    \bigcup_{t=0}^\gamma
    (z_\varepsilon(t)\mathfrak{B}_\varepsilon)\backslash
    \mathfrak{B}_\varepsilon
    \right).
\end {split}
\]
Now $P\mathfrak{B}_\varepsilon=\mathcal{O} (\varepsilon) $, so if we
can show that
$P\left(\bigcup_{t=0}^\gamma(z_\varepsilon(t)\mathfrak{B}_\varepsilon)\backslash
\mathfrak{B}_\varepsilon\right)=\mathcal{O} (\varepsilon^2)$, then
it will follow that $P\{T_\varepsilon'<T \} =\mathcal{O}
(\varepsilon)$.

If
$z\in\bigcup_{t=0}^\gamma(z_\varepsilon(t)\mathfrak{B}_\varepsilon)\backslash
\mathfrak{B}_\varepsilon$, it is still true that
$\abs{v_1^\perp}=\mathcal{O}(\varepsilon)$.  This is because
$\abs{v_1^\perp}$ changes by at most $\mathcal{O} (\varepsilon) $ at
the collisions, and if a collision forces
$\abs{v_1^\perp}>C_1\varepsilon$, then the gas particle must escape
to the region $q_1^\perp\leq 0$ before $ v_1^\perp $ can change
again, and this will take time greater than $\gamma $. Furthermore,
if
$z\in\bigcup_{t=0}^\gamma(z_\varepsilon(t)\mathfrak{B}_\varepsilon)\backslash
\mathfrak{B}_\varepsilon$, then at least one of the following four
possibilities must hold:
\begin {itemize}
\item
     $\abs{q_1^\perp-Q_{min}}\leq\mathcal{O}(\varepsilon)$,
\item
     $\abs{Q-Q_{min}}\leq\mathcal{O}(\varepsilon)$,
\item
     $\abs{Q-Q_{max}}\leq\mathcal{O}(\varepsilon)$,
\item
     $\abs{Q-q_1^\perp}\leq\mathcal{O}(\varepsilon)$.
\end {itemize}
It follows that
$P\left(\bigcup_{t=0}^\gamma(z_\varepsilon(t)\mathfrak{B}_\varepsilon)\backslash
\mathfrak{B}_\varepsilon\right)=\mathcal{O} (\varepsilon^2)$.  For
example,
\[
\begin {split}
    \int_{\mathcal{M}}
    &
    1_{\{\abs{v_1^\perp}\leq
    \mathcal{O}(\varepsilon),\:
    \abs{q_1^\perp-Q_{min}}\leq\mathcal{O}(\varepsilon)\}}dP
    \\
    &
    =
    \C
    \int_{\set {E_{min}\leq W^2/2+v_1^2/2+v_2^2/2\leq E_{max}}}
    1_{\{\abs{v_1^\perp}\leq
    \mathcal{O}(\varepsilon)\}}
    dW dv_1^{\perp}dv_1^{\parallel} dv_2^{\perp}dv_2^{\parallel}
    \\
    &
    \qquad\times
    \int_{\set {Q\in [0,1],\, q_1\in\mathcal{D}_1,\, q_2\in\mathcal{D}_2}}
    1_{\{\abs{q_1^\perp-Q_{min}}\leq\mathcal{O}(\varepsilon)\}}
    dQ dq_1^{\perp}dq_1^{\parallel} dq_2^{\perp}dq_2^{\parallel}
    \\
    &
    =\mathcal{O}(\varepsilon^2).
\end {split}
\]

\end {proof}

\section{Proof of the main result for two-dimensional gas
    containers with only one gas particle on each side}
\label{sct:2dproof}

As in Section~\ref{sct:2dprep}, we continue with the case when $d=2$
and there is only one gas particle on either side of the piston.

\subsection{Main steps in the proof of convergence in probability}\label{sct:main_steps}

By Lemma \ref{lem:no_vertical}, it suffices to show that $
\sup_{0\leq\tau\leq \tilde{T}_\varepsilon}
\abs{h_\varepsilon(\tau/\varepsilon)-\bar{h}(\tau)}\rightarrow 0$ in
probability as $\varepsilon=M^{-1/2}\rightarrow 0$. Several of the
ideas in the steps below were inspired by a recent proof of Anosov's
averaging theorem for smooth systems that is due to
Dolgopyat~\cite{Dol05}.

\paragraph*{Step 1:  Reduction using Gronwall's Inequality.}

Observe that $\bar {h}(\tau) $ satisfies the integral equation
\[
\bar {h}(\tau) -\bar h(0) = \int_0^{\tau}\bar H(\bar
h(\sigma))d\sigma,
\]
while from Lemma \ref{lem:h_int},
\[
\begin {split}
    h_\varepsilon(\tau/\varepsilon)-h_\varepsilon(0)
    &
    =\mathcal{O}(\varepsilon) +\varepsilon\int_0^{\tau/\varepsilon}
    H(z_\varepsilon(s))ds\\
    &=\mathcal{O}(\varepsilon) +
    \varepsilon\int_0^{\tau/\varepsilon}
    H(z_\varepsilon(s))-
    \bar H(h_\varepsilon(s))ds+
    \int_0^{\tau}\bar H( h_\varepsilon(\sigma/\varepsilon))d\sigma
\end {split}
\]
for $0\leq\tau\leq \tilde{T}_\varepsilon$.  Define
\[
    e_\varepsilon(\tau) =\varepsilon\int_0^{\tau/\varepsilon}
    H(z_\varepsilon(s))- \bar H(h_\varepsilon(s))ds.
\]
It follows from Gronwall's Inequality that
\begin {equation}\label {eq:2d_Gronwall}
    \sup_{0\leq \tau\leq \tilde{T}_\varepsilon}
    \abs{h_\varepsilon(\tau/\varepsilon)-\bar h(\tau)}\leq
    \left(\mathcal{O}(\varepsilon)+
    \sup_{0\leq \tau\leq \tilde{T}_\varepsilon}
    \abs{e_\varepsilon(\tau)}\right)e^{ \Lip{\bar
    H\arrowvert_\mathcal{V}}T}.
\end {equation}
\noindent Gronwall's Inequality is usually stated for continuous
paths, but the standard proof (found in \cite{SV85}) still works for
paths that are merely integrable, and
$\abs{h_\varepsilon(\tau/\varepsilon)-\bar h(\tau)}$ is piecewise
smooth.

\paragraph*{Step 2:  Introduction of a time scale for ergodization.}

Let $L(\varepsilon) $ be a real valued function such that
$L(\varepsilon)\rightarrow\infty$, but $L(\varepsilon)\ll \log
\varepsilon^ {-1} $, as $\varepsilon\rightarrow 0$. In
Section~\ref{sct:Gronwall} we will place precise restrictions on the
growth rate of $L(\varepsilon) $.  Think of $L(\varepsilon) $ as
being a time scale that grows as $\varepsilon\rightarrow 0$ so that
\emph{ergodization}, i.e.~the convergence along an orbit of a
function's time average to a space average, can take place. However,
$L(\varepsilon) $ doesn't grow too fast, so that on this time scale
$z_\varepsilon(t) $ essentially stays on the submanifold
$\set{h=h_\varepsilon(0)}$, where we have our ergodicity assumption.
Set $t_{k,\varepsilon} =kL(\varepsilon) $, so that
\begin {equation}
\label{eq:2d_einfnorm}
    \sup_{0\leq \tau\leq \tilde{T}_\varepsilon}\abs{e_\varepsilon(\tau)}
    \leq \mathcal{O}(\varepsilon L(\varepsilon))+
    \varepsilon\sum_{k=0}^{\frac{\tilde{T}_\varepsilon}{\varepsilon
    L(\varepsilon)}-1}\abs{\int_{t_{k,\varepsilon}}^{t_{k+1,\varepsilon}}H(z_\varepsilon(s))-\bar
    H(h_\varepsilon(s))ds}.
\end {equation}

\paragraph*{Step 3:  A splitting according to particles.}

Now $H(z) -\bar H(h(z)) $ divides into two pieces, each of which
depends on only one gas particle when the piston is held fixed:
\[
    H(z) -\bar H(h(z))=
    \begin{bmatrix}
    0\\
    2\abs{v_1^{\perp }} \delta_{q_1^{\perp }=Q}
    -\frac{E_1\ell}{\abs{\mathcal{D}_1(Q)}}\\
    -2W\abs{v_1^{\perp }} \delta_{q_1^{\perp }=Q}
    +\frac{WE_1\ell}{\abs{\mathcal{D}_1(Q)}}\\
    0\\
    \end{bmatrix}
    +
    \begin{bmatrix}
    0\\
    \frac{E_2\ell}{\abs{\mathcal{D}_2(Q)}}
    -2\abs{v_2^{\perp }} \delta_{q_2^{\perp }=Q}\\
    0\\
    -\frac{WE_2\ell}{\abs{\mathcal{D}_2(Q)}}+
    2W\abs{v_2^{\perp }} \delta_{q_2^{\perp }=Q}\\
    \end{bmatrix}.
\]
We will only deal with the piece depending on the left gas particle,
as the right particle can be handled similarly.  Define
\begin{equation}
\label{eq:G_definition}
    G(z)=\abs{v_1^{\perp }} \delta_{q_1^{\perp }=Q},
    \qquad
    \bar G(h)=
    \frac{E_1\ell}{2\abs{\mathcal{D}_1(Q)}}.
\end {equation}
Returning to Equation \eqref{eq:2d_einfnorm}, we see that in order
to prove Theorem \ref{thm:dDpiston}, it suffices to show that both
\[
\begin {split}
    &\varepsilon\sum_{k=0}^{\frac{\tilde{T}_\varepsilon}{\varepsilon
    L(\varepsilon)}-1}\abs{\int_{t_{k,\varepsilon}}^{t_{k+1,\varepsilon}}G(z_\varepsilon(s))-\bar
    G(h_\varepsilon(s))ds} \qquad\text { and}
    \\
    &\varepsilon\sum_{k=0}^{\frac{\tilde{T}_\varepsilon}{\varepsilon
    L(\varepsilon)}-1}\abs{\int_{t_{k,\varepsilon}}^{t_{k+1,\varepsilon}}W_\varepsilon(s)
    \bigl(G(z_\varepsilon(s))-\bar
    G(h_\varepsilon(s))\bigr)ds}
\end {split}
\]
converge to $0$ in probability as $\varepsilon\rightarrow 0$.

\paragraph*{Step 4:  A splitting for using the triangle inequality.}

Now we let $z_{k,\varepsilon} (s) $ be the orbit of the
$\varepsilon=0$ Hamiltonian vector field satisfying
$z_{k,\varepsilon}(t_{k,\varepsilon})=z_{\varepsilon}(t_{k,\varepsilon})$.
Set $h_{k,\varepsilon} (t) =h (z_{k,\varepsilon} (t)) $. Observe
that $h_{k,\varepsilon} (t) $ is independent of $t $.

We emphasize that so long as $0\leq
t\leq\tilde{T}_\varepsilon/\varepsilon$, the times between
collisions of a specific gas particle and piston are uniformly
bounded greater than $0$, as explained before Lemma \ref{lem:h_int}.
It follows that, so long as
$t_{k+1,\varepsilon}\leq\tilde{T}_\varepsilon/\varepsilon$,
\begin{equation}
\label{eq:h_div}
    \sup_{t_{k,\varepsilon}\leq t\leq t_{k+1,\varepsilon}}
    \abs{h_{k,\varepsilon} (t) -h_\varepsilon (t)}
    =\mathcal{O}(\varepsilon L(\varepsilon)).
\end{equation}
This is because the slow variables change by at most $\mathcal{O}
(\varepsilon) $ at collisions, and
$dQ_\varepsilon/dt=\mathcal{O}(\varepsilon)$.

Also,
\[
\begin {split}
    \int_{t_{k,\varepsilon}}^{t_{k+1,\varepsilon}}
    &
    W_\varepsilon(s) \bigl(G(z_\varepsilon(s))-\bar
    G(h_\varepsilon(s))\bigr)ds
    \\
    &= \mathcal{O} (\varepsilon
    L(\varepsilon)^2) +W_{k,\varepsilon}(t_{k,\varepsilon})\int_{t_{k,\varepsilon}}^{t_{k+1,\varepsilon}}
    G(z_\varepsilon(s))-\bar G(h_\varepsilon(s))ds,
\end {split}
\]
and so
\[
\begin {split}
    \varepsilon\sum_{k=0}^{\frac{\tilde{T}_\varepsilon}{\varepsilon
    L(\varepsilon)}-1}
    &
    \abs{\int_{t_{k,\varepsilon}}^{t_{k+1,\varepsilon}}W_\varepsilon(s)
    \bigl(G(z_\varepsilon(s))-\bar
    G(h_\varepsilon(s))\bigr)ds}
    \\
    &\leq
    \mathcal{O}(\varepsilon L(\varepsilon))+
    \varepsilon\,\C\sum_{k=0}^{\frac{\tilde{T}_\varepsilon}{\varepsilon
    L(\varepsilon)}-1}\abs{\int_{t_{k,\varepsilon}}^{t_{k+1,\varepsilon}}
    G(z_\varepsilon(s))-\bar
    G(h_\varepsilon(s))ds}.
\end {split}
\]
Thus, in order to prove Theorem \ref{thm:dDpiston}, it suffices to
show that
\[
\begin {split}
    \varepsilon\sum_{k=0}^{\frac{\tilde{T}_\varepsilon}{\varepsilon
    L(\varepsilon)}-1}\abs{\int_{t_{k,\varepsilon}}^{t_{k+1,\varepsilon}}G(z_\varepsilon(s))-\bar
    G(h_\varepsilon(s))ds}
    \leq
    \varepsilon\sum_{k=0}^{\frac{\tilde{T}_\varepsilon}{\varepsilon
    L(\varepsilon)}-1}
    \abs{I_{k,\varepsilon}}+\abs{II_{k,\varepsilon}}+\abs{III_{k,\varepsilon}}
\end {split}
\]
converges to $0$ in probability as $\varepsilon\rightarrow 0$, where
\[
\begin {split}
    I_{k,\varepsilon}
    &=\int_{t_{k,\varepsilon}}^{t_{k+1,\varepsilon}}G(z_\varepsilon(s)) -
    G(z_{k,\varepsilon}(s))ds,
    \\
    II_{k,\varepsilon}
    &=\int_{t_{k,\varepsilon}}^{t_{k+1,\varepsilon}} G(z_{k,\varepsilon}(s))-
    \bar G(h_{k,\varepsilon}(s))ds,
    \\
    III_{k,\varepsilon}
    &=\int_{t_{k,\varepsilon}}^{t_{k+1,\varepsilon}} \bar G(h_{k,\varepsilon}(s))-
    \bar G(h_{\varepsilon}(s))ds.
\end {split}
\]

The term $II_{k,\varepsilon}$ represents an ``ergodicity term'' that
can be controlled by our assumptions on the ergodicity of the flow
$z_0(t) $, while the terms $I_{k,\varepsilon}$ and
$III_{k,\varepsilon}$ represent ``continuity terms'' that can be
controlled by controlling the drift of $z_{\varepsilon} (t) $ from
$z_{k,\varepsilon} (t) $ for $t_{k,\varepsilon}\leq t\leq
t_{k+1,\varepsilon}$.

\paragraph*{Step 5:  Control of drift from the $\varepsilon=0$ orbits.}

Now $\bar G$ is uniformly Lipschitz on the compact set $\mathcal{V}
$, and so it follows from Equation \eqref{eq:h_div} that
$III_{k,\varepsilon}=\mathcal{O}(\varepsilon L(\varepsilon)^2)$.
Thus,
$\varepsilon\sum_{k=0}^{\frac{\tilde{T}_\varepsilon}{\varepsilon
L(\varepsilon)}-1}\abs{III_{k,\varepsilon}} =\mathcal{O}(\varepsilon
L(\varepsilon))\rightarrow 0$ as $\varepsilon\rightarrow 0$.

Next, we show that for fixed $\delta > 0$,
$P\left(\varepsilon\sum_{k=0}^{\frac{\tilde{T}_\varepsilon}{\varepsilon
    L(\varepsilon)}-1}\abs{I_{k,\varepsilon}}\geq
\delta\right)\rightarrow 0$ as $\varepsilon\rightarrow 0$.

For initial conditions $z\in \mathcal{M}$ and for integers $k\in
[0,T/(\varepsilon L(\varepsilon))-1]$ define
\[
\begin {split}
    \mathcal{A}_{k,\varepsilon} & =\set{z:\frac{1}{L(\varepsilon)}
    \abs{I_{k,\varepsilon}}
    >\frac{\delta}{2T} \text { and } k\leq\frac{\tilde{T}_\varepsilon}{\varepsilon
    L(\varepsilon)}-1 }
    ,\\
    \mathcal{A}_{z,\varepsilon} & =\set{k:z\in
    \mathcal{A}_{k,\varepsilon}}.
\end {split}
\]
Think of these sets as describing ``poor continuity'' between
solutions of the $\varepsilon=0$ and the $\varepsilon>0 $
Hamiltonian vector fields. For example, roughly speaking,
$z\in\mathcal{A}_{k,\varepsilon}$ if the orbit $z_{\varepsilon}(t)$
starting at $z$ does not closely follow $z_{k,\varepsilon}(t)$ for
$t_{k,\varepsilon}\leq t\leq t_{k+1,\varepsilon} $.

One can easily check that $\abs{I_{k,\varepsilon}}\leq \mathcal{O}
(L(\varepsilon))$ for $k\leq\ \tilde{T}_\varepsilon/(\varepsilon
L(\varepsilon))-1$, and so it follows that
\[
    \varepsilon
    \sum_{k=0}^{\frac{\tilde{T}_\varepsilon}{\varepsilon
    L(\varepsilon)}-1}\abs{I_{k,\varepsilon}}\leq
    \frac{\delta}{2}+\mathcal{O}(\varepsilon L(\varepsilon)
    \# (\mathcal{A}_{z,\varepsilon})).
\]
Therefore it suffices to show that $P\left(\#
(\mathcal{A}_{z,\varepsilon})\geq\delta(\C\, \varepsilon
L(\varepsilon)) ^ {-1}\right)\rightarrow 0$ as
$\varepsilon\rightarrow 0$. By Chebyshev's Inequality, we need only
show that
\[
    E_P (\varepsilon L(\varepsilon)\# (\mathcal{A}_{z,\varepsilon})) =
    \varepsilon L(\varepsilon)\sum_{k=0}^{\frac{T}{\varepsilon
    L(\varepsilon)}-1}P(\mathcal{A}_{k,\varepsilon})
\]
tends to $0$ with $\varepsilon$.

Observe that $z_\varepsilon
(t_{k,\varepsilon})\mathcal{A}_{k,\varepsilon}\subset\mathcal{A}_{0,\varepsilon}
$. In words, the initial conditions giving rise to orbits that are
``bad'' on the time interval
$[t_{k,\varepsilon},t_{k+1,\varepsilon}] $, moved forward by time
$t_{k,\varepsilon}$, are initial conditions giving rise to orbits
which are ``bad'' on the time interval
$[t_{0,\varepsilon},t_{1,\varepsilon}] $. Because the flow
$z_\varepsilon(\cdot) $ preserves the measure, we find that
\[
    \varepsilon L(\varepsilon)\sum_{k=0}^{\frac{T}{\varepsilon
    L(\varepsilon)}-1}P(\mathcal{A}_{k,\varepsilon})
    \leq \C\, P(\mathcal{A}_{0,\varepsilon}).
\]

To estimate $P(\mathcal{A}_{0,\varepsilon})$, it is convenient to
use a different probability measure, which is uniformly equivalent
to $P$ on the set $\set{z\in\mathcal{M}:h(z)\in\mathcal{V}}
\supset\{\tilde{T}_\varepsilon\geq \varepsilon L(\varepsilon)\} $.
We denote this new probability measure by $P^f$, where the $f$
stands for ``factor.''  If we choose coordinates on $\mathcal{M} $
by using $h$ and the billiard coordinates on the two gas particles,
then $P^f$ is defined on $\mathcal{M}$ by $dP^f
=dh\,d\mu^1_h\,d\mu^2_h$, where $dh$ represents the uniform measure
on $\mathcal{V}\subset\mathbb{R}^4$, and the factor measure
$d\mu^i_h$ represents the invariant billiard measure of the $i^ {th}
$ gas particle coordinates for a fixed value of the slow variables.
One can verify that $1_{\set{h(z)\in\mathcal{V}}}dP \leq \C\, dP^f$,
but that $P^f$ is not invariant under the flow $z_\varepsilon(\cdot)
$ when $\varepsilon>0$.

We abuse notation, and consider $\mu^1_h$ to be a measure on the
left particle's initial billiard coordinates once $h$ and the
initial coordinates of the right gas particle are fixed.  In this
context, $\mu^1_h$ is simply the measure $\mu$ from
Subsection~\ref{sct:billiard}.  Then
\[
\begin {split}
    &
    P^f(\mathcal{A}_{0,\varepsilon})
    \\
    &\leq \int
    dh\,d\mu^2_h \cdot\mu_h^1
    \set{z:\abs{\frac{1}{L(\varepsilon)}\int_0^{L(\varepsilon)}
    G(z_\varepsilon(s))-G(z_0(s))ds}\geq\frac{\delta}{2T}
    \text { and } \varepsilon
    L(\varepsilon)\leq\tilde{T}_\varepsilon },
\end {split}
\]
and we must show that the last term tends to $0$ with $\varepsilon$.
By the Bounded Convergence Theorem, it suffices to show that for
almost every $h\in\mathcal{V}$ and initial condition for the right
gas particle,
\begin {equation}\label{eq:Gronwall_probability}
    \mu_h^1
    \set{z:\abs{\frac{1}{L(\varepsilon)}\int_0^{L(\varepsilon)}
    G(z_\varepsilon(s))-G(z_0(s))ds}\geq\frac{\delta}{2T}
    \text { and } \varepsilon
    L(\varepsilon)\leq\tilde{T}_\varepsilon }
    \rightarrow 0\text{ as }\varepsilon\rightarrow 0.
\end {equation}

Note that if $G$ were a smooth function and $z_\varepsilon(\cdot) $
were the flow of a smooth family of vector fields $Z(z,\varepsilon)
$ that depended smoothly on $\varepsilon$, then from Gronwall's
Inequality, it would follow that $\sup_{0\leq t\leq
L(\varepsilon)}\abs{z_{\varepsilon}(t)-
    z_0(t)}\leq \mathcal{O}(\varepsilon L(\varepsilon)
    e^{ \Lip{Z} L(\varepsilon)}).$
If this were the case, then $\abs{L(\varepsilon)^
{-1}\int_0^{L(\varepsilon)}
G(z_\varepsilon(s))-G(z_0(s))ds}=\mathcal{O}(\varepsilon
L(\varepsilon) e^{ \Lip{Z} L(\varepsilon)})$, which would tend to
$0$ with $\varepsilon$.  Thus, we need a Gronwall-type inequality
for billiard flows.  We obtain the appropriate estimates in Section
\ref{sct:Gronwall}.

\paragraph*{Step 6:  Use of ergodicity along fibers to
control $II_{k,\varepsilon} $.}

All that remains to be shown is that for fixed $\delta > 0$,
$P\left(\varepsilon\sum_{k=0}^{\frac{\tilde{T}_\varepsilon}{\varepsilon
    L(\varepsilon)}-1}\abs{II_{k,\varepsilon}}\geq
\delta\right)\rightarrow 0$ as $\varepsilon\rightarrow 0$.

For initial conditions $z\in \mathcal{M}$ and for integers $k\in
[0,T/(\varepsilon L(\varepsilon))-1]$ define
\[
\begin {split}
    \mathcal{B}_{k,\varepsilon} & =\set{z:\frac{1}{L(\varepsilon)}
    \abs{II_{k,\varepsilon}}
    >\frac{\delta}{2T} \text { and } k\leq\frac{\tilde{T}_\varepsilon}{\varepsilon
    L(\varepsilon)}-1 }
    ,\\
    \mathcal{B}_{z,\varepsilon} & =\set{k:z\in
    \mathcal{B}_{k,\varepsilon}}.
\end {split}
\]
Think of these sets as describing ``bad ergodization.''  For
example, roughly speaking, $z\in\mathcal{B}_{k,\varepsilon}$ if the
orbit $z_{\varepsilon}(t)$ starting at $z$ spends the time between
$t_{k,\varepsilon}$ and $t_{k+1,\varepsilon} $ in a region of phase
space where the function $G(\cdot) $ is ``poorly ergodized'' on the
time scale $L(\varepsilon) $ by the flow $z_0(t) $ (as measured by
the parameter $\delta/2T$).  Note that $G(z)=\abs{v_1^{\perp }}
\delta_{q_1^{\perp }=Q}$ is not really a function, but that we may
still speak of the convergence of $t^ {-1}\int_0^t G(z_0(s))ds$ as
$t\rightarrow\infty$.  As we showed in
Lemma~\ref{lem:ae_convergence}, the limit is $\bar G(h_0) $ for
almost every initial condition.

Proceeding as in Step 5 above, we find that it suffices to show that
for almost every $h\in\mathcal{V}$,
\[
    \mu_h^1
    \set{z:\abs{\frac{1}{t}\int_0^{t}
    G(z_0(s))ds-\bar G(h_0(0))}\geq\frac{\delta}{2T}}
    \rightarrow 0\text{ as }t\rightarrow \infty.
\]
But this is simply a question of examining billiard flows, and it
follows immediately from Corollary \ref{cor:ae_convergence} and our
Main Assumption.

\subsection{A Gronwall-type inequality for billiards}\label{sct:Gronwall}

We begin by presenting a general version of Gronwall's Inequality
for billiard maps.  Then we will show how these results imply the
convergence required in Equation~\eqref{eq:Gronwall_probability}.

\subsubsection{Some inequalities for the collision map}\label{sct:Gronwall_map}

In this section, we consider the value of the slow variables to be
fixed at $h_0\in\mathcal{V} $.  We will use the notation and results
presented in Section~\ref{sct:billiard}, but because the value of
the slow variables is fixed, we will omit it in our notation.

Let $\rho $, $\gamma $, and $\lambda$ satisfy $0<\rho\ll\gamma \ll
1\ll \lambda<\infty $. Eventually, these quantities will be chosen
to depend explicitly on $\varepsilon$, but for now they are fixed.

Recall that the phase space $\Omega$ for the collision map $F$ is a
finite union of disjoint rectangles and cylinders.  Let
$d(\cdot,\cdot)$ be the Euclidean metric on connected components of
$\Omega$. If $x$ and $x'$ belong to different components, then we
set $d(x,x') =\infty $. The invariant measure $\nu$ satisfies
$\nu<\C\cdot (\text {Lebesgue measure}) $.  For $A\subset\Omega$ and
$a >0$, let $\mathcal{N}_a (A) =\set {x\in\Omega:d(x,A)< a} $ be the
$a$-neighborhood of $A$.

For $x\in\Omega$ let $x_k(x) = x_k =F^k x$, $ k\geq 0$, be its
forward orbit.  Suppose $x\notin \mathcal{C}_{\gamma ,\lambda}$,
where
\[
    \mathcal{C}_{\gamma,\lambda}=
    \bigl(\cup_{k=0} ^\lambda F^
    {-k}\mathcal{N}_\gamma (\partial\Omega)\bigr)\bigcup\bigl(\cup_{k=0}
    ^\lambda F^ {-k}\mathcal{N}_\gamma  (F^ {-1}\mathcal{N}_\gamma
    (\partial\Omega))\bigr).
\]
Thus for $0\leq k\leq \lambda $, $x_k$ is well defined, and from
Equation~\eqref{eq:2d_derivative_bound} it satisfies
\begin {equation}\label{eq:gron0}
    d(x',x_k)\leq \gamma \;\Rightarrow\; d(Fx',x_{k+1})\leq\frac{\C}{\gamma
    } d(x',x_k).
\end {equation}

Next, we consider any $\rho$-pseudo-orbit $x'_k$ obtained from $x$
by adding on an error of size $\leq\rho$ at each application of the
map, i.e.~$d(x'_0,x_0)\leq \rho$, and for $k\geq 1$, $d(x'_k,Fx'_{
k-1})\leq \rho$.  Provided $d(x_j,x'_j)<\gamma $ for each $j<k$, it
follows that
\begin {equation}\label{eq:gron1}
    d(x_k,x'_k)\leq
    \rho\sum_{j=0}^{k}\left(\frac{\C}{\gamma}\right)^j\leq
    \C\,\rho \left (\frac {\C} {\gamma}\right) ^k .
\end {equation}
In particular, if $\rho $, $\gamma $, and $\lambda$ were chosen such
that
\begin {equation}\label{eq:gron2}
    \C\,\rho \left (\frac {\C} {\gamma}\right) ^\lambda<\gamma,
\end {equation}
then Equation~\eqref{eq:gron1} will hold for each $k\leq\lambda $.
We assume that Equation~\eqref{eq:gron2} is true.  Then we can also
control the differences in elapsed flight times using
Equation~\eqref{eq:2d_time_derivative}:
\begin {equation}\label{eq:gron3}
    \abs{\zeta x_k-\zeta x'_k}
    \leq
    \frac {\C\,\rho} {\gamma } \left (\frac {\C} {\gamma}\right) ^k.
\end {equation}

It remains to estimate the size $\nu \mathcal{C}_{\gamma,\lambda}$
of the set of $x$ for which the above estimates do not hold.  Using
Lemma~\ref{lem:gron1} below,
\begin {equation}\label{eq:gron4}
    \nu\mathcal{C}_{\gamma,\lambda}
    \leq
    (\lambda +1)\bigl(\nu \mathcal{N}_\gamma (\partial\Omega)+\nu \mathcal{N}_\gamma
    (F^ {-1}\mathcal{N}_\gamma
    (\partial\Omega))  \bigr)
    \leq
    \mathcal{O} (\lambda (\gamma +\gamma  ^ {1/3})) =\mathcal{O} (\lambda \gamma  ^
    {1/3}).
\end {equation}

\begin{lem}\label{lem:gron1}
    As $\gamma \rightarrow 0$,
    \[
        \nu \mathcal{N}_\gamma  (F^ {-1}\mathcal{N}_\gamma
        (\partial\Omega))=\mathcal{O}(\gamma  ^ {1/3}).
    \]
\end {lem}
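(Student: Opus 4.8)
The plan is to decompose $\mathcal{N}_\gamma(F^{-1}\mathcal{N}_\gamma(\partial\Omega))$ according to how strongly $F$ expands near $x$, which is controlled by $\cos\varphi(Fx)$ through the bound $\norm{DF(x)}\le\C/\cos\varphi(Fx)$ of Equation~\eqref{eq:2d_derivative_bound}. Fix a threshold $\theta=\theta(\gamma)$ with $2\sqrt{\C\gamma}\le\theta\le 1$, to be optimized at the end. I claim
\[
    \mathcal{N}_\gamma(F^{-1}\mathcal{N}_\gamma(\partial\Omega))
    \subset
    \mathcal{N}_\gamma(\partial\Omega)
    \;\cup\;
    \set{x:\cos\varphi(Fx)<\theta}
    \;\cup\;
    F^{-1}\bigl(\mathcal{N}_{\C\gamma/\theta}(\partial\Omega)\bigr).
\]
Granting this, the three pieces are controlled using only the explicit density $d\nu=\cos\varphi\,d\varphi\,dr/(2\abs{\partial\mathcal{D}_1})$ and the $F$-invariance of $\nu$: the finitely many ``corner'' fibers $r=r_i$ in $\partial\Omega$ give $\nu\mathcal{N}_a(\partial\Omega)=\mathcal{O}(a)$ (the lines $\varphi=\pm\pi/2$ contribute only $\mathcal{O}(a^2)$), so $\nu\mathcal{N}_\gamma(\partial\Omega)=\mathcal{O}(\gamma)$ and $\nu F^{-1}\mathcal{N}_{\C\gamma/\theta}(\partial\Omega)=\nu\mathcal{N}_{\C\gamma/\theta}(\partial\Omega)=\mathcal{O}(\gamma/\theta)$; and $\nu\set{x:\cos\varphi(Fx)<\theta}=\nu\set{y:\cos\varphi(y)<\theta}=\frac{1}{2\abs{\partial\mathcal{D}_1}}\int_{\set{\cos\varphi<\theta}}\cos\varphi\,d\varphi\,dr=\mathcal{O}(\theta^2)$.

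To justify the containment I would argue as follows. Take $x$ in the left-hand set, so there is $x'$ with $d(x,x')<\gamma$ and $d(Fx',\partial\Omega)<\gamma$; assume $x\notin\mathcal{N}_\gamma(\partial\Omega)$ and $\cos\varphi(Fx)\ge\theta$, else $x$ already lies in the first two pieces. Then $x,x'$ are in the same component of $\Omega$ and the geodesic segment between them stays in $B(x,\gamma)\subset\Omega$. Along the $F$-image of this segment, wherever $F$ is smooth, $\bigl|\tfrac{d}{ds}\cos^2\varphi(F(\cdot))\bigr|\le 2\cos\varphi\cdot\norm{DF}\le 2\C$ by Equation~\eqref{eq:2d_derivative_bound}; since $\cos^2\varphi$ starts at $\ge\theta^2\ge 4\C\gamma$ and the segment has length $<\gamma$, it stays $\ge\theta^2/2$, so the grazing singularities $\varphi=\pm\pi/2$ are never met and $\norm{DF}\le\sqrt2\,\C/\theta$ throughout. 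Hence either the segment first meets a ``corner'' singularity at some $\tilde x$ with $F\tilde x\in\partial\Omega$ — and then $d(Fx,\partial\Omega)\le\sqrt2\,\C\gamma/\theta$ — or $F$ is smooth along the whole segment and $d(Fx,Fx')\le\sqrt2\,\C\gamma/\theta$, giving $d(Fx,\partial\Omega)<\gamma+\sqrt2\,\C\gamma/\theta\le\C'\gamma/\theta$. In all cases $x\in F^{-1}(\mathcal{N}_{\C'\gamma/\theta}(\partial\Omega))$, which is the containment (after relabeling the constant).

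Finally, choosing $\theta=\gamma^{1/3}$ (admissible since $\gamma^{1/3}\ge 2\sqrt{\C\gamma}$ for small $\gamma$) makes each of the three bounds $\mathcal{O}(\gamma^{2/3})$, so $\nu\mathcal{N}_\gamma(F^{-1}\mathcal{N}_\gamma(\partial\Omega))=\mathcal{O}(\gamma^{2/3})=\mathcal{O}(\gamma^{1/3})$. I expect the only genuinely delicate point to be the transport estimate in the second paragraph: because $\norm{DF}$ blows up near grazing collisions, one cannot push a $\gamma$-ball forward with a uniform bound, so the threshold $\theta$ must be introduced and tuned, and the resulting competition between the escaping mass $\mathcal{O}(\theta^2)$ near $\varphi=\pm\pi/2$ and the expanded neighborhood $\mathcal{O}(\gamma/\theta)$ is exactly what fixes the exponent. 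Everything else is bookkeeping with $F$-invariance of $\nu$ and its explicit density, and the argument does not require any hyperbolicity or dispersing hypothesis on $\mathcal{D}_1$.
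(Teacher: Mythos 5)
Your proof is correct, but it follows a genuinely different route from the paper's. The paper first replaces $F^{-1}$ by $F$ via the time-reversal involution $\mathcal{I}(r,\varphi)=(r,-\varphi)$, covers $\mathcal{N}_\gamma(\partial\Omega)$ by $\mathcal{O}(\gamma^{-1})$ sets $A_i$ of diameter $\mathcal{O}(\gamma)$, and splits the indices according to whether $FA_i$ meets $\mathcal{N}_{\gamma^\alpha}(\partial\Omega)$: good indices contribute $\mathcal{O}(\gamma^{-1})\cdot\mathcal{O}(\gamma^{2(1-\alpha)})$ and bad ones land inside $\mathcal{N}_{\mathcal{O}(\gamma^\alpha)}(\partial\Omega)$, giving $\mathcal{O}(\gamma^{1-2\alpha}+\gamma^\alpha)$ and the exponent $1/3$ at $\alpha=1/3$. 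You instead work with $F^{-1}$ directly (using only $F$-invariance of $\nu$, no involution), avoid any covering, and — crucially — take as your ``bad set'' only $\set{\cos\varphi(Fx)<\theta}$, which sits near the grazing part of $\partial\Omega$ alone and therefore has measure $\mathcal{O}(\theta^2)$ rather than the $\mathcal{O}(\gamma^\alpha)$ the paper pays for a full neighborhood of $\partial\Omega$; the corner singularities are disposed of separately, since a connecting segment that crosses one is automatically mapped onto $\partial\Omega$. The observation that $\cos^2\varphi(F(\cdot))$ is Lipschitz with a uniform constant (because the factor $\cos\varphi(Fx)$ in the chain rule cancels the blow-up in $\norm{DF}$) is the key new ingredient that lets you propagate the derivative bound along the whole segment, and the trade-off $\theta^2$ versus $\gamma/\theta$ then yields $\mathcal{O}(\gamma^{2/3})$ at $\theta=\gamma^{1/3}$ — strictly better than the stated $\mathcal{O}(\gamma^{1/3})$, consistent with the remark following the lemma that the paper's estimate is not optimal, and still sufficient for Equation~\eqref{eq:gron4}. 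The one step you should make explicit is the standard structural fact about billiard singularities that you invoke when the segment meets $\overline{F^{-1}(\partial\Omega)}$: that $F$ extends continuously from the side of $x$ up to the first singular point $\tilde x$, with the limit lying in $\partial\Omega$ (and, by your $\cos^2\varphi$ bound, necessarily at a corner rather than at grazing). This is the same fact the paper uses implicitly when it asserts that the pieces of a cut $A_i$ ``must be mapped near $\partial\Omega$,'' so it is a shared reliance rather than a gap.
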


This estimate is not necessarily the best possible.  For example,
for dispersing billiard tables, where the curvature of the boundary
is positive, one can show that $\nu \mathcal{N}_\gamma  (F^
{-1}\mathcal{N}_\gamma (\partial\Omega))=\mathcal{O}(\gamma  )$.
However, the estimate in Lemma~\ref{lem:gron1} is general and
sufficient for our needs.

\begin {proof}
First, we note that it is equivalent to estimate $\nu
\mathcal{N}_\gamma (F\mathcal{N}_\gamma (\partial\Omega))$, as $F$
has the measure-preserving involution $\mathcal{I} (r,\varphi) = (r,
-\varphi) $, i.e.~$F^ {-1} =\mathcal{I}\circ F\circ\mathcal{I}
$~\cite{CherMark06}.

Fix $\alpha\in (0,1/2) $, and cover $\mathcal{N}_\gamma
(\partial\Omega) $ with $\mathcal{O} (\gamma ^ {-1}) $ starlike
sets, each of diameter no greater than $\mathcal{O} (\gamma) $.  For
example, these sets could be squares of side length $\gamma $.
Enumerate the sets as $\set {A_i} $.  Set $\mathcal{G}=\set
{i:FA_i\cap \mathcal{N}_{\gamma ^\alpha} (\partial\Omega)
=\varnothing}$.

If $i\in \mathcal{G}$, $F\arrowvert _{A_i} $ is a diffeomorphism
satisfying $\norm{DF\arrowvert_{A_i}}\leq\mathcal{O} (\gamma ^
{-\alpha}) $.  See Equation~\eqref{eq:2d_derivative_bound}.  Thus
$\dia{FA_i}\leq\mathcal{O} (\gamma ^ {1-\alpha}) $, and so
\[
    \dia{\mathcal{N}_\gamma (FA_i)}\leq\mathcal{O} (\gamma ^
    {1-\alpha}).
\]
Hence $\nu\mathcal{N}_\gamma (FA_i)\leq\mathcal{O} (\gamma ^
{2(1-\alpha)}) $, and $\nu\mathcal{N}_\gamma
(\cup_{i\in\mathcal{G}}FA_i)\leq\mathcal{O} (\gamma ^ {1-2\alpha})
$.

If $i\notin \mathcal{G}$,  $A_i\cap F^ {-1} (\mathcal{N}_{\gamma
^\alpha} (\partial\Omega)) \neq\varnothing $.  Thus $A_i$ might be
cut into many pieces by $F^ {-1} (\partial\Omega) $, but each of
these pieces must be mapped near $\partial\Omega $.  In fact,
$FA_i\subset\mathcal{N}_{\mathcal{O} (\gamma ^\alpha)}
(\partial\Omega) $.  This is because outside $F^ {-1}
(\mathcal{N}_{\gamma ^\alpha} (\partial\Omega))$,
$\norm{DF}\leq\mathcal{O} (\gamma ^ {-\alpha}) $, and so points in
$FA_i$ are no more than a distance $\mathcal{O} (\gamma /\gamma ^
{\alpha}) $ away from $\mathcal{N}_{\gamma^\alpha} (\partial\Omega)
$, and $\gamma <\gamma  ^ {1-\alpha} <\gamma  ^\alpha $.  It follows
that $\mathcal{N}_\gamma (FA_i) \subset\mathcal{N}_{\mathcal{O}
(\gamma ^\alpha)} (\partial\Omega)$, and
$\nu\mathcal{N}_{\mathcal{O} (\gamma ^\alpha)}
(\partial\Omega)=\mathcal{O} (\gamma ^\alpha).  $

Thus $\nu \mathcal{N}_\gamma  (F^ {-1}\mathcal{N}_\gamma
(\partial\Omega))=\mathcal{O}(\gamma  ^ {1-2\alpha}+\gamma  ^
{\alpha})$, and we obtain the lemma by taking $\alpha=1/3$.

\end {proof}

\subsubsection{Application to a perturbed billiard flow}
\label{sct:gron_ap}

Returning to the end of Step 5 in Section~\ref{sct:main_steps}, let
the initial conditions of the slow variables be fixed at
$h_0=(Q_0,W_0,E_{1,0},E_{2,0})\in\mathcal{V} $ throughout the
remainder of this section.  We can assume that the billiard dynamics
of the left gas particle in $\mathcal{D}_1(Q_0) $ are ergodic. Also,
fix a particular value of the initial conditions for the right gas
particle for the remainder of this section.  Then $z_\varepsilon(t)
$ and $\tilde T_\varepsilon$ may be thought of as random variables
depending on the left gas particle's initial conditions
$y\in\mathcal{M} ^1$. Now if $h_\varepsilon (t)=
(Q_\varepsilon(t),W_\varepsilon(t),E_{1,\varepsilon}(t),E_{2,\varepsilon}(t))$
denotes the actual motions of the slow variables when
$\varepsilon>0$, it follows from Equation~\eqref{eq:h_div} that,
provided $\varepsilon L(\varepsilon)\leq \tilde {T}_\varepsilon $,
\begin {equation}\label{eq:h_div2}
    \sup_{0\leq t\leq
    L(\varepsilon)}\abs{h_0-h_\varepsilon(t)}=\mathcal{O}(\varepsilon L(\varepsilon)).
\end {equation}
Furthermore, we only need to show that
\begin {equation}\label{eq:gron5}
    \mu
    \set{y\in\mathcal{M} ^1:\abs{\frac{1}{L(\varepsilon)}\int_0^{L(\varepsilon)}
    G(z_\varepsilon(s))-G(z_0(s))ds}\geq\frac{\delta}{2T}
    \text { and } \varepsilon
    L(\varepsilon)\leq\tilde{T}_\varepsilon }
    \rightarrow 0
\end {equation}
as $\varepsilon\rightarrow 0$, where $G$ is defined in
Equation~\eqref{eq:G_definition}.

For definiteness, we take the following quantities from
Subsection~\ref{sct:Gronwall_map} to depend on $\varepsilon$ as
follows:
\begin {equation}\label{eq:gron6}
\begin {split}
    L(\varepsilon) &= L=\log \log\frac{1}{\varepsilon},
    \\
    \gamma (\varepsilon) &= \gamma =e^{-L},
    \\
    \lambda(\varepsilon)&=\lambda=
    \frac{2}{E_{\nu}\zeta}L,
    \\
    \rho(\varepsilon) &=\rho=\C\frac {\varepsilon L} {\gamma }.
\end {split}
\end {equation}
The constant in the choice of $\rho$ and $\rho$'s dependence on
$\varepsilon$ will be explained in the proof of
Lemma~\ref{lem:gron3}, which is at the end of this subsection. The
other choices may be explained as follows. We wish to use continuity
estimates for the billiard map to produce continuity estimates for
the flow on the time scale $L$. As the divergence of orbits should
be exponentially fast, we choose $L$ to grow sublogarithmically in
$\varepsilon^ {-1} $.  Since from Equation~\eqref{eq:2d_Santalo} the
expected flight time between collisions with
$\partial\mathcal{D}_1(Q_0)$ when $\varepsilon=0$ is
$E_{\nu}\zeta=\pi\abs{\mathcal{D}_1(Q_0)}/(\sqrt{2E_{1,0}}\abs{\partial\mathcal{D}_1(Q_0)})$,
we expect to see roughly $\lambda/2$ collisions on this time scale.
Considering $\lambda$ collisions gives us some margin for error.
Furthermore, we will want orbits to keep a certain distance, $\gamma
$, away from the billiard discontinuities. $\gamma \rightarrow 0$ as
$\varepsilon\rightarrow 0$, but $\gamma  $ is very large compared to
the possible drift $\mathcal{O} (\varepsilon L) $ of the slow
variables on the time scale $L$.  In fact, for each $C,m,n>0$,
\begin {equation}\label{eq:gron7}
    \frac {\varepsilon L^m } {\gamma ^n}
    \left (\frac {C} {\gamma}\right)^\lambda=\mathcal{O}
    (\varepsilon\, e^{\C\,L^2})
    \rightarrow 0\text { as }\varepsilon\rightarrow 0.
\end {equation}

Let $X:\mathcal{M} ^1\rightarrow\Omega$ be the map taking
$y\in\mathcal{M} ^1$ to $x=X(y)\in\Omega $, the location of the
billiard orbit of $y$ in the collision cross-section that
corresponds to the most recent time in the past that the orbit was
in the collision cross-section.  We consider the set of initial
conditions
\[
    \mathcal{E}_\varepsilon=
    X^{-1}(\Omega\backslash\mathcal{C}_{\gamma ,\lambda})\bigcap
    X^{-1}
    \set {x\in\Omega:  \sum_{k=0}^\lambda \zeta (F^k x)>
    L}.
\]
Now from Equations~\eqref{eq:gron4} and~\eqref{eq:gron6},
$\nu\mathcal{C}_{\gamma ,\lambda}\rightarrow 0$ as
$\varepsilon\rightarrow 0$.  Furthermore, by the ergodicity of $F$,
\[
    \nu\set {x\in\Omega:\sum_{k=0}^\lambda \zeta (F^k x)\leq L}=\nu\set
    {x\in\Omega:\lambda^ {-1}\sum_{k=0}^\lambda \zeta (F^k x)\leq E_\nu
    \zeta/2}\rightarrow 0
\]
as $\varepsilon\rightarrow 0$. But because the free flight time is
bounded above, $\mu X^ {-1}\leq \C\cdot \nu $, and so
$\mu\mathcal{E}_\varepsilon\rightarrow 1$ as $\varepsilon\rightarrow
0$.  Hence, the convergence in Equation~\eqref{eq:gron5} and the
conclusion of the proof in Section~\ref{sct:main_steps} follow from
the lemma below and Equation~\eqref{eq:gron7}.

\begin{lem}[Analysis of deviations along good orbits]\label{lem:gron2}
As $\varepsilon\rightarrow 0$,
\[
    \sup_{y\in\mathcal{E}_\varepsilon \cap \set{\varepsilon L\leq \tilde {T}_\varepsilon
    }}
    \abs{\frac{1}{L}\int_0^{L}
    G(z_\varepsilon(s))-G(z_0(s))ds}=
    \mathcal{O} \left(\rho
    \left(\frac {\C} {\gamma} \right)^ {\lambda}\right)
    +\mathcal{O}(L^ {- 1})\rightarrow 0.
\]

\end {lem}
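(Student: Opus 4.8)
The plan is to reduce both time integrals to sums over piston collisions and then compare the two collision sequences by means of the pseudo-orbit estimates of Subsection~\ref{sct:Gronwall_map}. First I would note that, since $G(z)=\abs{v_1^\perp}\delta_{q_1^\perp=Q}$, for either flow the integral $\int_0^L G(z(s))\,ds$ equals the sum of the values of $\abs{v_1^\perp}$ over the collisions of the left gas particle with the piston occurring in $[0,L]$. On the event $\set{\varepsilon L\le\tilde T_\varepsilon}$ all such collisions are clean, so the estimates preceding Lemma~\ref{lem:h_int} bound their number by $\mathcal{O}(L)$; moreover, because $y\in\mathcal{E}_\varepsilon$, the unperturbed orbit satisfies $\sum_{k=0}^\lambda\zeta(F^kx)>L$, so the $\varepsilon=0$ orbit undergoes at most $\mathcal{O}(\lambda)$ collisions of any kind within time $L$, and its first $\lambda$ iterates under $F$ all lie a distance $\gamma$ from $\partial\Omega\cup F^{-1}\partial\Omega$, so that \eqref{eq:gron0} holds along it.

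Next I would record the collision data of both orbits in the $(r,\varphi)$ coordinates on the cross-section $\Omega$ attached to the frozen slow variables $h_0$. The $\varepsilon=0$ orbit yields the genuine forward orbit $x_k=F^kx$. For the perturbed flow, Equation~\eqref{eq:h_div2} shows that while $\varepsilon L\le\tilde T_\varepsilon$ the slow variables stay within $\mathcal{O}(\varepsilon L)$ of $h_0$, so the container $\mathcal{D}_1(Q_\varepsilon(t))$, the speed $\sqrt{2E_{1,\varepsilon}(t)}$, and the piston face all differ from their frozen positions by $\mathcal{O}(\varepsilon L)$, while at each piston collision $v_1^\perp$ jumps by only $\mathcal{O}(\varepsilon)$ by \eqref{eq:v_1Wchange}. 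Feeding these into the derivative bounds \eqref{eq:2d_derivative_bound} and \eqref{eq:2d_time_derivative} (which, away from the $\gamma$-neighborhood of grazing, bound the dependence of the collision map and of $\zeta$ on the phase point \emph{and} on the parameters $(Q,E_1)$ by $\mathcal{O}(1/\gamma)$) should show that the perturbed collision sequence $x'_k$ is a $\rho$-pseudo-orbit of $\set{x_k}$ in the sense of Subsection~\ref{sct:Gronwall_map}, with $\rho=\C\,\varepsilon L/\gamma$. This statement is isolated as Lemma~\ref{lem:gron3} below, and it is where the real work lies: one must carefully relate the fixed cross-section $\Omega$ to the slowly moving domain and the moving piston face, check that the $\mathcal{O}(\varepsilon)$ velocity jumps and the $\mathcal{O}(\varepsilon L)$ parameter drift enter the per-step error with only the harmless $1/\gamma$ loss, and use the clean-collision property to rule out degenerate behaviour of the perturbed orbit near the piston. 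I expect this to be the main obstacle.

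Granting Lemma~\ref{lem:gron3}, the rest is bookkeeping. The choices in \eqref{eq:gron6} give $0<\rho\ll\gamma\ll1\ll\lambda$ for small $\varepsilon$ and make the smallness condition \eqref{eq:gron2} hold, so \eqref{eq:gron1} and \eqref{eq:gron3} yield, for $0\le k\le\lambda$,
\[
    d(x_k,x'_k)\le\C\,\rho\Bigl(\tfrac{\C}{\gamma}\Bigr)^{\lambda},
    \qquad
    \abs{\zeta x_k-\zeta x'_k}\le\tfrac{\C\,\rho}{\gamma}\Bigl(\tfrac{\C}{\gamma}\Bigr)^{\lambda},
\]
and both bounds are $o(1)$ as $\varepsilon\to0$ by \eqref{eq:gron7}. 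Consequently the flight-time discrepancies are negligible compared to the (bounded-below) inter-collision times, so the piston collisions of the two orbits in $[0,L]$ match up one-to-one except for at most a single collision near $t=L$; at each matched collision $\abs{v_{1,\varepsilon}^\perp}=\sqrt{2E_{1,\varepsilon}}\cos\varphi(x'_k)=\abs{v_{1,0}^\perp}+\mathcal{O}(\varepsilon L)+\mathcal{O}\bigl(\rho(\C/\gamma)^\lambda\bigr)$, which equals $\abs{v_{1,0}^\perp}+\mathcal{O}\bigl(\rho(\C/\gamma)^\lambda\bigr)$ since $\rho(\C/\gamma)^\lambda$ dominates $\varepsilon L$. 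Summing over the $\mathcal{O}(L)$ collisions, dividing by $L$, and charging the at most one unmatched collision (which contributes $\mathcal{O}(1)$, hence $\mathcal{O}(L^{-1})$ after division) gives
\[
    \abs{\tfrac1L\int_0^L G(z_\varepsilon(s))-G(z_0(s))\,ds}=\mathcal{O}\Bigl(\rho\bigl(\tfrac{\C}{\gamma}\bigr)^{\lambda}\Bigr)+\mathcal{O}(L^{-1})
\]
uniformly over $y\in\mathcal{E}_\varepsilon\cap\set{\varepsilon L\le\tilde T_\varepsilon}$, and the right-hand side vanishes as $\varepsilon\to0$ by \eqref{eq:gron7} together with $L\to\infty$, which is the assertion of the lemma.
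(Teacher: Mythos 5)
Your overall strategy coincides with the paper's: reduce both time integrals to sums of $\abs{v_1^\perp}$ over piston collisions, compare the perturbed collision sequence with the frozen orbit $x_k=F^kx$ in $\Omega_{h_0}$ as a $\rho$-pseudo-orbit with $\rho=\C\,\varepsilon L/\gamma$, and then do the bookkeeping via \eqref{eq:gron1}, \eqref{eq:gron3} and \eqref{eq:gron7}; your accounting of the error terms (the $\mathcal{O}(L^{-1})$ endpoint mismatch, absorbing the $\mathcal{O}(\varepsilon L)$ energy drift into $\rho(\C/\gamma)^\lambda$) matches what the paper does. The genuine gap is that you have deferred the entire tracking statement --- your ``Lemma~\ref{lem:gron3}'' --- and that is where essentially all of the difficulty of Lemma~\ref{lem:gron2} lives. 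The paper proves it by induction on the collision index with a case analysis and an explicit geometric argument (see Figure~\ref{fig:collision}): one must show that while the perturbed particle is beyond the frozen piston position $Q_0$ it collides exactly once with the moving piston and with nothing else, that the return point to $\Omega_{h_0}$ has angle differing by $\mathcal{O}(\varepsilon)$ (from \eqref{eq:v_1Wchange}) and $r$-coordinate differing by $\mathcal{O}(\varepsilon L/\gamma)$ (this is precisely what dictates the choice of $\rho$ in \eqref{eq:gron6}), and that the piston-collision indices of the perturbed and frozen orbits coincide; all of this uses $x\notin\mathcal{C}_{\gamma,\lambda}$ to keep the orbit a distance of order $\gamma$ from grazing and corner configurations. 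Asserting a one-to-one matching of piston collisions with a per-collision error, without this argument, is exactly the step that requires proof.

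Moreover, the mechanism you sketch for the pseudo-orbit property is off target: Equations \eqref{eq:2d_derivative_bound} and \eqref{eq:2d_time_derivative} bound derivatives in the phase point for the \emph{fixed} table $\mathcal{D}_1(Q_0)$ and say nothing about dependence on the parameters $(Q,E_1)$. In fact no error at all accrues between piston collisions: the fixed walls are unchanged and specular reflection is independent of speed, so there the perturbed collision sequence is an exact orbit segment of $F$ (property (b) of the paper's Lemma~\ref{lem:gron3}); the $\mathcal{O}(\varepsilon L)$ speed discrepancy from \eqref{eq:h_div2} only perturbs the flight times. Errors of size $\rho$ are injected solely at collisions with the moving piston, where the displacement $Q_\varepsilon-Q_0=\mathcal{O}(\varepsilon L)$, combined with the fact that the incoming angle is at least of order $\gamma$ away from tangential, produces the $\mathcal{O}(\varepsilon L/\gamma)$ shift in $r$; the clean-collision property (and the uniform lower bound on times between piston collisions) guarantees these injections are well separated. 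So the architecture of your argument is right and the final bookkeeping is sound, but to have a proof you must supply the inductive geometric analysis at the piston collisions rather than appeal to derivative bounds in the slow parameters.
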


\begin {proof}

Fix a particular value of $y\in\mathcal{E}_\varepsilon \cap
\set{\varepsilon L\leq \tilde {T}_\varepsilon}$. For convenience,
suppose that $y=X(y) =x\in\Omega $.  Let $y_0(t) $ denote the time
evolution of the billiard coordinates for the left gas particle when
$\varepsilon=0$.  Then there is some $N\leq \lambda$ such that the
orbit $ x_k=F^k x= ( r_k,\varphi_k)$ for $0\leq k\leq N $
corresponds to all of the instances (in order) when $y_0(t) $ enters
the collision cross-section $\Omega=\Omega_{ h_0} $ corresponding to
collisions with $\partial\mathcal{D}_1(Q_0) $ for $0\leq t\leq L$.
We write $\Omega_{ h_0}$ to emphasize that in this subsection we are
only considering the collision cross-section corresponding to the
billiard dynamics in the domain $\mathcal{D}_1(Q_0) $ at the energy
level $ E_{1,0} $.  In particular, $F$ will always refer to the
return map on $\Omega_{ h_0}$.

Also, define an increasing sequence of times $ t_k$ corresponding to
the actual times $ y_0(t) $ enters the collision cross-section, i.e.
\[
\begin {split}
    t_0 &=0,\\
    t_k & = t_{k-1} +\zeta x_{k-1}\text { for } k>0.
\end {split}
\]
Then $ x_k= y_0 ( t_k) $.  Furthermore, define inductively
\[
\begin {split}
    N_1&=\inf\set{k>0: t_k\text{ corresponds to a collision with the
    piston}},\\
    N_j&=\inf\set{k>N_{j-1}: t_k\text{ corresponds to a collision with the
    piston}}.\\
\end {split}
\]

Next, let $y_\varepsilon(t) $ denote the time evolution of the
billiard coordinates for the left gas particle when $\varepsilon>0$.
We will construct a pseudo-orbit $x_{k,\varepsilon}' =
(r_{k,\varepsilon}',\varphi_{k,\varepsilon}')$ of points in
$\Omega_{ h_0}$ that essentially track the collisions (in order) of
the left gas particle with the boundary under the dynamics of
$y_\varepsilon(t) $ for $0\leq t\leq L$.

First, define an increasing sequence of times $ t_{k,\varepsilon}'$
corresponding to the actual times $ y_\varepsilon(t) $ experiences a
collision with the boundary of the gas container or the moving
piston.  Define
\[
\begin {split}
    N_{\varepsilon}'&=\sup\set{k\geq 0: t_{k,\varepsilon}'
    \leq L},\\
    N_{1,\varepsilon}'&=\inf\set{k>0: t_{k,\varepsilon}'
    \text{ corresponds to a collision with the
    piston}},\\
    N_{j,\varepsilon}'&=\inf\set{k>N_{j-1,\varepsilon}': t_{k,\varepsilon}'
    \text{ corresponds to a collision with the
    piston}}.\\
\end {split}
\]
Because $L\leq \tilde {T}_\varepsilon(y)/\varepsilon$, we know that
as long as $N_{j+1,\varepsilon}'\leq N_{\varepsilon}'$, then
$N_{j+1,\varepsilon}'-N_{j,\varepsilon}'\geq 2$.  See the discussion
in Subsection~\ref{sct:collisions}.  Then we define
$x_{k,\varepsilon}'\in\Omega_{ h_0}$ by
\[
    x_{k,\varepsilon}' =
    \begin {cases}
    y_\varepsilon (t_{k,\varepsilon}')
    \text { if }k\notin\set {N_{j,\varepsilon}'},\\
    F^ {-1}x_{k+1,\varepsilon}'
    \text { if }k\in\set {N_{j,\varepsilon}'}.
    \end {cases}
\]

\begin{lem}\label{lem:gron3}
Provided $\varepsilon$ is sufficiently small, the following hold for
each $k\in [0,N\wedge N_\varepsilon')$.  Furthermore, the requisite
smallness of $\varepsilon$ and the sizes of the constants in these
estimates may be chosen independent of the initial condition
$y\in\mathcal{E}_\varepsilon \cap \set{\varepsilon L\leq \tilde
{T}_\varepsilon}$ and of $k$:
\begin {itemize}
    \item[\emph{(a)}]
        $x_{k,\varepsilon}' $ is well defined.  In particular, if
        $k\notin\set{N_{j,\varepsilon}'} $,
        $y_\varepsilon(t_{k,\varepsilon}') $ corresponds to a
        collision point on $\partial\mathcal{D}_1( Q_0)$, and not to
        a collision point on a piece of $\partial\mathcal{D} $ to
        the right of $ Q_0$.
    \item[\emph{(b)}]
        If $ k>0$ and $k\notin\set{N_{j,\varepsilon}'} $, then
        $x_{k,\varepsilon}' =Fx_{k-1,\varepsilon}' $.
    \item[\emph{(c)}]
        If $ k>0$ and $k\in\set{N_{j,\varepsilon}'} $, then
        $d(x_{k,\varepsilon}',Fx_{k-1,\varepsilon}')\leq\rho$ and
        the $\varphi$ coordinate of $y_\varepsilon(t_{k,\varepsilon}') $
        satisfies
        $\varphi(y_\varepsilon(t_{k,\varepsilon}')) =\varphi_{k,\varepsilon}' +
        \mathcal{O} (\varepsilon).$
    \item[\emph{(d)}]
        $d(x_k,x'_{k,\varepsilon})\leq\C\,\rho (\C/\gamma) ^k$ .
    \item[\emph{(e)}]
        $k=N_{j,\varepsilon}'$ if and only if $k=N_j$.
    \item[\emph{(f)}]
        If $ k>0$, $t_{k,\varepsilon}'-t_{k-1,\varepsilon}'
        =
        t_k - t_{ k-1} +
        \mathcal{O}(\rho \left (\C/\gamma\right) ^k).$
\end {itemize}

\end{lem}

We defer the proof of Lemma~\ref{lem:gron3} until the end of this
subsection.  Assuming that $\varepsilon$ is sufficiently small for
the conclusions of Lemma~\ref{lem:gron3} to be valid, we continue
with the proof of Lemma~\ref{lem:gron2}.

Set $M=N\wedge N_\varepsilon'-1$. Note that $M\leq\lambda\sim L $.
From (f) in Lemma~\ref{lem:gron3} and Equations~\eqref{eq:gron6} and
\eqref{eq:gron7}, we see that
\[
\begin {split}
    \abs{t_M-t_{M,\varepsilon}'}
    &
    \leq \sum_{k=1}^M
    \abs{t_{k,\varepsilon}'-t_{k-1,\varepsilon}'- (t_k - t_{ k-1})}
    =
    \mathcal{O}\left(\rho \frac{\C^\lambda}{\gamma^{\lambda}}\right)
    \rightarrow 0\text{ as }\varepsilon\rightarrow 0.
\end {split}
\]
Because the flight times $t_{k,\varepsilon}'-t_{k-1,\varepsilon}'$
and $t_k - t_{ k-1}$ are uniformly bounded above, it follows from
the definitions of $N$ and $N_\varepsilon' $ that $t_M,\,
t_{M,\varepsilon}'\geq L-\C$.  But from
Subsection~\ref{sct:collisions}, the time between the collisions of
the left gas particle with the piston are uniformly bounded away
from zero. Using (c) and Equation~\eqref{eq:h_div2}, it follows that
\[
\begin {split}
    &\abs{\frac{1}{L}
    \int_0^{L}G(z_\varepsilon(s))-G(z_0(s))ds}
    \\
    &\qquad
    =\mathcal{O} (L^ {-1}) +
    \sum_{k\in \set { N_j:N_j\leq M}} \abs{\sqrt{2E_{1,0}}\,\cos \varphi_k
    -\sqrt{2E_{1,\varepsilon}(t_{k,\varepsilon}')}\,\cos
    (\varphi_{k,\varepsilon}'+\mathcal{O} (\varepsilon))}
    \\
    &\qquad
    =\mathcal{O} (L^ {-1}) +
    \sum_{k\in \set { N_j:N_j\leq M}}
    \abs{\sqrt{2E_{1,0}}\,\cos \varphi_k
    -\sqrt{2E_{1,0}}\,\cos
    \varphi_{k,\varepsilon}'
    +\mathcal{O} (\varepsilon L)}
    \\
    &\qquad
    =\mathcal{O} (L^ {-1}) +
    \mathcal{O} (\varepsilon L^2)
    +\sqrt{2E_{1,0}}\,\sum_{k\in \set { N_j:N_j\leq M}}
    \abs{\cos \varphi_k
    -\cos
    \varphi_{k,\varepsilon}'}.
\end {split}
\]
But using (d),
\[
\begin {split}
    \sum_{k\in \set { N_j:N_j\leq M}}
    \abs{\cos \varphi_k-\cos
    \varphi_{k,\varepsilon}'}
    \leq\sum_{ k=0} ^M\mathcal{O}  (\rho (\C/\gamma) ^k)
    =\mathcal{O}  (\rho (\C/\gamma) ^\lambda).
\end {split}
\]
Since $\varepsilon L^2=\mathcal{O}(\rho (\C/\gamma) ^\lambda) $,
this finishes the proof of Lemma~\ref{lem:gron2}.

\end {proof}

\begin {proof}[Proof of Lemma~\ref{lem:gron3}]

The proof is by induction.  We take $\varepsilon$ to be so small
that Equation~\eqref{eq:gron2} is satisfied. This is possible by
Equation~\eqref{eq:gron7}.

It is trivial to verify (a)-(f) for $ k=0$. So let $0<l<N\wedge
N_\varepsilon' $, and suppose that (a)-(f) have been verified for
all $k<l$.  We have three cases to consider:

\subsubsection*{Case 1: $l-1$ and $l\notin \set{N_{j,\varepsilon}'}$:}

In this case, verifying (a)-(f) for $ k=l$ is a relatively
straightforward application of the machinery developed in
Subsection~\ref{sct:Gronwall_map}, because for
$t_{l-1,\varepsilon}'\leq t\leq t_{l,\varepsilon}'$, $y_\varepsilon
(t) $ traces out the billiard orbit between $x_{l-1,\varepsilon}'$
and $x_{l,\varepsilon}'$ corresponding to free flight in the domain
$\mathcal{D}_1( Q_0) $. We make only two remarks.

First, as long as $\varepsilon$ is sufficiently small, it really is
true that $x_{l,\varepsilon}'=y_\varepsilon (t_{l,\varepsilon}')$
corresponds to a true collision point on $\partial\mathcal{D}_1(
Q_0) $.  Indeed, if this were not the case, then it must be that
$Q_\varepsilon(t_{l,\varepsilon}')> Q_0 $, and $y_\varepsilon
(t_{l,\varepsilon}')$ would have to correspond to a collision with
the side of the ``tube'' to the right of $ Q_0$.  But then
$x_{l,\varepsilon}''  =Fx_{l-1,\varepsilon}'\in\Omega_{ h_0}$ would
correspond to a collision with an immobile piston at $ Q_0$ and
would satisfy $d(x_k,x''_{k,\varepsilon})\leq\C\,\rho (\C/\gamma)
^k\leq \C\,\rho (\C/\gamma) ^\lambda =o(\gamma )$, using
Equations~\eqref{eq:gron1} and \eqref{eq:gron7}.  But $
x_k\notin\mathcal{N}_\gamma (\partial\Omega_{ h_0}) $, and so it
follows that when the trajectory of $y_\varepsilon(t) $ crosses the
plane $\set {Q= Q_0} $, it is at least a distance $\sim \gamma $
away from the boundary of the face of the piston, and its velocity
vector is pointed no closer than $\sim \gamma $ to being parallel to
the piston's face.  As $Q_\varepsilon(t_{l,\varepsilon}')-
Q_0=\mathcal{O} (\varepsilon L) =o(\gamma ) $, and it is
geometrically impossible (for small $\varepsilon$) to construct a
right triangle whose sides $ s_1,\: s_2$ satisfy $\abs{ s_1}\geq\sim
\gamma ,\:\abs{ s_2}\leq\mathcal{O} (\varepsilon L) $, with the
measure of the acute angle adjacent to $ s_1$ being greater than
$\sim \gamma$, we have a contradiction. After crossing the plane
$\set {Q= Q_0} $, $y_\varepsilon(t) $ must experience its next
collision with the face of the piston, which violates the fact that
$l\notin \set{N_{j,\varepsilon}'}$.

Second, $t_{l,\varepsilon}'-t_{l-1,\varepsilon}' =\zeta
x'_{l-1,\varepsilon}+\mathcal{O}(\varepsilon L)$, because
$v_{1,\varepsilon} =v_ {1,0} +\mathcal{O} (\varepsilon L) $.  See
Equation~\eqref{eq:h_div2}.  From Equation~\ref{eq:gron3},
$\abs{\zeta x_{l-1}-\zeta x'_{l-1,\varepsilon}} \leq \mathcal{O}
((\rho/\gamma)  \left (\C/\gamma\right) ^{l-1})$. As $t_l - t_{
l-1}=\zeta x_{l-1}$ and $\varepsilon L=\mathcal{O} ((\rho/\gamma)
\left (\C/\gamma\right) ^{l-1})$, we obtain (f).

\subsubsection*{Case 2: There exists $i$ such that $l=N_{i,\varepsilon}'$: }

For definiteness, we suppose that
$Q_\varepsilon(t_{l,\varepsilon}')\geq Q_0$, so that the left gas
particle collides with the piston to the right of $ Q_0$.  The case
when $Q_\varepsilon(t_{l,\varepsilon}')\leq Q_0$ can be handled
similarly.

We know that $ x_{l-1}, x_{l},x_{l+1}\notin \mathcal{N}_\gamma
(\partial\Omega_{ h_0})\cup\mathcal{N}_\gamma  (F^
{-1}\mathcal{N}_\gamma (\partial\Omega_{ h_0}))$.  Using the
inductive hypothesis and Equation~\eqref{eq:gron1}, we can define
\[
    x_{l,\varepsilon}''=Fx_{l-1,\varepsilon}',\qquad
    x_{l+1,\varepsilon}''=F^2x_{l-1,\varepsilon}',
\]
and $d( x_{l},x_{l,\varepsilon}'') \leq\C\,\rho (\C/\gamma) ^{l}$,
$d( x_{l+1}, x_{l+1,\varepsilon}'')\leq\C\,\rho (\C/\gamma) ^{l+1}$.
In particular, $x_{l,\varepsilon}''$ and $x_{l+1,\varepsilon}''$ are
both a distance $\sim \gamma $ away from $\partial\Omega_{ h_0} $.
Furthermore, when the left gas particle collides with the moving
piston, it follows from Equation~\eqref{eq:v_1Wchange} that the
difference between its angle of incidence and its angle of
reflection is $\mathcal{O} (\varepsilon) $.  Referring to
Figure~\ref{fig:collision}, this means that
$\varphi_{l,\varepsilon}' =\varphi_{l,\varepsilon}'' +\mathcal{O}
(\varepsilon) $. Geometric arguments similar to the one given in
Case 1 above show that the $y_\varepsilon$-trajectory of the left
gas particle has precisely one collision with the piston and no
other collisions with the sides of the gas container when the gas
particle traverses the region $Q_0\leq Q\leq
Q_\varepsilon(t_{l,\varepsilon}')$. Note that $x_{l,\varepsilon}' $
was defined to be the point in the collision cross-section $\Omega_{
h_0} $ corresponding to the return of the $y_\varepsilon$-trajectory
into the region $Q\leq Q_0$. See Figure~\ref{fig:collision}.  From
this figure, it is also evident that
$d(r_{l,\varepsilon}',r_{l,\varepsilon}'')\leq\mathcal{O}
(\varepsilon L/\gamma ) $.  Thus $d(
x_{l,\varepsilon}'',x_{l,\varepsilon}')=\mathcal{O} (\varepsilon
L/\gamma )$, and this explains the choice of $\rho(\varepsilon) $ in
Equation~\eqref{eq:gron6}.

\begin{figure}
    \begin {center}
    \setlength{\unitlength}{1.0 cm}
    \begin{picture}(15,10)
        \thicklines
        \put(1,1.5){\line(1,0){10.5}}
        \put(1,8.5){\line(1,0){10.5}}
        \put(11.5,1.5){\line(0,1){7.0}}
        \put(3.5,1.5){\vector(1,0){0.5}}
        \put(3.3,1.15){$r-$coordinate}
        \thinlines
        \put(1,1.5){\line(1,0){11}}
        \put(1,8.5){\line(1,0){11}}
        \put(11.5,1.4){\line(0,1){7.1}}
        \put(11.3,1.0){$Q_0$}
        \put(12,1.4){\line(0,1){7.1}}
        \put(11.9,1.0){$Q_\varepsilon(t_{l,\varepsilon}')$}
        \put(1.5,8){$\mathcal{D}_1(Q_0) $}
        \put(9.5,8.8){\line(1,0){4}}
        \put(11.5,8.8){\line(0,-1){0.15}}
        \put(13.5,8.8){\line(0,-1){0.15}}
        \put(9.5,8.8){\line(0,-1){0.15}}
        \put(  10.5,8.8){\line(0,1){0.15}}
        \put(10.2,9.1){$\gamma/2 $}
        \put(12.5,8.8){\line(0,1){0.15}}
        \put(12.2,9.1){$\gamma/2 $}
        \put(11.5,0.6){\line(1,0){0.5}}
        \put(11.75,0.6){\line(0,-1){0.15}}
        \put(  11.5,0.6){\line(0,1){0.15}}
        \put(  12,0.6){\line(0,1){0.15}}
        \put(11.4,0.1){$\mathcal{O} (\varepsilon L) $}
        \put(12.5,4.5){\line(0,1){1}}
        \put(12.5,4.5){\line(-1,0){0.15}}
        \put(12.5,5){\line(1,0){0.15}}
        \put(12.5,5.5){\line(-1,0){0.15}}
        \put(12.7,4.85){$\mathcal{O} (\varepsilon L/\gamma ) $}
        \put(12.5,6.5){\line(0,1){2}}
        \put(12.5,8.5){\line(-1,0){0.15}}
        \put(12.5,7.5){\line(1,0){0.15}}
        \put(12.5,6.5){\line(-1,0){0.15}}
        \put(12.7,7.45){$\gamma /2 $}
        \put(12.5,1.5){\line(0,1){2}}
        \put(12.5,3.5){\line(-1,0){0.15}}
        \put(12.5,2.5){\line(1,0){0.15}}
        \put(12.5,1.5){\line(-1,0){0.15}}
        \put(12.7,2.45){$\gamma  /2$}
    \small
        \put(12,5){\line(-1,-1){3.5}}
        \put(11.5,4.5){\vector(-1,1){0.8}}
        \put(11.5,4.5){\line(-1,1){4}}
        \put(11.65,4.4){$r_{l,\varepsilon}''$}
        \put(10.35,4.7){$\varphi_{l,\varepsilon}''$}
        \put(11.5,4.5){\circle*{.1}}
        \qbezier[24](10,4.5)(10.75,4.5)(11.5,4.5)
        \put(11.1,4.5){\vector(1,1){0.2}}
        \put(12,5){\vector(-1,1){1.3}}
        \put(12,5){\line(-1,1){3.5}}
        \put(11.65,5.4){$r_{l,\varepsilon}'$}
        \put(10.35,5.7){$\varphi_{l,\varepsilon}'$}
        \put(11.5,5.5){\circle*{.1}}
        \qbezier[24](10,5.5)(10.75,5.5)(11.5,5.5)
        \put(11.1,5.5){\vector(1,1){0.2}}
        \qbezier[12](11.5,5)(11.75,5)(12,5)
        \put(8.5,1.5){\circle*{.1}}
        \put(8.5,1.5){\vector(1,1){0.8}}
        \qbezier[24](8.5,1.5)(8.5,2.25)(8.5,3.0)
        \put(8.45,1.15){$r_{l-1,\varepsilon}'$}
        \put(8.57,2.6){$\varphi_{l-1,\varepsilon}'$}
        \put(8.5,1.9){\vector(1,-1){0.2}}
        \put(8.5,8.5){\circle*{.1}}
        \put(8.3,8.7){$r_{l+1,\varepsilon}'$}
        \put(7.5,8.5){\circle*{.1}}
        \put(6.95,8.7){$r_{l+1,\varepsilon}''$}
    \normalsize
    \end{picture}
    \end {center}
    \caption{An analysis of the divergences of orbits when $\varepsilon>0$
        and the left gas particle collides with the moving piston to the right of $Q_0$.  Note that the
        dimensions are distorted for visual clarity, but that $\varepsilon L$
        and $\varepsilon L/\gamma $ are both $o(\gamma ) $ as $\varepsilon\rightarrow
        0$.}  Furthermore, $\varphi_{l,\varepsilon}''\in(-\pi/2+\gamma/2,\pi/2-\gamma/2) $
        and $\varphi_{l,\varepsilon}' =\varphi_{l,\varepsilon}'' +\mathcal{O} (\varepsilon) $,
        and so $r_{l,\varepsilon}' =r_{l,\varepsilon}''+\mathcal{O}
        (\varepsilon L/\gamma ) $.  In particular, the
        $y_\varepsilon$-trajectory of the left gas particle has precisely
        one collision with the piston and no other collisions with the sides
        of the gas container when the gas particle traverses the region
        $Q_0\leq Q\leq Q_\varepsilon(t_{l,\varepsilon}')$
    \label{fig:collision}
\end{figure}
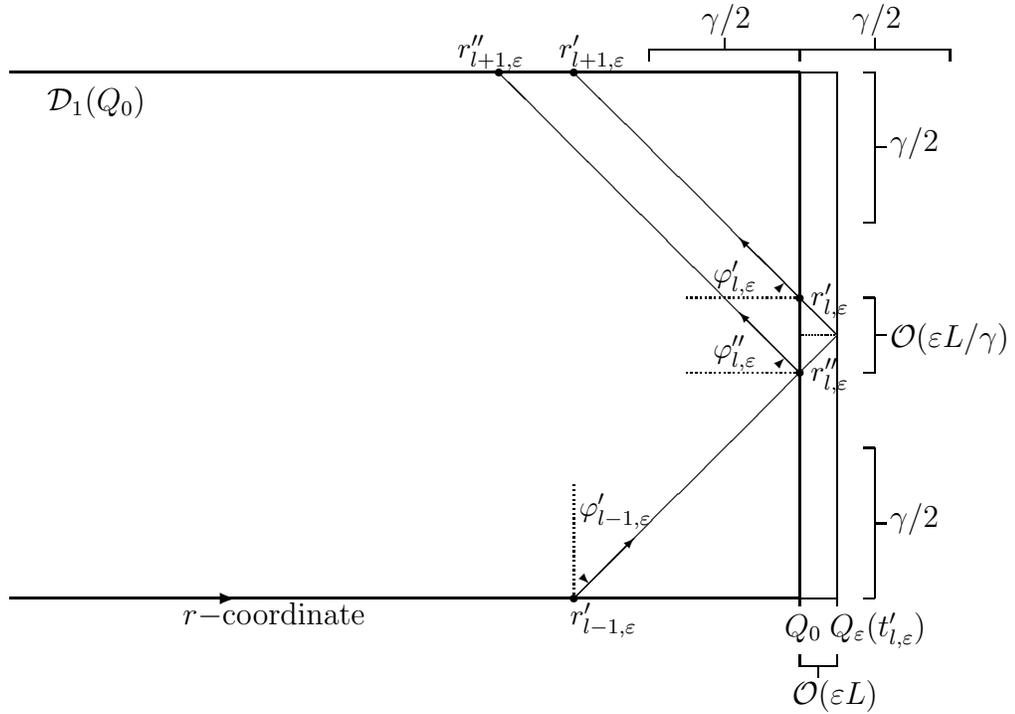

From the above discussion and the machinery of
Subsection~\ref{sct:Gronwall_map}, (a)-(e) now follow readily for
\emph{both} $ k=l$ and $ k=l+1$.  Furthermore, property (f) follows
in much the same manner as it did in Case 1 above.  However, one
should note that $t_{l,\varepsilon}'-t_{l-1,\varepsilon}' =\zeta
x'_{l-1,\varepsilon}+\mathcal{O}(\varepsilon L
)+\mathcal{O}(\varepsilon L/\gamma )$ and
$t_{l+1,\varepsilon}'-t_{l,\varepsilon}' =\zeta
x'_{l,\varepsilon}+\mathcal{O}(\varepsilon L
)+\mathcal{O}(\varepsilon L/\gamma )$, because of the extra distance
$\mathcal{O} (\varepsilon L/\gamma ) $ that the gas particle travels
to the right of $ Q_0$.  But $\varepsilon L/\gamma =\mathcal{O}
((\rho/\gamma) \left (\C/\gamma\right) ^{l-1})$, and so property (f)
follows.

\subsubsection*{Case 3: There exists $i$ such that $l-1=N_{i,\varepsilon}'$: }

As mentioned above, the inductive step in this case follows
immediately from our analysis in Case 2.

\end {proof}

\section{Generalization to a full proof\\ of Theorem~\ref{thm:dDpiston}}
\label{sct:generalization}

It remains to generalize the proof in Sections~\ref{sct:2dprep} and
\ref{sct:2dproof} to the cases when $ n_1, n_2\geq 1$ and $d=3$.

\subsection{Multiple gas particles on each side of the piston}\label{sct:multiple}

When $d=2$, but $n_1,n_2\geq 1$, only minor modifications are
necessary to generalize the proof above. As in
Subsection~\ref{sct:collisions}, one defines a stopping time $\tilde
{T}_\varepsilon$ satisfying $P\set {\tilde {T}_\varepsilon < T\wedge
T_\varepsilon} =\mathcal{O} (\varepsilon) $ such that for $0\leq
t\leq \tilde {T}_\varepsilon/\varepsilon$, gas particles will only
experience clean collisions with the piston.

Next, define $H(z) $ by
\[
    H(z) =
    \begin{bmatrix}
    W\\
    +2\sum_{j=1}^{n_1}\abs{v_{1,j}^{\perp }} \delta_{q_{1,j}^{\perp }=Q}
    -2\sum_{j=1}^{n_2}\abs{v_{2,j}^{\perp }} \delta_{q_{2,j}^{\perp }=Q}\\
    -2W\abs{v_{1,j}^{\perp }} \delta_{q_{1,j}^{\perp }=Q}\\
    +2W\abs{v_{2,j}^{\perp }} \delta_{q_{2,j}^{\perp }=Q}\\
    \end{bmatrix}.
\]
It follows that for $0 \leq t\leq \tilde
{T}_\varepsilon/\varepsilon$, $
    h_\varepsilon(t)-h_\varepsilon(0)=
    \mathcal{O}(\varepsilon)+\varepsilon\int_0^t
    H(z_\varepsilon(s))ds.
$ From here, the rest of the proof follows the same steps made in
Subsection~\ref{sct:main_steps}.  We note that at Step 3, we find
that $H(z) -\bar H(h(z)) $ divides into $n_1+ n_2 $ pieces, each of
which depends on only one gas particle when the piston is held
fixed.

\subsection{Three dimensions}\label{sct:higher_d}

The proof of Theorem~\ref{thm:dDpiston} in $d=3$ dimensions is
essentially the same as the proof in two dimensions given above. The
principal differences are due to differences in the geometry of
billiards.  We indicate the necessary modifications.

In analogy with Section~\ref{sct:billiard}, we briefly summarize the
necessary facts for the billiard flows of the gas particles when
$M=\infty $ and the slow variables are held fixed at a specific
value $h\in\mathcal{V} $.  As before, we will only consider the
motions of one gas particle moving in $\mathcal{D}_1 $.  Thus we
consider the billiard flow of a point particle moving inside the
domain $\mathcal{D}_1$ at a constant speed $\sqrt{2E_1} $.  Unless
otherwise noted, we use the notation from
Section~\ref{sct:billiard}.

The billiard flow takes place in the five-dimensional space
$\mathcal{M}^1=\{(q_1,v_1)\in\mathcal{TD}_1:q_1\in\mathcal{ D}_1,\;
\abs{v_1}=\sqrt{2E_1}\}/\sim$.  Here the quotient means that when
$q_1\in\partial\mathcal{ D}_1$, we identify velocity vectors
pointing outside of $\mathcal{D}_1$ with those pointing inside
$\mathcal{D}_1$ by reflecting orthogonally through the tangent plane
to $\partial\mathcal{D}_1$ at $ q_1$. The billiard flow preserves
Liouville measure restricted to the energy surface.  This measure
has the density $d\mu=dq_1dv_1/(8\pi E_1\abs{\mathcal{D}_1} ) $.
Here $dq_1$ represents volume on $\mathbb{R}^3$, and $ dv_1$
represents area on
$S^2_{\sqrt{2E_1}}=\set{v_1\in\mathbb{R}^3:\abs{v_1}=\sqrt{2E_1}}$.

The collision cross-section
$\Omega=\{(q_1,v_1)\in\mathcal{TD}_1:q_1\in\partial\mathcal{ D}_1,\;
\abs{v_1}=\sqrt{2E_1}\}/\sim$ is properly thought of as a fiber
bundle, whose base consists of the smooth pieces of
$\partial\mathcal{D}_1$ and whose fibers are the set of outgoing
velocity vectors at $q_1\in\partial\mathcal{ D}_1$.  This and other
facts about higher-dimensional billiards, with emphasis on the
dispersing case, can be found in~\cite{BalCheSzaTot_2003}.  For our
purposes, $\Omega$ can be parameterized as follows. We decompose
$\partial\mathcal{D}_1$ into a finite union $\cup_j \Gamma_j$ of
pieces, each of which is diffeomorphic via coordinates $r$ to a
compact, connected subset of $\mathbb{R}^2$ with a piecewise
$\mathcal{C} ^3$ boundary.  The $\Gamma_j$ are nonoverlapping,
except possibly on their boundaries. Next, if $(q_1,v_1)\in\Omega $
and $ v_1$ is the outward going velocity vector, let $\hat v =
v_1/\abs{v_1} $.  Then $\Omega$ can be parameterized by $\{x=(r,\hat
v)\}$. It follows that $\Omega$ it is diffeomorphic to $\cup_j
\Gamma_j\times S^{2 +}$, where $S^{2 +}$ is the upper unit
hemisphere, and by $\partial\Omega$ we mean the subset diffeomorphic
to $(\cup_j
\partial\Gamma_j\times S^{2 +})\bigcup (\cup_j \Gamma_j\times
\partial S^{2 +})$. If $x\in\Omega $, we let $\varphi\in [0,\pi /2]$
represent the angle between the outgoing velocity vector and the
inward pointing normal vector $n$ to $\partial\mathcal{D}_1$,
i.e.~$\cos\varphi=\langle \hat v, n\rangle$. Note that we no longer
allow $\varphi$ to take on negative values. The return map
$F:\Omega\circlearrowleft$ preserves the projected probability
measure $\nu $, which has the density $d\nu=\cos\varphi\, d\hat v \,
dr/(\pi\abs{\partial\mathcal{D}_1}) $. Here
$\abs{\partial\mathcal{D}_1}$ is the area of
$\partial\mathcal{D}_1$.

$F$ is an invertible, measure preserving transformation that is
piecewise $\mathcal{C} ^2$.  Because of our assumptions on
$\mathcal{D}_1$, the free flight times and the curvature of
$\partial\mathcal{D}_1$ are uniformly bounded.  The bound on $\norm
{DF(x)}$ given in Equation~\eqref{eq:2d_derivative_bound} is still
true.  A proof of this fact for general three-dimensional billiard
tables with finite horizon does not seem to have made it into the
literature, although see~\cite{BalCheSzaTot_2003} for the case of
dispersing billiards.  For completeness, we provide a sketch of a
proof for general billiard tables in Section~\ref{sct:d_bounds}.

We suppose that the billiard flow is ergodic, so that $F$ is
ergodic.  Again, we induce $F$ on the subspace $\hat\Omega$ of
$\Omega$ corresponding to collisions with the (immobile) piston to
obtain the induced map $\hat F:\hat\Omega\circlearrowleft$ that
preserves the induced measure $\hat \nu$.

The free flight time $\zeta:\Omega\rightarrow \mathbb{R}$ again
satisfies the derivative bound given in
Equation~\eqref{eq:2d_time_derivative}. The generalized
Santal\'{o}'s formula~\cite{Chernov1997} yields
\[
    E_\nu \zeta=\frac {4
    \abs{\mathcal{D}_1}} {\abs{v_1}\abs{\partial\mathcal{D}_1}}.
\]
If $\hat\zeta:\hat\Omega\rightarrow\mathbb{R} $ is the free flight
time between collisions with the piston, then it follows from
Proposition \ref{prop:inducing} that
\[
    E_{\hat\nu} \hat\zeta=\frac {4
    \abs{\mathcal{D}_1}}{\abs{v_1}\ell}.
\]

The expected value of $ \abs{v_1^\perp }$ when the left gas particle
collides with the (immobile) piston is given by
\[
    E_{\hat\nu} \abs{v_1^\perp }=E_{\hat\nu} \sqrt{2E_1}\cos\varphi=
    \frac{\sqrt{2E_1}}{\pi}\iint_{S^ {2+}} \cos^2\varphi\,d\hat v_1=
    \sqrt{2E_1}\frac{2}{3}.
\]

As a consequence, we obtain
\begin {lem}
\label{lem:ae_convergence_3d}

For $\mu-a.e.$ $y\in \mathcal{M}^1$,
\[
    \lim_{t\rightarrow\infty} \frac{1}{t}
    \int_0^t \abs{v_1^\perp (s)}\delta_{q_1^\perp (s)
    =Q}ds=
    \frac{E_1\ell}{3\abs{\mathcal{D}_1(Q)}}.
\]

\end {lem}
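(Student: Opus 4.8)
The plan is to mimic exactly the proof of Lemma~\ref{lem:ae_convergence} from Subsection~\ref{sct:billiard}, replacing every two-dimensional geometric quantity by its three-dimensional analogue as recorded in Subsection~\ref{sct:higher_d}. The billiard flow $y(t)$ in $\mathcal{M}^1$ is a suspension flow over the collision cross-section $\Omega$ with ceiling function $\zeta$, and the piston collisions are precisely the returns to $\hat\Omega\subset\Omega$. First I would invoke the Poincar\'e Recurrence Theorem to guarantee that, for $\hat\nu$-a.e.\ $x\in\hat\Omega$, the number $\hat N_t(x)=\#\set{s\in(0,t]:y(s,x)\in\hat\Omega}$ of piston collisions up to time $t$ tends to infinity as $t\to\infty$.

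Next I would write the same sandwiching estimate used in the $d=2$ case: since the integrand $\abs{v_1^\perp(s)}\delta_{q_1^\perp(s)=Q}$ is supported exactly on piston collisions, for $x\in\hat\Omega$,
\[
    \frac{\hat N_t}{\sum_{n=0}^{\hat N_t}\hat\zeta(\hat F^n x)}
    \frac{1}{\hat N_t}\sum_{n=1}^{\hat N_t}\abs{v_1^\perp}(\hat F^n x)
    \leq
    \frac{1}{t}\int_0^t\abs{v_1^\perp(s)}\delta_{q_1^\perp(s)=Q}\,ds
    \leq
    \frac{\hat N_t}{\sum_{n=0}^{\hat N_t-1}\hat\zeta(\hat F^n x)}
    \frac{1}{\hat N_t}\sum_{n=0}^{\hat N_t}\abs{v_1^\perp}(\hat F^n x).
\]
Since the billiard flow is ergodic (Main Assumption), $\hat F$ is an ergodic measure preserving transformation of $(\hat\Omega,\hat\nu)$, so Birkhoff's Ergodic Theorem applies to both $\hat\zeta$ and $\abs{v_1^\perp}=\sqrt{2E_1}\cos\varphi$ on $\hat\Omega$. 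The time averages in the numerator converge $\hat\nu$-a.e.\ to $E_{\hat\nu}\abs{v_1^\perp}=\sqrt{2E_1}\cdot\tfrac{2}{3}$, and the reciprocal-flight-time factors converge to $1/E_{\hat\nu}\hat\zeta=\abs{v_1}\ell/(4\abs{\mathcal{D}_1})$. Multiplying, both sides of the sandwich converge to
\[
    \frac{\abs{v_1}\ell}{4\abs{\mathcal{D}_1}}\cdot\sqrt{2E_1}\cdot\frac{2}{3}
    =\frac{2E_1\ell}{4\abs{\mathcal{D}_1}}\cdot\frac{2}{3}
    =\frac{E_1\ell}{3\abs{\mathcal{D}_1(Q)}},
\]
using $\abs{v_1}=\sqrt{2E_1}$, which is the claimed limit for $\hat\nu$-a.e.\ $x\in\hat\Omega$. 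Finally, because the flow is a suspension over the full collision cross-section $\Omega$ with bounded ceiling $\zeta$, and $\hat\Omega$ has positive $\nu$-measure, convergence $\hat\nu$-a.e.\ on $\hat\Omega$ upgrades to convergence $\mu$-a.e.\ on $\mathcal{M}^1$ by the standard suspension-flow argument (the same one used to pass from $\hat\Omega$ to $\Omega$ to $\mathcal{M}^1$ in the proof of Lemma~\ref{lem:ae_convergence}).

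I do not expect any real obstacle here: the only place where dimension enters is through the two explicit integrals $E_\nu\zeta$ (generalized Santal\'o) and $E_{\hat\nu}\abs{v_1^\perp}$, both of which are already stated in Subsection~\ref{sct:higher_d}, and the bookkeeping with $\hat F$, $\hat\zeta$, and $\hat N_t$ is identical to the $d=2$ case. If anything requires care it is simply confirming that the suspension-flow and inducing formalism of Section~\ref{sct:inducing} (Proposition~\ref{prop:inducing}) applies verbatim in three dimensions — which it does, since nothing in that formalism uses the dimension — so the proof is a direct transcription with the constant $\pi/4$ replaced by $2/3$ and $\pi\abs{\mathcal{D}_1}$ replaced by $4\abs{\mathcal{D}_1}$.
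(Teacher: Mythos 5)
Your proposal is correct and is precisely the argument the paper intends: the paper gives no separate proof for $d=3$, instead writing ``Compare the proof of Lemma~\ref{lem:ae_convergence},'' and your transcription — suspension-flow reduction to $\hat\Omega$, Poincar\'e recurrence, the sandwich estimate, Birkhoff applied to $\hat F$, and the substitutions $E_{\hat\nu}\hat\zeta = 4\abs{\mathcal{D}_1}/(\abs{v_1}\ell)$ and $E_{\hat\nu}\abs{v_1^\perp}=\tfrac{2}{3}\sqrt{2E_1}$ — is exactly that, with the arithmetic checking out.
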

\noindent Compare the proof of Lemma~\ref{lem:ae_convergence}.

With these differences in mind, the rest of the proof of
Theorem~\ref{thm:dDpiston} when $d=3$ proceeds in the same manner as
indicated in Sections~\ref{sct:2dprep}, \ref{sct:2dproof} and
\ref{sct:multiple} above.  The only notable difference occurs in the
proof of the Gronwall-type inequality for billiards.  Due to
dimensional considerations, if one follows the proof of
Lemma~\ref{lem:gron1} for a three-dimensional billiard table, one
finds that \[\nu \mathcal{N}_\gamma  (F^ {-1}\mathcal{N}_\gamma
(\partial\Omega))=\mathcal{O}(\gamma  ^ {1-4\alpha}+\gamma  ^
{\alpha}).\]  The optimal value of $\alpha$ is $1/5$, and so $ \nu
\mathcal{N}_\gamma  (F^ {-1}\mathcal{N}_\gamma
(\partial\Omega))=\mathcal{O}(\gamma  ^ {1/5})$ as $\gamma
\rightarrow 0$.  Hence $\nu\mathcal{C}_{\gamma,\lambda} =\mathcal{O}
(\lambda \gamma  ^ {1/5})$, which is a slightly worse estimate than
the one in Equation~\eqref{eq:gron4}.  However, it is still
sufficient for all of the arguments in Section~\ref{sct:gron_ap},
and this finishes the proof.

\comment{
 can also be heuristically justified by
the procedure we used to justify the averaged equation when $d=2$.
Let $B^d=\{(x_1,\dots,x_d)\in\mathbb{R}^d :\sum_{i=1}^{d}x_i^2\leq
1\}$ denote the unit ball in $\mathbb{R}^d$, and let
$S^{d-1}=\{(x_1,\dots,x_d)\in B^d :\sum_{i=1}^{d}x_i^2= 1\}$ denote
the unit $(d-1)$-sphere.  Also let $(S^{d-1}) ^
+=\{(x_1,\dots,x_d)\in S^{d-1} :x_d\geq 0\}$.  Then if the piston is
held fixed, the expected flight time between collisions for the left
gas particle (with respect to the invariant billiard measure for the
billiard map induced on the subspace of collisions with the piston)
is
\[
    E_{\hat\nu} \hat\zeta=\frac {
    \abs{\mathcal{D}_1}}{\ell\sqrt{2E_1}}\frac {\abs{S^{d-1}}}
    {\abs{B^{d-1}}}.
\]
See \cite{CM06}. Furthermore,\marginal {I should say a word about
invariant measures/how to derive these equations} For future
reference, we observe that the expected value of $ \abs{v_1^\perp }$
when the left gas particle collides with the (immobile) piston is
given by
\[
    E_{\hat\nu} \abs{v_1^\perp }=
    \frac {\sqrt{2E_1}} {2d}\frac{\abs{S^{d-1}}}{\abs{B^{d-1}}}.
\]
} 


\section{Inducing maps on subspaces}
\label{sct:inducing}

Here we present some well-known facts on inducing measure preserving
transformations on subspaces. Let $F: (\Omega,
\mathfrak{B},\nu)\circlearrowleft$ be an invertible, ergodic,
measure preserving transformation of the probability space $\Omega$
endowed with the $\sigma$-algebra $\mathfrak{B}$ and the probability
measure $\nu$.  Let $\hat\Omega\in\mathfrak{B}$ satisfy $0<\nu
\hat\Omega<1$.  Define $R:\hat\Omega\rightarrow\mathbb{N}$ to be the
first return time to $\hat\Omega$, i.e.~$R\omega
=\inf\{n\in\mathbb{N}:F^n\omega \in\hat\Omega\}$.  Then if
$\hat{\nu} : =\nu(\cdot\cap\hat\Omega)/\nu\hat\Omega$ and
$\hat{\mathfrak{B}}: =\{B\cap\hat\Omega:B\in \mathfrak{B}\}$,
$\hat{F}: (\hat\Omega,
\hat{\mathfrak{B}},\hat{\nu})\circlearrowleft$ defined by
$\hat{F}\omega=F^{R\omega}\omega$ is also an invertible, ergodic,
measure preserving transformation~\cite{Pet83}. Furthermore
$E_{\hat{\nu}} R=\int_{\hat\Omega}
R\,d\hat{\nu}=(\nu\hat\Omega)^{-1}$.

This last fact is a consequence of the following proposition:
\begin {prop}
\label{prop:inducing}

If $\zeta:\Omega\rightarrow\mathbb{R}_{\geq 0}$ is in $L^1(\nu)$,
then $\hat\zeta =\sum_{n=0}^{R-1}\zeta\circ F^n$ is in
$L^1(\hat{\nu})$, and
\[
    E_{\hat{\nu}} \hat\zeta =\frac {1}{\nu\hat\Omega}
    E_{\nu}\zeta.
\]
\end{prop}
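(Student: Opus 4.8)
The plan is to use the standard Kac-type identity for induced maps, relying on the ergodic (hence recurrent) structure that is already in place. First I would note that since $F$ is ergodic and $\nu\hat\Omega>0$, the Poincar\'e Recurrence Theorem guarantees $R$ is finite $\hat\nu$-a.e., so the sum defining $\hat\zeta$ has finitely many terms a.e. The key object is the ``tower'' (or Rokhlin skyscraper) decomposition of $\Omega$: for each $k\geq 1$ set
\[
    \hat\Omega_k=\{\omega\in\hat\Omega:R\omega=k\},
\]
and observe that the sets $F^j\hat\Omega_k$ for $k\geq 1$, $0\leq j\leq k-1$, are pairwise disjoint and their union is all of $\Omega$ up to a $\nu$-null set (this is precisely where ergodicity of $F$ is used; without it one would only fill up the union of forward orbits of $\hat\Omega$).

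The main computation is then a rearrangement of a sum. I would write, using $F$-invariance of $\nu$ in the form $\int_{F^j\hat\Omega_k}\zeta\,d\nu=\int_{\hat\Omega_k}\zeta\circ F^j\,d\nu$,
\[
    E_\nu\zeta=\int_\Omega\zeta\,d\nu
    =\sum_{k=1}^\infty\sum_{j=0}^{k-1}\int_{F^j\hat\Omega_k}\zeta\,d\nu
    =\sum_{k=1}^\infty\sum_{j=0}^{k-1}\int_{\hat\Omega_k}\zeta\circ F^j\,d\nu
    =\sum_{k=1}^\infty\int_{\hat\Omega_k}\Bigl(\sum_{j=0}^{k-1}\zeta\circ F^j\Bigr)d\nu.
\]
On $\hat\Omega_k$ we have $R=k$, so the inner sum is exactly $\hat\zeta$, and the sets $\hat\Omega_k$ partition $\hat\Omega$; hence the last expression equals $\int_{\hat\Omega}\hat\zeta\,d\nu=\nu\hat\Omega\cdot E_{\hat\nu}\hat\zeta$ by the definition $\hat\nu=\nu(\cdot\cap\hat\Omega)/\nu\hat\Omega$. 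Rearranging gives $E_{\hat\nu}\hat\zeta=(\nu\hat\Omega)^{-1}E_\nu\zeta$, which is the claim; taking $\zeta\equiv 1$ recovers $E_{\hat\nu}R=(\nu\hat\Omega)^{-1}$. The $L^1(\hat\nu)$ membership of $\hat\zeta$ is automatic from this same chain of equalities applied to $|\zeta|$: it shows $\int_{\hat\Omega}\hat\zeta^{\,+}\,d\nu$ and similar are finite, so $\hat\zeta\in L^1(\hat\nu)$; one may do the computation first for $\zeta\geq 0$ to justify the interchange of summation and integration by the Monotone Convergence (Tonelli) Theorem, then split a general $L^1$ function into positive and negative parts.

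The only genuine obstacle is justifying the interchange of the infinite sum and the integral, i.e. Tonelli's theorem for the double series above; this is harmless because all terms are nonnegative once we reduce to $\zeta\geq 0$. A secondary point worth stating carefully is the disjointness-and-covering property of the skyscraper sets $\{F^j\hat\Omega_k\}$: disjointness follows because $R$ is the \emph{first} return time (so an orbit segment of length $k$ starting in $\hat\Omega_k$ does not revisit $\hat\Omega$ strictly before time $k$), and the covering up to a null set is the ergodicity assumption combined with recurrence. Everything else is bookkeeping, so I expect the write-up to be short.
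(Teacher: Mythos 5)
Your proposal is correct and follows essentially the same route as the paper: the paper's proof is exactly the skyscraper decomposition of $\Omega$ into the sets $F^n(\hat\Omega\cap\{R=k\})$, $0\leq n<k<\infty$, together with the $F$-invariance of $\nu$ and the rearrangement of the resulting double sum. Your additional remarks about Tonelli for $\zeta\geq 0$ and splitting a general $\zeta$ into positive and negative parts just make explicit what the paper leaves implicit.
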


\begin {proof}
\[
\begin {split}
    \nu\hat\Omega \int_{\hat\Omega} \sum_{n=0}^{R-1}\zeta\circ F^n\,d\hat{\nu}
    &=
    \int_{\hat\Omega} \sum_{n=0}^{R-1}\zeta\circ F^n\,d\nu
    =
    \sum_{k=1}^\infty\int_{\hat\Omega\cap\{R=k\}}
    \sum_{n=0}^{k-1}\zeta\circ F^n\,d\nu
    \\
    &=
    \sum_{k=1}^\infty\sum_{n=0}^{k-1}\int_{F^n(\hat\Omega\cap\{R=k\})}
    \zeta\,d\nu
    =
    \int_{\Omega}\zeta\,d\nu,
\end {split}
\]
because $\{F^n(\hat\Omega\cap\{R=k\}):0\leq n< k<\infty\}$ is a
partition of $\Omega$.

\end {proof}

\section{Derivative bounds for the billiard map\\ in three dimensions}
\label{sct:d_bounds}

Returning to Section~\ref{sct:higher_d}, we need to show that for a
billiard table $\mathcal{D}_1\subset\mathbb{R}^3$ with a piecewise
$\mathcal{C} ^3$ boundary and the free flight time uniformly bounded
above, the billiard map $F$ satisfies the following: If $x_0\notin
\partial\Omega \cup F^{-1} (\partial\Omega) $, then
\begin{equation*}
    \norm {DF(x_0)}\leq\frac {\C} {\cos \varphi(Fx_0)}.
\end{equation*}

Fix $x_0= (r_0,\hat v_0)\in\Omega $, and let $x_1= (r_1,\hat
v_1)=Fx_0$.  Let $\Sigma$ be the plane that perpendicularly bisects
the straight line between $ r_0$ and $ r_1$, and let $r_{1/2} $
denote the point of intersection.  We consider $\Sigma$ as a
``transparent'' wall, so that in a neighborhood of $ x_0$, we can
write $F=F_2\circ F_1$.  Here, $F_1$ is like a billiard map in that
it takes points (i.e.~directed velocity vectors with a base) near $
x_0$ to points with a base on $\Sigma$ and a direction pointing near
$ r_1$. ($ F_1$ would be a billiard map if we reflected the image
velocity vectors orthogonally through $\Sigma$.)  $ F_2$ is a
billiard map that takes points in the image of $F_1$ and maps them
near $ x_1$. Let $x_{1/2} = F_1 x_0= F_2^ {-1} x_1 $. Then $ \norm
{DF(x_0)}\leq \norm {DF_1(x_0)}\norm {DF_2(x_{1/2})}$.

It is easy to verify that $\norm {DF_1(x_0)}\leq\C$, with the
constant depending only on the curvature of $\partial\mathcal{D}_1$
at $ r_0$.  In other words, the constant may be chosen independent
of $ x_0$. Similarly, $\norm {DF_2^ {-1}(x_1)}\leq\C$.  Because
billiard maps preserve a probability measure with a density
proportional to $\cos\varphi $, $\text {det}DF_2^ {-1}(x_1)=\cos
\varphi_{1}/\cos\varphi_{ 1/2} =\cos\varphi_1$.  As $\Omega$ is $4
$-dimensional, it follows from Cramer's Rule for the inversion of
linear transformations that
\[
    \norm {DF_2(x_{1/2})}\leq \frac {\C\norm {DF_2^ {-1}(x_1)}^3}
    {\text {det}DF_2^ {-1}(x_1)}\leq\frac {\C} {\cos\varphi_1},
\]
and we are done.

\backmatter

\bibliographystyle{alpha}
\addcontentsline{toc}{chapter}{Bibliography}

\begin{thebibliography}{CDPS96}

\bibitem[Ano60]{Ano60}
D.~V. Anosov.
\newblock Averaging in systems of ordinary differential equations with rapidly
  oscillating solutions.
\newblock {\em Izv. Akad. Nauk SSSR Ser. Mat.}, 24:721--742, 1960.

\bibitem[BCST03]{BalCheSzaTot_2003}
P{\'e}ter B{\'a}lint, Nikolai Chernov, Domokos Sz{\'a}sz, and
Imre~P{\'e}ter
  T{\'o}th.
\newblock Geometry of multi-dimensional dispersing billiards.
\newblock {\em Ast\'erisque}, (286):xviii, 119--150, 2003.
\newblock Geometric methods in dynamics. I.

\bibitem[BR98]{BunimovichRehacek1998}
Leonid~A. Bunimovich and Jan Rehacek.
\newblock On the ergodicity of many-dimensional focusing billiards.
\newblock {\em Ann. Inst. H. Poincar\'e Phys. Th\'eor.}, 68(4):421--448, 1998.
\newblock Classical and quantum chaos.

\bibitem[BTT07]{BalintTothToth2007}
P.~B\'alint, B.~T\'oth, and I.~P. T\'oth.
\newblock {On the zero mass limit of tagged particle diffusion in the 1-d
  Rayleigh gas}.
\newblock Submitted to the Journal of Statistical Physics, 2007.

\bibitem[Bun79]{Bun79}
L.~A. Bunimovich.
\newblock On the ergodic properties of nowhere dispersing billiards.
\newblock {\em Comm. Math. Phys.}, 65(3):295--312, 1979.

\bibitem[Cal63]{Ca63}
H.~B. Callen.
\newblock {\em Thermodynamics}.
\newblock Wiley, New York, 1963.
\newblock Appendix C.

\bibitem[CD06a]{CD06}
N.~Chernov and D.~Dolgopyat.
\newblock {Brownian brownian motion - I}.
\newblock Memoirs of the American Mathematical Society, to appear, 2006.

\bibitem[CD06b]{CD06b}
N.~Chernov and D.~Dolgopyat.
\newblock Hyperbolic billiards and statistical physics.
\newblock In {\em {Proceedings of the International Congress of
  Mathematicians}}, Madrid, Spain, 2006.

\bibitem[CDPS96]{CD96}
B.~Crosignani, P.~Di~Porto, and M.~Segev.
\newblock Approach to thermal equilibrium in a system with adiabatic
  constraints.
\newblock {\em Am. J. Phys.}, 64(5):610--613, 1996.

\bibitem[Che97]{Chernov1997}
N.~Chernov.
\newblock Entropy, {L}yapunov exponents, and mean free path for billiards.
\newblock {\em J. Statist. Phys.}, 88(1-2):1--29, 1997.

\bibitem[Che04]{Che05}
N.~Chernov.
\newblock On a slow drift of a massive piston in an ideal gas that remains at
  mechanical equilibrium.
\newblock {\em Math. Phys. Electron. J.}, 10:Paper 2, 18 pp. (electronic),
  2004.

\bibitem[CL02]{CL02}
N.~Chernov and J.~L. Lebowitz.
\newblock Dynamics of a massive piston in an ideal gas: oscillatory motion and
  approach to equilibrium.
\newblock {\em J. Statist. Phys.}, 109(3-4):507--527, 2002.
\newblock Special issue dedicated to J. Robert Dorfman on the occasion of his
  sixty-fifth birthday.

\bibitem[CLS02]{CLS02b}
N.~Chernov, J.~L. Lebowitz, and Ya. Sinai.
\newblock Scaling dynamics of a massive piston in a cube filled with ideal gas:
  exact results.
\newblock {\em J. Statist. Phys.}, 109(3-4):529--548, 2002.
\newblock Special issue dedicated to J. Robert Dorfman on the occasion of his
  sixty-fifth birthday.

\bibitem[CM06a]{CM06}
N.~Chernov and R.~Markarian.
\newblock {\em {Chaotic Billiards}}.
\newblock Number 127 in Mathematical Surveys and Monographs. American
  Mathematical Society, 2006.

\bibitem[CM06b]{CherMark06}
N.~Chernov and R.~Markarian.
\newblock Dispersing billiards with cusps: slow decay of correlations.
\newblock preprint, 2006.

\bibitem[Dol05]{Dol05}
Dmitry Dolgopyat.
\newblock Introduction to averaging.
\newblock Available online at http://www.math.umd.edu/$\sim$dmitry, 2005.

\bibitem[GN06]{GorNeishtadt06}
I.V. Gorelyshev and A.I. Neishtadt.
\newblock On the adiabatic perturbation theory for systems with impacts.
\newblock {\em Prikl. Mat. Mekh.}, 70(1):6--19, 2006.
\newblock English translation in Journal of Applied Mathematics and Mechanics
  70 (2006) 4–17.

\bibitem[GPL03]{GPL03}
Christian Gruber, S{\'e}verine Pache, and Annick Lesne.
\newblock Two-time-scale relaxation towards thermal equilibrium of the
  enigmatic piston.
\newblock {\em J. Statist. Phys.}, 112(5-6):1177--1206, 2003.

\bibitem[Gru99]{Gru99}
Ch. Gruber.
\newblock Thermodynamics of systems with internal adibatic constraints: time
  evolution of the adiabatic piston.
\newblock {\em Eur. J. Phys.}, 20:259--266, 1999.

\bibitem[Kif04a]{Kif04b}
Yuri Kifer.
\newblock Averaging principle for fully coupled dynamical systems and large
  deviations.
\newblock {\em Ergodic Theory Dynam. Systems}, 24(3):847--871, 2004.

\bibitem[Kif04b]{Kif04}
Yuri Kifer.
\newblock Some recent advances in averaging.
\newblock In {\em {Modern Dynamical Systems and Applications}}, pages 385--403.
  Cambridge Univ. Press, Cambridge, 2004.

\bibitem[Lie99]{Lie99}
Elliott~H. Lieb.
\newblock Some problems in statistical mechanics that {I} would like to see
  solved.
\newblock {\em Phys. A}, 263(1-4):491--499, 1999.
\newblock STATPHYS 20 (Paris, 1998).

\bibitem[LM88]{LM88}
P.~Lochak and C.~Meunier.
\newblock {\em {Multiphase Averaging for Classical Systems}}.
\newblock Springer-Verlag, New York, 1988.

\bibitem[LSC02]{CLS02}
J.~Lebowitz, Ya.~G. Sinai, and N.~Chernov.
\newblock Dynamics of a massive piston immersed in an ideal gas.
\newblock {\em Uspekhi Mat. Nauk}, 57(6(348)):3--86, 2002.
\newblock English translation in Russian Math. Surveys 57 (2002), no. 6,
  1045--1125.

\bibitem[Nei76]{Nei76}
A.~I. Neishtadt.
\newblock Averaging in multi-frequency systems {II}.
\newblock {\em Doklady Akad. Nauk. SSSR Mechanics}, 226(6):1295--1298, 1976.
\newblock English translation in Soviet Phys. Doklady 21 (1976), no. 2, 80--82.

\bibitem[NS04]{NS04}
A.~I. Neishtadt and Ya.~G. Sinai.
\newblock Adiabatic piston as a dynamical system.
\newblock {\em J. Statist. Phys.}, 116(1-4):815--820, 2004.

\bibitem[Pet83]{Pet83}
Karl Petersen.
\newblock {\em {Ergodic Theory}}.
\newblock Cambridge University Press, Cambridge, 1983.

\bibitem[San76]{San76}
L.~A. Santal\'{o}.
\newblock {\em {Integral Geometry and Geometric Probability}}.
\newblock Addison Wesley, Reading, Mass., 1976.

\bibitem[Sin70]{Sin70}
Ya.~G. Sina{\u\i}.
\newblock Dynamical systems with elastic reflections. {E}rgodic properties of
  dispersing billiards.
\newblock {\em Uspehi Mat. Nauk}, 25(2 (152)):141--192, 1970.

\bibitem[Sin99]{Sin99}
Ya.~G. Sinai.
\newblock Dynamics of a massive particle surrounded by a finite number of light
  particles.
\newblock {\em Teoret. Mat. Fiz.}, 121(1):110--116, 1999.
\newblock English translation in Theoret. and Math. Phys. 121 (1999), no. 1,
  1351-1357.

\bibitem[SV85]{SV85}
J.~A. Sanders and F.~Verhulst.
\newblock {\em {Averaging Methods in Nonlinear Dynamical Systems}}.
\newblock Springer-Verlag, New York, 1985.

\bibitem[Vor97]{Vorobets_1997}
Ya.~B. Vorobets.
\newblock Ergodicity of billiards in polygons.
\newblock {\em Mat. Sb.}, 188(3):65--112, 1997.

\bibitem[Wri06]{Wri06}
Paul Wright.
\newblock A simple piston problem in one dimension.
\newblock {\em Nonlinearity}, 19:2365--2389, 2006.

\bibitem[Wri07]{Wri06b}
Paul Wright.
\newblock The periodic oscillation of an adiabatic piston in two or three
  dimensions.
\newblock {\em Comm. Math. Phys.}, 2007.
\newblock To appear; available online at
  http://www.cims.nyu.edu/$\sim$paulrite.

\end{thebibliography}

\end{document}